\title[Hochschild cohomology of intersection complexes and BV algebras]{Hochschild cohomology of intersection complexes and Batalin-Vilkovisky algebras}
\author{Ismaïl RAZACK}
\address{LAMFA, CNRS UMR 7352, Université de Picardie Jules Verne,  33, rue Saint-Leu, 80000, Amiens, France.}
\email{ismail.razack@u-picardie.fr}
\keywords{Intersection (co)homology, Blown-up intersection cohomology, Hochschild (co)homology, Perverse differential graded algebras, Topological invariance.}
\subjclass{16E40, 55N33}
\begin{document}
\begin{abstract}
    Let $X$ be a compact, oriented, second countable pseudomanifold. We show that $HH^\ast_\bullet(\widetilde N^\ast_\bullet(X;\mathbb{Q}))$, the Hochschild cohomology of the blown-up intersection cochain complex of $X$, is well defined and endowed with a Batalin-Vilkovisky algebra structure. Furthermore, we prove that it is a topological invariant. More generally, we define the Hochschild cohomology of a perverse differential graded algebra $A_\bullet$ and present a natural Gerstenhaber algebra structure on it. This structure can be extended into a Batalin-Vilkovisky algebra when $A_\bullet$ is a derived Poincaré duality algebra. 
\end{abstract}
\maketitle
{\small\tableofcontents}

\section*{Introduction}
\indent It is well known that the Hochschild cohomology $HH^\ast(A)$ of a differential graded algebra $A$ over a commutative ring $R$ has the structure of a \emph{Gerstenhaber algebra} \cite{Ger63}. 

\begin{defi*}
    An associative, graded-commutative algebra $(A^\ast, \cup)$ is a \emph{Gerstenhaber algebra} if there exists a degree -1 linear map $[-,-]:(A\otimes A)^\ast \to A^{\ast-1}$ such that
    \begin{enumerate}[label=\alph*)]
        \item the suspension of $A$, $(sA, [-,-])$, is a Lie algebra. This means that, for $a,b,c\in A^\ast$, the following identities hold
        \begin{itemize}
        \item graded skew-commutativity: $[a,b]=-(-1)^{(\Real{a}-1)(\Real{b}-1)}[a,b]$,
        \item Jacobi identity: $[[a,b],c]=[a,[b,c]]-(-1)^{(\Real{a}-1)(\Real{b}-1)}[b,[a,c]]$,
         \end{itemize}   
    \item and both operations satisfy the Leibniz rule: 
    \[[a, b\cup c]=[a,b]\cup c + (-1)^{(\Real{a}-1)\Real{b}}b\cup [a,c].\]
    \end{enumerate}
\end{defi*}

Furthermore, this structure can be extended into a \emph{Batalin-Vilkovisky algebra} (BV algebra) when $A$ satisfies some form of duality. 
\begin{defi*}
    A Gerstenhaber algebra $(A^\ast, \cup, [-,-])$ is a \emph{Batalin-Vilkovisky algebra} if there exists a linear map $\Delta:A^\ast\to A^{\ast-1}$ of degree -1 such that $\Delta\circ\Delta=0$ and
    \[(-1)^{\Real{a}}[a,b]=\Delta(a\cup b)-\Delta(a)\cup b - (-1)^{\Real{a}}a\cup \Delta(b)\]
    for any $a,b\in A^\ast$.
\end{defi*}

Various cases are collected in \cite{Abb15} and among them, we can find the notion of \emph{derived Poincaré duality algebra}. The singular cochain complex of a manifold is an example of such an algebra. Menichi shows the following result.
\begin{thm*}[{\cite[Theorem 22]{Men09}}]
    Let $M$ be a compact, simply-connected, oriented, smooth manifold. Then, there is a Batalin-Vilkovisky algebra structure on $HH^\ast(C^\ast(M; \mathbb{F}))$, the Hochschild cohomology of the singular cochains of $M$ with coefficients in a field $\mathbb{F}$.
\end{thm*}

Menichi's proof relies on Poincaré duality and the existence of an isomorphism between $C^\ast(M; \mathbb{F})$ and its linear dual in the derived category of $C^\ast(M; \mathbb{F})$-bimodules. We can ask ourselves whether or not there is a similar result for spaces with singularities, for instance pseudomanifolds. The aim of this paper is to answer this question.

Since a pseudomanifold $X$ doesn't necessarily satisfy Poincaré duality, one cannot directly apply Menichi's approach. In order to restore Poincaré duality, Goresky and MacPherson have introduced the $\overline{p}$-\emph{intersection chain complex} \cite{GM80}. This complex, denoted $I^{\overline{p}}C_\ast(X;R)$, is obtained by considering $\overline{p}$-\emph{allowable} singular chains i.e. chains that verify geometric conditions with respect to the parameter $\overline{p}$ which is called a \emph{perversity}. We denote its homology by $I^{\overline{p}}H_\ast(X;R)$. One can associate a poset of perversities to each pseudomanifold $X$  and the intersection chain complex $I^\bullet C_\ast(X;R)$ is then seen as a family of chain complexes indexed by this poset. This poset admits a greatest element which is denoted by $\overline{t}$. Goresky and MacPherson state Poincaré duality as follows.

\begin{thm*}[\textbf{- Generalized Poincaré duality} - {\cite[Section 3.3]{GM80}}] Let $X$ be an $n$-dimensional oriented PL-pseudomanifold. If $i$ and $j$ are complementary dimensions ($i+j=n$) and if $\overline{p}$ and $\overline{q}$ are complementary perversities ($\overline{p}+\overline{q}=\overline{t}$), then there exists a pairing
    \[I^{\overline{p}}H_i(X;\mathbb{Z})\times I^{\overline{q}}H_j(X; \mathbb{Z})\to \mathbb{Z}\]
    which is non-degenerate when the groups are tensored with $\mathbb{Q}$.
\end{thm*}
To lift this result at the cochain level, one needs to consider variants of intersection (co)homology. We will work with the \emph{blown-up intersection cohomology}, defined by Chataur, Saralegui and Tanré \cite{CST18ration} and denoted by $\widetilde N^\ast_\bullet(X; R)$. One of its advantages is that it is a \emph{perverse differential graded algebra} (pDGA) in Hovey's terminology \cite{Hov09}. The algebraic notions we presented before all have a perverse analogue. 

In this paper, we define the Hochschild cochain complex of a pDGA,  and prove in Theorem \ref{thm:Hoch_is_Gerst} that it is endowed with a \emph{perverse Gerstenhaber algebra} structure.
Furthermore, if $A_\bullet$ is a \emph{perverse derived Poincaré duality algebra} and under some additional assumptions, one can extend this structure into a \emph{perverse Batalin-Vilkovisky algebra}. The detailed statement is given in Theorem \ref{thm:DPDA_is_BV}. The commutative case is easier and is treated in Theorem \ref{thm:commuDPDA_is_BV}. 

As a consequence of all these theorems, we show the following result which is similar to the one given by Menichi. 
\begin{proposition*}[\textbf{\ref{prop:BV_on_BUP}}]
Let $X$ be a compact, oriented, second countable pseudomanifold. Then, there exists a perverse Batalin-Vilkovisky algebra structure on the Hochschild cohomology $HH^\ast_\bullet(\widetilde N^\ast_\bullet(X; \mathbb{Q}))$.
\end{proposition*}

We also give two important properties of this Hochschild cohomology.
\begin{itemize}
    \item \textbf{Corollary \ref{coro:top_invar_Gerst}} It is a topological invariant when the coefficient ring is a field $\mathbb{F}$. If $X$ and $Y$ are two homeomorphic (or stratified homotopy equivalent) pseudomanifolds then we have an isomorphism of perverse Gerstenhaber algebras
    \[HH^\ast_\bullet(\widetilde N^\ast_\bullet(X; \mathbb{F})) \simeq HH^\ast_\bullet(\widetilde N^\ast_\bullet(Y; \mathbb{F})).\]
    Furthermore, in corollary \ref{coro:top_invar_BV}, when $\mathbb{F}=\mathbb{Q}$, we give sufficient conditions on $X$ and $Y$ to have an isomorphism of perverse BV algebras .
    \item \textbf{Proposition \ref{prop:HH_tensorprod_BViso}} Under finiteness assumptions on commutative pDGAs $A_\bullet$ and $B_\bullet$ , we have an isomorphism of perverse BV-algebras
    \[HH^\ast_\bullet(A\boxtimes B) \simeq (HH^\ast(A)\boxtimes HH^\ast(B))_\bullet.\]
\end{itemize} 
Using these results, we show that there exist pseudomanifolds for which the Hochschild cohomology $HH^\ast_\bullet(\widetilde N^\ast_\bullet(X; \mathbb{Q}))$ has non trivial perverse BV algebra structure.

\begin{center}
    \textbf{Outline of the paper}
\end{center}

In section \ref{sect:pDGA}, we present some intersection (co)homology theories and interpret them in Hovey's framework \cite{Hov09}. They are \emph{perverse chain complexes} by his terminology. More generally, we consider the category of \emph{perverse objects} on a fixed target category and give its main properties. If the target category is abelian or closed symmetric monoidal, then so is the new category of perverse objects. For instance, there is a well-defined tensor product $\boxtimes$ on the category of perverse chain complexes and hence, one can deal with monoids (called pDGAs for perverse chain complexes), and modules over a monoid.  

\indent The Hochschild cohomology $HH^\ast(A)$ of an algebra $A$ over a commutative ring $R$ can be defined via the \emph{bar construction} or in terms of derived functors \cite{Lod98}. Section \ref{sect:Hoch-coho} will be about adapting these constructions for pDGAs. First, we define the Hochschild (co)chain complex of a pDGA $A_\bullet$ using the bar construction and show that the Hochschild cohomology is naturally endowed with a perverse Gerstenhaber algebra structure. Later, we present the model categorical framework. A characterisation of cofibrant perverse chain complexes over a field is given in Appendix \ref{appendix:chara-cofib-field}. In Theorem \ref{thm:char_cofib_Amod}, we characterize cofibrant objects for the model category structure, given by Hovey \cite{Hov09}, on modules over a pDGA. We end the section by showing that the Hochschild (co)homology can be interpreted with the $\Tor$ and $\Ext$ functors when working on a field or when $A_\bullet$ is cofibrant. 

\indent In section \ref{sect:DPDA}, we give a sufficient condition on a pDGA $A_\bullet$, namely being a perverse derived Poincaré duality algebra, to extend the perverse Gerstenhaber algebra structure of $HH^\ast_\bullet(A)$ into a perverse BV-algebra. We then show that this condition is indeed satisfied by the blown-up intersection cochain complex. To do so, we restrict ourselves to $\mathbb{Q}$ and use a perverse analogue of Sullivan's polynomial forms, which appears in \cite{CST18ration}. 

\indent The last two sections are dedicated to the properties of this Hochschild cohomology. In section \ref{sect:top_invar}, we prove the invariance of Hochschild cohomology under quasi-isomorphisms of pDGAs and in section \ref{sect:tensor}, we study its good behaviour with the tensor product under some assumptions. These results offer tools to compute $HH^\ast_\bullet(N^\ast_\bullet(X; \mathbb{Q}))$ for specific cases which we present as examples. 

\begin{center}
    \textbf{Acknowledgments}
\end{center}
This preprint is part of my PhD thesis on \emph{Hochschild cohomology of intersection algebras}. I would like to thank my advisor David Chataur for taking the time to read various drafts of this paper and for his insights on several proofs. I am also grateful to Sylvain Douteau for discussions regarding the characterisation of cofibrant perverse chain complexes over a field.

\textbf{Notations and conventions:}
\begin{enumerate}[label=\roman*)]
    \item $\mathbb{F}$ denotes a field and $R$ denotes a commutative ring.
    \item Let $M$ be an $R$-module, its linear dual is denoted $M^\vee:=\Hom_R(M,R)$.
    \item $Ch(R)$ denotes the category of chain complexes. A chain complex $(Z_\ast, d)$ is a sequence of lower $\mathbb{Z}$-graded $R$-modules $\{Z_i\}_{i\in\mathbb{Z}}$ with a degree $-1$ differential. The chain complex whose components are all trivial is denoted $0$.
    \item A cochain complex $(Z^\ast, d)$ is a sequence of upper $\mathbb{Z}$-graded $R$-modules $\{Z^i\}_{i\in\mathbb{Z}}$ with a degree $+1$ differential. With the \emph{classical convention} of \cite[pp 41-42]{FHT01}, we define a chain complex $(Z_\ast, d)$ by setting for $i\in\mathbb{Z}$
    \[Z_i:=Z^{-i}.\]
    Hence, we can think of a cochain complex as an object in $Ch(R)$.
     \item Let $n\in \mathbb{Z}$. The $n$-\emph{sphere chain complex} $\mathbb{S}^n$ is defined as the chain complex concentrated in degree $n$ with $(\mathbb{S}^n)_n=R$ with trivial differential.\\
    The $n$-\emph{disk chain complex} is denoted $\mathbb{D}^n$. It is the chain complex whose components are $R$ in degrees $n$ and $n-1$ and $0$ elsewhere. The only non-trivial differential is $(\mathbb{D}^n)_n\to (\mathbb{D}^n)_{n-1}$ and it is given by the identity.  
    \item The \emph{suspension} $sC_\ast$ of a chain complex $C_\ast$ is the shift in degree by 1 i.e. for $n\in \mathbb{N}$, $(sC)_n=C_{n-1}$. For a cochain complex $C^\ast$, this corresponds to a shift in degree by -1 i.e. $(sC)^n=C^{n+1}$.
\end{enumerate}

\section{Categorical framework for intersection homology}\label{sect:pDGA}
    In this section, we're going to present some intersection homology theories, namely the original theory introduced by Goresky and MacPherson \cite{GM80} and the \emph{blown-up intersection cohomology} studied by Chataur, Saralegui and Tanré. They will be treated as \emph{perverse chain complexes} (i.e. functors from a poset of perversities into the category of chain complexes) and some of their properties are given in this formalism. 
    \subsection{Intersection homology theories}\label{subsect: interhom}
    Let's first define the spaces we will be studying throughout this paper. We will follow \cite[Section 1]{CST18BUP-Alpine}, another good reference for these notions is \cite[Chapter 2]{Fri20}.
    \begin{defi}\label{def:filtered_space}
    A \emph{filtered space} is a Hausdorff space $X$ equipped with a filtration by closed subspaces 
    \[\emptyset = X_{-1} \subseteq X_0 \subseteq \ldots \subseteq X_n=X,\]
    such that $X_n\setminus X_{n-1}$ is non-empty. The \emph{formal dimension} of $X$ is $\dim(X)=n$. 
    
    The \emph{strata} of $X$ are the non-empty connected components of $X_i\setminus X_{i-1}$. 
    The \emph{formal dimension} of a stratum $S\subset X_i\setminus X_{i-1}$ is $\dim(S)=i$. Its \emph{formal codimension} is $\codim(S)=\dim(X)-\dim(S)$.
    The \emph{regular strata} are the strata of dimension $n$ and the other strata are \emph{singular}. The set of strata of $X$ is denoted $\mathcal{S}_X$. 
    \end{defi}

    \begin{defi}
        A filtered space  $X$ is \emph{stratified} if it verifies the \emph{Frontier condition}:
        \begin{center}
        \emph{For any pair $S,T \in \mathcal{S}_X$, if $S\cap \overline{T}\neq \emptyset$ then $S\subset T$.}
        \end{center}
        A continuous map $f:X\to Y$ between stratified spaces is \emph{stratified} if for every $S\in \mathcal{S}_X$ there exists a stratum $T\in \mathcal{S}_Y$ such that
        \[f(S)\subset T \text{ and } \codim(S)\geq \codim(T).\]
        A statified map is a \emph{stratified homeomorphism} if it is an homeomorphism and its inverse map is also stratified. \\        
        We denote by $\strat$ the category whose objects are stratified spaces and whose morphisms are stratified maps.
    \end{defi}
    \begin{defi}
        Let $f,g:X\to Y$ be stratified maps between stratified spaces. They are \emph{stratified homotopic} if there exists a stratified map $H:[0,1]\times X \to Y$ such that $f=H|_{\{0\}\times X}$ and $g=H|_{\{1\}\times X}$. The map $H$ is called a \emph{stratified homotopy}.

        Two stratified spaces $X$ and $Y$ are \emph{stratified homotopy equivalent} if there exist stratified maps $f:X\to Y$ and $g:Y\to X$ such that
        \begin{itemize}
            \item $f\circ g$ and $g\circ f$ are stratified homotopic to $\id_Y$ and $\id_X$ respectively,
            \item for each stratum $S\in \mathcal{S}_X$, we have $\codim(f(S))=\codim(S)$,
            \item and for each stratum $T\in \mathcal{S}_Y$, we have $\codim(g(T))=\codim(T)$.
        \end{itemize}
        We say that the maps $f$ and $g$ are \emph{stratified homotopy equivalences}.
    \end{defi}
    Filtered spaces are enough to define intersection homology but one needs additional structure in order to get Poincaré duality. 
    \begin{defi}
        A filtered space $X$ of dimension $n$, with no codimension one strata, is an \emph{$n$-dimensional pseudomanifold} if for any $i\in\{0,1,\ldots, n\}$, $X_i\setminus X_{i-1}$ is an $i$-dimensional manifold. Furthermore, for any $i\in\{0,1,\ldots, n-1\}$ and for every $x\in X_i\setminus X_{i-1}$, there exist
        \begin{itemize}
            \item an open neighborhood $V$ of $x$ in $X$ endowed with the induced filtration,
            \item an open neighborhood $U$ of $x$ in $X_i\setminus X_{i-1}$,
            \item a \emph{link} i.e. a compact pseudomanifold $L$ of dimension $n-i-1$, whose open cone $\mathring{c}L$ is endowed with the filtration $(\mathring{c}L)_i=\mathring{c}L_{i-1},$
            \item a homeomorphism, $\phi: U \times \mathring{c}L \to V$ such that
                \begin{itemize}
                    \item $\phi(u,v)=u$ for any $u\in U$ and $v$ the apex of $\mathring{c}L$,
                    \item $\phi(U\times \mathring{c}L_j)=V\cap X_{i+j+1}$, for all $j\in\{0, \ldots, n-i-1\}.$
                \end{itemize}
        \end{itemize}
    \end{defi}    
    \begin{rem}
        A pseudomanifold (more generally, a \emph{CS set}) is a stratified space, see \cite[Theorem G]{CST18ration}.
    \end{rem}
    One can endow every filtered space with a poset of \emph{perversities}. The \emph{intersection chain complex} consists of chains that verify some conditions with respect to these perversities. We now give Goresky and MacPherson's original definition of a \emph{perversity}. 
    \begin{defi}
    Let $n\in \mathbb{N}$, a \emph{Goresky-MacPherson $n$-perversity} (GM-perversity) is a map 
    \[\overline{p}:\{0, 1, 2\ldots, n\}\to \mathbb{N}\] 
    such that $\overline{p}(0)=\overline{p}(1)=\overline{p}(2)=0$ and 
    \[\overline{p}(i)\leq \overline{p}(i+1)\leq \overline{p}(i)+1\] 
    for any $i\in \{1,\ldots, n-1\}$.
    \end{defi}
    \begin{rem}\label{rem:genperv}
        Let $X$ be a filtered space of formal dimension $n$. Note that a Goresky-MacPherson perversity induces a map
        \[\overline{p}:\mathcal{S}_X\to \mathbb{N}\]
        by setting for a singular stratum $S$, $\overline{p}(S)=\overline{p}(\codim(S))$ (and 0 on the regular strata). In the following, a \emph{perversity on $X$} will refer to a GM $n$-perversity if  $X$ is a filtered space of formal dimension $n$.\\
        In MacPherson's work \cite{MacP91}, a perversity is defined as a function
        \[\overline{p}:\mathcal{S}_X\to \mathbb{Z}\]
        that takes the value $0$ on the regular strata. This general notion of perversity has been used in papers dealing with non-GM intersection homology. (\cite{Fri20}, \cite{CST18BUP-Alpine} for example).
    \end{rem}
    
    \textbf{Poset of GM-perversities:} In the following, we fix $n\in \mathbb{N}$ and we consider the set of GM $n$-perversities, $\pGM$. It is a poset. The partial order is given by $\leq$: 
    \[\overline{p}\leq \overline{q} \Longleftrightarrow \overline{p}(i)\leq \overline{q}(i), ~\forall i\in \{0,\ldots, n\}.\]
    It can be seen as a category where there is at most one morphism between any pair of perversities.\\
    Notice that it has a minimal element the \emph{zero perversity} $\overline{0}$ and a maximal element, the \emph{top perversity} $\overline{t}$, given by 
    \[\overline{t}(0)=\overline{t}(1)=0 \text{ and } \overline{t}(i)=i-2 \text{ for } i\in \{2,\ldots, n\}.\]
    
    \textbf{Partial symmetric monoidal structure:} In general, the point-wise sum of two GM-perversities is not a GM-perversity but one can define a partial symmetric monoidal structure on $\pGM$. For $\overline{p}, \overline{q}\in \pGM$ such that $\overline{p}+\overline{q}\leq \overline{t}$, we denote by $\overline{p}\oplus\overline{q}$ the smallest GM-perversity which is greater or equal to $\overline{p}+\overline{q}$. Dually, when $\overline{p}\leq \overline{q}$, let $\overline{q}\ominus \overline{p}$ be the biggest GM-perversity which is lower or equal to $\overline{q}-\overline{p}$. \\
    The \emph{dual perversity} (or \emph{complementary perversity}) of $\overline{p}$ is $D\overline{p}:=\overline{t}\ominus\overline{p}$. Note that it is exactly $\overline{t}-\overline{p}$. 
    \\ When $\oplus$ and $\ominus$ are defined, they satisfy the properties expected from a closed symmetric monoidal category.
    
    Let $X$ be a filtered space of formal dimension $n$ and $\overline{p}\in \pGM$. We will follow Chataur, Saralegui and Tanré's exposition of intersection homology given in \cite{CST18ration}. They present $I^{\overline{p}}C_\ast(X;R)$, the \emph{$\overline{p}$-intersection chain complex} of $X$ and define the \emph{blown-up complex of $\overline{p}$-intersection cochains} $\widetilde{N}^\ast_{\overline{p}}(X ;R)$ (also known as the \emph{Thom-Whitney complex}). The reader may refer to \cite{CST18BUP-Alpine}, \cite{CST18ration} and \cite{Fri20} for explicit constructions, we're just going to give the main properties of these complexes.

    \begin{proposition}
        Let $\overline{p}$ be a GM $n$-perversity. There is a functor
        \[\begin{array}{rrcl}
        I^{\overline{p}} C_\ast: & \nstrat & \to & Ch(R) \\
        ~& X & \mapsto & I^{\overline{p}} C_\ast(X ; R). \\
        \end{array}\]   
        from the category of stratified spaces of formal dimension $n$ to the category of chain complexes. Furthermore, the homology of $I^{\overline{p}} C_\ast(X;R)$, called \emph{$\overline{p}$-intersection homology} and denoted $I^{\overline{p}} H_\ast(X;R)$, verifies the following properties.
        \begin{description}[style=unboxed,leftmargin=0cm]
            \item[Mayer-Vietoris] If $\{U, V\}$ is an open cover of $X$, then there is a long exact sequence
            \[\ldots \to I^{\overline{p}}H_\ast(U\cap V) \to I^{\overline{p}}H_\ast(U)\oplus I^{\overline{p}}H_\ast(V)\to I^{\overline{p}}H_\ast(X)\to I^{\overline{p}}H_{\ast-1}(U\cap V)\to \ldots\]
            \item[Cone formula] If $X$ is compact, we have the \emph{cone formula}. For any $k\in \mathbb{N}$,
            \[I^{\overline{p}}H_k(\mathring{c}X; R)=\begin{cases}
                I^{\overline{p}}H_k(X; R) & \text{if }k\leq n-\overline{p}(n+1)-1, \\
                0 & \text{if } k \geq n-\overline{p}(n+1).
            \end{cases}\]
            \item[Topological invariance] If $X$ and $Y$ are two pseudomanifold of formal dimension $n$ which are homeomorphic (not necessarily stratified homeomorphic) then for every $\overline{p}\in\pGM$ we have an isomorphism
            \[I^{\overline{p}} H_\ast(X;R) \simeq I^{\overline{p}} H_\ast(Y;R).\]
            \item[Invariance under stratified homotopy equivalence] Let $f:X\to Y$ be a stratified homotopy equivalence between $X\in \nstrat$ and $Y\in \mstrat$. Let $\overline{p}$ and $\overline{q}$ be perversities of rank $n$ and $m$ respectively such that for any $S\in \mathcal{S}_X$, we have $\overline{p}(\codim(S))=\overline{q}(\codim(T))$  when $f(S)\subset T$.
            Then, we have an isomorphism
            \[I^{\overline{p}} H_\ast(X;R) \simeq I^{\overline{q}} H_\ast(Y;R).\]
        \end{description}
    \end{proposition}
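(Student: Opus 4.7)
The plan is to verify each bullet separately, mostly by reducing to the references \cite{CST18BUP-Alpine}, \cite{CST18ration}, \cite{Fri20} while highlighting the mechanism in each case. For functoriality, I would first recall that for a stratified map $f:X\to Y$ and a singular simplex $\sigma:\Delta^k\to X$ whose $i$-skeleton meets a singular stratum $S$ with prescribed codimensional bound $\overline{p}$, the composite $f\circ \sigma$ meets strata $T\supset f(S)$ of codimension at most that of $S$. This is exactly why the definition of a stratified map requires $\codim(S)\geq \codim(T)$: it guarantees that $\overline{p}$-allowability is preserved, so $f_\ast$ sends $I^{\overline{p}}C_\ast(X;R)$ to $I^{\overline{p}}C_\ast(Y;R)$ and commutes with the boundary.

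For Mayer--Vietoris, the strategy is the usual one: given an open cover $\{U,V\}$, produce a subdivision operator on $I^{\overline{p}}C_\ast(X;R)$ whose image lies in $I^{\overline{p}}C_\ast^{\{U,V\}}(X;R)$ (the subcomplex generated by simplices contained in $U$ or $V$), and which is chain homotopic to the identity. The only subtlety compared with the classical singular case is checking that barycentric subdivision preserves $\overline{p}$-allowability; this is where one uses that allowability is a condition on the dimension of intersections with singular strata and therefore behaves well under affine subdivision. The long exact sequence then follows formally. The cone formula is obtained by an explicit computation on $\mathring cX$: below the critical degree $n-\overline{p}(n+1)$, the inclusion of a copy of $X$ at a small radius is a chain equivalence onto $I^{\overline{p}}C_\ast(\mathring cX;R)$, while above it, the cone operator $c$ on allowable chains is defined and provides a null-homotopy, the obstruction being precisely allowability at the apex stratum (codimension $n+1$).

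The main obstacle is topological invariance, as it is not an elementary chain-level statement. I would not attempt to prove it from the bar/blown-up definitions directly; instead, the argument I would present is the sheaf-theoretic one of Goresky--MacPherson \cite{GM80}: the intersection chain sheaf $\mathcal{IC}^{\overline{p}}$ is characterised, up to isomorphism in the constructible derived category $D^b_c(X)$, by axioms (normalisation on the regular part, and support/cosupport conditions depending only on $\overline{p}$ and the codimension filtration) which are preserved by any homeomorphism, regardless of whether it respects the strata. Hypercohomology of $\mathcal{IC}^{\overline{p}}$ computes $I^{\overline{p}}H_\ast(X;R)$, so invariance follows. In \cite{Fri20} one can find a corresponding sheaf-free argument based on comparing different stratifications of a given pseudomanifold via a common refinement and using the cone formula to show the result is independent of stratification; I would flag this as the alternative exposition.

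Finally, for invariance under stratified homotopy equivalence, the plan is standard. Given a stratified homotopy $H:[0,1]\times X\to Y$, the codimension-preserving condition on $f,g$ ensures that the prism operator $P:I^{\overline{p}}C_\ast(X;R)\to I^{\overline{q}}C_{\ast+1}(Y;R)$, built from the usual prism decomposition applied to $H_\ast$, lands in allowable chains: any singular stratum of $[0,1]\times X$ is of the form $[0,1]\times S$ with the same codimension as $S$, and the hypothesis $\overline{p}(\codim S)=\overline{q}(\codim T)$ with $H(S)\subset T$ makes the allowability bounds match up. Then $dP+Pd=g_\ast-f_\ast$ as usual, so $f_\ast$ and $g_\ast$ are chain homotopy inverses and induce the desired isomorphism in homology.
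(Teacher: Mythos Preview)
The paper does not prove this proposition at all: it is stated as a summary of known properties, with the reader directed to \cite{CST18BUP-Alpine}, \cite{CST18ration} and \cite{Fri20} for constructions and proofs. Your proposal therefore already goes well beyond the paper's own treatment, and the sketch you give for each item is essentially the standard argument one finds in those references.

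Two small remarks. First, the axiomatic sheaf-theoretic characterisation of $\mathcal{IC}^{\overline p}$ you invoke for topological invariance is due to Goresky--MacPherson's \emph{second} paper (Intersection homology II, 1983), not \cite{GM80}; the latter works in the PL category and does not contain the Deligne-style axioms. Second, in your functoriality paragraph the key inequality is that GM perversities are nondecreasing, so $\codim T\le\codim S$ gives $\overline p(\codim T)\le\overline p(\codim S)$; you use this implicitly but it is worth making explicit, since it is exactly what fails for general (non-GM) perversities and is the reason the statement is formulated for $\overline p\in\pGM$.
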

    \begin{defi}
        Let $X$ be a filtered space of formal dimension $n$ and $\overline{p}\in \pGM$. By taking the linear dual of the $\overline{p}$-intersection chain complex, we define the \emph{dual $\overline{p}$-intersection cochain complex}, $I_{\overline{p}}C^\ast(X;R):=\Hom_R(I^{\overline{p}}C_\ast(X;R), R)$ and its homology is the \emph{dual $\overline{p}$-intersection cohomology} $I_ {\overline{p}}H^\ast(X;R)$.
    \end{defi}
    There exists several variants to intersection (co)homology. For instance, we have non-GM intersection (co)homology theories which are presented in \cite{Fri20} and the \emph{blown-up intersection cohomology} which we now consider. Note that the blown-up cochain complex appears in a several papers  \cite{CST18BUP-Alpine}, \cite{CST18ration},  \cite{CST18DP}, \cite{CST20Deligne} and \cite{ST20} to name a few.
    \begin{proposition}\label{prop:bup_prop}
        Let $\overline{p}$ be a GM $n$-perversity. There is a functor
        \[\begin{array}{rrcl}
        \widetilde{N}^\ast_{\overline{p}}: & (\nstrat)^{op} & \to & Ch(R) \\
        ~& X & \mapsto & \widetilde{N}^\ast_{\overline{p}}(X ; R). \\
        \end{array}\]    
        Furthermore, the homology of $\widetilde{N}^\ast_{\overline{p}}(X;R)$, called \emph{blown-up $\overline{p}$-intersection cohomology} and denoted $\mathscr{H}^{\overline{p}}_\ast(X;R)$, verifies the following properties.
        \begin{description}[style=unboxed,leftmargin=0cm]
            \item[Mayer-Vietoris] Suppose that $X$ is paracompact and has an open cover $\{U, V\}$, then there is a long exact sequence
            \[\ldots \to \mathscr{H}_{\overline{p}}^\ast(X;R) \to \mathscr{H}_{\overline{p}}^\ast(U;R)\oplus \mathscr{H}_{\overline{p}}^\ast(V;R)\to \mathscr{H}_{\overline{p}}^\ast(U\cap V; R)\to \mathscr{H}_{\overline{p}}^{\ast+1}(X;R)\to \ldots \]
            \item[Cone formula] If $X$ is a compact $n$-pseudomanifold, we have,
            \[\mathscr{H}^k_{\overline{p}}(\mathring{c}X; R)=\begin{cases}
                \mathscr{H}^k_{\overline{p}}(X; R) & \text{if }k\leq\overline{p}(n+1), \\
                0 & \text{if } k> \overline{p}(n+1)
            \end{cases}\]
            where $\overline{p}$ is a perversity on $\mathring{c}X$.
            \item[Topological invariance] If $X$ and $Y$ are two stratified spaces of formal dimension $n$ which are homeomorphic (not necessarily stratified homeomorphic) then for every $\overline{p}\in\pGM$ we have an isomorphism
            \[\mathscr{H}^\ast_{\overline{p}}(X; R) \simeq \mathscr{H}^\ast_{\overline{p}}(Y; R).\]
            \item[Invariance under stratified homotopy equivalence] Let $f:X\to Y$ be a stratified homotopy equivalence between $X\in \nstrat$ and $Y\in \mstrat$. Let $\overline{p}$ and $\overline{q}$ be perversities of rank $n$ and $m$ respectively such that for any $S\in \mathcal{S}_X$, we have $\overline{p}(\codim(S))=\overline{q}(\codim(T))$  when $f(S)\subset T$.
            Then, we have an isomorphism
            \[\mathscr{H}^\ast_{\overline{p}}(X; R) \simeq \mathscr{H}^\ast_{\overline{q}}(Y; R).\]
        \end{description}
    \end{proposition}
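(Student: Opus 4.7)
My plan is to establish each of the five properties by reducing to results already worked out in the cited papers \cite{CST18BUP-Alpine}, \cite{CST18ration} and \cite{CST20Deligne}, while indicating the key geometric input at each step.

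First I would address functoriality. Recall that $\widetilde{N}^\ast_{\overline{p}}(X;R)$ is defined on the filtered simplices of $X$ by imposing $\overline{p}$-allowability conditions on blown-up multi-simplices. A stratified map $f:X\to Y$ pulls back filtered simplices to filtered simplices, and because $\codim(f(S))\leq \codim(S)$, the allowability condition is preserved: the pullback of a $\overline{p}$-allowable blown-up simplex on $Y$ is a $\overline{p}$-allowable blown-up simplex on $X$. This yields a cochain map $f^\ast:\widetilde{N}^\ast_{\overline{p}}(Y;R)\to \widetilde{N}^\ast_{\overline{p}}(X;R)$ and contravariant functoriality is then formal.

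Next, for the Mayer-Vietoris sequence, the plan is to imitate the classical argument in singular cohomology by exhibiting, for any open cover $\{U,V\}$, a short exact sequence of cochain complexes
\[0\to \widetilde{N}^\ast_{\overline{p}}(X;R)\to \widetilde{N}^\ast_{\overline{p}}(U;R)\oplus \widetilde{N}^\ast_{\overline{p}}(V;R)\to \widetilde{N}^\ast_{\overline{p}}(U\cap V;R)\to 0.\]
Surjectivity on the right is the delicate point: it requires subdividing a blown-up simplex of $U\cap V$ into pieces that extend to $U$ and to $V$. Here I would invoke the subdivision operator constructed in \cite{CST18BUP-Alpine}, where it is shown that the subdivision is chain-homotopic to the identity and preserves $\overline{p}$-allowability; combined with paracompactness, this yields exactness and the standard long exact sequence by the zig-zag lemma.

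The cone formula is then a direct computation. Over $\mathring{c}X$ one decomposes a blown-up multi-simplex along the cone direction; above the degree $\overline{p}(n+1)$ the allowability forbids cochains to cover the apex, producing the vanishing, while below that degree the complex retracts onto the base. This is carried out explicitly in \cite[Section 7]{CST18BUP-Alpine}.

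The two invariance statements are the hardest. For invariance under stratified homotopy equivalence, I would combine functoriality with the fact that the projection $[0,1]\times X\to X$ induces a quasi-isomorphism on $\widetilde{N}^\ast_{\overline{p}}$ (a stratified-cochain-level homotopy operator), which is part of the properties established in \cite{CST18ration}; the hypothesis $\overline{p}(\codim S)=\overline{q}(\codim T)$ ensures that $f$ and $g$ pull back the relevant allowability conditions correctly. Topological invariance is the deepest point and I would not attempt a direct simplicial argument; instead I would use the sheaf-theoretic comparison in \cite{CST20Deligne}, where the complex of presheaves $U\mapsto \widetilde{N}^\ast_{\overline{p}}(U;R)$ is shown to sheafify into a complex quasi-isomorphic to the Deligne sheaf $\mathcal{IC}^{\overline{p}}_\bullet$, and the latter is characterized by properties intrinsic to the underlying topological space. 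This sheafification step, together with verifying the Deligne axioms on the blown-up side, is where the real work lies and where I expect the main obstacle: one must check softness (or fineness) of the presheaf of blown-up cochains over paracompact pseudomanifolds and then identify the stalks via the cone formula.
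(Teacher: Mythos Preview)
Your proposal is correct and aligns with the paper's treatment: the paper does not give a proof of this proposition at all, presenting it instead as a summary of known properties established in \cite{CST18BUP-Alpine}, \cite{CST18ration}, and \cite{CST20Deligne}. Your sketch is in fact more detailed than what the paper provides, but it defers to exactly the same sources for each item (functoriality, Mayer--Vietoris and the cone formula from \cite{CST18BUP-Alpine}, stratified homotopy invariance from \cite{CST18ration}, and topological invariance via the Deligne-sheaf comparison in \cite{CST20Deligne}).
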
    
    
    The blown-up intersection cohomology coincides with intersection cohomology as it was defined by Goresky and MacPherson. Furthermore, it corresponds to the dual intersection cohomology when we work on a field and for GM-perversities.
    \begin{thm}[{\cite[Theorem B]{CST18ration}}]\label{thm:field_intercoho_iso}
        Suppose that $R$ is a field and let $X$ be a filtered space of formal dimension $n$. For any $\overline{p}\in\pGM$, we have a quasi-isomorphism
        \[\Inter:\widetilde{N}^\ast_{\overline{p}}(X ;R)\to I_{\overline{t}-\overline{p}}C^\ast(X ;R).\]
    \end{thm}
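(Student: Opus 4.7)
The plan is to construct the map $\Inter$ explicitly at the cochain level and then prove it is a quasi-isomorphism by the standard Mayer--Vietoris / cone formula induction on the depth of the stratification. The two functors $\widetilde{N}^\ast_{\overline{p}}(-;R)$ and $I_{\overline{t}-\overline{p}}C^\ast(-;R)$ satisfy the same local axioms (Mayer--Vietoris and a cone formula), and a direct perversity computation shows that the cone formulas are compatible: since $\overline{t}(n+1)=n-1$, one has $n-(\overline{t}-\overline{p})(n+1)-1=\overline{p}(n+1)$, so both sides vanish in degrees $>\overline{p}(n+1)$ and agree with the cohomology of the link below. This matching is the combinatorial reason to expect a quasi-isomorphism.

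First I would define $\Inter$ on a blown-up cochain $\omega\in \widetilde{N}^\ast_{\overline{p}}(X;R)$ by evaluation on a $\overline{t}-\overline{p}$-allowable singular chain $\xi$. Concretely, the blown-up complex is built from cochains on a canonical cubical subdivision of the prism $\Delta\times[0,1]^{\codim}$ associated to each singular simplex; the natural integration/projection onto the top cubical cell produces an element of $\Hom_R(C_\ast(\sigma),R)$. The essential algebraic check is that when $\omega$ has filtration degree $\overline{p}$ in the blown-up sense, the resulting cochain vanishes on chains failing the complementary allowability condition $\overline{t}-\overline{p}$. This gives a well-defined chain map $\Inter$ that is natural in stratified maps.

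Next, I would verify compatibility of $\Inter$ with the Mayer--Vietoris sequences on both sides. Given an open cover $\{U,V\}$ of $X$ and the MV sequences from Proposition \ref{prop:bup_prop} (and the dual version, obtained by dualising the MV sequence for $I^{\overline{t}-\overline{p}}C_\ast$ and using that linear dualisation preserves long exact sequences of vector spaces over a field), $\Inter$ induces a ladder of long exact sequences. By an open cover of $X$ adapted to the stratification and a colimit argument, one reduces by induction on the number of opens and on the depth of $X$ to the case of a distinguished neighborhood $V \cong U\times \mathring{c}L$, where $L$ is a compact pseudomanifold of strictly smaller depth. By homotopy invariance and the cone formula on both sides, computing $\Inter$ on $V$ reduces to computing it on $L$, for which one invokes the induction hypothesis. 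The base case is the trivial filtration: $X$ is then a manifold, blown-up cohomology reduces to ordinary singular cohomology, intersection cohomology reduces to singular homology, and $\Inter$ becomes the universal coefficient isomorphism $H^\ast(X;R)\simeq H_\ast(X;R)^\vee$, valid because $R$ is a field.

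The main obstacle I expect is Step~1: giving a clean, natural definition of $\Inter$ at the cochain level and checking that it respects the allowability conditions on the nose. Blown-up cochains live on a cubical blow-up of each simplex and incorporate a non-trivial interplay between the simplicial direction and the filtration-induced cubical directions, so the precise construction of an evaluation pairing against singular intersection chains requires careful bookkeeping (the dualisation of perversities and the parity conventions for $\overline{t}-\overline{p}$ must be threaded through the formula). Once $\Inter$ is available as a natural transformation of functors on $\nstrat$, the Mayer--Vietoris/cone-formula induction is formal, since both sides agree locally on the universal models $\mathbb{R}^i\times\mathring{c}L$ by the perversity calculation above.
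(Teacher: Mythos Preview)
The paper does not prove this theorem; it is quoted verbatim as \cite[Theorem B]{CST18ration} and used as a black box. There is therefore no ``paper's own proof'' to compare your proposal against.

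That said, the strategy you outline is essentially the one carried out in the cited reference: define the integration map $\Inter$ explicitly at the cochain level, check it is a natural transformation, and then run the standard bootstrap argument (Mayer--Vietoris, stratified homotopy invariance, and the cone formula) to reduce to the case of a manifold with trivial filtration, where $\Inter$ becomes the ordinary pairing. Your observation that the cone formulas on both sides match because $n-(\overline{t}-\overline{p})(n+1)-1=\overline{p}(n+1)$ is exactly the key combinatorial input. You are also right that the delicate part is Step~1: the construction of $\Inter$ on blown-up cochains and the verification that perverse degree $\overline{p}$ on the blown-up side translates into vanishing on non-$(\overline{t}-\overline{p})$-allowable chains requires genuine work with the cubical decomposition, and this is where most of the effort in \cite{CST18ration} (and the companion paper \cite{CST18BUP-Alpine}) actually goes.
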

    In \cite{CST18ration}, Sullivan's approach to rational homotopy theory is extended to intersection cohomology. In particular, there is a filtered version of Sullivan's polynomial forms: the \emph{blown-up of Sullivan's polynomial forms} $\widetilde{A}^{\ast}_{PL,\bullet}$. We have the following result.    
    \begin{proposition}[{\cite[Corollary 1.39]{CST18ration}}]
    Let $X$ be a stratified space of formal dimension $n$ and $\overline{p}\in \pGM$. The integration map
    \[\int\colon \widetilde{A}^{\ast}_{PL,\overline{p}}(X)\to \widetilde{N}^{\ast}_{\overline{p}}(X; \mathbb{Q})\]
    induces an isomorphism in homology.    
    \end{proposition}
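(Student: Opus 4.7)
The plan is to adapt the classical Sullivan--de Rham argument to the stratified setting, by showing that $\int$ is a natural transformation between two cohomology theories on $\nstrat$ that both satisfy Mayer--Vietoris and agree on a basis of local models, then bootstrapping the local equivalences to a global quasi-isomorphism.

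The first step is to verify that both $\widetilde{A}^{\ast}_{PL,\overline{p}}$ and $\widetilde{N}^{\ast}_{\overline{p}}$, viewed as contravariant functors on stratified spaces, satisfy Mayer--Vietoris for stratified open covers, and that $\int$ is natural with respect to restrictions and compatible with the connecting morphisms. For $\widetilde{N}^{\ast}_{\overline{p}}$ this is already recorded in Proposition~\ref{prop:bup_prop}; for $\widetilde{A}^{\ast}_{PL,\overline{p}}$ one reproduces the classical partition-of-unity argument, using that the polynomial forms restrict and glue along stratified open inclusions. Compatibility of $\int$ with the connecting maps is the usual Stokes-type calculation performed stratum-by-stratum on a blown-up simplex.

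The second step is the local computation on a distinguished neighborhood $V\cong U\times \mathring{c}L$, where $U$ is an open set in $\mathbb{R}^i$ and $L$ is a compact pseudomanifold link of strictly smaller depth. Using stratified homotopy invariance I would reduce to $X=\mathring{c}L$, and then establish a cone formula for $\widetilde{A}^{\ast}_{PL,\overline{p}}(\mathring{c}L)$ matching the one in Proposition~\ref{prop:bup_prop}: the truncation at $\overline{p}(n+1)$ has to be extracted from the perversity-dependent filtration on polynomial forms, and one must check that $\int$ sends the truncated cone of forms to the truncated cone of blown-up cochains. Combined with the induction hypothesis applied to the link $L$ (which has strictly smaller depth), this proves the local case.

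The third step is to assemble the global statement by induction on the depth of $X$, iterating Mayer--Vietoris over a locally finite cover of $X$ by distinguished neighborhoods and passing to the colimit in a manner controlled by paracompactness; a Čech-to-derived-functor / spectral sequence argument then promotes the local quasi-isomorphisms to a global one. The main obstacle here will be the cone formula for the perverse polynomial forms: whereas for $\widetilde{N}^{\ast}_{\overline{p}}$ the truncation at $\overline{p}(n+1)$ is built geometrically into the blow-up, on the polynomial side it must be read off the defining filtration and reconciled degree-wise with $\int$. Once that local compatibility is in place, the rest of the argument follows the template of the classical Sullivan--de Rham equivalence.
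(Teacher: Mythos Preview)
The paper does not give its own proof of this statement: it is quoted verbatim as \cite[Corollary 1.39]{CST18ration} and used as a black box. So there is no ``paper's proof'' to compare against here.

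That said, your outline is the standard Sullivan--de Rham template adapted to the filtered/perverse setting, and it matches the strategy actually carried out in \cite{CST18ration}: naturality of $\int$, Mayer--Vietoris on both sides, a cone formula for $\widetilde{A}^{\ast}_{PL,\overline{p}}$ matching the one for $\widetilde{N}^{\ast}_{\overline{p}}$, and induction on the depth of the stratification. The point you flag as the main obstacle --- establishing the perverse truncation in the cone formula for polynomial forms and checking its compatibility with $\int$ --- is exactly where the work lies in the cited reference. Your sketch is correct in shape; turning it into a proof requires carrying out those computations, which is what \cite{CST18ration} does.
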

    
    We now recall results regarding algebraic structures found on the blown-up intersection complex.
    \begin{proposition}[{\cite[Proposition 4.2]{CST18BUP-Alpine}}]\label{prop:cup_bup}
        Let $X$ be a filtered space of formal dimension $n$. For any $\overline{p}, \overline{q}\in \pGM$ and $i,j\in \mathbb{N}$, there exists an associative multiplication
        \[-\cup-:\widetilde{N}^i_{\overline{p}}(X ;R)\otimes \widetilde{N}^j_{\overline{q}}(X ;R)\to \widetilde{N}^{i+j}_{\overline{p}\oplus \overline{q}}(X ;R).\]
        It induces a graded commutative multiplication in homology called \emph{intersection cup product}
        \[-\cup-:\mathscr{H}^i_{\overline{p}}(X ;R)\otimes \mathscr{H}^j_{\overline{q}}(X ;R)\to \mathscr{H}^{i+j}_{\overline{p}\oplus \overline{q}}(X ;R).\]
    \end{proposition}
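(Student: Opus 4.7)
The proposition is a direct restatement of Proposition 4.2 of \cite{CST18BUP-Alpine}, so my proof plan mirrors the construction there: build the cup product at the cochain level by applying an Alexander–Whitney–type formula to each factor of a filtered simplex, verify the perverse degree bound, and promote the commutativity to cohomology.

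First, I would recall that an element of $\widetilde N^\ast_{\overline p}(X;R)$ is a functional on the free module generated by filtered singular simplices $\sigma\colon \Delta = c\Delta^{n_0}\times\cdots\times c\Delta^{n_{s-1}}\times\Delta^{n_s}\to X$, taking values in the tensor product of the polynomial de~Rham algebras on the factors, and that the $\overline p$-condition constrains how large the cone–direction degree of $\alpha(\sigma)$ is allowed to be in terms of the codimensions of the strata met by $\sigma$. Concretely, writing the blown-up value $\alpha(\sigma)=\sum_{\underline k}\alpha_{\underline k}(\sigma)$ with $\alpha_{\underline k}(\sigma)$ homogeneous of multi-degree $(k_0,\ldots,k_s)$, the perversity condition demands an inequality of the form $k_j\le\overline p(\codim S_j)$ whenever $\sigma$ hits stratum $S_j$ in the $j$-th cone factor.

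Next I would define the cup product on a filtered simplex $\sigma$ by setting $(\alpha\cup\beta)(\sigma)$ equal to the product, inside the tensor product of the factor algebras, of the restrictions of $\alpha$ and $\beta$ to the front and back faces of each factor simplex, exactly as in the ordinary Alexander–Whitney cup product applied componentwise. Associativity is then immediate from the coassociativity of the front/back face decomposition on each factor, and the Leibniz rule with respect to the total differential $d$ of the blown-up complex follows from the fact that $d$ is induced factorwise from the simplicial and de~Rham differentials, both of which satisfy Leibniz against Alexander–Whitney.

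The core computation is checking that $\alpha\cup\beta$ lies in $\widetilde N^{i+j}_{\overline p\oplus\overline q}$. Given that the multi-degrees of $\alpha(\sigma)$ and $\beta(\sigma)$ are respectively bounded by $\overline p$ and $\overline q$ on each cone factor, the multi-degree of their product is bounded by $\overline p+\overline q$ factor-by-factor; since $\overline p\oplus\overline q$ is by definition the smallest GM-perversity dominating $\overline p+\overline q$, the bound a fortiori holds for $\overline p\oplus\overline q$. This is the only place where the perverse bookkeeping really interacts with the formula, and it is the step I expect to require the most care, because one has to track how the Alexander–Whitney splitting of a filtered simplex interacts with the stratum the simplex meets in each cone factor (in particular, verifying that front and back faces still satisfy the filteredness conditions of \cite{CST18BUP-Alpine}).

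Finally, for graded commutativity in cohomology I would invoke an acyclic-models argument adapted to the perverse setting: the functor sending $\sigma$ to the free blown-up cochain complex on the standard filtered simplex is acyclic on model objects (standard simplices, cones, and their products), and both $\alpha\cup\beta$ and $(-1)^{ij}\beta\cup\alpha$ are natural cochain operations landing in $\widetilde N^{i+j}_{\overline p\oplus\overline q}=\widetilde N^{i+j}_{\overline q\oplus\overline p}$ (using symmetry of $\oplus$). This produces a natural cochain homotopy between them and yields graded commutativity of the induced intersection cup product in $\mathscr{H}^\ast_\bullet(X;R)$.
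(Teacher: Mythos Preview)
The paper does not supply its own proof of this proposition; it is simply quoted from \cite[Proposition 4.2]{CST18BUP-Alpine} and used as background. So there is no ``paper's proof'' to compare against, only the original source.

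Your outline is broadly faithful to the construction in \cite{CST18BUP-Alpine}, but your description of the objects is inaccurate in a way that would matter if you tried to fill in the details. The blown-up cochain complex $\widetilde N^\ast_{\bullet}(X;R)$ is \emph{not} built from ``polynomial de~Rham algebras on the factors''; that describes $\widetilde A^\ast_{PL,\bullet}(X)$. Rather, $\widetilde N^\ast$ is assembled from tensor products of normalized \emph{singular} cochain complexes $N^\ast(c\Delta^{j_0})\otimes\cdots\otimes N^\ast(\Delta^{j_s})$, and the cup product is the tensor product of the ordinary Alexander--Whitney cup products on each simplicial factor. Similarly, the $\overline p$-admissibility condition is not a bound on a multi-degree $k_j$ as you wrote, but a condition on the \emph{perverse degree} $\|\omega\|_S$ defined via restriction to a distinguished face of the blown-up prism (see \cite[Definition~3.2 and Proposition~3.4]{CST18BUP-Alpine}); the additivity check $\|\omega\cup\eta\|_S\le\|\omega\|_S+\|\eta\|_S$ is the actual content of the perversity bound. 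Finally, the graded commutativity in \cite{CST18BUP-Alpine} is obtained not by an abstract acyclic-models argument but by exhibiting explicit products $\cup_i$ on the blown-up complex (Steenrod-style) that furnish the required homotopy; your acyclic-models sketch is plausible in spirit but would need the representability and acyclicity hypotheses spelled out for the filtered setting, which is not automatic.
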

    \begin{proposition}[{\cite[Propostion 6.7]{CST18BUP-Alpine}}]\label{prop:cap_bup}
        Let $X$ be a filtered space of formal dimension $n$. For any $\overline{p}, \overline{q}\in \pGM$ and $i,j\in \mathbb{N}$, there is a well defined \emph{intersection cap product}
        \[-\cap-:\widetilde{N}^i_{\overline{p}}(X ;R)\otimes I^{\overline{q}}C_j(X ;R)\to I^{\overline{p}\oplus \overline{q}}C_{j-i}(X ;R).\]
        It induces a morphism in homology
        \[-\cap-:\mathscr{H}^i_{\overline{p}}(X ;R)\otimes I^{\overline{q}}H_j(X ;R)\to I^{\overline{p}\oplus \overline{q}}H_{j-i}(X ;R).\]
    \end{proposition}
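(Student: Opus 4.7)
The plan is to construct the intersection cap product by first defining it on filtered simplices and chains in direct analogy with the classical Alexander--Whitney formula, and then verifying that it respects both the perverse allowability conditions and the differentials so that it descends to homology.

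First I unpack the relevant structure. The blown-up cochain complex $\widetilde{N}^\ast_{\overline{p}}(X;R)$ is built from cochains on filtered simplices $\sigma:\Delta^{\vec n}\to X$, where $\Delta^{\vec n}=\Delta^{n_0}\ast\dots\ast\Delta^{n_d}$ reflects the filtration of $X$, with values in a tensor product of cubical cochains on the factors of the join; compatibilities across faces and a $\overline{p}$-support condition on singular strata are imposed. An intersection chain in $I^{\overline{q}}C_j(X;R)$ is a linear combination of filtered simplices whose preimage of any singular stratum of codimension $k$ has dimension at most $j-k+\overline{q}(k)$.

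I would then define $\omega\cap\sigma$ by the filtered analogue of the classical formula: decompose $\sigma$ into a ``front'' $i$-face and a ``back'' $(j-i)$-face using the join structure, pair $\omega$ with the front contribution in its blown-up form, and keep the back face as an element of $C_{j-i}(X;R)$. Extending $R$-linearly gives a candidate map. The two verifications that remain are then: (a) \emph{allowability}, namely that the preimage of a codimension $k$ stratum under the back face of $\sigma$ is bounded by $(j-i)-k+(\overline{p}+\overline{q})(k)$, which by definition of $\overline{p}\oplus\overline{q}$ as the smallest GM-perversity majorising $\overline{p}+\overline{q}$ places $\omega\cap\sigma$ in $I^{\overline{p}\oplus\overline{q}}C_{j-i}(X;R)$; and (b) the Leibniz rule $\partial(\omega\cap\sigma)=d\omega\cap\sigma+(-1)^{i}\omega\cap\partial\sigma$, which passes the construction to homology.

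The main obstacle is making the pairing of $\omega$ with the front face rigorous in the blown-up setting, since $\omega$ is not an ordinary simplicial cochain but an element of a tensor product indexed by the factors of the join $\Delta^{\vec n}$, carrying its own cubical differential. One must untangle the face-wise combinatorics to ensure that an Alexander--Whitney-type formula produces a genuinely well-defined filtered chain, and that the front contribution restricts correctly across the faces appearing in $\partial\sigma$. Once this bookkeeping is handled, both the allowability statement and the Leibniz identity follow from the definition of $\oplus$ and the compatibility of the blown-up differential with the join decomposition.
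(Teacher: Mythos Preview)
The paper does not prove this statement at all: it is imported verbatim from \cite[Proposition~6.7]{CST18BUP-Alpine}, so there is no ``paper's own proof'' to compare against. Your outline is broadly in the spirit of the cited construction---an Alexander--Whitney-type front/back decomposition adapted to filtered simplices, followed by allowability and Leibniz checks---so as a sketch of what happens in \cite{CST18BUP-Alpine} it is reasonable.

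That said, your allowability paragraph glosses over the one genuinely nontrivial point. You write that the back face of $\sigma$ satisfies the bound $(j-i)-k+(\overline{p}+\overline{q})(k)$, but you do not explain where the $\overline{p}$ contribution comes from. The back $(j-i)$-face of a $\overline{q}$-allowable simplex is only $\overline{q}$-allowable on its own; the extra $\overline{p}$ slack enters because the blown-up cochain $\omega$ has perverse degree $\leq\overline{p}$, which forces the pairing with the front face to vanish on those filtered faces that would violate the combined bound. In other words, the allowability of $\omega\cap\sigma$ is not a property of the back face alone but of the \emph{sum} after the front-face evaluation kills the offending terms. Making this precise is exactly the ``bookkeeping'' you defer, and it is the substance of the cited proof rather than a routine verification.
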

    The following result is a consequence of \cite[Theorem 3.1]{ST20}. 
    \begin{thm}[\textbf{- Poincaré duality}]\label{thm:Poincaré}
    Let $X$ be an $n$-dimensional, second countable, compact and oriented pseudomanifold. Then, for every $\overline{p}\in \pGM$ there exists a quasi-isomorphism
    \[\widetilde N^\ast_{\overline{p}}(X;R) \xrightarrow[\simeq]{DP_X} I^{\overline{p}}C_{n-\ast}(X;R)\]
    where $DP_X$ is the cap product with a fundamental cycle $\zeta\in I^{\overline{0}}C_n(X;R)$.
    \end{thm}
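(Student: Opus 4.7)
The plan is to construct $DP_X$ explicitly from the intersection cap product of Proposition \ref{prop:cap_bup}, verify that it is a chain map, and then reduce the quasi-isomorphism statement to the local computation of \cite[Theorem 3.1]{ST20} by a standard Mayer--Vietoris argument.

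First I would observe that $\overline{p}\oplus\overline{0}=\overline{p}$, since $\overline{p}+\overline{0}=\overline{p}$ is already a GM-perversity. Specializing Proposition \ref{prop:cap_bup} to $\overline{q}=\overline{0}$ and $j=n$ thus yields a pairing
\[-\cap-\colon \widetilde{N}^i_{\overline{p}}(X;R)\otimes I^{\overline{0}}C_n(X;R)\longrightarrow I^{\overline{p}}C_{n-i}(X;R).\]
Because $X$ is compact, oriented and second countable, a fundamental cycle $\zeta\in I^{\overline{0}}C_n(X;R)$ exists. Defining $DP_X(\alpha):=\alpha\cap\zeta$ then produces a map of the required bidegree. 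The Leibniz rule for the cap product, combined with $\partial\zeta=0$, gives $d(\alpha\cap\zeta)=(d\alpha)\cap\zeta$, so $DP_X$ is a morphism of chain complexes.

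The next step is to promote $DP_X$ to a quasi-isomorphism. The Mayer--Vietoris sequences of Proposition \ref{prop:bup_prop} (and of the corresponding statement for $I^{\overline{p}}C_\ast$) are compatible with capping against the restrictions of $\zeta$, thanks to the naturality of the cap product with respect to open inclusions. Applying the five lemma reduces the global statement to the case of distinguished neighborhoods of the form $U\times \mathring{c}L$. A product-type argument, together with stratified homotopy invariance, further reduces the problem to the cone $\mathring{c}L$ over a compact link $L$, at which point the cone formulas for $\mathscr{H}^\ast_{\overline{p}}$ and for $I^{\overline{p}}H_\ast$ recalled earlier can be compared. This precise local comparison is the content of \cite[Theorem 3.1]{ST20}, from which the global quasi-isomorphism follows.

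The main obstacle lies in the local cone-level comparison: $\mathring{c}L$ is not compact, so one cannot directly invoke the compact Poincaré duality, and the cone formulas for blown-up cohomology and for intersection homology are not naively dual to each other. To obtain a matching, one must work with the appropriate non-compact variants (typically compactly supported blown-up cochains paired with finite intersection chains) and verify that the capping map induces an isomorphism in the relevant degrees. Since \cite{ST20} already carries out this delicate step, my remaining task reduces to checking that the abstract quasi-isomorphism obtained there is genuinely realized by the map $DP_X=-\cap\zeta$ described above, which follows from the naturality and uniqueness of the cap product constructed in Proposition \ref{prop:cap_bup}.
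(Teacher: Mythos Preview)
Your proposal is essentially correct and aligned with the paper: the paper does not give its own proof of this theorem but simply records it as a consequence of \cite[Theorem~3.1]{ST20}, which is exactly the reference you invoke for the decisive local step. Your additional sketch (constructing $DP_X$ via the cap product of Proposition~\ref{prop:cap_bup}, checking it is a chain map, and running a Mayer--Vietoris/five-lemma reduction to cones) is a reasonable expansion of why that citation suffices, and is compatible with the way the map $DP_X$ is used later in the paper (e.g.\ in Proposition~\ref{prop:dual-qiso-bup}).
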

    \subsection{Generalities on perverse objects}
    In \cite{Hov09}, Mark Hovey defines perverse modules and chain complexes for Goresky-MacPherson perversities. We extend this approach and consider perverse objects on any abelian category.
    \begin{defi}
    Let $n\in\mathbb{N}$ and  $\mathcal{C}$ be an abelian category. A \emph{perverse object} in $\mathcal{C}$ of \emph{rank} $n$ is a functor \[M:\pGM\to \mathcal{C}.\] The perverse objects in $\mathcal{C}$ of rank $n$ form a category where morphisms are natural transformations. We will denote it $\mathcal{C}^{\pGM}$. In what follows, the rank of an object will be omitted. 
    \end{defi}
    Alternatively, a perverse object is a family of objects $\{M_{\overline{p}}\}_{\overline{p}\in \pGM}$ in $\mathcal{C}$ such that for any pair of GM-perversities $\overline{p} \leq \overline{q}$ we have a morphism in $\mathcal{C}$
    \[\phi_{\overline{p}\leq \overline{q}}: M_{\overline{p}}\to M_{\overline{q}}\]
    which is the identity when $\overline{p}=\overline{q}$ and such that for any perversities $\overline{p}\leq \overline{q}\leq \overline{r}$, we have $\phi_{\overline{p}\leq \overline{r}}=\phi_{\overline{q}\leq \overline{r}}\circ \phi_{\overline{p}\leq \overline{q}}$. We refer to these morphisms as \emph{structure morphisms}.
    \begin{rem}
    Recall that the category of functors from a small category into an abelian category is also abelian \cite[Chapter IX - Prop 3.1]{MacL95} and that limits and colimits are taken point-wise in functor categories. 
    \end{rem}
    \begin{exemple}
    \begin{itemize}
        \item A \emph{perverse module} is a perverse object in the category of $R$-modules. The category of perverse $R$-modules is denoted $(\modu)^{\pGM}$.
        \item A \emph{perverse graded module} $M^\bullet$ is a family of $R$-modules $\{M_i^{\overline{p}}\}_{i\in\mathbb{Z},\, \overline{p}\in \pGM}$ such that for any $\overline{p} \leq \overline{q}$ and $i\in\mathbb{Z}$ we have an $R$-linear map
        \[M^{\overline{p}}_i\to M^{\overline{q}}_i\]
        which is the identity when $\overline{p}=\overline{q}$.\\
        An element $m\in M^{\overline{p}}_i$ is of degree $\Real{m}=i$ and perverse degree $\overline{p}$.
        \item A \emph{perverse chain complex} is a perverse object in the category of chain complexes $Ch(R)$. Equivalently, a perverse chain complex is a chain complex of perverse modules. The category of perverse chain complexes is denoted $(Ch(R))^{\pGM}$.\\
        The degree $i\in \mathbb{Z}$ and perverse degree $\overline{p}\in \pGM$ component of a perverse chain complex $(Z^\bullet, d^\bullet)$ is denoted $Z^{\overline{p}}_i$. Following the convention given at the beginning of this paper, for $i\in \mathbb{Z}$, we set
        \[Z^i_{\overline{p}}:=Z_{-i}^{\overline{p}}.\]
        This defines a \emph{perverse cochain complex} $Z_\bullet$ whose $\overline{p}$-component is the cochain complex $Z^\ast_{\overline{p}}$.
    \end{itemize}
    \end{exemple}
    
    We collect results from \cite{Hov09}. The original statements are given for perverse $R$-modules but one can easily adapt the proofs to perverse objects.   
    \begin{proposition}
    Let $\overline{p}\in\pGM$, there is an exact evaluation functor 
    \[\begin{array}{rrcl}
    Ev_{\overline{p}}: & \mathcal{C}^{\pGM} & \to & \mathcal{C} \\
    ~& M & \mapsto & M_{\overline{p}}. \\
    \end{array}\]   
    The functor $Ev_{\overline{p}}$ possesses a left adjoint $F_{\overline{p}}$ defined for all $N\in \mathcal{C}$ by
    \[F_{\overline{p}}(N)_{\overline{q}}=\begin{cases}
        N & \text{ if } \overline{p}\leq \overline{q}. \\
        0 & \text{ else.} \\
    \end{cases}\]
    \end{proposition}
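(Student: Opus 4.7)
The plan is to verify the two claims separately: exactness of $Ev_{\overline{p}}$ and the adjunction $F_{\overline{p}} \dashv Ev_{\overline{p}}$.

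For exactness, I would simply recall that in the functor category $\mathcal{C}^{\pGM}$, (co)kernels and more generally all (co)limits are computed pointwise (this is the same observation as in the remark preceding the statement, applied to finite limits). Consequently, $Ev_{\overline{p}}$ preserves kernels and cokernels, hence is exact.

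For the adjunction, I would first check that $F_{\overline{p}}(N)$ is actually a perverse object, by specifying its structure morphisms: given $\overline{q} \leq \overline{r}$ in $\pGM$, set $\phi_{\overline{q} \leq \overline{r}} = \id_N$ when $\overline{p} \leq \overline{q}$ (so both values are $N$), and set it to be the zero morphism otherwise (either the source or the target is $0$). Transitivity and the identity condition are immediate. Next, given $M \in \mathcal{C}^{\pGM}$, I would construct the natural bijection
\[ \Hom_{\mathcal{C}^{\pGM}}(F_{\overline{p}}(N), M) \;\xrightarrow{\;\cong\;}\; \Hom_{\mathcal{C}}(N, M_{\overline{p}}) \]
by sending a natural transformation $\eta \colon F_{\overline{p}}(N) \to M$ to its $\overline{p}$-component $\eta_{\overline{p}} \colon N \to M_{\overline{p}}$. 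The inverse takes a morphism $f \colon N \to M_{\overline{p}}$ to the natural transformation $\widetilde f$ whose $\overline{q}$-component is $\phi^M_{\overline{p} \leq \overline{q}} \circ f$ when $\overline{p} \leq \overline{q}$ and is zero otherwise. Naturality of $\widetilde f$ with respect to the structure morphisms of $F_{\overline{p}}(N)$ and of $M$ is a direct diagram chase, using the fact that whenever a source component is $N$, the target component must also be $N$ (since $\overline{p} \leq \overline{q} \leq \overline{r}$ forces $\overline{p} \leq \overline{r}$). The two assignments are mutually inverse, and naturality of the bijection in $N$ and $M$ is straightforward.

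There is no serious obstacle here; the only minor subtlety is the bookkeeping that ensures $F_{\overline{p}}$ is well defined as a functor and that the bijection above is truly forced on all components $\overline{q} \neq \overline{p}$. Conceptually, $F_{\overline{p}}$ is the left Kan extension along the inclusion of the singleton subcategory $\{\overline{p}\} \hookrightarrow \pGM$, and the formula in the statement is just the pointwise Kan extension formula simplified by the fact that the comma category is empty whenever $\overline{p} \not\leq \overline{q}$ and is the terminal category otherwise.
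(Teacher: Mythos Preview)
Your proposal is correct. The paper does not give an explicit proof of this proposition; it simply records it as one of several results ``collected from \cite{Hov09}'' whose proofs ``one can easily adapt'' to perverse objects in a general abelian category. Your direct verification---pointwise exactness of $Ev_{\overline{p}}$ and the explicit bijection establishing $F_{\overline{p}} \dashv Ev_{\overline{p}}$---is exactly the standard argument one would supply, and your closing remark identifying $F_{\overline{p}}$ as a left Kan extension along $\{\overline{p}\}\hookrightarrow\pGM$ is a nice conceptual gloss that the paper does not mention.
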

    Notice that $F_{\overline{0}}(N)$ is a constant diagram. 
    \begin{proposition}
        If $(\mathcal{C}, \otimes_\mathcal{C}, [~,~], \mathcal{I})$ is a closed symmetric monoidal category then so is the category of perverse object $\mathcal{C}^{\pGM}$.
        The monoidal structure is given by:
        \[(M \boxtimes N)_{\overline{r}}:=\colim_{\overline{p}+\overline{q}\leq \overline{r}}M_{\overline{p}}\otimes_\mathcal{C} N_{\overline{q}}\]
        with unit $F_{\overline{0}}(\mathcal{I})$. The closed structure comes from
        \[\Hom_{\mathcal{C}^{\pGM}}(M,N)_{\overline{r}}:=\lim_{\overline{r}\leq \overline{q}-\overline{p}} [M_{\overline{p}},N_{\overline{q}}].\]
    \end{proposition}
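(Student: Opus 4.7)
The plan is to recognize this as a standard Day convolution construction, adapted to the partial symmetric monoidal structure on $\pGM$ coming from pointwise addition $+$ of perversities (which may leave $\pGM$ but is always defined in the ambient $\mathbb{N}^{n+1}$, and it is this partial addition that appears in the indexing conditions of the colimit and the limit).

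I would first check that $\boxtimes$ defines a bifunctor. For $\overline{r}\leq \overline{r}'$, the inclusion of indexing posets $\{\overline{p}+\overline{q}\leq \overline{r}\}\subseteq \{\overline{p}+\overline{q}\leq \overline{r}'\}$ supplies the structure morphism $(M\boxtimes N)_{\overline{r}}\to (M\boxtimes N)_{\overline{r}'}$, and functoriality in $M$ and $N$ is obtained by applying morphisms levelwise to the colimit diagram. The symmetry of $\boxtimes$ follows from that of $\otimes_\mathcal{C}$ combined with the evident symmetry of the indexing condition $\overline{p}+\overline{q}\leq \overline{r}$. For associativity, both $(M\boxtimes N)\boxtimes P$ and $M\boxtimes(N\boxtimes P)$ are naturally isomorphic to the triple colimit $\colim_{\overline{p}+\overline{q}+\overline{s}\leq \overline{r}} M_{\overline{p}}\otimes_\mathcal{C} N_{\overline{q}}\otimes_\mathcal{C} P_{\overline{s}}$, using commutation of colimits and that $\otimes_\mathcal{C}$ preserves colimits in each variable since $\mathcal{C}$ is closed.

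For the unit axiom, I observe that since $\overline{0}$ is the minimum of $\pGM$, the object $F_{\overline{0}}(\mathcal{I})$ is the constant diagram with value $\mathcal{I}$. The crucial point is the finality of the projection $\pi\colon \{(\overline{p},\overline{q}) : \overline{p}+\overline{q}\leq \overline{r}\}\to \pGM_{\leq \overline{r}}$ sending $(\overline{p},\overline{q})\mapsto \overline{q}$: for every $\overline{q}\leq \overline{r}$, the corresponding comma category admits $(\overline{0},\overline{q})$ as an initial object, since $\overline{0}+\overline{q} = \overline{q}\leq \overline{r}$. Finality then reduces the colimit to $(F_{\overline{0}}(\mathcal{I})\boxtimes M)_{\overline{r}}\simeq \colim_{\overline{q}\leq \overline{r}} M_{\overline{q}} \simeq M_{\overline{r}}$, the last isomorphism coming from the fact that $\overline{r}$ is terminal in $\pGM_{\leq \overline{r}}$.

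Finally, I would establish the adjunction $(-\boxtimes N)\dashv \Hom_{\mathcal{C}^{\pGM}}(N,-)$ by unfolding both sides: the universal properties of the colimit defining $\boxtimes$ and of the limit defining the internal $\Hom_{\mathcal{C}^{\pGM}}$ identify each of the hom sets $\Hom_{\mathcal{C}^{\pGM}}(M\boxtimes N,P)$ and $\Hom_{\mathcal{C}^{\pGM}}(M,\Hom_{\mathcal{C}^{\pGM}}(N,P))$ with the set of compatible families of morphisms $M_{\overline{p}}\otimes_\mathcal{C} N_{\overline{q}}\to P_{\overline{r}}$ indexed by triples with $\overline{p}+\overline{q}\leq \overline{r}$, the bijection being mediated by the internal closedness of $\mathcal{C}$. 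The main obstacle is the finality argument in the unit axiom together with the careful bookkeeping of compatibility conditions in the tensor-hom adjunction; no new idea is required, but the indexing conventions must be tracked attentively to ensure that the structure morphisms on both sides match.
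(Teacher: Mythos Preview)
Your argument is correct and more detailed than what the paper itself provides: the paper does not prove this proposition at all, but simply records it as one of the results ``collected from \cite{Hov09}'', remarking that Hovey's proofs for perverse $R$-modules ``can easily be adapted to perverse objects''. Your Day convolution outline---checking functoriality, associativity via the triple colimit using that $\otimes_\mathcal{C}$ preserves colimits in each variable, the unit axiom via the finality argument with initial object $(\overline{0},\overline{q})$ in each comma category, and the tensor--hom adjunction by unravelling both sides into compatible families $M_{\overline{p}}\otimes_\mathcal{C} N_{\overline{q}}\to P_{\overline{r}}$---is exactly the standard route and is what a reader would reconstruct from Hovey's reference.
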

    \begin{rem}
        Using adjunctions, we can also think of the closed structure as morphisms in the category of perverse objects (i.e. natural transformations)
        \[\Hom_{\mathcal{C}^{\pGM}}(M,N)_{\overline{r}}\simeq \mathcal{C}^{\pGM}(F_{\overline{r}}(\mathcal{I})\boxtimes M, N).\]
    \end{rem}
    
    \begin{rem}\label{rem:internal_tenhom_adj}
        Since $\mathcal{C}^{\pGM}$ is a symmetric monoidal category, for any perverse objects $X_\bullet, Y_\bullet$ and $Z_\bullet$ we have an isomorphism
        \[\Hom_{\mathcal{C}^{\pGM}}(X\boxtimes Y , Z)_\bullet \simeq \Hom_{\mathcal{C}^{\pGM}}(X , \Hom_{\mathcal{C}^{\pGM}}(Y,Z))_\bullet.\]
        We'll refer to this isomorphism as \emph{internal tensor-hom adjunction}.
    \end{rem}
    
    \begin{exemple}\label{ex:pmonoidal_chcplx}
    Let $\overline{r}$ be a perversity and $Z^\bullet,~Y^\bullet$ two perverse chain complexes. 
    The monoidal structure on $(Ch(R))^{\pGM}$ is given by 
    \[(Z \boxtimes Y)^{\overline{r}}:=\colim_{\overline{p}+\overline{q}\leq \overline{r}}Z^{\overline{p}}\otimes Y^{\overline{q}}\]
    where the tensor product on the right hand side denotes the tensor product of chain complexes.
    For $n\in \mathbb{Z}$, the degree $n$ and perverse degree $\overline{r}$ component of the perverse chain complex is
    \[(Z \boxtimes Y)_n^{\overline{r}}=\colim_{\overline{p}+\overline{q}\leq \overline{r}}\bigoplus_{i+j=n}Z_i^{\overline{p}}\otimes_R Y_j^{\overline{q}}.\]
    The unit is $F_{\overline{0}}(\mathbb{S}^0)^\bullet$.
    
    The internal Hom from $Z^\bullet$ to $Y^\bullet$ is the perverse chain complex whose degree $n$ and perverse degree $\overline{r}$ component is
    \[\Hom_{(Ch(R))^{\pGM}}(Z,Y)_n^{\overline{r}}=\lim_{\overline{r}\leq \overline{q}-\overline{p}} \prod_{j-i=n} \Hom_R(Z_i^{\overline{p}}, Y_j^{\overline{q}}).\]
    In what follows, we will be interested in the linear dual $DZ^\bullet$ of a perverse chain complex, we have:
    \[DZ_n^{\overline{r}}:=\Hom_{(Ch(R))^{\pGM}}(Z, F_{\overline{0}}(\mathbb{S}^0))_n^{\overline{r}}=\Hom_R(Z_{-n}^{\overline{t}-\overline{r}}, R).\]
    \end{exemple}
    \subsection{Perverse differential graded algebra}\label{subsect:pDGA}
    We are now going to present a perverse analogue of differential graded algebras (DGA). Recall that a DGA is a monoid in the monoidal category $Ch(R)$, this justifies the following definition.
    \begin{defi}
        A \emph{perverse differential graded algebra} (pDGA) is a monoid in the category of perverse chain complexes $(Ch(R))^{\pGM}$.
    \end{defi}
    An equivalent explicit description is given in the next definition.
    \begin{defi}
        A \emph{perverse differential graded $R$-algebra} $A^\bullet$ is a perverse chain complex $(A^\bullet, d^\bullet)$ 
        equipped for every $\overline{p},\overline{q}\in \pGM$ and $i,j\in\mathbb{Z}$ with an associative product $\mu: A_i^{\overline{p}} \otimes A_j^{\overline{q}} \to A_{i+j}^{\overline{p}\oplus\overline{q}}$
        compatible with the poset structure of $\pGM$ i.e. it makes the following diagram commute
        \[\begin{tikzcd}
        A^{\overline{p_1}}_i\otimes A^{\overline{q_1}}_j \arrow[r, "\mu"] \arrow[d, "\phi_{\overline{p_1}\leq\overline{p_2}}\otimes \phi_{\overline{q_1}\leq\overline{q_2}}"'] & A^{\overline{p_1}+\overline{q_1}}_{i+j} \arrow[d, "\phi_{\overline{p_1}+\overline{q_1}\leq\overline{p_2}+\overline{q_2}}"] \\
        A^{\overline{p_2}}_i\otimes A^{\overline{q_2}}_j \arrow[r, "\mu"]                                                                                                      & A^{\overline{p_2}+\overline{q_2}}_{i+j}  \end{tikzcd}\]
        where $\overline{p_1}\leq \overline{p_2}$ and $\overline{q_1}\leq \overline{q_2}$.
        We will denote $\mu(a \otimes b)$ by $ab$. The multiplication has a unit $1\in A^{\overline{0}}_0$. Furthermore, the differential $d^\bullet$ is a derivation with respect to this product i.e.
        \[d_{i+j}^{\overline{p}\oplus \overline{q}}(ab)=(d_i^{\overline{p}}a)b+(-1)^ia(d_j^{\overline{q}}b) \text{ for } a\in A_i^{\overline{p}}\text{ and }b\in A_j^{\overline{q}}.\]
        We will denote by $\pdgAlg$ the category of perverse differential graded algebras whose morphisms are natural transformations between perverse chain complexes that are compatible with the products.\\
        We say that $(A^\bullet, d^\bullet )$ is a \emph{perverse commutative differential graded $R$-algebra} (pCDGA) if for every $\overline{p},\overline{q}\in \pGM$ and $i,j\in\mathbb{Z}$, the following diagram commutes
        \[\begin{tikzcd}
        A^{\overline{p}}_i\otimes A^{\overline{q}}_j \arrow[rr, "\tau"'] \arrow[rd, "\mu"'] &                                     & A^{\overline{q}}_j\otimes A^{\overline{p}}_i \arrow[ld, "\mu"] \\
                                                                                    & A^{\overline{p}+\overline{q}}_{i+j}. &                                                               
        \end{tikzcd}\]
        where $\tau$ is the usual twisting isomorphism for chain complexes
        \[\begin{array}{rrcl}
        \tau: & A_\ast \otimes B_\ast & \to & B_\ast\otimes A_\ast \\
        ~& a\otimes b & \mapsto & (-1)^{\Real{a}\Real{b}}b\otimes a. \\
        \end{array}\]    
    \end{defi}
    \begin{rem}
        Note that having a perverse graded algebra is equivalent to asking for chain maps, $\mu: (A\boxtimes A)^{\overline{r}}\to A^{\overline{r}}$, for every $\overline{r}\in\pGM$, that are compatible with the poset structure of $\pGM$ and verify the associativity property.
    \end{rem}
    \begin{rem}
        To be precise, we have defined a perverse differential \emph{lower} graded $R$-algebra since we consider chain complexes. Similarly, we can define a perverse differential \emph{upper} graded $R$-algebra when we work on cochain complexes.
    \end{rem}
    \begin{exemple}
        Let $X$ be a filtered space. The blown-up of Sullivan's polynomial forms $\widetilde{A}^{\ast}_{PL,\overline{p}}(X)$ is a pCDGA, see \cite[Section 2.1]{CST18ration}.
    \end{exemple}
    \begin{exemple}\label{ex:tensoralg}
        Let $M^\bullet$ be a perverse graded module. The \emph{tensor algebra} of $M^\bullet$ is the perverse graded algebra (pDGA with trivial differential) \[(TM)^\bullet:= \bigoplus_{k\geq 0} (T^k M)^\bullet\]
        with $(T^0 M)^\bullet = F_{\overline{0}}(\mathbb{S}^0)^\bullet$ and $(T^k M)^\bullet=(M^{\boxtimes k})^\bullet$ for $k\geq 1$. 
        
        The degree of an element $m_1\otimes m_2 \otimes \ldots \otimes m_k$ in $(T^k M)^\bullet_i$ is $i=\sum_{i=1}^k \Real{m_i}$. For $\overline{p},\overline{q}\in \pGM$ and $i,j\in\mathbb{Z}$, the multiplication 
        \[\mu: (TM)_i^{\overline{p}} \otimes (TM)_j^{\overline{q}} \to (TM)_{i+j}^{\overline{p}\oplus\overline{q}}\] is given by the tensor product.
    \end{exemple}
    By a similar process, we adapt the notion of differential graded $A$-module to the context of perverse objects.
    \begin{defi}
        Let $(A^\bullet, d_A^\bullet)$ be a perverse DGA. A left \emph{perverse differential graded $(A^\bullet, d^\bullet_A)$-module} (pDG module) is a perverse chain complex of $R$-modules $(M^\bullet, d^\bullet_M)$ with $R$-linear maps 
        \[\begin{array}{rcl}
         A^{\overline{p}}_i\otimes M^{\overline{q}}_j    & \to & M^{\overline{p}\oplus \overline{q}}_{i+j} \\
           a \otimes m  & \mapsto & a.m
        \end{array} \]
        for every $\overline{p}, \overline{q}\in \pGM$ and $i, j\in \mathbb{Z}$ such that 
        \[d_M(a.m)=d_A(a).m+(-1)^{\Real{a}}a.d_M(m).\] 
        The category of perverse differential graded $A^\bullet$-modules is denoted $\pMod(A)$.
   
    \end{defi}
    \begin{rem}
    \leavevmode
        \begin{itemize}
        \item Analogously, one defines \emph{right perverse differential graded $(A^\bullet, d^\bullet_A)$-module}.
        \item A \emph{perverse differential graded $(A^\bullet, d^\bullet_A)$-bimodule} is a left perverse differential graded $((A^e)^\bullet:=(A\boxtimes A^{op})^\bullet, d^\bullet_A \otimes 1 +1 \otimes d^\bullet_A)$-module where $(A^{op})^\bullet$ is equipped with the opposite multiplication of $A^\bullet$, i.e.\ $\mu^{op}(a\otimes b)=(-1)^{\Real{a}.\Real{b}}\mu(b\otimes a)$. It is a perverse chain complex of $R$-modules $(M^\bullet, d^\bullet_M)$ with left and right pDG $A^\bullet$-module structures such that
        \[d_M(a.m.b)=d_A(a).m.b+(-1)^{\Real{a}}a.d_M(m).b+(-1)^{\Real{a}+\Real{m}}a.m.d_B(b)\]
        for any $a,b\in A^\bullet$ and $m\in M^\bullet.$
        We denote by $\pMod((A^e)^\bullet)$ the category of perverse differential $A^\bullet$-bimodules.
        \end{itemize}    
    \end{rem}
    The category $\pMod(A)$ has an internal Hom functor which we define below.
    \begin{defi}
    Let $A^\bullet$ be a pDGA and consider two pDG $A^\bullet$-modules $M^\bullet, P^\bullet$. The perverse chain complex $\Hom_A(M,P)^\bullet$ is the perverse subcomplex of the internal Hom, $\Hom_{(Ch(R))^{\pGM}}(M, P)^\bullet$, made of maps that commute with the action of $A^\bullet$. In other words, a map $f : M^\bullet \to P^\bullet$ of degree $\Real{f}$ of $\Hom_{(Ch(R))^{\pGM}}(M, P)^\bullet$ is in $\Hom_A(M,P)^\bullet$, if 
    \[f(a.m) = (-1)^{\Real{f}\Real{a}}a.f(m)~ \forall a \in A^\bullet,\, m\in M^\bullet.\]
    Similarly, we define $(M \boxtimes_A P)^\bullet$ the tensor product over $A^\bullet$ when $M^\bullet$ (resp. $P^\bullet$) is a right (resp. left) pDG $A^\bullet$-module. It is generated by the simple tensors $m\otimes p$ of $M\boxtimes P$ such that
    \[m.a\otimes p = m\otimes a.p ~\forall a \in A^\bullet.\]
    In other words, $(M\boxtimes_A P)^\bullet$ is the coequalizer of $f,g:(M\boxtimes A \boxtimes P)^\bullet \to (M\boxtimes P)^\bullet$ where $f(m\otimes a \otimes p)=m.a\otimes p$ and $g(m\otimes a \otimes p)=m\otimes a.p$. 
    \end{defi}
    The results given at the end of subsection \ref{subsect: interhom} concerning algebraic structures found on the blown-up intersection complex can be stated in the following terms.
    \begin{proposition}
        Let $X$ be a stratified space. The blown-up intersection complex $(\widetilde{N}_\bullet^\ast(X; R), \cup)$ is perverse differential (upper) graded algebra.\\
        The intersection chain complex $I^\bullet C_\ast(X;R)$ is a right perverse differential graded $\widetilde{N}_\bullet^\ast(X; R)$-module for the cap product. 
    \end{proposition}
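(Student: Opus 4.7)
The plan is to verify, one axiom at a time, that the data supplied by Propositions \ref{prop:cup_bup} and \ref{prop:cap_bup} precisely match the abstract definitions of a pDGA and of a right pDG module given in Subsection \ref{subsect:pDGA}. The work is essentially one of translation: the content is already contained in \cite{CST18BUP-Alpine}, and what remains is to recast that content in Hovey's formalism. Note that we are in the upper-graded (cochain) setting, so we implicitly pass through the convention $Z^i_{\overline{p}}=Z_{-i}^{\overline{p}}$ from the notations.

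First I would treat the algebra structure. Proposition \ref{prop:cup_bup} already gives, for every pair $(\overline{p},\overline{q})$ with $\overline{p}+\overline{q}\le \overline{t}$, an associative product landing in perverse degree $\overline{p}\oplus\overline{q}$. To obtain a pDGA in the sense of the excerpt I need three things: (i) compatibility of the product with the structure morphisms $\phi_{\overline{p}\le\overline{p}'}$, i.e.\ the commutativity of the square appearing in the definition of a pDGA; (ii) the existence of a unit $1\in \widetilde{N}^0_{\overline{0}}(X;R)$; and (iii) the Leibniz identity for $d$. Point (i) is automatic from the explicit construction of $\cup$ on blown-up cochains (the structure morphisms are simply inclusions of perverse degree components and the cup product formula is perversity-independent on representatives), while (ii) and (iii) are standard properties of $\widetilde{N}^\ast_\bullet$ already recorded in \cite{CST18BUP-Alpine}. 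Assembling these, the maps $\mu\colon (\widetilde{N}^\ast\boxtimes \widetilde{N}^\ast)^{\overline{r}}\to \widetilde{N}^{\ast}_{\overline{r}}$ factor canonically through the colimit defining $\boxtimes$, giving a monoid structure in $(Ch(R))^{\pGM}$.

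Next I would treat the module structure on $I^\bullet C_\ast(X;R)$. Again Proposition \ref{prop:cap_bup} supplies the bilinear pairing with the correct perverse degree behaviour $\overline{p}\oplus \overline{q}$, and I need only verify: (a) compatibility with structure morphisms in both the $\widetilde{N}^\ast_\bullet$ and $I^\bullet C_\ast$ variables, which reduces to the fact that the cap product is defined uniformly on representatives and commutes with the inclusions $\phi_{\overline{p}\le\overline{p}'}$; (b) the unit acts trivially; and (c) the mixed Leibniz rule $d(\alpha\cap\sigma)=d\alpha\cap\sigma+(-1)^{|\alpha|}\alpha\cap d\sigma$, which is part of \cite[Proposition 6.7]{CST18BUP-Alpine}. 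Converting this data into a morphism $I^\bullet C_\ast \boxtimes \widetilde{N}^\ast_\bullet \to I^\bullet C_\ast$ of perverse chain complexes via the universal property of the colimit in the definition of $\boxtimes$, one obtains the desired right pDG $\widetilde{N}^\ast_\bullet(X;R)$-module structure.

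The only genuinely substantive step is verifying compatibility with the structure morphisms, i.e.\ that $\mu$ and $\cap$ descend to the colimits defining $\boxtimes$; I expect this to be the main (modest) obstacle, but it follows immediately once one observes that the blown-up cochain complex $\widetilde{N}^\ast_{\overline{p}}(X;R)$ sits inside $\widetilde{N}^\ast_{\overline{q}}(X;R)$ for $\overline{p}\le\overline{q}$ via an inclusion of subcomplexes (and analogously for $I^{\overline{p}}C_\ast$), so the relevant diagrams commute on the nose. Everything else is a direct rewriting of the cited propositions in the language of monoids and modules in $(Ch(R))^{\pGM}$.
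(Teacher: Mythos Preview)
Your proposal is correct and matches the paper's approach: the paper does not give a detailed proof of this proposition but simply presents it as a restatement of Propositions \ref{prop:cup_bup} and \ref{prop:cap_bup} in the language of monoids and modules in $(Ch(R))^{\pGM}$, which is exactly the translation you carry out. Your axiom-by-axiom verification is more explicit than what the paper records, but the substance is the same.
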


    The integration map is compatible with the product structure.
    \begin{proposition}[{\cite[Proposition 2.33]{CST18ration}}]\label{prop:integ_polyforms}
    Let $X$ be a stratified space of formal dimension $n$. The integration map
    \[\int\colon \widetilde{A}^{\ast}_{PL,\bullet}(X)\to \widetilde{N}^{\ast}_{\bullet}(X; \mathbb{Q})\]
    is a quasi-isomorphism of perverse chain complexes. Furthermore, there exist a pDGA, $(\widetilde{A_{PL}\otimes C})^\ast_\bullet(X)$, and quasi-isomorphisms of pDGAs, $f_1$ and $f_2$, such that the following diagram commutes in homology
    \[
    \begin{tikzcd}
    {\widetilde A^\ast_{PL, \bullet}(X)} \arrow[rr, "\int"] \arrow[rd, "f_1"'] &                                                                              & \widetilde N^\ast_\bullet(X; \mathbb{Q})  \arrow[ld, "f_2"']\\
                                                                                       & (\widetilde{A_{PL}\otimes C})^\ast_\bullet(X). &                                         
    \end{tikzcd}
    \]
    \end{proposition}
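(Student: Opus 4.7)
The plan is to split the statement into two parts: (i) that the family $\{\int_{\overline p}\}_{\overline p\in\pGM}$ assembles into a quasi-isomorphism of perverse chain complexes, and (ii) that, at the DGA level, $\int$ is realised in homology by a zigzag $f_2\circ f_1^{-1}$ through an intermediate pDGA $(\widetilde{A_{PL}\otimes C})^\ast_\bullet(X)$.

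For (i), the previous corollary already furnishes a quasi-isomorphism $\int_{\overline p}\colon \widetilde A^\ast_{PL,\overline p}(X) \to \widetilde N^\ast_{\overline p}(X;\mathbb Q)$ for each fixed perversity $\overline p$. What remains is naturality in $\overline p$: the structure morphisms $\phi_{\overline p\leq \overline q}$ on both sides arise from the inclusion of $\overline p$-allowable data into $\overline q$-allowable data, while integration is defined on a filtered simplex without any reference to the ambient perversity. Hence the naturality squares commute tautologically, promoting the pointwise quasi-isomorphisms to a quasi-isomorphism of perverse chain complexes.

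For (ii), the obstruction is the classical one from rational homotopy theory: the wedge product of polynomial forms and the cup product of cochains agree only up to coherent homotopy, so $\int$ is not a pDGA map on the nose. My approach would be to perversify a Dupont--Thom--Whitney style construction by taking $(\widetilde{A_{PL}\otimes C})^\ast_\bullet(X)$ to be, perversity-by-perversity, the complex of compatible pairs consisting of one polynomial form and one blown-up cochain on each admissible filtered simplex, matching under integration and under the filtered face maps. A componentwise multiplication sends $\overline p,\overline q$ to $\overline p\oplus\overline q$, making this object into a pDGA; the forgetful projections $f_1$ and $f_2$ are pDGA maps by construction, and a perverse acyclic-models argument should show they are perversity-wise quasi-isomorphisms. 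Commutativity of the homology diagram then reduces to the classical statement on a single simplex.

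The main obstacle, as is often the case for perverse constructions, is not the individual quasi-isomorphism assertions but the global bookkeeping: one must ensure that the pair-compatibility condition is preserved by the structure morphisms $\phi_{\overline p\leq\overline q}$, that the product on pairs shifts perversities correctly along $\oplus$, and that the filtered structure of simplices is respected at every step. Once these perverse compatibilities have been set up with care, the rest follows the classical Dupont template more or less verbatim, with the cone formula of Proposition \ref{prop:bup_prop} replacing contractibility of a simplex along the singular strata.
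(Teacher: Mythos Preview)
The paper does not prove this proposition; it is quoted verbatim from \cite[Proposition 2.33]{CST18ration}, so there is no in-paper argument to compare against. That said, your sketch contains a genuine gap in part (ii) that is worth flagging.

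Your proposed intermediate object --- pairs $(\omega,c)$ with $\omega$ a polynomial form and $c$ a blown-up cochain satisfying $\int\omega=c$ --- is not closed under the componentwise product you describe. Indeed, $(\omega,c)\cdot(\omega',c')=(\omega\wedge\omega',\,c\cup c')$ lands in the subspace of compatible pairs only if $\int(\omega\wedge\omega')=(\int\omega)\cup(\int\omega')$, and the failure of this identity is precisely the obstruction you set out to circumvent. Worse, the constraint $\int\omega=c$ makes the second coordinate redundant, so your ``intermediate'' object is just $\widetilde A^\ast_{PL,\bullet}(X)$ in disguise, and the projection $f_2$ is $\int$ itself --- still not a pDGA map. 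The construction actually used in \cite{CST18ration} (and signalled by the notation $\widetilde{A_{PL}\otimes C}$) is not a fibre product but a \emph{tensor} product: on each filtered simplex one takes $A_{PL}\otimes C^\ast$ and then performs the blown-up/Thom--Whitney totalisation. The maps $f_1\colon\omega\mapsto\omega\otimes 1$ and $f_2\colon c\mapsto 1\otimes c$ are then honest pDGA morphisms for trivial reasons, and the acyclic-models argument is run to show each is a quasi-isomorphism; the homology commutativity of the triangle with $\int$ is a separate (and easier) check. Your part (i) is fine.
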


    The next result follows from Poincaré duality (Theorem \ref{thm:Poincaré}). 
    \begin{proposition}
    Let $X$ be an $n$-dimensional, second countable, compact and oriented pseudomanifold. Then, there exists a quasi-isomorphism of perverse (right) $\widetilde N^\ast_{\overline{\bullet}}(X;R)$-modules:
    \[\widetilde N^\ast_{\bullet}(X;R) \to \Hom(I_{\bullet}C^{n-\ast}(X;R), R).\]
    It is obtained as the composite of
    \[\widetilde N^\ast_{\bullet}(X;R) \xrightarrow[\simeq]{DP_X} I^{\bullet}C_{n-\ast}(X;R) \xrightarrow[\simeq]{Bid} \Hom(I_{\bullet}C^{n-\ast}(X;R), R)\]
    where
    \begin{itemize}
        \item $DP_X$ is the cap product with a fundamental cycle
        \item and $Bid$ is the injection in the bidual given in \emph{\cite[Proposition A]{CST20Deligne}}.
    \end{itemize}
    \end{proposition}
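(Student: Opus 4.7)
The statement factors as a composition of two maps that are already known to be quasi-isomorphisms: the Poincaré duality map $DP_X$ (by Theorem \ref{thm:Poincaré}) and the bidual injection $Bid$ (by \cite[Proposition A]{CST20Deligne}). So the bulk of the work is not in proving either arrow is a quasi-iso, but in verifying that both respect the right $\widetilde N^\ast_\bullet(X;R)$-module structure, and in identifying the module structure on the target $\Hom(I_\bullet C^{n-\ast}(X;R), R)$. The plan therefore is to handle each arrow in turn and then compose.

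First, I would set up the module structures. On the source, $\widetilde N^\ast_\bullet(X;R)$ is a right module over itself via the intersection cup product of Proposition \ref{prop:cup_bup}. On $I^\bullet C_{n-\ast}(X;R)$, the right $\widetilde N^\ast_\bullet(X;R)$-module structure is given by the cap product of Proposition \ref{prop:cap_bup}, namely $\xi \cdot \alpha := \alpha \cap \xi$ for $\xi \in I^{\overline{q}} C_j$ and $\alpha \in \widetilde N^i_{\overline{p}}$. Finally, the linear dual $I_\bullet C^{n-\ast}(X;R)$ inherits from the cap product a natural \emph{left} $\widetilde N^\ast_\bullet(X;R)$-module structure, and applying $\Hom(-,R)$ converts this into a \emph{right} module structure on the bidual, making the target of the statement a genuine perverse right module.

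Next, for the first arrow $DP_X(\alpha) := \alpha \cap \zeta$: Theorem \ref{thm:Poincaré} gives that it is a quasi-isomorphism of perverse chain complexes. To check it is a morphism of right modules, I would invoke the perverse analogue of the classical cup/cap compatibility $(\alpha \cup \beta) \cap \zeta = \alpha \cap (\beta \cap \zeta)$, which is available in the blown-up setting from \cite{CST18BUP-Alpine}. This formula reads exactly as $DP_X(\alpha \cup \beta) = (\alpha \cap \zeta) \cap \beta$, i.e.\ $DP_X(\alpha \cdot \beta) = DP_X(\alpha)\cdot \beta$, and the perverse compatibility $\overline{p}\oplus \overline{q}$ follows from Propositions \ref{prop:cup_bup} and \ref{prop:cap_bup}. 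For the second arrow $Bid$, the canonical evaluation into the bidual is a morphism of right modules by naturality: for any $\varphi \in I_\bullet C^{n-\ast}$, the formula $Bid(\alpha\cdot\beta)(\varphi) = \varphi(\alpha \cdot \beta) = (\beta \cdot \varphi)(\alpha) = Bid(\alpha)(\beta \cdot \varphi)$ is the defining identity making the bidual a right module; the fact that $Bid$ is a quasi-isomorphism in the perverse setting is \cite[Proposition A]{CST20Deligne}, which applies under the second-countability and compactness hypotheses. Composing yields the desired quasi-isomorphism of perverse right $\widetilde N^\ast_\bullet(X;R)$-modules.

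The one point needing some care—where I expect the only real obstacle—is the perverse-degree bookkeeping for the bidual map, since the closed monoidal structure on $(Ch(R))^{\pGM}$ involves a $\lim$ over $\overline{r}\leq \overline{q}-\overline{p}$ (as in Example \ref{ex:pmonoidal_chcplx}), and one must check that evaluation lands in the right perversity component and that the module action of $\widetilde N^\ast_\bullet$ shifts perversities consistently in source and target. Once this compatibility is unpacked using the formula $DZ^{\overline{r}}_n = \Hom_R(Z^{\overline{t}-\overline{r}}_{-n},R)$ recalled in Example \ref{ex:pmonoidal_chcplx} and the complementary-perversity relation $\overline{p}\oplus (\overline{t}\ominus(\overline{p}\oplus\overline{q})) = \overline{t}\ominus\overline{q}$, everything matches and the proof is complete.
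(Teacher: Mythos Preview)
Your proposal is correct and matches the paper's approach: the paper does not give a separate proof but simply records the proposition as an immediate consequence of Theorem~\ref{thm:Poincaré} together with \cite[Proposition~A]{CST20Deligne}, exactly the two ingredients you isolate. Your write-up in fact supplies more detail than the paper (the module-map verification via cup/cap compatibility and the perverse-degree bookkeeping), though you should be a bit more careful with the order of arguments in the formula $(\alpha\cup\beta)\cap\zeta$ versus $\beta\cap(\alpha\cap\zeta)$ so that it matches the convention of \cite{CST18BUP-Alpine}.
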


    
    Recall that, when we work on a field, we have a quasi-isomorphism between the blown-up cochain complex and the dual intersection cochain complex (Theorem \ref{thm:field_intercoho_iso}). The previous result can be stated in the following terms.
    
    \begin{proposition}\label{prop:dual-qiso-bup}
    Let $X$ be an $n$-dimensional, compact, second countable and oriented pseudomanifold. Then, there exists a quasi-isomorphism of perverse (right) $\widetilde N^\ast_{\overline{\bullet}}(X;\mathbb{F})$-modules:
    \[
    \Dual_N:\widetilde N^\ast_{\bullet}(X;\mathbb{F}) \to \mathbb{D}\widetilde N^\bullet_\ast(X;\mathbb{F})\]
    where $\mathbb{D}\widetilde N^\bullet_\ast(X;\mathbb{F}):=\Hom(\widetilde N^{n-\ast}_{\overline{t}-\bullet}(X;\mathbb{F}), \mathbb{F})$.
    \end{proposition}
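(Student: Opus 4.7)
The plan is to construct $\Dual_N$ as a composition: first reapply the quasi-isomorphism from the preceding proposition (specialized to $R=\mathbb{F}$), then dualize the comparison map $\Inter$ from Theorem \ref{thm:field_intercoho_iso}. The key enabling fact is that over a field, $\Hom(-, \mathbb{F})$ is an exact functor on chain complexes, so it preserves quasi-isomorphisms (via the universal coefficient theorem applied degreewise).

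Concretely, I would first apply Theorem \ref{thm:field_intercoho_iso} at cochain degree $n-\ast$ with the perversity $\overline{t}-\overline{p}$, giving a natural family of quasi-isomorphisms
\[\Inter \colon \widetilde N^{n-\ast}_{\overline{t}-\overline{p}}(X;\mathbb{F}) \xrightarrow{\simeq} I_{\overline{p}}C^{n-\ast}(X;\mathbb{F}),\]
which assemble into a perverse quasi-isomorphism $\widetilde N^{n-\ast}_{\overline{t}-\bullet} \to I_\bullet C^{n-\ast}$. Dualizing over $\mathbb{F}$ yields a perverse quasi-isomorphism
\[\Inter^\vee \colon \Hom(I_\bullet C^{n-\ast}(X;\mathbb{F}), \mathbb{F}) \xrightarrow{\simeq} \mathbb{D}\widetilde N^\bullet_\ast(X;\mathbb{F}).\]
Precomposing with the two-step quasi-isomorphism provided by the preceding proposition then produces the desired map
\[\widetilde N^\ast_\bullet(X;\mathbb{F}) \xrightarrow[\simeq]{DP_X} I^\bullet C_{n-\ast}(X;\mathbb{F}) \xrightarrow[\simeq]{Bid} \Hom(I_\bullet C^{n-\ast}(X;\mathbb{F}), \mathbb{F}) \xrightarrow[\simeq]{\Inter^\vee} \mathbb{D}\widetilde N^\bullet_\ast(X;\mathbb{F}).\]

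What remains is to verify that $\Dual_N$ is a morphism of right perverse $\widetilde N^\ast_\bullet(X;\mathbb{F})$-modules. The first two arrows are already module morphisms by the preceding proposition, so it suffices to show that $\Inter^\vee$ is $\widetilde N^\ast_\bullet$-linear, i.e.\ that $\Inter$ intertwines the cup-product action of $\widetilde N^\ast_\bullet$ on itself with the action on $I_\bullet C^{n-\ast}$ dual to the cap-product action on $I^\bullet C_\ast$ from Proposition \ref{prop:cap_bup}. This module compatibility is the technical heart of the argument and the main obstacle: one must trace through the explicit chain-level construction of $\Inter$ given in \cite{CST18ration}, together with the standard adjunction between cup and cap products, to check the relevant commutative diagram. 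Once established, $\Dual_N$ is a composite of quasi-isomorphisms of right perverse $\widetilde N^\ast_\bullet(X;\mathbb{F})$-modules, and the proposition follows.
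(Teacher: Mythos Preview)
Your construction is exactly the one the paper uses: it defines $\Dual_N$ as the composite $\Inter^\vee\circ Bid\circ DP_X$, invoking the preceding proposition together with Theorem~\ref{thm:field_intercoho_iso} over a field (this is confirmed by the explicit description of $\Gamma_X$ and the later commutative diagram in the corollary on $\widetilde A_{PL,\bullet}$). You are in fact slightly more careful than the paper in flagging the $\widetilde N^\ast_\bullet$-linearity of $\Inter^\vee$ as the point needing verification; the paper leaves this implicit in the cited references.
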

    Note that $\mathbb{D}\widetilde N^\bullet_\ast(X;\mathbb{F})$ is equal to the linear dual $D\widetilde N^\bullet_\ast(X;\mathbb{F})$ by a shift of $n$ in degree (Example \ref{ex:pmonoidal_chcplx}).
    The image of the unit $1\in\widetilde N^0_{\overline{0}}(X; \mathbb{F})$ by $\Dual_N$ is denoted $\Gamma_X$, it is given by 
    \begin{align*}
        \Gamma_X: \widetilde N_{\overline{t}}^n(X;\mathbb{F}) &\to \mathbb{F}   \\
          \alpha &\mapsto \Inter(1)(\alpha \cap \zeta)
    \end{align*}
    with $\zeta\in I^{\overline{0}}C_n(X;R)$ a fundamental cycle.

\section{Hochschild cohomology for perverse DGA}\label{sect:Hoch-coho}
    We would like to define the Hochschild (co)homology for perverse differential graded algebras. Standard references for Hochschild cohomology are \cite{Lod98} and \cite[Chapter X]{MacL78}. In the survey paper \cite{Abb15}, the Hochschild (co)chain complex of a differential graded algebra is given via the \emph{two-sided bar construction} and using derived functors. We're going to extend this approach to pDGAs in this section. 
    
    Throughout this section, we will consider perverse cochain complexes (objects with upper grading). 
    
    \subsection{Hochschild (co)homology via bar construction}\label{subsect:Hoch_via_bar}
    Let $A_\bullet$ be a perverse differential (upper) graded algebra. It is unital, so it is equipped with a morphism $\eta:F_{\overline{0}}(\mathbb{S}^0)_\bullet\to A_\bullet$. We fix $\overline{A}_\bullet=A_\bullet/\eta(F_{\overline{0}}(\mathbb{S}^0))_\bullet$. 
    
    Since $A_\bullet$ is a monoid in the abelian monoidal category of perverse chain complexes $(Ch(R))^{\pGM}$, we can consider its \emph{normalized two-sided bar construction} $B(A, A, A)_\bullet$. A good reference for the general case is \cite{Zha19}. We give an explicit description below. Our construction will be slightly different from the one given by Zhang as we take the suspension but both complexes are isomorphic. Recall that for a cochain complex $C^\ast$, the suspension corresponds to a shift in degree by -1: $(sC)^k=C^{k+1}$ for any $k\in\mathbb{Z}$.   

    \begin{defi}
    The (normalized) \emph{two-sided bar complex} is the perverse cochain complex given by
    \[\mathbb{B}(A)_\bullet:=(A\boxtimes T(s\overline{A}) \boxtimes A)_\bullet\]
    where $T(s\overline{A})$ is the tensor algebra of $s\overline{A}_\bullet$ (Example \ref{ex:tensoralg}).
    
    More precisely, the component of perverse degree $\overline{r}\in P_X$ and degree $q$ is given by
    \[\mathbb{B}(A)^q_{\overline{r}}=\bigoplus_{k\in \mathbb{N}} \mathbb{B}_k(A)^q_{\overline{r}}\]
    where \[\mathbb{B}_k(A)^q_{\overline{r}}=(A \boxtimes (s\overline{A})^{\boxtimes k} \boxtimes A)^q_{\overline{r}}\] 
    \[=\colim_{\overline{r}_0+\ldots +\overline{r}_{k+1}\leq\overline{r}} \bigoplus_{i_0+\ldots+i_{k+1}=q}A^{i_0}_{\overline{r}_0}\otimes (s \overline{A})^{i_1}_{\overline{r}_1}\otimes \ldots \otimes  (s \overline{A})^{i_k}_{\overline{r}_k}\otimes A^{i_{k+1}}_{\overline{r}_{k+1}}.\] 
    The element $a\otimes s(a_1)\otimes s(a_2)\otimes\ldots\otimes s(a_k)\otimes b$ from $\mathbb{B}_k(A)^q_{\overline{r}}$ is denoted $a[a_1|a_2|\ldots|a_k]b$ and is said to be of \emph{length} $k$ and of \emph{degree} 
    \[q=\Real{a}+\Real{b}+ \sum_{i=1}^{k}\Real{s(a_i)}=\Real{a}+\Real{b}+ \sum_{i=1}^{k}\Real{a_i}-k.\]
    We denote by $\mathbb{B}_k(A)_\bullet$ the perverse cochain complex
    whose degree $q$ component is $\mathbb{B}_k(A)^q_\bullet$ (and with differential $d_0$ given below).
    
    The differential on $\mathbb{B}(A)$, $D=d_0+d_1$, is defined by an internal (or vertical) differential
    \[d_0(a[a_1|a_2|\ldots|a_k]b)=d(a)[a_1|a_2|\ldots|a_k]b-\sum_{i=1}^{k}(-1)^{\eps_i}a[a_1|a_2|\ldots|d(a_i)|\ldots|a_k]b\] 
    \[+(-1)^{\eps_{k+1}}a[a_1|a_2|\ldots|a_k]d(b),\]
    and an external (or horizontal) differential
    \[d_1(a[a_1|\ldots|a_k]b)=(-1)^{\Real{a}}aa_1[a_2|\ldots|a_k]b+\sum_{i=2}^{k}(-1)^{\eps_i}a[a_1|a_2|\ldots|a_{i-1}a_i|\ldots|a_k]b\] 
    \[-(-1)^{\eps_{k+1}}a[a_1|a_2|\ldots|a_{k-1}]a_kb,\]
    where $\eps_i=\Real{a}+\sum_{j<i}\Real{s(a_j)}$.
    Notice that since we took the suspension of $A$, we have $\degr(D)=\degr(d_0+d_1)=1$. 
    \end{defi}

    \begin{defi}
        The (normalized) \emph{Hochschild chain complex} of $A_{\bullet}$ with coefficients in a pDG $A_{\bullet}$-bimodule $M_{\bullet}$ is \[HC^{\bullet}_\ast(A, M):=(M\boxtimes_{A^e}\mathbb{B}(A))^\ast_{\bullet}\simeq (M\boxtimes T(s\overline{A}))^\ast_{\bullet},\]
        equipped with a degree 1 differential $D_\ast=d_0+d_1$ which is defined by
    \[d_0(m[a_1|\ldots|a_k])=d_M(m)[a_1|a_2|\ldots|a_k]-\sum_{i=1}^{k}(-1)^{\eps_i}m[a_1|a_2|\ldots|d_A(a_i)|\ldots|a_k],\] 
    \[d_1(m[a_1|\ldots|a_k])=(-1)^{\Real{m}}ma_1[a_2|\ldots|a_k]+\sum_{i=2}^k(-1)^{\eps_i}m[a_1|a_2|\ldots|a_{i-1}a_i|\ldots|a_k]\] 
    \[-(-1)^{\eps_k\Real{s(a_k)}}a_km[a_1|a_2|\ldots|a_{k-1}],\]
    where $\eps_i=\Real{m}+\sum_{j< i}\Real{s(a_j)}.$ 
    Its homology $HH^{\bullet}(A, M)$ is the \emph{Hochschild homology} of $A_{\bullet}$ with coefficients in $M_{\bullet}$.
    \end{defi}
    
    \begin{defi}\label{def:Hochcocplx}
    The (normalized) \emph{Hochschild cochain complex} of $A_{\bullet}$ with coefficients in a pDG $A_{\bullet}$-bimodule $M_{\bullet}$ is \[HC^\ast_{\bullet}(A, M):=\Hom_{A^e}(\mathbb{B}(A), M)^\ast_{\bullet}\simeq \Hom_{(Ch(R))^{P_X}}(T(s \overline{A}),M)^\ast_{\bullet}. \]
    Explicitly, the degree $q$ and perversity $\overline{r}$ component $HC^q(A, M)_{\overline{r}}$ is given by 
    \[\Hom_{(Ch(R))^{P_X}}(T(s \overline{A}),M)^q_{\overline{r}}=\lim_{\overline{r}\leq \overline{q}-\overline{p}} \prod_{j-i=q} \Hom_R(T(s \overline{A})^i_{\overline{p}}, M^j_{\overline{q}}).\]
    A cochain $\phi\in HC_\bullet(A, M)$ will be identified with the unique element $f$ in the perverse chain complex $\Hom_{(Ch(R))^{P_X}}(T(s \overline{A}),M)_\bullet$ which verifies 
    \[\phi(a_0[a_1|\ldots|a_k]a_{k+1})=a_0f([a_1|\ldots|a_k])a_{k+1}\]
    for any $a_0[a_1|a_2|\ldots|a_k]a_{k+1}$ in $\mathbb{B}_k(A)$.
    We denote the degree of a cochain $f\in HC^\ast_\bullet(A, M)$ by $\Real{f}$. The differential is given by $D^\ast=d_0+d_1$ where
    \[d_0(f)([a_1|\ldots| a_k])=d_Mf([a_1|\ldots|a_k])+\sum_{i=1}^{k}(-1)^{\eps_i+\Real{f}}f([a_1|\ldots|d_A(a_i)|\ldots|a_k]),\] 
    \[d_1(f)([a_1|\ldots|a_k])=-(-1)^{(\Real{a_1}+1)\Real{f}}a_1f([a_2|\ldots|a_k])+(-1)^{\eps_{k}+\Real{f}}f([a_1|\ldots|a_{k-1}])a_k\] \[-\sum_{i=2}^{k}(-1)^{\eps_i+\Real{f}}f([a_1|\ldots|a_{i-1}a_i|\ldots|a_k]),\]
    with $\eps_i=\sum_{j<i}\Real{s(a_j)}.$ Its homology $HH_\bullet(A, M)$ is the \emph{Hochschild cohomology} of $A$ with coefficients in $M$.
    \end{defi}
    \textbf{Notation} We will use the notations $HC_\ast^\bullet(A)$ and $HC^\ast_\bullet(A)$ to refer to the Hochschild chain and cochain complexes of $A_\bullet$ i.e. when $M_\bullet=A_\bullet$. 

    \textbf{Action on Hochschild cohomology}
    We end this subsection by describing the left action of $HH^\ast_\bullet(A)$ on $HH^\ast_\bullet(A,M)$ where $A_\bullet$ is a pDGA and $M_{\bullet}$ is a pDG $A_{\bullet}$-bimodule. First, notice that for any $\overline{p}, \overline{q}\in \pGM$, we have a map of cochain complexes of degree 0 and perverse degree $\overline{0}$
    \[-\boxtimes_A -:HC^\ast_{\overline{p}}(A)\otimes HC^\ast_{\overline{q}}(A,M)\to HC^\ast_{\overline{p}+\overline{q}}(A,A\boxtimes_A M).\]
    For $f\in HC^i_{\overline{p}}(A)$ and $g\in HC^j_{\overline{p}}(A,M)$, let $f\boxtimes_A g$ be the perverse cochain in $HC^{i+j}_{\overline{p}+\overline{q}}(A,A\boxtimes_A M)$ given by
    \[f\boxtimes_A g([a_1|\ldots |a_l])=\sum_{k=0}^l (-1)^{\Real{g}(\Real{a_1}+\ldots + \Real{a_k}-k)}f([a_1|\ldots|a_k])\boxtimes_A g([a_{k+1}|\ldots|a_l]).\]
    This map is natural with respect to $M$. Furthermore, we have an isomorphism of perverse $A_\bullet$-bimodules which is given by the action of $A_\bullet$ on $M_\bullet$:
    \begin{align*}
        (A\boxtimes_A M)_\bullet &\simeq M_\bullet \\
        a\boxtimes_A m &\mapsto a.m
    \end{align*}
    The map induced in homology by the composite of both maps gives the sought action.

    \subsection{Gerstenhaber algebra structure}
    In \cite{Ger63}, Gerstenhaber shows that the classical Hochschild cohomology of an algebra is equipped with a Gerstenhaber algebra structure. This result can be extended to the case of pDGA and we get the following result. 
    
    \begin{letterthm}\label{thm:Hoch_is_Gerst}
    The Hochschild cohomology $HH^\ast_\bullet(A)$ of a pDGA $A_\bullet$ is a \emph{perverse Gerstenhaber algebra}. More precisely, it is equipped with two products
    \begin{itemize}
    \item the \emph{cup product}: $-\cup -: HH^m_\bullet(A) \otimes HH^p_\bullet(A) \to HH^{m+p}_\bullet(A)$
    \item the \emph{Gerstenhaber bracket}: $[-,-]: HH^m_\bullet(A) \otimes HH^p_\bullet(A) \to HH^{m+p-1}_\bullet(A)$
    \end{itemize}    
    of perverse degree $\overline{0}$ such that 
    \begin{enumerate}[label=\alph*)]
        \item $\cup$ is an associative and graded commutative product,
        \item the suspended Hochschild cohomology $(sHH^\ast_\bullet(A), [-,-])$ is a Lie algebra. For $f,g,h\in HH^\ast_\bullet(A)$, the following identities hold
        \begin{itemize}
        \item graded skew-commutativity: $[f,g]=-(-1)^{(\Real{f}-1)(\Real{g}-1)}[g,f]$,
        \item Jacobi identity: $[[f,g],h]=[f,[g,h]]-(-1)^{(\Real{f}-1)(\Real{g}-1)}[g,[f,h]]$,
         \end{itemize}   
    \item and both operations satisfy the Leibniz rule: 
    \[[f, g\cup h]=[f,g]\cup h + (-1)^{(\Real{f}-1)\Real{g}}g\cup [f,h].\]
    In other words, for any cocycle $f$, the operation $[f,-]: HH^\ast_\bullet(A) \to HH^{\ast+\Real{f}-1}_\bullet(A)$ is a derivation with respect to $\cup$.
    \end{enumerate}
    \end{letterthm}

    This theorem is a consequence of a series of results which we state in this subsection. Let's now give explicit descriptions of these products. 
   
    \begin{defi}\label{def:cup_prod}
    The \emph{cup product} is an associative product given for $f,g\in HC^\ast_\bullet(A)$ by 
    \[f\cup g[a_1|\ldots|a_k]:=\sum_{i=1}^{k-1}(-1)^{\Real{g}\overline{\eps}_i}f[a_1| \ldots | a_{i}]g[a_{i+1} | \ldots |  a_{k}]\]    
    where $\overline{\eps_i}=\sum_{j \leq i} \Real{s(a_j)}=\sum_{j \leq i} (\Real{a_j}-1)$.
    \end{defi}   

    \begin{defi}
    The \emph{Gerstenhaber bracket} on $HC^\ast(A)_\bullet$ is given for two cochains $f$ and $g$ by 
    \[[f,g]:=f\circ g - (-1)^{(\Real{f}-1)(\Real{g}-1)}g\circ f\]
    with
    \[f\circ g([a_1|\,\ldots\, | a_k]):=\sum_{0 \leq i < j \leq k} (-1)^{(\Real{g}-1)\overline{\eps_i}}f[a_1|\,\ldots\,| a_i| g[a_{i+1}|\,\ldots\,| a_j]| a_{j+1}|\,\ldots\,| a_k]\]
    where $\overline{\eps_i}=\sum_{j \leq i} \Real{s(a_j)}=\sum_{j \leq i} (\Real{a_j}-1)$ as before. It is clear that the bracket is graded skew-commutative.    
    \end{defi}
      
    The cup product and the Gerstenhaber bracket are particular cases of the \emph{brace operator} which is introduced in \cite{Get93}. We give a description of it below. 
    In what follows, a Hochschild chain $[a_1|\ldots|a_m]$ will be denoted $[a_{1,m}]$.

    \begin{defi}
    To each family $f_0,\ldots, f_k \in HC^\ast_\bullet(A)$ of Hochschild cochains, the \emph{brace operator} associates a new Hochschild cochain $f_0\{f_1,\ldots, f_k\}$ which is defined by
    \[\begin{aligned}
    f_0\{f_1,\ldots, f_k\}[a_{1,m}]:= &  \sum_{0\leq i_1 < j_1 \leq \ldots \leq i_k < j_k \leq m} (-1)^{\eps_{i_1}(\Real{f_1}-1)+\ldots+\eps_{i_k}(\Real{f_k}-1)}\\
      ~   & f_0[a_{1,i_1}|f_1[a_{i_1+1,j_1}]|a_{j_1+1}|\ldots|a_{i_k}|f_k[a_{i_k+1,j_k}]|a_{j_k+1,m}]
    \end{aligned} \]
    with $\eps_i=\sum_{l=1}^i \Real{s(a_l)}$.
    \end{defi}
    \textbf{Interpretation of products and differential using the brace operator.}
    We remark that $f\circ g=f\{g\}$ and $f\cup g = (-1)^{\Real{f}}m\{f,g\}$ where the map
    \[m:T(s\overline{A})_\bullet\to A_\bullet \text{ is defined by }
    m([a_1| \ldots| a_k])=\begin{cases}
        0 & \text{if } k \neq 2 \\
        (-1)^{\Real{a_1}}a_1.a_2 & \text{if } k=2. 
    \end{cases}\]
    Although $m$ is not a Hochschild cochain, the cochain $m\{f,g\}$ is well defined. Furthermore, notice that
    \[\Real{m[a_1|a_2]}=\Real{a_1}+\Real{a_2}=\Real{s(a_1)}+\Real{s(a_2)}+2.\]
    This is why we fix $\Real{m}=2$. \\
    We can also interpret the differential on $HH^\ast_\bullet(A)$ using the bracket. Indeed, recall that the differential is given by $D^\ast=d_0+d_1$ where
    \[d_0(f)([a_1|\ldots| a_k])=d_Af([a_1|\ldots|a_k])+\sum_{i=1}^{k}(-1)^{\eps_i+\Real{f}}f([a_1|\ldots|d_A(a_i)|\ldots|a_k]),\] 
    \[d_1(f)([a_1|\ldots|a_k])=-(-1)^{(\Real{a_1}+1)\Real{f}}a_1f([a_2|\ldots|a_k])+(-1)^{\eps_{k}+\Real{f}}f([a_1|\ldots|a_{k-1}])a_k\] \[-\sum_{i=2}^{k}(-1)^{\eps_i+\Real{f}}f([a_1|\ldots|a_{i-1}a_i|\ldots|a_k]),\]
    with $\eps_i=\sum_{j<i}\Real{s(a_j)}.$ One can see that $D^\ast f=[d_A,f]+[m,f]$ 
    \[\text{where } d_A:T(s\overline{A})_\bullet\to A_\bullet \text{ is given by }
    d_A([a_1| \ldots| a_k])=\begin{cases}
        0 & \text{if } k \neq 1 \\
        d_A(a_1) & \text{if } k=1. 
    \end{cases}\]  
    We have $\Real{d_A[a_1]}=\Real{a_1}+1=\Real{s(a_1)}+2.$ So we fix $\Real{d_A}=2$, even tough $d_A$ is not a cochain. \\    
    The fact that $HH^\ast_\bullet(A)$ is a perverse Gerstenhaber algebra is a consequence of identities verified by the brace operator which are given in \cite[Section 5]{GJ94} and \cite{GV95}.
    \begin{proposition}\label{prop:highJacobi}
    The brace operator satisfies the \emph{higher pre-Jacobi identities}:
    for any family of Hochschild cochains $\phi, f_1,\ldots, f_k, g_1, \ldots, g_l \in HC^\ast_\bullet(A)$ we have
    \[\begin{aligned}
    \phi\{f_1,\ldots, f_k\}\{g_1,\ldots, g_l\}= &  \sum_{0\leq i_1 \leq j_1 \leq \ldots \leq i_k \leq j_k \leq l} (-1)^{\eta_{i_1}(\Real{f_1}-1)+\ldots+\eta_{i_k}(\Real{f_k}-1)}\\
      ~   & \phi\{g_{1,i_1},f_1\{g_{i_1+1,j_1}\},g_{j_1+1},\ldots,g_{i_k},f_k\{a_{i_k+1,j_k}\},g_{j_k+1,l}\}
    \end{aligned} \]
    where $\eta_i=\sum_{i'=1}^i \Real{s(g_{i'})}$, $f\{g_{i+1,j}\}:=f\{g_{i+1},\ldots, g_j\}$  and with the convention $f_n\{g_{i_n+1, i_n}\}=f_n$.
    \end{proposition}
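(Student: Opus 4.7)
My plan is to prove this by direct combinatorial expansion, reducing the problem to a bookkeeping argument that is essentially identical to the classical (non-perverse) statement found in \cite{GV95}. The key observation is that the brace operator on $HC^\ast_\bullet(A)$ is defined by exactly the same formula as in the classical case, with signs depending only on the ordinary (upper) degrees $\Real{a_i}$ and $\Real{f_n}$, and with perverse degrees behaving additively through the colimit structure of the $\boxtimes$-product. Therefore, once the identity is checked on the underlying graded cochains, its compatibility with the perverse stratification is automatic: if one starts with $\phi \in HC^\ast_{\overline{r}_0}(A)$ and $f_i \in HC^\ast_{\overline{r}_i}(A)$, $g_j \in HC^\ast_{\overline{s}_j}(A)$, then every term on either side lives in perverse degree $\overline{r}_0 + \overline{r}_1 + \cdots + \overline{r}_k + \overline{s}_1 + \cdots + \overline{s}_l$.

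The first step is to expand the left-hand side. Evaluating $(\phi\{f_1,\ldots,f_k\})\{g_1,\ldots,g_l\}$ on $[a_{1,m}]$ gives a sum indexed by choices $0 \leq i_1 < j_1 \leq \cdots \leq i_l < j_l \leq m$ of positions where the $g_s$ are inserted into $[a_{1,m}]$, and then, for each such insertion, a further sum coming from applying $\phi\{f_1,\ldots,f_k\}$ to the resulting length-$(m - \sum(j_s - i_s) + l)$ chain. This inner sum places each $f_n$ at some position of this intermediate chain, and that position is either wholly within one of the outer $a$-segments, wholly within the chain produced by some $g_s$ (impossible, since we only access the output letters $g_s[a_{i_s+1,j_s}]$), or more subtly straddles a contiguous block of $g$-letters. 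Classifying terms according to which contiguous range $g_{i_n+1},\ldots,g_{j_n}$ each $f_n$ encloses produces exactly the indexing on the right-hand side, with $f_n$ applied to $[\ldots|g_{i_n+1}[\cdots]|\ldots|g_{j_n}[\cdots]|\ldots]$, which by definition is $f_n\{g_{i_n+1,j_n}\}$ applied to an appropriate sub-chain of $[a_{1,m}]$. The convention $f_n\{g_{i_n+1,i_n}\}=f_n$ handles the case where $f_n$ encloses no $g$'s at all.

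The second step is the sign match. On the left, the insertion of the $g_s$ contributes $\sum_{s}\eta_{i_s}(\Real{g_s}-1)$, where $\eta_{i_s}=\sum_{i'\leq i_s}\Real{s(g_{i'})}$, while the subsequent insertion of the $f_n$ contributes $\sum_n \widetilde{\eps}_n(\Real{f_n}-1)$, where $\widetilde{\eps}_n$ is the total suspended degree \emph{in the intermediate chain} to the left of where $f_n$ lands. On the right, after regrouping, this latter quantity decomposes into a contribution from the $a$-letters strictly to the left of the block enclosed by $f_n$, plus contributions from full $g_{i'}[\cdots]$-blocks lying to its left, and the key point is that $\Real{s(g_{i'}[a_{i'+1,j'}])}=\Real{s(g_{i'})}+\sum_{p=i'+1}^{j'}\Real{s(a_p)}$. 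Expanding this equality and summing telescopes exactly into the $\eta_{i_n}(\Real{f_n}-1)$ summand of the right-hand side, while the leftover contributions from the $a$-letters inside the $g$-blocks recombine with the original $g$-insertion sign to form the other terms. This is the classical Getzler--Jones--Voronov calculation.

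The main obstacle is genuinely just the sign bookkeeping in step two: one must carefully separate the contribution of suspended degrees to the left of each insertion, and check that the cross-terms arising when an $f_n$ encloses a block of $g_s$'s reorganise correctly. Since the combinatorics and signs are formally identical to those in \cite{GJ94,GV95} and the perverse structure plays no role beyond additivity of perverse degrees, I would write the proof by setting up the indexing bijection between the two sides precisely, then quote the sign calculation from the classical references. No new phenomena appear beyond verifying that every term of the identity lives in the correct perverse degree, which is immediate from the definition of $\boxtimes$ and of the Hochschild cochain complex in Definition \ref{def:Hochcocplx}.
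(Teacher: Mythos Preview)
Your proposal is correct and matches the paper's own treatment: the paper does not give an independent proof of this proposition but simply records it as the classical identity from \cite{GJ94,GV95}, observing (just as you do) that the perverse degrees behave additively and do not enter into the signs, so the non-perverse computation carries over verbatim. Your sketch of the indexing bijection and sign telescoping is in fact more detailed than what the paper provides.
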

    
    \begin{exemple}\label{ex:highJacobi}
            \begin{itemize}
            \item For $k=1, \,l=2$, we have
            \[\begin{aligned}
            \phi\{f\}\{g,h\}=&\phi\{f,g,h\}+\phi\{f\{g\},h\}+\phi\{f\{g,h\}\}+(-1)^{(\Real{f}-1)(\Real{g}-1)}\phi\{g,f,h\}\\
            ~\               &+(-1)^{(\Real{f}-1)(\Real{g}-1)}\phi\{g,f\{h\}\}+(-1)^{(\Real{f}-1)(\Real{g}+\Real{h})}\phi\{g,h,f\}.
            \end{aligned} \]
            \item For $k=2, \,l=1$, we get
            \[\begin{aligned}
            \phi\{f,g\}\{h\}=&\phi\{f,g,h\}+\phi\{f,g\{h\}\}+(-1)^{(\Real{g}-1)(\Real{h}-1)}\phi\{f,h,g\}\\
            ~\               &+(-1)^{(\Real{g}-1)(\Real{h}-1)}\phi\{f\{h\},g\}+(-1)^{(\Real{f}+\Real{g})(\Real{h}-1)}\phi\{h,f,g\}.
            \end{aligned} \]           
        \end{itemize}    
    \end{exemple}
    
    The following results are stated in \cite{GV95} and follow from the higher pre-Jacobi identities and straightforward computations. They also hold for the Hochschild cochains of a pDGA as the perversities don't appear in our definitions of the products and of the differential. One difference we have with Gerstenhaber and Voronov is that we also have an internal differential but those additional terms cancel between themselves.
    \begin{proposition}
    The cup product and the Gerstenhaber bracket are chain maps i.e.\ for $f,g\in HC^\ast_\bullet(A)$ we have 
    \[D^\ast (f \cup g)=D^\ast(f) \cup g + (-1)^{\Real{f}}f \cup D^\ast(g)\]
    and
    \[D^\ast([f, g]) = [D^\ast(f), g] + (-1)^{\Real{f}+1}[f, D^\ast(g)].\]
    This implies that the cup product and the Gerstenhaber bracket are well defined on the Hochschild cohomology $HH^{\ast}_\bullet(A)$.
    \end{proposition}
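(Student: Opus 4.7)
The plan is to reduce both Leibniz-type identities to formal applications of the higher pre-Jacobi identities of Proposition \ref{prop:highJacobi}, combined with the encoded structural relations $m\{m\}=0$ (associativity of the product $\mu$ on $A_\bullet$) and $d_A\{d_A\}=0$ (the differential of $A_\bullet$ squaring to zero). The crucial observation, already emphasised just before the statement, is that the cup product, the composition $f\circ g=f\{g\}$ and the differential are all expressible purely through the brace operator: $f\cup g=(-1)^{\Real{f}}m\{f,g\}$ and $D^\ast f=[d_A,f]+[m,f]$, where $d_A$ and $m$ are treated as formal symbols of degree $2$. Because the brace operator distributes the perversities additively via $\oplus$ (the perversity of $f_0\{f_1,\ldots,f_k\}$ is the $\oplus$-sum of the perversities of the inputs), the perverse grading is preserved automatically by every operation involved, and the argument therefore reduces to the classical one in the sense of Gerstenhaber-Voronov.

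For the cup product identity, I would expand
\[
D^\ast(f\cup g)=(-1)^{\Real{f}}\bigl[d_A+m,\,m\{f,g\}\bigr]
\]
using the $k=1,\,l=2$ and $k=2,\,l=1$ cases of the higher pre-Jacobi identity recorded in Example \ref{ex:highJacobi}. Most of the terms produced involve ``double compositions'' with $m$ and $d_A$; they cancel either in pairs or against one another by virtue of $m\{m\}=0$ and $d_A\{d_A\}=0$. The surviving terms regroup precisely into expressions of the form $m\{[d_A+m,f],g\}$ and $m\{f,[d_A+m,g]\}$ with the correct Koszul signs, which is exactly $(D^\ast f)\cup g+(-1)^{\Real{f}}f\cup D^\ast g$.

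For the Gerstenhaber bracket identity, I would first compute $D^\ast(f\circ g)=[d_A+m,\,f\{g\}]$ via the $k=1,\,l=1$ pre-Jacobi expansion, which produces a sum of the derivation-type terms $[d_A+m,f]\{g\}$ and $f\{[d_A+m,g]\}$ plus two remainder terms in which the roles of the ``test'' cochains are permuted. Then, exploiting the definition of $[-,-]$ as an antisymmetrisation of $\circ$, I would form the analogous expansion for $D^\ast(g\circ f)$ and combine the two: by a parallel cancellation, the remainder terms annihilate each other, leaving the required $[D^\ast f,g]+(-1)^{\Real{f}+1}[f,D^\ast g]$.

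The principal obstacle is not conceptual but combinatorial: the delicate sign bookkeeping inherent to brace expressions, and the justification that the ``meta-cochains'' $m$ and $d_A$, although not themselves elements of $HC^\ast_\bullet(A)$, may be freely manipulated inside brace expressions as formal symbols of degree $2$ as soon as the result is a bona fide Hochschild cochain. Once this convention is fixed and the perverse compatibility of $\boxtimes$ with $\oplus$ is invoked at each step, the classical Gerstenhaber-Voronov computation transfers verbatim to the perverse setting, yielding both Leibniz identities and hence the well-definedness of $\cup$ and $[-,-]$ on $HH^\ast_\bullet(A)$.
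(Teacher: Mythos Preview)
Your proposal is correct and follows essentially the same route as the paper, which does not spell out the computation either but simply refers to Gerstenhaber--Voronov and observes that the argument transfers verbatim since perversities play no role in the formulas for the products and the differential, the only novelty being the internal differential $d_A$ whose contributions cancel among themselves. One small omission: alongside $m\{m\}=0$ and $d_A\{d_A\}=0$ you will also need the compatibility $[d_A,m]=0$ (encoding that $d_A$ is a derivation for $\mu$) when carrying out the cancellations, but this is implicit in your setup and would surface immediately upon expanding.
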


    The graded commutativity of the cup product is a consequence of the next result.
    \begin{proposition}
    For any cochains $f,g\in HC^\ast(A)$, we have 
    \[D^\ast(f\circ g) - D^\ast(f)\circ g - (-1)^{\Real{f}+1}f \circ D^{\ast}(g)  = (-1)^{\Real{g}-1}(g\cup f-(-1)^{\Real{f}\Real{g}}f\cup g).\]
    \end{proposition}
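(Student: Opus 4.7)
The plan is to translate the statement into the brace operator language introduced just before Proposition \ref{prop:highJacobi} and deduce it from the higher pre-Jacobi identities. Recall that $f\circ g = f\{g\}$, $f\cup g = (-1)^{\Real{f}}m\{f,g\}$, and $D^\ast\phi = [M,\phi] = M\{\phi\} - (-1)^{\Real{\phi}-1}\phi\{M\}$, where $M := d_A + m$ is assigned formal degree $2$. Under these identifications, each term on the left-hand side expands as a bracket of brace compositions; for example
\[D^\ast(f\circ g) = M\{f\{g\}\} - (-1)^{\Real{f}+\Real{g}}f\{g\}\{M\},\]
and similarly for $D^\ast(f)\{g\}$ and $f\{D^\ast g\}$.

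The key step is then to invoke the $k=l=1$ case of Proposition \ref{prop:highJacobi},
\[\phi\{x\}\{y\} = \phi\{x,y\} + \phi\{x\{y\}\} + (-1)^{(\Real{x}-1)(\Real{y}-1)}\phi\{y,x\},\]
three times: once with $\phi = M$, $x = f$, $y = g$ to rewrite $M\{f\{g\}\} - M\{f\}\{g\}$, and twice with $\phi = f$ to rewrite $f\{g\}\{M\} - f\{g\{M\}\}$ and $f\{M\}\{g\} - f\{M\{g\}\}$. After substituting everything back, the expressions of the form $f\{x,y\}$ and $f\{x\{y\}\}$ cancel in pairs (there are four such cancellations), leaving only
\[-M\{f,g\} - (-1)^{(\Real{f}-1)(\Real{g}-1)}M\{g,f\}.\]
Since $d_A$ has length one, the double-insertion expressions $d_A\{f,g\}$ and $d_A\{g,f\}$ vanish identically, so this residue reduces to $-m\{f,g\} - (-1)^{(\Real{f}-1)(\Real{g}-1)}m\{g,f\}$, which a direct sign manipulation using $f\cup g = (-1)^{\Real{f}}m\{f,g\}$ identifies with the desired right-hand side $(-1)^{\Real{g}-1}(g\cup f - (-1)^{\Real{f}\Real{g}}f\cup g)$.

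The proof is formally the same as Gerstenhaber's classical argument \cite{GJ94, GV95} because perversities play no essential role: the cup product, bracket, and differential are all of perverse degree $\overline{0}$, and Proposition \ref{prop:highJacobi} is already stated for general pDGAs. The only genuine difficulty is sign bookkeeping in the cancellation step; to stay honest I would check a low-degree test case such as $\Real{f} = \Real{g} = 1$ (where the identity reads $D^\ast(f\circ g) - D^\ast(f)\circ g - f\circ D^\ast(g) = g\cup f + f\cup g$) and then propagate the signs through the mechanical cancellations described above.
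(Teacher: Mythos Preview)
Your proposal is correct and matches the paper's approach: the paper does not give a detailed proof of this proposition but simply asserts that it (together with the surrounding identities) follows from the higher pre-Jacobi identities of Proposition \ref{prop:highJacobi} and the computations in \cite{GV95}, noting that perversities play no role. Your outline---rewriting everything in brace language, applying the $k=l=1$ pre-Jacobi identity three times, cancelling the mixed $f\{\cdot,\cdot\}$ terms, and reducing the residue $-m\{f,g\}-(-1)^{(\Real{f}-1)(\Real{g}-1)}m\{g,f\}$ to the right-hand side---is exactly the intended argument.
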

    We now move on to the Gerstenhaber bracket.
    \begin{proposition}
     Let $A_\bullet$ be a pDGA. The suspended Hochschild cochain complex equipped with the Gerstenhaber bracket $(sHC^\ast_\bullet(A), [-,-])$ is a Lie algebra.   
    \end{proposition}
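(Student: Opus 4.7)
The proof separates naturally into two independent identities: graded skew-commutativity and the graded Jacobi identity. Skew-commutativity is essentially a tautology: starting from the antisymmetrized definition $[f,g] = f\{g\} - (-1)^{(\Real{f}-1)(\Real{g}-1)} g\{f\}$, one reads off directly that $[g,f] = -(-1)^{(\Real{f}-1)(\Real{g}-1)}[f,g]$, since swapping $f$ and $g$ interchanges the two summands while toggling the relative sign. So the real content of the proposition is the Jacobi identity for the bracket.

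The plan for Jacobi is the classical Gerstenhaber--Voronov strategy: show that $\circ$ is a graded right pre-Lie product and then invoke the formal fact that any pre-Lie product yields a Lie bracket upon antisymmetrization. Specializing the higher pre-Jacobi identities (Proposition \ref{prop:highJacobi}) to $k=l=1$ gives
\[
f\{g\}\{h\} = f\{g,h\} + f\{g\{h\}\} + (-1)^{(\Real{g}-1)(\Real{h}-1)}\, f\{h,g\},
\]
which upon rearrangement reads
\[
(f\circ g)\circ h - f\circ(g\circ h) \;=\; f\{g,h\} + (-1)^{(\Real{g}-1)(\Real{h}-1)}\, f\{h,g\}.
\]
The right-hand side is graded symmetric under $g\leftrightarrow h$ (swapping the two cochains multiplies it by $(-1)^{(\Real{g}-1)(\Real{h}-1)}$), so the associator of $\circ$ is symmetric in its last two arguments. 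This is exactly the (right) pre-Lie axiom for $\circ$ on $sHC^\ast_\bullet(A)$.

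From here, I would conclude by the standard algebraic lemma that the skew-symmetrization of a pre-Lie product always satisfies Jacobi. Concretely, expanding each of $[[f,g],h]$, $[f,[g,h]]$, and $(-1)^{(\Real{f}-1)(\Real{g}-1)}[g,[f,h]]$ via the definition of the bracket yields twelve $\circ$-terms which, after regrouping into four associators and applying the symmetry established above, cancel in pairs. I expect the only obstacle to be careful sign bookkeeping, and this is purely formal: no new ideas enter beyond the pre-Jacobi identity already proved. Crucially, the perverse grading plays no role in the verification, because both the bracket and the brace operator have perverse degree $\overline{0}$; every identity can be checked componentwise on fixed perverse degrees, where the formulas and signs coincide verbatim with the classical ones of \cite{GV95}, whose argument therefore transfers without modification.
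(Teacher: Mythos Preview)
Your proposal is correct and follows essentially the same route as the paper: the paper does not spell out a proof but defers to \cite{GV95}, remarking that the result follows from the higher pre-Jacobi identities and that the perverse grading plays no role. Your argument is precisely the standard Gerstenhaber--Voronov one---specialize Proposition~\ref{prop:highJacobi} to $k=l=1$ to obtain the pre-Lie identity for $\circ$, then antisymmetrize---so there is nothing to add.
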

    
    We detail the proof of the Leibniz rule.
    \begin{proposition}
    For $f,g,h \in HH^\ast_\bullet(A)$, we have the following equality:
    \[[f, g\cup h]=[f,g]\cup h + (-1)^{(\Real{f}-1)\Real{g}}g\cup [f,h].\]
    \end{proposition}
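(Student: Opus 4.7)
The plan is to verify the Leibniz rule at the level of cohomology by translating every expression into brace operations and applying the higher pre-Jacobi identity (Proposition \ref{prop:highJacobi}) together with the observation that $m$ annihilates any input whose length differs from $2$. I will repeatedly use the identifications $f\cup g=(-1)^{\Real{f}}m\{f,g\}$ and $[f,g]=f\{g\}-(-1)^{(\Real{f}-1)(\Real{g}-1)}g\{f\}$, which reduce the statement to a purely combinatorial identity in the brace algebra.

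\emph{Key auxiliary identity.} First I would apply Proposition \ref{prop:highJacobi} with $\phi=m$, $k=2$ and $l=1$ to expand $m\{g,h\}\{f\}$. Any three-slot brace $m\{u_1,u_2,u_3\}$ feeds $m$ a sequence of length at least $3$, so every such term vanishes. Only two summands survive, yielding
\[m\{g,h\}\{f\}=m\{g,h\{f\}\}+(-1)^{(\Real{h}-1)(\Real{f}-1)}m\{g\{f\},h\},\]
which, after reintroducing the cup-product signs, reads
\[(g\cup h)\circ f=g\cup(h\circ f)+(-1)^{(\Real{f}-1)\Real{h}}(g\circ f)\cup h.\]

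\emph{Reduction to a residual defect.} Substituting this identity into $[f,g\cup h]=f\circ(g\cup h)-(-1)^{(\Real{f}-1)(\Real{g}+\Real{h}-1)}(g\cup h)\circ f$ and expanding $[f,g]\cup h+(-1)^{(\Real{f}-1)\Real{g}}g\cup[f,h]$ via the definition of the bracket, the contributions of the form $(g\circ f)\cup h$ and $g\cup(h\circ f)$ cancel pairwise once the exponents are reduced mod $2$. What is left is
\[\Delta(f,g,h):=f\circ(g\cup h)-(f\circ g)\cup h-(-1)^{(\Real{f}-1)\Real{g}}g\cup(f\circ h),\]
which measures the failure of $f\circ-$ to be a derivation of $\cup$ at the cochain level.

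\emph{Coboundary identification and main obstacle.} The final step is to exhibit $\Delta(f,g,h)$ as a coboundary modulo contributions from $Df$, $Dg$ and $Dh$, by showing that it equals the differential of the $2$-brace $f\{g,h\}$ up to sign. Using the interpretation $D^\ast u=[d_A,u]+[m,u]$ from Subsection \ref{subsect:Hoch_via_bar} and applying Proposition \ref{prop:highJacobi} to $(d_A+m)\{f\{g,h\}\}$ (again exploiting the vanishing of any $m$-brace with three or more slots), one obtains an identity of the shape
\[D(f\{g,h\})=\pm\,\Delta(f,g,h)+(Df)\{g,h\}\pm f\{Dg,h\}\pm f\{g,Dh\}.\]
For cohomology classes represented by cocycles, the last three terms vanish and $\Delta(f,g,h)$ becomes a coboundary, establishing the Leibniz rule in $HH^\ast_\bullet(A)$. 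The hard part is the sign bookkeeping in this last step: the pre-Jacobi expansion produces many summands and one must carefully combine them, using the identifications of $\cup$, $\circ$ and $D$ with brace operations involving $m$ and $d_A$, to recover $\Delta$ with exactly the right sign in the perverse graded setting.
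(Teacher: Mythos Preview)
Your proposal is correct and follows essentially the same route as the paper: both expand $m\{g,h\}\{f\}$ via the higher pre-Jacobi identity (using that three-slot braces on $m$ vanish), reduce the Leibniz rule to the defect $\Delta(f,g,h)=f\circ(g\cup h)-(f\circ g)\cup h-(-1)^{(\Real{f}-1)\Real{g}}g\cup(f\circ h)$, and then identify this defect as $D(f\{g,h\})$ modulo terms involving $Df$, $Dg$, $Dh$. The paper carries out the final sign computation explicitly rather than leaving it as ``bookkeeping,'' but the argument is the same.
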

    \begin{proof}
    Let $f,g,h \in HH^\ast_\bullet(A)$. Recall that $f\cup g=(-1)^{\Real{f}}m\{f,g\}$. We compute each of the terms.
    \begin{itemize}
        \item We begin with the left hand side    
        \[[f, g\cup h]=(-1)^{\Real{g}}f\{m\{g,h\}\}-(-1)^{(\Real{g}+\Real{h}-1)(\Real{f}-1)+\Real{g}}m\{g,h\}\{f\}.\]
    By applying proposition \ref{prop:highJacobi} for $k=1$ and $l=2$, we get:
        \begin{align*}
            [f, g\cup h]&=(-1)^{\Real{g}}f\{m\{g,h\}\}-(-1)^{\Real{g}\Real{f}}m\{g\{f\},h\} \\
                        &-(-1)^{(\Real{g}+\Real{h}-1)(\Real{f}-1)+\Real{g}}m\{g,h\{f\}\}.
        \end{align*}
        \item We move on to the right hand side
        \[[f,g]\cup h= (-1)^{\Real{f}+\Real{g}-1}m\{f\{g\},h\}-(-1)^{\Real{f}\Real{g}}m\{g\{f\},h\}.\]
        \item Finally, we compute the last term
    \[(-1)^{(\Real{f}-1)\Real{g}}g\cup [f,h]=(-1)^{\Real{g}\Real{f}}m\{g,f\{h\}\}+(-1)^{\Real{h}+\Real{f}(\Real{g}+\Real{h}-1)}m\{g,h\{f\}\}.\]
    \end{itemize}
    We are left with proving that
    \[(-1)^{\Real{g}}f\{m\{g,h\}\}=(-1)^{\Real{f}+\Real{g}-1}m\{f\{g\},h\}+(-1)^{\Real{g}\Real{f}}m\{g,f\{h\}\}.\]
    The differential of a cochain $f\in HC^\ast(A)$ is given by $D^\ast f=[m,f]+[d_A,f]$. So, looking at the term $D^\ast(f\{g,h\})$ may provide some insight. \\
    By definition, we have
    \[\begin{aligned}
    D^\ast(f\{g,h\})=&m\{f\{g,h\}\}+(-1)^{\Real{f}+\Real{g}+\Real{h}} f\{g,h\}\{m\}+d_A\{f\{g,h\}\} \\
    ~&+(-1)^{\Real{f}+\Real{g}+\Real{h}}f\{g,h\}\{d_A\},
    \end{aligned}\]
    \[(D^\ast f)\{g,h\}=m\{f\}\{g,h\}+(-1)^{\Real{f}}f\{m\}\{g,h\}+d_A\{f\}\{g,h\}+(-1)^{\Real{f}}f\{d_A\}\{g,h\}\]
    \[f\{D^\ast g,h\}=f\{m\{g\},h\}+(-1)^{\Real{g}}f\{g\{m\},h\}+f\{d_A\{g\},h\}+(-1)^{\Real{g}}f\{g\{d_A\},h\}\]
    and
    \[f\{g,D^\ast h\}=f\{g,m\{h\}\}+(-1)^{\Real{h}}f\{g,h\{m\}\}+f\{g,d_A\{h\}\}+(-1)^{\Real{h}}f\{g,h\{d_A\}\}.\]    
    Using proposition \ref{prop:highJacobi}, we find 
    \[D^\ast(f\{g,h\})-(D^\ast f)\{g,h\}+(-1)^{\Real{f}}f\{D^\ast g,h\}+(-1)^{\Real{f}+\Real{g}-1}f\{g,D^\ast h\}\]
    \[=-m\{f\{g\},h\}-(-1)^{(\Real{f}-1)(\Real{g}-1)}m\{g,f\{h\}\}+(-1)^{\Real{f}-1}f\{m\{g,h\}\}\]
    This means that the term above is trivial when taking the Hochschild cohomology. This proves the result.
    \end{proof}

    This concludes the proof of Theorem \ref{thm:Hoch_is_Gerst}. In section \ref{sect:DPDA}, we will see how to endow this perverse Gerstenhaber algebra with a \emph{perverse Batalin-Vilkovisky algebra} structure (Definition \ref{def:pBV}).
        
    \subsection{Model structure on perverse chain complexes}
    We would like to interpret Hochschild (co)homology in terms of derived functors. To do so, we first present model structures on perverse chain complexes $(Ch(R))^{\pGM}$ and $\pMod(A)$ for $A_\bullet$ a pDGA. Hovey shows that the (projective) model category structure found on $Ch(R)$ can be lifted to $(Ch(R))^{\pGM}$. 
    \begin{thm}[{\cite[Theorem 3.1]{Hov09}}]\label{thm:modelcat_pchain}
        There is a model category structure on the category of perverse chain complexes $(Ch(R))^{\pGM}$ where a morphism $f:Z_\bullet\to Y_\bullet$ is weak equivalence or a fibration if and only if $f_{\overline{p}}:Z_{\overline{p}}\to Y_{\overline{p}}$ is so in $Ch(R)$ for all $\overline{p}\in \pGM$. 
    \end{thm}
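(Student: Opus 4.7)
The plan is to recognize this statement as an instance of the standard projective model structure on a diagram category, with indexing category $\pGM$ (a small poset) and target the cofibrantly generated model category $Ch(R)$. The whole proof amounts to transferring Quillen's small object argument from $Ch(R)$ to $(Ch(R))^{\pGM}$ using the adjunctions $F_{\overline p}\dashv Ev_{\overline p}$.

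First I would recall that the projective model structure on $Ch(R)$ is cofibrantly generated by
$I=\{\mathbb{S}^{n-1}\hookrightarrow \mathbb{D}^n\}_{n\in\mathbb{Z}}$ and $J=\{0\to \mathbb{D}^n\}_{n\in\mathbb{Z}}$, with fibrations the surjections and weak equivalences the quasi-isomorphisms. The candidate generating sets in the diagram category are then
\[\tilde I=\{F_{\overline p}(i)\mid \overline p\in\pGM,\ i\in I\},\qquad \tilde J=\{F_{\overline p}(j)\mid \overline p\in\pGM,\ j\in J\}.\]
The adjunction $F_{\overline p}\dashv Ev_{\overline p}$ immediately gives the key bijection of lifting problems: a morphism $f:Z_\bullet\to Y_\bullet$ has the right lifting property with respect to $\tilde I$ (resp. $\tilde J$) if and only if each $f_{\overline p}$ has the right lifting property with respect to $I$ (resp. $J$) in $Ch(R)$, i.e.\ if and only if $f$ is a pointwise trivial fibration (resp. pointwise fibration). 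Thus the classes one is forced to produce by the small object argument match those named in the statement.

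The remaining verifications are routine. The 2-out-of-3 property and closure of weak equivalences under retracts hold pointwise because they hold in $Ch(R)$. Limits and colimits in $(Ch(R))^{\pGM}$ are computed pointwise, so the category is bicomplete. The small object argument applies to $\tilde I$ and $\tilde J$ since the domains $\mathbb{S}^{n-1}$ and $0$ are small in $Ch(R)$ and $F_{\overline p}$ preserves small colimits (being a left adjoint), so their images remain small. This yields the two functorial factorizations (cofibration, trivial fibration) and (trivial cofibration, fibration) once one knows that $\tilde J$-cell complexes are weak equivalences.

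The one point that requires care — and which I expect to be the main obstacle — is precisely this last claim: any relative $\tilde J$-cell complex is a pointwise weak equivalence. The argument goes via the observation that each $F_{\overline p}(j)$ is pointwise either $j$ itself or the identity $0\to 0$, hence a pointwise trivial cofibration. Pointwise pushouts of pointwise trivial cofibrations in $Ch(R)$ are again pointwise trivial cofibrations (because the pushout in the functor category is computed levelwise), and transfinite composition preserves this class for the same reason. Therefore $\tilde J$-cell complexes are pointwise trivial cofibrations, in particular pointwise weak equivalences, closing the loop and producing the model structure described in the theorem.
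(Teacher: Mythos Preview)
Your argument is correct and is precisely the standard construction of the projective model structure on a diagram category indexed by a small category, specialized to the poset $\pGM$. The paper does not give its own proof of this theorem: it simply quotes it from Hovey \cite[Theorem 3.1]{Hov09}, and in the appendix invokes \cite[Theorem 5.1.3]{Hov99} (the general projective-model-structure transfer theorem) to obtain the same structure over a field. What you have written is exactly the content of that transfer theorem unpacked for this particular situation, so your approach coincides with the one the paper defers to.
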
 
    In Appendix \ref{appendix:chara-cofib-field}, we give a characterization of cofibrant perverse chain complexes over a field.
    \begin{rem}
       By \cite[Rem 5.1.8]{Hov99}, the model category $(Ch(R))^{\pGM}$ is cofibrantly generated by 
       \[\mathcal{I}^{\pGM}:=\bigcup_{\overline{p}\in \pGM}F_{\overline{p}}\mathcal{I} \text{ and } \mathcal{J}^{\pGM}:=\bigcup_{\overline{p}\in \pGM}F_{\overline{p}}\mathcal{J}\] 
       where
       \[\mathcal{I}:= \{\mathbb{S}^{m-1}\hookrightarrow \mathbb{D}^m,\, m\in \mathbb{Z}\} \text{ and }\mathcal{J}:=\{0\to \mathbb{D}^m,\, m\in \mathbb{Z}\}. \]
    \end{rem}
        
    \begin{rem}\label{rem:quillen_adj_pch}
    This newly obtained model structure is \emph{monoidal} \cite[Theorem 3.3]{Hov09}. The reader may refer to \cite[Chapter 4]{Hov99} for a detailed presentation of the notion. This implies, in particular, that we have the following \emph{Quillen adjunctions} in $(Ch(R))^{\pGM}$ when $C$ is cofibrant:
    \begin{itemize}
        \item[--] $(C\boxtimes -)_\bullet\vdash \Hom_{Ch(R)^{\pGM}}(C,-)_\bullet$,
        \item[--] $(-\boxtimes C)_\bullet\vdash \Hom_{Ch(R)^{\pGM}}(C,-)_\bullet$.
    \end{itemize}
    Furthermore, if $D$ is fibrant $\Hom_{Ch(R)^{\pGM}}(-, D)$ preserves quasi-isomorphism between cofibrant objects. 
    \end{rem}
    
    Let $A_\bullet$ be a pDGA. We can endow the category of perverse DG $A_\bullet$-modules with a model category structure.

    \begin{thm}[{\cite[Theorem 3.4]{Hov09}}]
    Let $A_\bullet$ be a pDGA.
    \begin{enumerate}
        \item There exists a model category structure on $pMod(A)$ where a morphism is a weak equivalence (or a fibration) if it is so in the underlying category $(Ch(R))^{\pGM}$. 
        \item If $A_\bullet$ is commutative, this model category structure is monoidal.
    \end{enumerate} 
    \end{thm}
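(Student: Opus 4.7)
The plan is to apply the standard transfer-of-model-structure machinery (Kan's lifting theorem, or equivalently the Schwede--Shipley framework for modules over monoids in monoidal model categories) along the free-forgetful adjunction
\[ F := (A \boxtimes -) : (Ch(R))^{\pGM} \rightleftarrows \pMod(A) : U. \]
I would declare a morphism $f$ in $\pMod(A)$ to be a weak equivalence (resp.\ fibration) if and only if $U(f)$ is one in $(Ch(R))^{\pGM}$, and take cofibrations to be determined by the left lifting property. Since $(Ch(R))^{\pGM}$ is cofibrantly generated by $\mathcal{I}^{\pGM}$ and $\mathcal{J}^{\pGM}$ (Theorem \ref{thm:modelcat_pchain}), the natural candidates for generators in $\pMod(A)$ are $F(\mathcal{I}^{\pGM})$ and $F(\mathcal{J}^{\pGM})$.

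The hypotheses of the transfer theorem that I would verify are: (i) $\pMod(A)$ is bicomplete, which is standard for modules over a monoid in a bicomplete monoidal category; (ii) the domains of $F(\mathcal{I}^{\pGM})$ and $F(\mathcal{J}^{\pGM})$ are small, which follows from the smallness of the domains in $(Ch(R))^{\pGM}$ together with the fact that $U$ preserves filtered colimits (such colimits in $\pMod(A)$ are created in the underlying category, since the monoidal product $\boxtimes$ preserves them in each variable); and (iii) the relative $F(\mathcal{J}^{\pGM})$-cell complexes become weak equivalences after applying $U$. The last item is the key point, and is equivalent to the \emph{monoid axiom} of Schwede--Shipley for the monoidal model category $(Ch(R))^{\pGM}$.

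To establish the monoid axiom, I would exploit the pointwise nature of the model structure: weak equivalences, fibrations, pushouts and filtered colimits in $(Ch(R))^{\pGM}$ are all computed perversity-wise. Thus for any $Y_\bullet$ and any generator $j = F_{\overline{p}}(0 \to \mathbb{D}^m) \in \mathcal{J}^{\pGM}$, the morphism $(j \boxtimes Y)_{\overline{r}}$ unpacks, via the colimit defining $\boxtimes$, into a direct sum of maps of the form $0 \to \mathbb{D}^m \otimes Y_{\overline{q}}$ in $Ch(R)$. Transfinite compositions of pushouts of such maps are weak equivalences by the classical monoid axiom in $Ch(R)$. Running the small object argument in $\pMod(A)$ against $F(\mathcal{J}^{\pGM})$ and invoking this reduction delivers the first assertion.

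For the second assertion, when $A_\bullet$ is commutative, $\pMod(A)$ carries the symmetric monoidal product $\boxtimes_A$ with unit $A_\bullet$. To check that the model structure is monoidal, I would verify the pushout-product and unit axioms by reducing to the corresponding axioms on $(Ch(R))^{\pGM}$. The key input is that $F$ is strong monoidal, $F(X) \boxtimes_A F(Y) \simeq F(X \boxtimes Y)$, so it suffices to test the pushout-product axiom on the generators $F(\mathcal{I}^{\pGM})$ and $F(\mathcal{J}^{\pGM})$, where it is inherited from the monoidal model structure already established on $(Ch(R))^{\pGM}$. The unit axiom is then automatic, since $A_\bullet$ is the free module on $F_{\overline{0}}(\mathbb{S}^0)_\bullet$ and the latter is cofibrant. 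I expect the main technical obstacle to be the monoid axiom, whose verification requires careful bookkeeping of the colimits over $\pGM$ appearing in the definition of $\boxtimes$; everything else is a routine application of the general transfer formalism.
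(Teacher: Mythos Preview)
The paper does not prove this statement; it is quoted verbatim from Hovey \cite[Theorem 3.4]{Hov09} and used as a black box. Your outline is precisely the Schwede--Shipley transfer argument that Hovey himself uses, so your approach is correct and matches the source.

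One small imprecision: when you write that $(j \boxtimes Y)_{\overline{r}}$ ``unpacks \ldots\ into a direct sum of maps of the form $0 \to \mathbb{D}^m \otimes Y_{\overline{q}}$,'' this is not quite right. For $j = F_{\overline{p}}(0 \to \mathbb{D}^m)$, the colimit defining $\boxtimes$ collapses because $F_{\overline{p}}(\mathbb{D}^m)_{\overline{s}}$ is either $\mathbb{D}^m$ or $0$, so $(F_{\overline{p}}(\mathbb{D}^m) \boxtimes Y)_{\overline{r}}$ is a single tensor product $\mathbb{D}^m \otimes Y_{\overline{r}\ominus\overline{p}}$ (or $0$ if $\overline{p} \not\leq \overline{r}$), not a direct sum over several $\overline{q}$. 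This actually makes the verification cleaner than you suggest: at each perversity the map is literally $0 \to \mathbb{D}^m \otimes (\text{something})$, and pushouts along such maps are injections with acyclic cokernel, which is stable under transfinite composition. The conclusion is unaffected.
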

    \begin{rem}
    We denote by $\mathcal{I}^{\pGM}_A$ (resp. $\mathcal{J}^{\pGM}_A$) the image of $\mathcal{I}^{\pGM}$ (resp. $\mathcal{J}^{\pGM}$) by the functor \[A \boxtimes -:(Ch(R))^{\pGM} \to pMod(A).\] The model category of pDG $A^\bullet$-modules is a cofibrantly generated by $\mathcal{I}^{\pGM}_A$ and $\mathcal{J}^{\pGM}_A$.
    \end{rem}
    The following notions, which are adapated from \cite[Definition 7.1]{BMR14}, are well defined now that we have a model category structure on bimodules over a pDGA.
    \begin{defi}
        Let $A_\bullet$ be a pDGA and $M_\bullet$ a perverse DG $A^e_\bullet$-module. For any $N_\bullet\in pMod(A^e)$ we define the Tor and Ext functors by
        \[\Tor_{A^e}(M,N)^\ast_\bullet:=H((P \boxtimes_{A^e} N)^\ast_\bullet)\]
        and
        \[\Ext_{A^e}(M,N)^\ast_\bullet:=H(\Hom_{A^e}(P,N)^\ast_\bullet)\]
        where $P_\bullet\to M_\bullet$ is a cofibrant approximation of the DG $A_\bullet^e$-module $M_\bullet$.
    \end{defi}    
    We would like to show that Hochschild (co)homology can be defined using $\Tor$ and $\Ext$ functors. This amounts to proving that the two-sided bar construction of a pDGA $A_\bullet$ is a cofibrant object in the category of $A_\bullet$-bimodules. To do so, we follow the approach of \cite[Section 9.1]{BMR14} and characterize the cofibrant objects of $\pMod(A)$ in Theorem \ref{thm:char_cofib_Amod}. We now give a series of results that will prove this theorem.
   
    \begin{defi}
    A pDG $A_\bullet$-module $M_\bullet$ is $\emph{semi-projective}$ if its underlying perverse graded $A_\bullet$-module is projective and if $\Hom_A(M,Z)_\bullet$ is acyclic as a perverse chain complex for all acyclic pDG $A$-modules $Z_\bullet$.
    \end{defi}

    \begin{rem}
        For chain complexes, the notion of semi-projective corresponds to the $K$\emph{-projective} chain complexes of Spaltenstein \cite{Spa88}.
    \end{rem}
    
    \begin{defi}
    A injection $i:N_\bullet \to M_\bullet$ of DG $A_\bullet$-modules is a \emph{semi-projective extension} if $(M/N)_\bullet:=M_\bullet/N_\bullet$ is semi-projective.
    \end{defi}
    
    \begin{proposition}\label{prop:sproj-cofib}
    If a map $i : N_\bullet \to M_\bullet$ of pDG $A_\bullet$-modules is a semi-projective extension, then it is a cofibration. In particular, a semi-projective $A_\bullet$-module is cofibrant.    
    \end{proposition}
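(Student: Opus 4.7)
The plan is to verify directly that $i$ has the left lifting property against any acyclic fibration $p:X_\bullet \to Y_\bullet$. Since the model structure on $\pMod(A)$ is detected perversity-wise by the projective model structure on $Ch(R)$, such a $p$ is at every perversity a surjective quasi-isomorphism, so $K_\bullet := \ker(p)$ is an acyclic pDG $A_\bullet$-module and $p$ is perversity-wise degreewise surjective. Given a commutative square with upper arrow $g:N_\bullet\to X_\bullet$ and lower arrow $f:M_\bullet\to Y_\bullet$, I will produce a chain map $\tilde f: M_\bullet\to X_\bullet$ with $\tilde f\circ i = g$ and $p\circ\tilde f = f$.

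Set $P_\bullet := (M/N)_\bullet$. The first step exploits that the underlying perverse graded $A_\bullet$-module of $P_\bullet$ is projective: the short exact sequence $0\to N_\bullet \to M_\bullet \to P_\bullet \to 0$ then splits as perverse graded $A_\bullet$-modules, giving an identification $M_\bullet = N_\bullet \oplus P_\bullet$ whose differential takes the twisted form $d_M(n,p) = (d_N n + \alpha(p),\, d_P p)$ for a graded $A_\bullet$-linear map $\alpha:P_\bullet \to N_\bullet$ of degree $1$, with the relation $d_N\alpha + \alpha d_P = 0$ forced by $d_M^2 = 0$. Writing the sought lift as $\tilde f(n,p) = (g(n), \psi(p))$ reduces the problem to finding a graded $A_\bullet$-linear $\psi:P_\bullet \to X_\bullet$ satisfying $p\psi = \phi$, where $\phi := f|_P$, and the twisted chain-map condition $d_X\psi - \psi d_P = g\alpha$.

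To construct $\psi$, I will first pick any graded $A_\bullet$-linear lift $\psi_0$ of $\phi$, which exists by graded projectivity of $P_\bullet$ together with the perversity-wise degreewise surjectivity of $p$. Set
$$\delta := d_X\psi_0 - \psi_0 d_P - g\alpha.$$
A short computation using $pg = fi$ and the chain-map property of $f$ yields $p\delta = 0$, so $\delta$ factors through a graded $A_\bullet$-linear map $P_\bullet \to K_\bullet$ of degree $1$. A second routine calculation, using $d_X g = g d_N$ and the cocycle relation $d_N\alpha + \alpha d_P = 0$, gives $d_X\delta + \delta d_P = 0$, so $\delta$ is a cocycle in the perverse chain complex $\Hom_A(P,K)_\bullet$.

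The decisive step then invokes the defining property of semi-projectivity: because $K_\bullet$ is acyclic, $\Hom_A(P,K)_\bullet$ is acyclic, hence $\delta$ is a coboundary $\delta = d_X h - h d_P$ for some graded $A_\bullet$-linear $h:P_\bullet \to K_\bullet$ of degree $0$. Then $\psi := \psi_0 - h$ satisfies both $p\psi = \phi$ (since $h$ lands in $K_\bullet$) and $d_X\psi - \psi d_P = g\alpha$, producing the required lift $\tilde f$ and showing that $i$ is a cofibration. The final assertion, that any semi-projective $P_\bullet$ is cofibrant, is the special case $N_\bullet = 0$. The only delicate point I anticipate is the sign bookkeeping for the perverse upper-graded internal $\Hom$ of Example \ref{ex:pmonoidal_chcplx} when verifying the cocycle identity for $\delta$, but this is mechanical once the conventions are fixed.
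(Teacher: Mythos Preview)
Your proposal is correct and follows essentially the same route as the paper's proof: split $M_\bullet \cong N_\bullet \oplus P_\bullet$ as graded $A_\bullet$-modules, choose a graded lift of $f|_P$ through $p$, form the obstruction cocycle in $\Hom_A(P,\ker p)_\bullet$, and kill it using acyclicity of $\Hom_A(P,\ker p)_\bullet$ coming from semi-projectivity of $P_\bullet$ and acyclicity of $\ker p$. The only cosmetic slip is that the formula $\tilde f(n,p) = (g(n),\psi(p))$ should read $\tilde f(n,p) = g(n) + \psi(p)$, since the target $X_\bullet$ is not decomposed.
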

    
    \begin{proof}
Fix $P_\bullet = (M/N)_\bullet$, by hypothesis it is semi-projective. We must show that $i$ has the LLP with respect to acyclic fibrations. \\
We must find a lift $H$ in this commutative diagram of pDG $A_\bullet$-modules
\[\begin{tikzcd}
N_\bullet \arrow[r, "g"] \arrow[d, "i"']                & E_\bullet \arrow[d, "p"] \\
M_\bullet \arrow[r, "f"'] \arrow[ru, "{H\,? }", dashed] & B_\bullet              
\end{tikzcd}\]
where $p: E_\bullet \to B_\bullet$ is an acyclic fibration.
Since $P_\bullet$ is projective as a graded $A_\bullet$-module, the following short exact sequence is right split
\[\begin{tikzcd}
0 \arrow[r] & N_\bullet \arrow[r, "i", hook] & M_\bullet \arrow[r, "\pi", two heads] & P_\bullet \arrow[r] \arrow[l, dashed, bend left=49] & 0.
\end{tikzcd}\]
We work in an abelian category, by the splitting lemma, we have an isomorphism of $A_\bullet$-modules $M_\bullet\simeq N_\bullet \oplus P_\bullet$. 
Hence, the differential on $M_\bullet$ can be written as
\[d_M(x, y) = (d_N(x) + t(y), d_P(y))\] 
where $d_N$ (resp. $d_P$) denotes the differential on $N_\bullet$ (resp. $P_\bullet$) and $t:P_\bullet \to N_\bullet$ is a map of graded $A_\bullet$-modules of degree $+1$ and perverse degree $\overline{0}$.\\
Note that $t$ is not necessarily a map of pDG $A_\bullet$-modules but we have 
\[d_N\circ t + t\circ d_P = 0.\]
Indeed, for $(x,y)\in N_\bullet \oplus P_\bullet$, we have
\[d_M^2(x,y)=d\big(d_N(x) + t(y), d_P(y)\big)=(d_N\circ t (y) +t\circ d_P(y), 0).\]
Since $d_M^2(x,y)=0$, we get the stated relation.\\  
We write $f = f_1 + f_2$, where $f_1 : N_\bullet \to B_\bullet$ and $f_2 : P_\bullet \to B_\bullet$ are maps of pDG $A_\bullet$-modules, and we decompose $H = H_1 + H_2$ similarly. 
In order to make the two triangles commute in the above diagram, we need to have
\[H_1= H \circ i = g \text{ and } p\circ H_2 = f_2.\]
Furthermore, $H_2$ is a map of pDG $A_\bullet$-modules, this implies that \[d_E\circ H_2(x)= d_E \circ H(0,x) \underset{\text{map } H}{\overset{\text{chain}}{=}} H\circ d_M(0, x) = H(t(x), d_P(x)) = g\circ t(x) + H_2\circ d_P(x).\] 
Let's give a first approximation of $H_2$. Since $P_\bullet$ is a projective graded $A_\bullet$-module, there is a map $f_2' : P_\bullet \to E_\bullet$ of graded $A_\bullet$-modules, such that $p\circ f_2' = f_2$.
\[\begin{tikzcd}
                                                       & E_\bullet \arrow[d, "p", two heads] \\
P_\bullet \arrow[r, "f_2"'] \arrow[ru, "\exists f_2'", dashed] & B_\bullet                          
\end{tikzcd}\]
However, $f_2'$ is not a map of pDG $A_\bullet$-modules: we don't necessarily have 
\[d_E\circ f_2'=g\circ t + f_2'\circ d_P.\] 
We have to tweak $f_2'$. Let $k : P_\bullet \to E_\bullet$ be the graded $A_\bullet$-module map given by
\[k = d_E\circ f_2'- f_2'\circ d_P - g\circ t.\] 
Suppose that we have a degree $0$ map of graded $A_\bullet$-modules $l: P_\bullet \to (\ker p)_\bullet$ such that $d_E\circ l-l\circ d_P = k$. Then, we get
\[d_E\circ (f_2'-l) - (f_2'-l)\circ d_P = g\circ t.\]
Hence, the map $f_2'-l: M_\bullet \to E_\bullet$ is a map of pDG $A_\bullet$-modules and $p\circ (f_2'-l)=f_2$. Let's prove that such a map $l$ exists.
We first remark that $(\Ima k)_\bullet \subset (\ker p)_\bullet$. Since $p$ is a chain map, we have $d_B\circ p = p\circ d_E$ and as a consequence
\[p\circ k = d_B\circ p\circ f_2' - p\circ f_2'\circ d_P - p\circ g\circ t = d_B\circ f_2 - f_2\circ d_P - f_1\circ t=0.\]
The last equality is obtained from $d_B\circ f= f \circ d_M$. 
Hence, $k$ can be viewed as a map $P_\bullet \to Z_\bullet:=(\ker p)_\bullet$ of degree $+1$. Moreover, $k$ is a cycle of degree $+1$ in $\Hom_A(P, Z)^{\overline{0}}$ because 
\[d_E\circ k - (-1)^1 k\circ d_P = -d_E\circ f_2'\circ d_P - d_E\circ g\circ t + d_E\circ f_2'\circ d_P - g\circ t\circ d_P = 0.\]
The last equality follows from $d_N\circ t + t\circ d_P = 0$.
We have a short exact sequence of perverse chain complexes
\[Z_\bullet\hookrightarrow E_\bullet \xtwoheadrightarrow{p} B_\bullet\simeq (E/Z)_\bullet\]
where the last map is a quasi-isomorphism. By considering the induced long exact sequence in homology, we find that $Z_\bullet$ is an acyclic perverse chain complex. By hypothesis, $P_\bullet$ is semi-projective so $\Hom_A(P,Z)_\bullet$ is acyclic. Therefore, the cycle $k$ is also a boundary. Hence, there exists a degree $0$ map of graded  $A_\bullet$-modules $l: P_\bullet \to Z_\bullet \subset E_\bullet$ such that $d_E\circ l-l\circ d_P = k$. By taking $H_2 = f_2'-l$, the map $H=H_1 +H_2$ solves the lifting problem.
    \end{proof}
    We now adapt the notion of \emph{semi-free module} which appears in \cite[Section 2]{FHT95} and \cite[Chapter 6]{FHT01}. 
    \begin{defi}
        A perverse chain complex $Y_\bullet$ is \emph{free} if there exists a family of degree-wise free chain complexes $\{V_{\overline{p}}\}_{\overline{p}\in \pGM}$ such that
        \[Y_\bullet=\bigoplus_{\overline{p}\in \pGM} F_{\overline{p}}(V_{\overline{p}})_\bullet.\]
        A \emph{basis} of $Y_\bullet$ is given by the union of bases of the $V^m_{\overline{p}}$ for $m\in\mathbb{Z}, \overline{p}\in\pGM$.\\ 
        For $A_\bullet$ a pDGA, a perverse chain complex $M_\bullet$ is \emph{free as pDG $A_\bullet$-module} if it is isomorphic as a pDG $A_\bullet$-module to $(A \boxtimes K)_\bullet$ with $K_\bullet$ a free perverse chain complex. A \emph{basis} of the pDG $A_\bullet$-module $M_\bullet$ is a basis of $K_\bullet$. 
    \end{defi}
    
    \begin{defi}
        A pDG $A_\bullet$-module $M_\bullet$ is \emph{semi-free} if it there exists an increasing sequence of pDG $A_\bullet$-submodules 
        \[0=F_{-1, \bullet} \subset F_{0,\bullet}  \subset \ldots \subset F_{k,\bullet} \subset \ldots \]
        such that $M_\bullet=\bigcup_k F_{k,\bullet}$ and each $(F_k/F_{k-1})_\bullet$ is isomorphic as a pDG $A_\bullet$-module to $(A \boxtimes K_k)_\bullet$ where $K_{k,\bullet}$ is a free perverse chain complex with trivial differential.
    \end{defi}
       
    \begin{rem}\label{rk:splitfiltdecomp}
    Note that a semi-free pDG $A_\bullet$-module is free as a graded $A_\bullet$-module, that is to say as a perverse chain complex with an $A_\bullet$-module that doesn't necessarily respect the differential.
    Since $(F_k/F_{k-1})_\bullet$ is free as a pDG $A_\bullet$-module, the following short exact sequence of $A_\bullet$-modules splits
    \[F_{k-1,\bullet} \hookrightarrow F_{k,\bullet} \twoheadrightarrow (F_k/F_{k-1})_\bullet.\]
    By the splitting lemma, \[F_{k,\bullet} \simeq F_{k-1,\bullet} \oplus (F_k/F_{k-1})_\bullet \simeq F_{k-1,\bullet} \oplus (A \boxtimes K_k)_\bullet.\]
    By induction, we get an isomorphism of graded $A_\bullet$-modules \[F_{k,\bullet} \simeq (A \boxtimes (\bigoplus_{i=0}^k K_i))_\bullet.\] Since $M_\bullet=\bigcup_{k \geq 0} F_{k,\bullet}$, it is isomorphic as a graded $A_\bullet$-module to $(A_\bullet \boxtimes (\bigoplus_{i \geq 0} K_i))_\bullet$. 
    \end{rem}
    
    \begin{proposition}\label{prop:semifree-semiproj}
    If a pDG $A_\bullet$-module $M_\bullet$ is semi-free then it is semi-projective.
    \end{proposition}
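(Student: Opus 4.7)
The plan is to verify the two defining properties of a semi-projective pDG $A_\bullet$-module. The first one is immediate: by Remark \ref{rk:splitfiltdecomp}, the underlying perverse graded $A_\bullet$-module of a semi-free $M_\bullet$ is isomorphic to $(A\boxtimes \bigoplus_{i\geq 0} K_i)_\bullet$, which is free and hence projective. The substantive work lies in showing that $\Hom_A(M,Z)_\bullet$ is acyclic whenever $Z_\bullet$ is an acyclic pDG $A_\bullet$-module.

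Fix such a $Z_\bullet$ and proceed by induction on the filtration $\{F_{k,\bullet}\}$ realizing the semi-free structure. Each quotient $(F_k/F_{k-1})_\bullet\simeq (A\boxtimes K_k)_\bullet$ is free as a pDG $A_\bullet$-module, so the short exact sequence $0\to F_{k-1,\bullet}\to F_{k,\bullet}\to (F_k/F_{k-1})_\bullet\to 0$ splits on underlying graded $A_\bullet$-modules. Applying $\Hom_A(-,Z)_\bullet$ therefore yields a short exact sequence of perverse chain complexes
\[0\to \Hom_A(F_k/F_{k-1},Z)_\bullet \to \Hom_A(F_k,Z)_\bullet \to \Hom_A(F_{k-1},Z)_\bullet \to 0.\]
Extension-of-scalars adjunction gives $\Hom_A(A\boxtimes K_k,Z)_\bullet \simeq \Hom_{(Ch(R))^{\pGM}}(K_k,Z)_\bullet$, and since $K_{k,\bullet}$ is a direct sum of summands of the form $F_{\overline{p}}(\mathbb{S}^m)_\bullet$, this internal hom turns into a product of terms that, via the $F_{\overline{p}}\dashv Ev_{\overline{p}}$ adjunction, get identified with shifts of components $Z_{\overline{q}}$. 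Acyclicity of $Z_\bullet$ together with exactness of products in $Ch(R)$ then forces $\Hom_{(Ch(R))^{\pGM}}(K_k,Z)_\bullet$ to be acyclic. Starting from $\Hom_A(F_{-1},Z)_\bullet=0$, the long exact sequence in homology yields acyclicity of $\Hom_A(F_k,Z)_\bullet$ for every $k$.

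To pass to $M_\bullet=\bigcup_k F_{k,\bullet}$, I would use the isomorphism $\Hom_A(M,Z)_\bullet \simeq \lim_k \Hom_A(F_k,Z)_\bullet$ together with the surjectivity of the transition maps $\Hom_A(F_{k+1},Z)_\bullet \twoheadrightarrow \Hom_A(F_k,Z)_\bullet$ (inherited from the splittings of the filtration). The Mittag-Leffler condition is therefore satisfied, the associated $\lim^1$-term vanishes, and Milnor's short exact sequence yields acyclicity of the inverse limit, that is, of $\Hom_A(M,Z)_\bullet$.

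The step I expect to require the most care is the base-case identification of $\Hom_{(Ch(R))^{\pGM}}(F_{\overline{p}}(\mathbb{S}^m), Z)_\bullet$, since the perverse internal hom involves a limit over pairs with $\overline{r}\leq \overline{q}-\overline{p}$ and one must track precisely which component $Z_{\overline{q}}$ is ultimately being mapped into. Unwinding this via the adjunction $\Hom_{(Ch(R))^{\pGM}}(M,N)_{\overline{r}}\simeq (Ch(R))^{\pGM}(F_{\overline{r}}(\mathbb{S}^0)\boxtimes M, N)$ reduces the acyclicity claim to the classical fact that mapping a sphere complex into an acyclic chain complex produces an acyclic complex, which closes the argument.
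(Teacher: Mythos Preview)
Your argument is correct. The base-case acyclicity of $\Hom_A(F_k/F_{k-1},Z)_\bullet$ is handled exactly as in the paper, but your passage to the colimit is organized differently. You run induction up the tower $\{F_{k,\bullet}\}$, obtaining acyclicity of each $\Hom_A(F_k,Z)_\bullet$, then identify $\Hom_A(M,Z)_\bullet\simeq \lim_k \Hom_A(F_k,Z)_\bullet$ and invoke Mittag--Leffler together with the Milnor $\lim^1$-sequence. The paper instead works with the quotient tower $\{(M/F_k)_\bullet\}$: from the split short exact sequences $0\to (F_k/F_{k-1})_\bullet\to (M/F_{k-1})_\bullet\to (M/F_k)_\bullet\to 0$ it extracts isomorphisms $H_\ast\Hom_A(M/F_k,Z)_{\overline{p}}\xrightarrow{\simeq}H_\ast\Hom_A(M/F_{k-1},Z)_{\overline{p}}$, packages these into an exact couple, and appeals to Boardman's conditional-convergence and comparison theorems to conclude that $H_\ast\Hom_A(M,Z)_{\overline{p}}=\colim_i D^1_{i,\ast-i}=0$. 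Your route is more elementary and self-contained; the paper's spectral-sequence formulation is heavier machinery here but transplants directly to settings where one has only an exact couple rather than a Milnor tower with surjective bonding maps. One small point worth making explicit in your write-up: the Milnor sequence requires degreewise surjectivity of the bonding maps \emph{of chain complexes} (to ensure $\lim^1$ of the tower of complexes vanishes), which you have from the graded splittings, and then acyclicity of each stage kills both $\lim$ and $\lim^1$ of the homology towers automatically.
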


    \begin{proof}
    Let $M_\bullet$ a semi-free pDG $A_\bullet$-module, then it has a filtration
    \[0=F_{-1,\bullet} \subset F_{0,\bullet}  \subset \ldots \subset F_{k,\bullet} \subset \ldots \]
    As explained in the remark above, $M_\bullet$ is isomorphic as a graded $A_\bullet$-module to 
    \[\bigoplus_{k \geq 0} (F_k/F_{k-1})_\bullet\simeq (A \boxtimes (\bigoplus_{k \geq 0} K_k))_\bullet.\] 
    More generally, for $k\in \mathbb{N}$, the pDG $A_\bullet$-module $(M/F_{k-1})_\bullet$ is isomorphic as a graded $A_\bullet$-module to 
    $\bigoplus_{i \geq k} (F_i/F_{i-1})_\bullet$. Hence, it is graded $A_\bullet$-free and the following short exact sequence splits
    \[0 \to (F_k/F_{k-1})_\bullet\to (M/F_{k-1})_\bullet \to (M/F_k)_\bullet\to 0.\]
    By applying the $\Hom$ functor, we get another short exact sequence
    \[0 \to \Hom_A(M/F_k,Z)_\bullet \to \Hom_A(M/F_{k-1},Z)_\bullet \to \Hom_A(F_k/F_{k-1}, Z)_\bullet \to 0.\]
    where $Z_\bullet$ is a pDG $A_\bullet$-module. 
    Suppose that $Z_\bullet$ is acyclic, we would like to show that the perverse chain complex $\Hom_A(M,Z)_\bullet$ is also acyclic. We follow the proof of \cite[Proposition 9.9]{BMR14}, we're going to show that each $\Hom_A(F_k/F_{k-1}, Z)_\bullet$ is acyclic.
    Consider the following diagram
    \[M_\bullet=(M/F_{-1})_\bullet\to (M/F_0)_\bullet \to (M/F_1)_\bullet \to \ldots\]
    Its colimit is $0$. Since the $\Hom$ functor sends colimits to limits we have
    \[\lim_k \Hom_A(M/F_k, Z)_\bullet \simeq \Hom_A (\colim_k M/F_k, Z)_\bullet =0.\]
    Let $k \in \mathbb{N}$, we have the following isomorphism of perverse chain complexes 
    \[\Hom_A(F_k/F_{k-1},Z)_\bullet\simeq\Hom_A(A \boxtimes K_k, Z)_\bullet\simeq \Hom_{(Ch(R))^{\pGM}}(K_k, UZ)_\bullet\] where $U$ denotes the forget functor. Since $K_{k,\bullet}$ is a degree wise free perverse chain complex with zero differential, we have
    \[K_{k,\bullet} \simeq \bigoplus_{\overline{p},\, m} F_{\overline{p}}(\mathbb{S}^m)_\bullet.\]
    The functor $\Hom_{(Ch(R))^{\pGM}}(-, UZ)$ sends colimits to limits. In particular, we have \[\Hom_{(Ch(R))^{\pGM}} (\bigoplus_{\overline{p},\, m} F_{\overline{p}}(\mathbb{S}^m), UZ)_\bullet \simeq \prod_{\overline{p},\, m}\Hom_{(Ch(R))^{\pGM}}  (F_{\overline{p}}(\mathbb{S}^m), UZ)_\bullet.\]
    So we just need to show that for $m\in \mathbb{N}$ and $\overline{p}\in \pGM$,  the perverse chain complex $\Hom_{(Ch(R))^{\pGM}}(F_{\overline{p}}(\mathbb{S}^m), UZ)_\bullet$ is acyclic. Recall that for perverse chain complexes $X_\bullet, Y_\bullet$, the internal Hom is the perverse chain complex defined for $k \in \mathbb{N}$ and $\overline{r}\in \pGM$ by
    \[\Hom_{(Ch(R))^{\pGM}}(X,Y)_k^{\overline{r}}=\lim_{\overline{r}\leq \overline{q}-\overline{s}} \prod_{j-i=k} \Hom_R(X_i^{\overline{s}}, Y_j^{\overline{q}}).\]
    Here, we have
    \[\Hom  (F_{\overline{p}}(\mathbb{S}^m)_\bullet, UZ)_k^{\overline{r}}=\lim_{\overline{r}\leq \overline{q} -\overline{p}} \Hom_R(R, UZ_{m+k}^{\overline{q}})\simeq \Hom_R(R, UZ_{m+k}^{\overline{r}+\overline{p}}).\]
    If $\overline{p}=\overline{0}$ and $m=0$, the perverse chain complex $\Hom  (F_{\overline{p}}(\mathbb{S}^m), UZ)_k^{\overline{r}}$ is isomorphic to $UZ_\bullet$, hence it is acyclic. For arbitrary $m\in \mathbb{N}$ and $\overline{p} \in \pGM$, it is isomorphic to $UZ_\bullet$ but shifted by $m$ (degree wise) and by $\overline{p}$ (perversity wise), it is also acyclic. 
    We come back to our short exact sequence
    \[0 \to \Hom_A(M/F_k,Z)_\bullet \to \Hom_A(M/F_{k-1},Z)_\bullet \to \Hom_A(F_k/F_{k-1}, Z)_\bullet \to 0,\]
    By taking the long exact sequence induced in homology, for any $\overline{p}\in\pGM$ we get isomorphisms
    \[H_\ast (\Hom_A(M/F_{k}, Z)_{\overline{p}}) \to H_\ast(\Hom_A(M/F_{k-1}, Z)_{\overline{p}}).\]
    We consider the spectral sequence which arises from the following exact couple
    \[D^1_{i,j}=H_{i+j}(\Hom_A(M/F_{i-1}, Z)_{\overline{p}}),\quad E^1_{i,j}=H_{i+j}(\Hom_A(F_i/F_{i-1}, Z)_{\overline{p}}).\]
    It is a right half-plane spectral sequence $E^\ast_{i,j}$ with $E^2=0$. Since $\lim_i D^1_{i, \ast-i}=0$ and $\lim^1_i D^1_{i, \ast-i}=0$ \cite[Proposition1.8]{Boa99}, the spectral sequence \emph{converges conditionally} \cite[Definition 5.10]{Boa99} to $\colim_i D^1_{i, \ast-i}=H_{\ast}(\Hom_A(M, Z)_{\overline{p}})$. Using the (co)limit comparison theorem \cite[Theorem 7.2]{Boa99} with the trivial spectral sequence (seen as induced by the identically zero exact couple), we get that 
    \[H_{\ast}(\Hom_A(M, Z)_{\overline{p}})=0.\]
    This shows that $M_\bullet$ is semi-projective.
    \end{proof}
    
    We can now prove the following main theorem.
    \begin{letterthm}\label{thm:char_cofib_Amod}
    For a pDG $A_\bullet$-module $M_\bullet$, the following statements are equivalent.
    \begin{enumerate}[label=(\arabic*), noitemsep]
      \item $M_\bullet$ is semi-projective,
      \item $M_\bullet$ is cofibrant,
      \item $M_\bullet$ is a retract of semi-free pDG $A_\bullet$-module.
    \end{enumerate}
    \end{letterthm}
    \begin{proof}
    By proposition \ref{prop:sproj-cofib}, we have $(1)\Rightarrow (2)$. 
    \begin{itemize}[label= -]
    \item $(2) \Rightarrow (3)$ By \cite[Proposition 2.1.18]{Hov99}, a cofibrant object $M_\bullet$ is a retract of an $\mathcal{I}^{\pGM}_A$-cell complex. It suffices to show that an $\mathcal{I}^{\pGM}_A$-cell complex is semi-free. Recall that a perverse chain complex $Y_\bullet$ is an $\mathcal{I}^{\pGM}_A$-cell complex if there exists an ordinal $\lambda$ and a map $g : 0 \to Y_\bullet$ that is a transfinite composite of a $\lambda$-sequence $Y_\bullet$ such that $Y_\bullet^0 = 0$ and, for a successor ordinal $\alpha+1 < \lambda$, $Y_\bullet^{\alpha+1}$ is obtained as the pushout in a diagram
    \[\begin{tikzcd}
    \bigsqcup A^q_\bullet \arrow[d, "\bigsqcup i_q"'] \arrow[r, "j"] & Y_\bullet^\alpha \arrow[d] \\
    \bigsqcup B^q_\bullet \arrow[r, "k"]                             & Y_\bullet^{\alpha+1}      
    \end{tikzcd}\]
    where the $i_q : A_\bullet^q \to B_\bullet^q$ run through some set of maps in 
    \[\mathcal{I}^{\pGM}_A=\{(A \boxtimes F_{\overline{p}}(\mathbb{S}^m))_\bullet\to (A \boxtimes F_{\overline{p}}(\mathbb{D}^{m+1}))_\bullet : m\in \mathbb{N}, \, \overline{p}\in \pGM \}.\] 
    The quotients $(Y_{\alpha+1}/Y_\alpha)_\bullet$ are isomorphic as perverse chain complexes to direct sums of $(A\boxtimes F_{\overline{p}}(\mathbb{S}^m))_\bullet$, so they are of the form $(A \boxtimes K_k)_\bullet$ with $K_{k,\bullet}$ free and of zero differential.   
    \item $(3) \Rightarrow (1)$ By proposition \ref{prop:semifree-semiproj}, we know that a semi-free pDG $A_\bullet$-module is semi-projective. Furthermore, the retract of semi-projective pDG $A_\bullet$-module is also semi-projective.
    \end{itemize}    
    \end{proof}
    \subsection{Hochschild (co)homology using derived functors}
    
    We now study cases where the bar construction of a pDGA is a cofibrant approximation of it. We say that a pDGA $A_\bullet$ is \emph{augmented} if there exist a morphism a perverse chain complexes $\eps:A_\bullet \to F_{\overline{0}}(\mathbb{S}^0)_\bullet$.
    \begin{proposition}\label{prop:bar-cofib-approx}
    If $A_\bullet$ is an augmented pDGA that is cofibrant in $(Ch(R))^{\pGM}$, then the map 
    \[q_A:\mathbb{B}(A)_\bullet\to A_\bullet \text{ defined by }
    q_A(a[a_1, \ldots, a_k]b)=\begin{cases}
        0 & \text{if } k >0 \\
        a.b & \text{if } k=0 
    \end{cases}\]
    is a cofibrant approximation of $A_\bullet$ in $pMod(A^e)$. In other words, it is an acyclic fibration and $\mathbb{B}(A)_\bullet$ is cofibrant in $pMod(A^e)$.  
    \end{proposition}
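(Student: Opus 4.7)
The plan is to verify separately that $q_A$ is an acyclic fibration in $\pMod(A^e)$ and that $\mathbb{B}(A)_\bullet$ is cofibrant there, invoking the semi-projective characterisation of Theorem \ref{thm:char_cofib_Amod} for the latter. The arguments adapt the classical treatment (cf.\ \cite[Section 9]{BMR14}) to the perverse setting, and the principal technical content lies in the cofibrancy step.

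For the acyclic fibration, surjectivity of $q_A$ is immediate from $q_A(1[\,]a) = a$, and because fibrations in $\pMod(A^e)$ are detected perversity-wise and internal-degreewise as surjections in $Ch(R)$, this gives the fibration part at once. For the quasi-isomorphism, I would introduce the standard ``extra degeneracy'' contracting homotopy. Define a right-$A_\bullet$-linear map $h \colon \mathbb{B}(A)_\bullet \to \mathbb{B}(A)_\bullet$ of internal degree $-1$ and perverse degree $\overline{0}$ by
\[h\bigl(a[a_1|\ldots|a_k]b\bigr) = 1[\bar a\,|\,a_1|\ldots|a_k]b,\]
where $\bar a$ denotes the class of $a$ in $\overline{A}_\bullet$ (with the Koszul signs forced by the suspension). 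A direct computation on $d_0$ and $d_1$ yields $Dh + hD = \id - \iota \circ q_A$, with the right-$A_\bullet$-linear section $\iota(a) = 1[\,]a$ of $q_A$. This exhibits $q_A$ as a chain homotopy equivalence of perverse chain complexes of right $A_\bullet$-modules, and hence a quasi-isomorphism.

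To show $\mathbb{B}(A)_\bullet$ is cofibrant, I would verify the two conditions defining semi-projectivity. The augmentation and unit split $A_\bullet \cong F_{\overline{0}}(\mathbb{S}^0)_\bullet \oplus \overline{A}_\bullet$ in $(Ch(R))^{\pGM}$, so $\overline{A}_\bullet$ is a retract of the cofibrant object $A_\bullet$ and is itself cofibrant. Each tensor power $(s\overline{A})^{\boxtimes k}_\bullet$ is then cofibrant by the monoidal structure (Remark \ref{rem:quillen_adj_pch}), and consequently $\mathbb{B}(A)_\bullet \cong (A^e \boxtimes T(s\overline{A}))_\bullet$ is projective as a perverse graded $A^e_\bullet$-module, which is the first condition. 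For the acyclicity condition, filter by bar length $F_{k,\bullet} := \bigoplus_{j \le k} \mathbb{B}_j(A)_\bullet$, so that the short exact sequences
\[0 \to F_{k-1,\bullet} \to F_{k,\bullet} \to (A^e \boxtimes (s\overline{A})^{\boxtimes k})_\bullet \to 0\]
split as perverse graded $A^e_\bullet$-modules. Applying $\Hom_{A^e}(-, Z)_\bullet$ to an acyclic $Z_\bullet$ yields short exact sequences of perverse chain complexes whose kernel term identifies, by extension-restriction adjunction, with $\Hom_{(Ch(R))^{\pGM}}((s\overline{A})^{\boxtimes k}, UZ)_\bullet$; this is acyclic by Remark \ref{rem:quillen_adj_pch}, since $(s\overline{A})^{\boxtimes k}_\bullet$ is cofibrant and every perverse chain complex is fibrant. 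An inverse-limit argument along the filtration, structurally identical to the one used in the proof of Proposition \ref{prop:semifree-semiproj}, then delivers acyclicity of $\Hom_{A^e}(\mathbb{B}(A), Z)_\bullet$ and completes the semi-projectivity check.

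The principal obstacle is the cofibrancy statement: the bar-length filtration does not literally present $\mathbb{B}(A)_\bullet$ as semi-free in the paper's sense, because the quotients $F_{k,\bullet}/F_{k-1,\bullet}$ carry the internal differential inherited from $A_\bullet$ rather than being free with trivial differential. Appealing instead to the semi-projective characterisation of Theorem \ref{thm:char_cofib_Amod} sidesteps this subtlety; the hypotheses that $A_\bullet$ be cofibrant and augmented enter precisely where cofibrancy of $\overline{A}_\bullet$ and of its $\boxtimes$-powers is needed.
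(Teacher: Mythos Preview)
Your proof is correct and tracks the paper's argument closely. The acyclic-fibration step is essentially identical: the paper uses the same contracting homotopy $h$, phrased as $Dh+hD=\id$ on $\ker q_A$ rather than your global $Dh+hD=\id-\iota q_A$, but these are equivalent.

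For cofibrancy the two arguments diverge slightly in packaging, though they rest on the same ingredients (cofibrancy of $(s\overline{A})^{\boxtimes k}_\bullet$ and the bar-length filtration). The paper does not unpack the definition of semi-projective; instead it observes that $A^e\boxtimes -$ is left Quillen, so each $\mathbb{B}_k(A)_\bullet=A^e\boxtimes (s\overline{A})^{\boxtimes k}_\bullet$ is already cofibrant in $\pMod(A^e)$, and then builds the tower $\mathbb{B}_{\leq k,\bullet}$ by successive semi-projective extensions (Proposition~\ref{prop:sproj-cofib}), concluding with a colimit of cofibrations. Your route---verifying the two clauses of semi-projectivity directly and invoking the spectral-sequence argument of Proposition~\ref{prop:semifree-semiproj}---reaches the same destination but with more unwinding. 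The paper's approach is shorter and sidesteps the issue you flag in your last paragraph (the quotients are not semi-free) precisely because it never needs the quotients to be semi-free, only cofibrant.
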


    \begin{proof}
    The map $q_A$ is a fibration since it is degree-wise surjective. Let's now see that $q_A$ is a weak equivalence i.e.\ a perversity-wise quasi-isomorphism of chain complex. Let $\overline{r}\in P_X$ and consider the map 
    \[q_A:\mathbb{B}(A)_{\overline{r}}\to A_{\overline{r}}.\]
    It is surjective, so it is enough to show that $\ker(q_A)$ has trivial homology to conclude that $q_A$ induces an isomorphism on homology. To do so, consider the following map $h:\mathbb{B}(A)_{\overline{r}}\to \mathbb{B}(A)_{\overline{r}}$ given by 
    \[h(a[a_1| \ldots| a_k]b)=\begin{cases}
        1[a a_1| \ldots| a_k]b & \text{if } k >0 \\
        1[a]b & \text{if } k=0. 
    \end{cases}\]
    This is a contracting homotopy on $\ker(q_A)$ i.e.\ we have $\id_{\ker(q_A)}=D\circ h+h \circ D$ on $\ker(q_A)$. The map $D$ denotes the differential on $\mathbb{B}(A)_{\overline{r}}$, recall that it is given by $D=d_0+d_1$ with $d_0(a[a_1|\ldots|a_k]b)+d_1(a[a_1|\ldots|a_k]b)=$ 
    \[d(a)[a_1|\ldots|a_k]b 
    -\sum_{i=1}^{k}(-1)^{\eps_i}a[a_1|\ldots|d(a_i)|\ldots|a_k]b+(-1)^{\eps_{k+1}}a[a_1|\ldots|a_k]d(b)+\]
    \[(-1)^{\Real{a}}aa_1[a_2|\ldots|a_k]b+\sum_{i=2}^{k}(-1)^{\eps_i}a[a_1|\ldots|a_{i-1}a_i|\ldots|a_k]-(-1)^{\eps_k}a[a_1|\ldots|a_{k-1}]a_kb\]
    where $\eps_i=\Real{a}+\sum_{j<i}\Real{s(a_j)}$.\\
    Consider an element $a[a_1|a_2|\ldots|a_k]b$ in $\mathbb{B}_k(A)^q_{\overline{r}}$ of length $k>0$ and degree
    \[\degr(a[a_1|\ldots|a_k]b)=\Real{a}+\Real{b}+ \sum_{i=1}^{k}\Real{s(a_i)}=\Real{a}+\Real{b}+ \sum_{i=1}^{k}\Real{a_i}+k=q.\] 
    We compute
    \[D \circ h (a[a_1|a_2|\ldots|a_k]b)=D(1[a|a_1|a_2|\ldots|a_k]b)=\]
    \[d(1)[a|a_1|\ldots|a_k]b 
    -\sum_{i=0}^{k}(-1)^{\eps_i-1}1[a|\ldots|d(a_i)|\ldots|a_k]b+(-1)^{\eps_{k+1}-1}1[a|\ldots|a_k]d(b)\]
    \[+a[a_1|\ldots|a_k]b+\sum_{i=1}^{k}(-1)^{\eps_i-1}1[a|\ldots|a_{i-1}a_i|\ldots|a_k]-(-1)^{\eps_k-1}1[a|\ldots|a_{k-1}]a_kb\]
    and $h\circ D(a[a_1|\ldots|a_k]b)$ is given by
    \[h(d(a)[a_1|\ldots|a_k]b 
    -\sum_{i=1}^{k}(-1)^{\eps_i}a[a_1|\ldots|d(a_i)|\ldots|a_k]b+(-1)^{\eps_{k+1}}a[a_1|\ldots|a_k]d(b)+\]
    \[(-1)^{\Real{a}}aa_1[a_2|\ldots|a_k]b+\sum_{i=2}^{k}(-1)^{\eps_i}a[a_1|\ldots|a_{i-1}a_i|\ldots|a_k]-(-1)^{\eps_k}a[a_1|\ldots|a_{k-1}]a_kb)\]
    which is equal to
    \[1[d(a)|a_1|\ldots|a_k]b 
    -\sum_{i=1}^{k}(-1)^{\eps_i}1[a|a_1|\ldots|d(a_i)|\ldots|a_k]b+(-1)^{\eps_{k+1}}1[a|\ldots|a_k]d(b)+\]
    \[(-1)^{\Real{a}}1[aa_1|a_2|\ldots|a_k]b+\sum_{i=2}^{k}(-1)^{\eps_i}1[a|\ldots|a_{i-1}a_i|\ldots|a_k]-(-1)^{\eps_k}1[a|\ldots|a_{k-1}]a_kb.\]
    Finally, we get \[(D\circ h + h\circ D)(a[a_1|\ldots|a_k]b)=d(1)[a|a_1|\ldots|a_k]b+a[a_1|\ldots|a_k]b=a[a_1|\ldots|a_k]b.\]
    For $a[]b\in B_0(A, \overline{A}, A)^q_{\overline{r}}$, we have
    \[(D\circ h + h\circ D)(a[]b)=D(1[a]b)+1[d(a)]b+(-1)^{\Real{a}}1[a]d(b)\]
    \[-1[d(a)]b+(-1)^{\Real{a}-1}1[a]d(b)+a[]b+(-1)^{\Real{a}}ab+1[d(a)]b+(-1)^{\Real{a}}1[a]d(b).\]
    Hence, $(D\circ h + h\circ D)(a[]b)=a[]b+(-1)^{\Real{a}}ab$. This proves that $h$ is a contracting homotopy on $\ker(q_A)$, it has trivial homology. The map $q_A$ is indeed a quasi-isomorphism. \\
    We just have to show that $\mathbb{B}(A)_\bullet$ is a cofibrant object in $pMod(A^e)$. $A_\bullet$ is equipped with two maps $\eta:(F_{\overline{0}}(\mathbb{S}^0))_\bullet\to A_\bullet$ and $\eps:A_\bullet \to F_{\overline{0}}(\mathbb{S}^0)_\bullet$ such that their composite $\eps \circ \eta$ is the identity. Then we have $\overline{A}_\bullet=A_\bullet/\eta(F_{\overline{0}}(\mathbb{S}^0))_\bullet=\ker(\eps)_\bullet$. $\overline{A}_\bullet$ is cofibrant since it is a retract of $A_\bullet$: 
    \[\overline{A}_\bullet=\ker(\eps)_\bullet\hookrightarrow A_\bullet \twoheadrightarrow A_\bullet/\eta(F_{\overline{0}}S^0)_\bullet=\overline{A}_\bullet.\]
    The suspension $s\overline{A}_\bullet$ is clearly cofibrant. Since $(Ch(R))^{P_X}$ is a monoidal model category (Remark \ref{rem:quillen_adj_pch}), the tensor product of two cofibrant perverse chain complexes is cofibrant. Hence, for $k\in \mathbb{N}$, $(s\overline{A})^{\boxtimes k}_\bullet$ is cofibrant in $(Ch(R))^{P_X}$. This implies that $\mathbb{B}_k(A)_\bullet=(A \boxtimes (s\overline{A})^{\boxtimes k} \boxtimes A)_\bullet$ is cofibrant as a pDG $A_\bullet^e$-module. By induction, we show that $\mathbb{B}_{\leq k, \bullet}=\oplus_{i=0}^k(A \boxtimes (s\overline{A})^{\boxtimes i} \boxtimes A)_\bullet$  is cofibrant equipped with the differential $d=d_0+d_1$ (where $d_0$ is induced by the differential of $A_\bullet$ and $d_1$ is induced by the multiplication of two consecutive terms in a tensor product). Finally, the bar construction is cofibrant as a colimit of cofibrant objects $\mathbb{B}_{\leq k,\bullet}$.
    \end{proof}

    We've shown (under some reasonable assumptions) that the two-sided bar construction $\mathbb{B}(A)_\bullet$ is a cofibrant approximation of the pDGA $A_\bullet$, this implies that
    \[HH^\bullet_\ast(A,M)=H_\ast(\mathbb{B}(A)\boxtimes_{A^e}M)_\bullet=\Tor^{A^e}_\ast(A,M)^\bullet\]
    and 
    \[HH^\ast_\bullet(A,M)=H^\ast(\Hom_{A^e}(\mathbb{B}(A), M)_\bullet)=\Ext^\ast_{A^e}(A,M)_\bullet.\]
    
    When we work on a field $R$, we don't have to put restrictions on $A_\bullet$. We'll need the following lemma which is inspired by \cite[Lemma 6.3]{FHT01}. We omit the proof as it is analogous to the one given by Félix, Halperin and Thomas. 

    \begin{lemma}\label{lem:sfreequotients-sfree}
    Let $M_\bullet$ be a pDG $A_\bullet$-module. If it is the union of an increasing sequence of submodules 
    \[0=F_{-1, \bullet}\subset F_{0, \bullet} \subset F_{1, \bullet} \subset \ldots \] 
    such that each $F_{k, \bullet}/F_{k-1, \bullet}$ is $A_\bullet$-semi-free. Then $M_\bullet$ itself is $A_\bullet$-semi-free.    
    \end{lemma}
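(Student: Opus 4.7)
I would follow the strategy of \cite[Lemma 6.3]{FHT01}, refining the given filtration $\{F_{k,\bullet}\}$ by incorporating the semi-free filtrations of each quotient and then concatenating.

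First, for each $k\geq 0$, since $(F_k/F_{k-1})_\bullet$ is semi-free, fix a witnessing filtration
\[0 = G^{(k)}_{-1,\bullet} \subset G^{(k)}_{0,\bullet} \subset G^{(k)}_{1,\bullet} \subset \cdots, \qquad (F_k/F_{k-1})_\bullet = \bigcup_{j\geq 0} G^{(k)}_{j,\bullet},\]
with each subquotient $G^{(k)}_{j,\bullet}/G^{(k)}_{j-1,\bullet}$ isomorphic to $(A\boxtimes K^{(k)}_{j})_\bullet$ for a free perverse chain complex $K^{(k)}_{j,\bullet}$ of trivial differential.

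Next, lift to a filtration of $F_{k,\bullet}$ itself by pulling back along the projection $\pi_k:F_{k,\bullet}\to(F_k/F_{k-1})_\bullet$. Setting $\widetilde G^{(k)}_{j,\bullet}:=\pi_k^{-1}(G^{(k)}_{j,\bullet})$, one obtains a chain of pDG $A_\bullet$-submodules
\[F_{k-1,\bullet} = \widetilde G^{(k)}_{-1,\bullet} \subset \widetilde G^{(k)}_{0,\bullet} \subset \widetilde G^{(k)}_{1,\bullet} \subset \cdots \subset F_{k,\bullet}\]
exhausting $F_{k,\bullet}$, whose successive quotients are again of the form $(A\boxtimes K^{(k)}_{j})_\bullet$ thanks to the snake-style identification $\widetilde G^{(k)}_{j,\bullet}/\widetilde G^{(k)}_{j-1,\bullet}\simeq G^{(k)}_{j,\bullet}/G^{(k)}_{j-1,\bullet}$.

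Finally, splice these refinements together using the lexicographic order on pairs $(k,j)$: ascend through $\widetilde G^{(0)}_{0,\bullet}\subset\widetilde G^{(0)}_{1,\bullet}\subset\cdots$ to $F_{0,\bullet}$, then continue with $\widetilde G^{(1)}_{0,\bullet}\subset\widetilde G^{(1)}_{1,\bullet}\subset\cdots$ to $F_{1,\bullet}$, and so on. The union is $M_\bullet$, and every successor quotient is free as a pDG $A_\bullet$-module with trivial differential, exactly as required by the definition of semi-free.

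The main technical point is the bookkeeping needed to reconcile this natural $\omega^2$-type concatenation with the $\mathbb{N}$-indexing in the definition; this is handled exactly as in \cite{FHT01} by reading the data as a well-ordered basis of $M_\bullet$ as a graded $A_\bullet$-module whose differential lowers a well-defined rank, and then reindexing the (countably many) basis elements by $\mathbb{N}$ so that the rank is preserved. The perverse structure introduces no additional obstacle, since every construction above is performed levelwise over $\pGM$ and the functors $\pi_k^{-1}$, $\bigcup$, and $\boxtimes$ all respect the perverse grading. The statement is then consistent with Theorem \ref{thm:char_cofib_Amod}.
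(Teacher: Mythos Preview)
Your proposal is correct and matches the paper's intention: the paper itself omits the proof, saying only that it is analogous to \cite[Lemma 6.3]{FHT01}, and your outline is precisely the FHT argument adapted to the perverse setting. The refinement by pullback along $\pi_k$, the identification of successive quotients, and the well-ordered basis/reindexing trick to collapse the $\omega^2$-indexed filtration to an $\mathbb{N}$-indexed one are exactly the steps in that reference; the perverse grading indeed plays no role beyond being carried along.
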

    We now prove that the bar construction is always cofibrant when the coefficient ring is a field.    
    \begin{proposition}\label{prop:bar-cofib-field}
    Let $A_\bullet$ be a perverse DGA. If $R$ is a field then the map $q_A$
    is a cofibrant approximation of $A_\bullet$ in $pMod(A^e)$. 
    \end{proposition}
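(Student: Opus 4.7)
The plan is to adapt the proof of Proposition \ref{prop:bar-cofib-approx} to the field setting, identifying where the hypotheses ``augmented'' and ``cofibrant in $(Ch(R))^{\pGM}$'' were actually used and showing that the corresponding conclusions are available automatically over $\mathbb{F}$.

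First, the argument that $q_A$ is an acyclic fibration transfers verbatim. Degreewise surjectivity of $q_A$ is immediate, and the contracting homotopy $h$ on $\ker(q_A)$ is built out of the unit $\eta$ and the multiplication of $A_\bullet$ only, so its definition and the identity $\id = Dh + hD$ on $\ker(q_A)$ do not use an augmentation or any model-theoretic hypothesis on $A_\bullet$. Hence $\ker(q_A)$ is acyclic and $q_A$ is a perversity-wise quasi-isomorphism.

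The remaining task is to show that $\mathbb{B}(A)_\bullet$ is cofibrant in $\pMod(A^e)$. In the earlier argument, the augmentation $\epsilon$ was used only to realise $\overline{A}_\bullet$ as a retract of $A_\bullet$, and the cofibrancy of $A_\bullet$ was then invoked to deduce cofibrancy of $\overline{A}_\bullet$ in $(Ch(R))^{\pGM}$. Over a field I would instead appeal directly to Appendix \ref{appendix:chara-cofib-field}: the characterization of cofibrant perverse chain complexes over $\mathbb{F}$ should imply that $\overline{A}_\bullet = A_\bullet/\eta(F_{\overline{0}}(\mathbb{S}^0))_\bullet$ is cofibrant as a perverse chain complex without any further hypothesis on $A_\bullet$. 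Once this is established, the rest of the proof of Proposition \ref{prop:bar-cofib-approx} carries through: monoidality of the model structure (Remark \ref{rem:quillen_adj_pch}) propagates cofibrancy to $s\overline{A}_\bullet$ and to each tensor power $(s\overline{A})^{\boxtimes k}_\bullet$; the free $A^e$-module functor $A \boxtimes (-) \boxtimes A$ is left Quillen, so each $\mathbb{B}_k(A)_\bullet$ is cofibrant in $\pMod(A^e)$; and the increasing filtration by the sub-pDG $A^e$-modules $\mathbb{B}_{\leq k}(A)_\bullet$, combined with Lemma \ref{lem:sfreequotients-sfree} and Theorem \ref{thm:char_cofib_Amod}, yields cofibrancy of the colimit $\mathbb{B}(A)_\bullet$.

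The hard part, and the one genuinely dependent on the field hypothesis, is the step where $\overline{A}_\bullet$ is asserted to be cofibrant in $(Ch(\mathbb{F}))^{\pGM}$. This is precisely what Appendix \ref{appendix:chara-cofib-field} is designed to deliver; should its characterization turn out to be narrower than ``every perverse chain complex is cofibrant,'' the fallback I have in mind is to build an explicit semi-free resolution of $\overline{A}_\bullet$ using the degreewise splittings $\overline{A}^n \cong Z^n \oplus C^n$ available over a field, and then transport this structure through the bar filtration by applying Lemma \ref{lem:sfreequotients-sfree} and Theorem \ref{thm:char_cofib_Amod} to conclude semi-freeness, hence cofibrancy, of $\mathbb{B}(A)_\bullet$.
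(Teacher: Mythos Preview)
Your primary approach has a genuine gap. Appendix \ref{appendix:chara-cofib-field} does \emph{not} establish that every perverse chain complex over a field is cofibrant; on the contrary, one of its propositions shows that the structure morphisms of any cofibrant perverse chain complex are necessarily injections, and the sufficient criterion given there (the minimum condition together with injective structure maps) applies only to p-filtrations. For an arbitrary pDGA $A_\bullet$ there is no reason for the structure maps of $\overline{A}_\bullet$ (or of $A_\bullet$ itself) to be injective, so you cannot conclude that $\overline{A}_\bullet$ is cofibrant in $(Ch(\mathbb{F}))^{\pGM}$, and the reduction to the argument of Proposition \ref{prop:bar-cofib-approx} breaks down at precisely that step.

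Your fallback is the correct route, and it is essentially what the paper does --- though the paper never passes through cofibrancy of $\overline{A}_\bullet$ at all. It shows directly that $\mathbb{B}(A)_\bullet$ is $A^e_\bullet$-semi-free, hence cofibrant by Theorem \ref{thm:char_cofib_Amod}. The bar filtration $\mathbb{B}_{\leq k,\bullet}$ has successive quotients isomorphic to $(A \boxtimes (s\overline{A})^{\boxtimes k} \boxtimes A)_\bullet$ carrying only the internal differential $d_0$ (since $d_1$ lowers the filtration); each $(s\overline{A})^{\boxtimes k}_\bullet$ is then seen to be $R$-semi-free via the two-step filtration $0 \subset \ker(d_0)_\bullet \subset (s\overline{A})^{\boxtimes k}_\bullet$, whose two quotients have trivial differential and are degree-wise free because $R$ is a field. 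Lemma \ref{lem:sfreequotients-sfree} assembles these into $A^e$-semi-freeness of $\mathbb{B}(A)_\bullet$. This is exactly your ``degreewise splitting'' idea, applied to each $(s\overline{A})^{\boxtimes k}_\bullet$ rather than to $\overline{A}_\bullet$ alone, and without any intermediate appeal to cofibrancy in $(Ch(\mathbb{F}))^{\pGM}$.
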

    \begin{proof}
     As shown in the previous proof, $q_A$ is an acyclic fibration. To prove that $\mathbb{B}(A)_\bullet$ is a cofibrant object in $pMod(A^e)$, we're going to show that it is $A_\bullet^e$-semi-free. Recall that its differential is given by $d=d_0+d_1$. Consider the following filtration on $\mathbb{B}(A)_\bullet$
    \[0 \subset \mathbb{B}_{\leq 0, \bullet}\subset \mathbb{B}_{\leq 1, \bullet} \subset \ldots \mathbb{B}_{\leq k, \bullet} \subset \ldots \text{ with } \mathbb{B}_{\leq k, \bullet}=\oplus_{i=0}^k(A \boxtimes (s\overline{A})^{\boxtimes i} \boxtimes A)_\bullet.\]
    Then, for $k\in \mathbb{N}$, the quotient $\mathbb{B}_{\leq k, \bullet}/\mathbb{B}_{\leq k-1, \bullet}$ is isomorphic as a perverse DG $A_\bullet^e$-module to $(A \boxtimes (s\overline{A})^{\boxtimes k} \boxtimes A)_\bullet$. Note that the differential on the quotient is given by $d_0$ since $d_1(\mathbb{B}_{\leq k, \bullet})\subset \mathbb{B}_{\leq k-1, \bullet}$. \\
    The following filtration of $(s\overline{A})^{\boxtimes k}_\bullet$ shows that it is semi-free:
    \[0 \subset \ker (d_0)_\bullet \subset (s\overline{A})^{\boxtimes k}_\bullet.\]
    Indeed, the differential $d_0$ on each successive quotient is trivial and since $R$ is a field, the complexes are degree-wise free. This shows that $(s\overline{A})^{\boxtimes k}_\bullet$ is $R$-semi-free, hence, \[\mathbb{B}_{\leq k, \bullet}/\mathbb{B}_{\leq k-1, \bullet}\simeq (A\boxtimes(s\overline{A})^{\boxtimes k}\boxtimes A)_\bullet\] is $A_\bullet^e$-semi-free. We conclude using Lemma \ref{lem:sfreequotients-sfree}. 
    \end{proof}

\section{Derived Poincaré duality algebras}\label{sect:DPDA}
In \cite{Men09}, the Hochschild cohomology of $C^\ast(M; \mathbb{F})$, the singular cochains  of a compact, simply-connected, oriented, smooth manifold $M$, is equipped with a BV-algebra structure where the operator $\Delta$ is given by the dual of \emph{Connes boundary map} $B^\vee$. Actually, $B^\vee$ is defined on the linear dual of the Hochschild chain complex $(HC_\ast(C^\ast(M; \mathbb{F})))^\vee\simeq HC^\ast(C^\ast(M; \mathbb{F}), C^\ast(M; \mathbb{F})^\vee)$ and $\Delta$ is given by
\[\Delta= \Dual^{-1}\circ B^\vee \circ \Dual\]
where
\[\Dual: HH^\ast(C^\ast(M; \mathbb{F})) \xrightarrow{\simeq} HH^\ast(C^\ast(M; \mathbb{F}), C^\ast(M; \mathbb{F})^\vee)\]
is an isomorphism of $HH^\ast(C^\ast(M; \mathbb{F}))$-modules. We would like to obtain a similar result for the Hochschild cohomology of the blown-up intersection cochain complex. First, one can consider a more general context. The cochain complex $C^\ast(M; \mathbb{F})$ equipped with $\Dual$ is an example of \emph{derived Poincaré duality algebra} by Abbaspour's terminology \cite[Section 4]{Abb15}. We now present a similar notion for pDGAs. Recall that we denote by $DA_\bullet$ the dual of $A_\bullet$ in the category of perverse chain complexes (Example \ref{ex:pmonoidal_chcplx}).

\begin{defi}
    We say that a pDGA $A_\bullet$ is a \emph{perverse derived Poincaré duality algebra} (pDPDA) if it isomorphic to its dual $DA_\bullet$ in the derived category of $A_\bullet$-bimodules.
\end{defi}

\begin{rem}\label{rem:pDPDA}
    In other words, if $A_\bullet$ is pDPDA then there exist, in the category of a $A_\bullet$-bimodules, a cofibrant approximation $q_A: P_\bullet\xrightarrow{\simeq} A_\bullet$ and a quasi-isomorphism of $c:P_\bullet\xrightarrow{\simeq} DA_\bullet$ that fit in the following zigzag:
    \[A_\bullet \xleftarrow[\simeq]{q_A} P_\bullet \xrightarrow[\simeq]{c} DA_\bullet.\]
    In particular, we can take the bar construction $P_\bullet=\mathbb{B}(A)_\bullet$ (when it is cofibrant as a pDG $A_\bullet$-bimodule). We have the following commutative diagram where the vertical maps are isomorphisms
    \[\adjustbox{scale=0.8}{\begin{tikzcd}[column sep = huge]
	{\Hom_{A^e}(\mathbb{B}(A),A)_\bullet} & {\Hom_{A^e}(\mathbb{B}(A),\mathbb{B}(A))_\bullet} & {\Hom_{A^e}(\mathbb{B}(A),DA)_\bullet} \\
	{\Hom_{Ch(R)^{\pGM}}(T(s\overline{A}),A)_\bullet} & {\Hom_{Ch(R)^{\pGM}}(T(s\overline{A}),\mathbb{B}(A))_\bullet} & {\Hom_{Ch(R)^{\pGM}}(T(s\overline{A}),A)_\bullet}
	\arrow["{\Hom_{A^e}(\mathbb{B}(A),q_A)}"', from=1-2, to=1-1]
	\arrow["{\Hom_{A^e}(\mathbb{B}(A),c)}", from=1-2, to=1-3]
	\arrow["{\Hom_{Ch(R)^{\pGM}}(T(s\overline{A}),c)}"', from=2-2, to=2-3]
	\arrow["{\Hom_{Ch(R)^{\pGM}}(T(s\overline{A}),q_A)}.", from=2-2, to=2-1]
	\arrow["\simeq"{marking}, draw=none, from=2-1, to=1-1]
	\arrow["\simeq"{marking}, draw=none, from=2-3, to=1-3]
	\arrow["\simeq"{marking}, draw=none, from=2-2, to=1-2]
    \end{tikzcd}}\]
    By Remark \ref{rem:quillen_adj_pch}, the bottom arrows are all quasi-isomorphisms. Hence, when we take homology, we get an isomorphism:
    \[HH^\ast_\bullet(A) \xleftarrow[\simeq]{HH^\ast(A, q_A)} HH^\ast_\bullet(A, P) \xrightarrow[\simeq]{HH^\ast(A,c)} HH^\ast_\bullet(A, DA).\]
    Notice that $[q_A]$, which is the unit of the left $HH^\ast_\bullet(A)$-module $HH^\ast_\bullet(A)$, is sent to $[Id_P]$ and finally to $[c]$. Hence, this isomorphism corresponds to the action of $HH^\ast_\bullet(A)$ on $[c]$ (given at the end of subsection \ref{subsect:Hoch_via_bar}). If $[c]\in HH^{j}_{\overline{q}}(A, DA)$, it is an isomorphism of $HH^\ast_\bullet(A)$-modules of degree $j$ and of perverse degree $\overline{q}$.
\end{rem}

In the next subsection, we show how to extend the perverse Gerstenhaber algebra structure on $HH^\ast_\bullet(A)$ (given in Theorem \ref{thm:Hoch_is_Gerst}) into a \emph{perverse Batalin-Vilkovisky algebra} one when $A_\bullet$ is a pDPDA. 
    \begin{defi}\label{def:pBV}
        A \emph{perverse Batalin-Vilkovisky} (pBV) algebra is a perverse Gerstenhaber algebra $(A_\bullet, \cup, [-,-])$ equipped  with a degree -1, perverse degree $\overline{0}$ operator $\Delta: A_\bullet^\ast\to A_\bullet^{\ast-1}$ such that
        \begin{itemize}
            \item $\Delta\circ \Delta =0$,
            \item $(-1)^{\Real{a}}[a,b]=\Delta(a\cup b)-\Delta(a)\cup b - (-1)^{\Real{a}}a\cup \Delta(b)$ for any $a,b\in A_\bullet$.
        \end{itemize}
        In other words, $\Delta$ is a differential and its deviation from being a derivation with respect to $\cup$ in encoded by the Lie bracket $[-,-]$.
    \end{defi}
In the latter subsections, we will give a sufficient condition for $A_\bullet$ to be a pDPDA and check that it holds for the blown-up intersection cochain complex of a pseudomanifold. 

    \subsection{BV algebra structure on Hochschild cohomology}
    Throughout this subsection, $A_\bullet$ will denote a pDGA. To make $HH^\ast_\bullet(A)$ a perverse BV algebra we will rely on Menichi's proof for differential graded algebras \cite[Proposition 12]{Men09} and use the notion of \emph{Tamarkin-Tsygan calculus} \cite[Section 3.2]{TT05}
    \begin{defi}
        A \emph{Tamarkin-Tsygan calculus} is given by a perverse Gerstenhaber algebra $(\mathcal{V}^\ast_\bullet, \cup, [-,-])$ and a perverse graded module $\Omega^\ast_\bullet$ equipped with
        \begin{itemize}
            \item a structure of perverse graded module over the pDGA $(\mathcal{V}^\ast_\bullet, \cup)$ (the corresponding action is denoted by $i_a$ for $a\in \mathcal{V}^\ast_\bullet$),
            \item a structure of perverse graded module over the perverse graded Lie algebra $((s\mathcal{V})^\ast_\bullet, [-,-])$ (the corresponding action is denoted by $L_a$ for $a\in \mathcal{V}^\ast_\bullet$),
            \item a degree -1 differential $B:\Omega^\ast_\bullet \to \Omega^{\ast-1}_\bullet$
        \end{itemize}
        such that, for any $a,b\in \mathcal{V}^\ast_\bullet$
        \begin{enumerate}[label=\roman*)]
            \item $i_{[a,b]}=L_a\circ i_b - (-1)^{\Real{b}(\Real{a}+1)} i_b\circ L_a$,
            \item $L_{a\cup b}=L_a\circ i_b + (-1)^{\Real{a}}i_a\circ L_b$, 
            \item $L_a=B\circ i_a - (-1)^{\Real{a}} i_a\circ B$
        \end{enumerate}
        where $\circ$ denotes the composition.
    \end{defi}
   
    We now define various operators on the Hochschild chain complex. Using the fact that they form a calculus, we will exhib a BV-algebra structure. 
    \begin{defi}
        Let $\overline{p}\in\pGM$ and consider a cochain $f\in HC^\ast_{\overline{p}}(A)$. There exists morphisms of chain complexes
        \begin{align*}
            i_f:&HC_\ast^\bullet(A) \to HC_{\ast+\Real{f}}^{\bullet+\overline{p}}(A), \\
            L_f:&HC_\ast^\bullet(A) \to HC_{\ast+\Real{f}-1}^{\bullet+\overline{p}}(A) 
        \end{align*}
        given for $a=a_0[a_1|\ldots|a_m]\in HC_\ast^\bullet(A)$ by
        \begin{align*}
            i_f(a_0[a_{1,m}])&:= \sum_{k=1}^m (-1)^{\Real{a_0}\Real{f}}a_0f[a_{1,k}]\otimes[a_{k+1}|\ldots|a_m], \\
            L_f (a_0[a_{1,m}])&:= \sum_{l=1}^m \left( 
            \sum_{k=1}^{m-l} (-1)^{(\Real{f}-1)(\sum_{i=0}^k \Real{s(a_i)})} a_0[a_{1,k}|f[a_{k+1,k+l}]|\ldots|a_m] \right.\\
            &\left.+\sum_{k=m-l+1}^m (-1)^{\Real{f}-1+\sum_{i\leq k}\Real{s(a_i)}\sum_{i> k}\Real{s(a_i)}}
            f[a_{k+1,m}|a_{0,l+k-m-1}]\otimes [a_{l+k-m,k}] \right)
        \end{align*}
    \end{defi}
    In our case, $B$ will be \emph{Connes' boundary map}. 
    \begin{defi}
        The \emph{Connes' boundary map} is a degree $-1$ and perverse degree $\overline{0}$ morphism of perverse chain complexes
        \[B:HC_\ast^\bullet(A) \to HC_{\ast-1}^\bullet(A)\]
        given by
        \[B(a_0[a_1|\ldots|a_m]):=\sum_{i=0}^m (-1)^{\sum_{k<i}\Real{s(a_k)}\sum_{k\geq i}\Real{s(a_k)}}1[a_i|\ldots|a_m|a_0|\ldots|a_{i-1}]\]
        for $a_0[a_1|\ldots|a_m]\in HC_\ast^\bullet(A)$ .
    \end{defi}
    \begin{rem}
        Similarly to \cite[Chapter 2]{Lod98}, one can show that
        \[B\circ D_\ast + D_\ast \circ B=0.\]
        Hence, the Connes' boundary map induces a morphism on Hochschild homology.
    \end{rem}
    \begin{defi}
        The \emph{dual of Connes' boundary map} is defined on $D(HC_\ast(A))_\bullet^\ast$, the linear dual of the Hochschild chain complex, by
        \[(B^\vee \phi)(a_0[a_{1,m}]):=(-1)^{\Real{\phi}}\sum_{i=0}^m (-1)^{\sum_{k<i}\Real{s(a_k)}\sum_{k\geq i}\Real{s(a_k)}}\phi(1[a_i|\ldots|a_m|a_0|\ldots|a_{i-1}])\]
        for $a_0[a_1|\ldots|a_m]\in HC_\ast^\bullet(A)$ and $\phi\in D(HC_\ast(A))_\bullet^\ast.$
    \end{defi}
    \begin{rem}
        By the internal tensor-hom adjunction (Remark \ref{rem:internal_tenhom_adj}), we have
        \[\Hom_{(Ch(R))^{\pGM}}(HC_\ast(A) , F_{\overline{0}}(\mathbb{S}^0))_\bullet \simeq HC^\ast_\bullet(A, \Hom_{(Ch(R))^{\pGM}}(A , F_{\overline{0}}(\mathbb{S}^0))).\]
        In other words,
        \[D(HC_\ast(A))_\bullet^\ast \simeq HC^\ast_\bullet(A, DA).\]
    \end{rem}
    The following proposition is the perverse analogue of a result due to Daletskii, Gel'fand and Tsygan, see \cite{DGT89}. A detailed proof for $A_\infty$-algebras is given in \cite[Theorem 1.1]{CLY22}. Chen, Lyu and Yang's computations are also valid for perverse objects.  
    \begin{proposition}\label{prop:Hoch_calculus}
        $(HH^\ast_\bullet(A), HH_\ast^\bullet(A), \cup, [-,-], i_\ast, L_\ast, B)$ is a Tamarkin-Tsygan calculus.
    \end{proposition}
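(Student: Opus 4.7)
The plan is to verify the axioms of a Tamarkin-Tsygan calculus one by one, exploiting the fact that the formulas defining $i_f$, $L_f$, $B$, $\cup$, and $[-,-]$ are structurally identical to those in the non-perverse setting, while perverse degrees compose correctly thanks to the monoidal structure of $(Ch(R))^{\pGM}$.

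First, I would check that each of $i_f$, $L_f$, $B$ is a well-defined morphism of perverse chain complexes of the announced degree and perverse degree. This is mostly book-keeping: whenever $f \in HC^\ast_{\overline{p}}(A)$ is evaluated on a tensor whose combined perverse degree is at most $\overline{r}$, the output lands in perverse degree at most $\overline{p}\oplus\overline{r}$, which is compatible with the colimits defining $\boxtimes$. I would then verify that $B^2=0$, that $B$ anti-commutes with the Hochschild differential $D_\ast$ (as stated in the remark after the definition of $B$), and that $i_f$ and $L_f$ are chain maps (up to the usual sign coming from their degree), so that they descend to (co)homology. These computations are identical to the classical ones in \cite[Chapter 2]{Lod98} and carry over verbatim.

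Next, I would establish the two module structures. The identity $i_{f\cup g}=i_f\circ i_g$ (up to signs) is a direct computation on bar chains and endows $HH_\ast^\bullet(A)$ with a structure of perverse graded module over $(HH^\ast_\bullet(A),\cup)$. The Lie module axiom $L_{[f,g]}=[L_f,L_g]$ is then deduced from the higher pre-Jacobi identities of Proposition \ref{prop:highJacobi}, by expanding both sides using the brace operator. In both cases, the signs and perverse degrees that appear on either side match those in the analogous non-perverse statements, because $\oplus$ is associative and functorial with respect to the order relation on $\pGM$.

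The main obstacle will be verifying the three calculus identities
\[i_{[a,b]}=L_a\circ i_b - (-1)^{\Real{b}(\Real{a}+1)} i_b\circ L_a,\quad L_{a\cup b}=L_a\circ i_b + (-1)^{\Real{a}}i_a\circ L_b,\quad L_a=B\circ i_a - (-1)^{\Real{a}} i_a\circ B.\]
These are delicate combinatorial identities on bar chains, but they have been established in full generality for $A_\infty$-algebras in \cite[Theorem 1.1]{CLY22} by purely formal manipulations with brace-type operators. Since our operators are given by the same formulas (only with additional perverse-degree bookkeeping), the same calculation goes through term by term: each individual sum appearing in Chen-Lyu-Yang's argument has a perverse-degree assignment that matches on both sides by the associativity of $\oplus$ and by the fact that $f$, $a_i$, and reordered tensors keep their perverse degrees. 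The cancellations producing the right-hand sides are formal in the brace operator and therefore insensitive to the perverse grading. Once these three identities are confirmed, we conclude that $(HH^\ast_\bullet(A), HH_\ast^\bullet(A), \cup, [-,-], i_\ast, L_\ast, B)$ is a Tamarkin-Tsygan calculus in the perverse sense.
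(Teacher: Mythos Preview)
Your proposal is correct and matches the paper's own treatment: the paper does not give a detailed proof either, but simply remarks that this is the perverse analogue of the Daletskii--Gel'fand--Tsygan result and that the computations in \cite[Theorem 1.1]{CLY22} for $A_\infty$-algebras carry over verbatim to perverse objects. Your outline is in fact more detailed than what the paper provides, since you spell out which identities need checking and why the perverse bookkeeping is harmless.
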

    As a corollary, one gets the following result.
    \begin{lemma}[{\cite[Lemma 16]{Men09}}]\label{lem:lemma_Menichi}
    For any $f,g\in HH^\ast_\bullet(A)$ and $c\in HH^\ast_\bullet(A, DA)$, we have
        \begin{align*}
            [f,g].c=&(-1)^{\Real{f}}B^\vee((f\cup g).c)-f.B^\vee(g.c) \\
            &+(-1)^{(\Real{f}-1)(\Real{g}-1)}g.B^\vee(f.c)+(-1)^{\Real{g}}(f\cup g).B^\vee(c).
        \end{align*}
    \end{lemma}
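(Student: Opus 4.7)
The plan is to derive this identity directly from the three axioms of the Tamarkin-Tsygan calculus established in Proposition \ref{prop:Hoch_calculus}, by dualizing them through the identification $D(HC_\ast(A))_\bullet^\ast \simeq HC^\ast_\bullet(A, DA)$ recalled just before the lemma. Menichi's Lemma 16 is exactly the dual statement of a suitable combination of axioms i) and iii), and the perverse setting does not change the sign combinatorics since perverse degrees only add up on the left (and are carried along passively by $B$, which has perverse degree $\overline{0}$).

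First I would pin down precisely how the left action of $HH^\ast_\bullet(A)$ on $HH^\ast_\bullet(A,DA)$ translates under the duality. A direct inspection of the formula given at the end of Subsection \ref{subsect:Hoch_via_bar}, together with the explicit formula for $i_f$ from Proposition \ref{prop:Hoch_calculus}, shows that the action is (up to a sign depending on $\Real{f}$ and $\Real{c}$) the transpose of $i_f$: for $c \in HC^\ast_\bullet(A,DA)$ viewed as an element $\tilde c \in D(HC_\ast(A))^\ast_\bullet$, we have
\[
(f.c)\,(\alpha) \;=\; \pm\, \tilde c\bigl(i_f(\alpha)\bigr)
\quad \text{for } \alpha \in HC_\ast^\bullet(A).
\]
Similarly, by definition, $B^\vee$ is the transpose of $B$. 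Consequently $f.(B^\vee c)$, $B^\vee(f.c)$, $(f\cup g).c$ and $[f,g].c$ all correspond, on the dual side, to prescribed compositions of the operators $i_f,\, i_g,\, i_{f\cup g}=i_f\circ i_g,\, i_{[f,g]}$ with $B$.

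Next I would combine the axioms of Proposition \ref{prop:Hoch_calculus} on the Hochschild chain side. Axiom i) gives
\[
i_{[f,g]} \;=\; L_f\circ i_g \;-\; (-1)^{\Real{g}(\Real{f}+1)}\, i_g\circ L_f,
\]
and axiom iii) lets me substitute $L_f = B\circ i_f - (-1)^{\Real{f}} i_f\circ B$ and $L_g = B\circ i_g - (-1)^{\Real{g}} i_g\circ B$. Expanding, using $i_{f\cup g}=i_f\circ i_g$, and collecting the four terms (one with $B$ on the outer left, one with $B$ on the outer right, and two sandwiched compositions $i_g\circ B\circ i_f$ and $i_f\circ B\circ i_g$), I obtain an identity of operators on $HC_\ast^\bullet(A)$ of the schematic form
\[
i_{[f,g]} \;=\; \varepsilon_1\, i_{f\cup g}\circ B \;+\; \varepsilon_2\, B\circ i_{f\cup g} \;+\; \varepsilon_3\, i_f\circ B\circ i_g \;+\; \varepsilon_4\, i_g\circ B\circ i_f,
\]
with signs $\varepsilon_i$ that are explicit in $\Real{f}$ and $\Real{g}$. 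Transposing this identity via the duality of the previous step and reordering compositions then yields exactly the claimed formula, with the four terms corresponding in order to $(f\cup g).B^\vee c$, $B^\vee((f\cup g).c)$, $g.B^\vee(f.c)$ and $f.B^\vee(g.c)$.

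The main obstacle is purely bookkeeping: the sign convention for the duality $f.c \leftrightarrow i_f^\vee$ introduces a sign of the form $(-1)^{\Real{f}\Real{c}}$ (or similar) in each translation, and one has to check that when these signs are combined with those coming from axioms i) and iii), and with the graded transpose rule $(\phi\circ \psi)^\vee = (-1)^{\Real{\phi}\Real{\psi}}\psi^\vee\circ \phi^\vee$, they add up to the signs stated in the lemma. I would handle this by fixing the sign convention once for the dual action and then propagating it carefully. No new conceptual input is needed beyond Proposition \ref{prop:Hoch_calculus} and the internal tensor-hom adjunction of Remark \ref{rem:internal_tenhom_adj}, so the perverse case follows by the same computation as in \cite{Men09}, with perverse degrees behaving additively and compatibly throughout.
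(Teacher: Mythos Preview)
Your proposal is correct and follows the same route as the paper. The paper does not spell out a proof either: it simply records that Lemma~\ref{lem:lemma_Menichi} is the dual of Ginzburg's identity (Lemma~\ref{lem:Ginzburg}), which in turn is obtained from the Tamarkin--Tsygan calculus of Proposition~\ref{prop:Hoch_calculus} by substituting axiom~iii) into axiom~i) and using $i_{f\cup g}=i_f\circ i_g$---precisely the expansion you sketch---and then transposing through the isomorphism $HC^\ast_\bullet(A,DA)\simeq D(HC_\ast(A))^\ast_\bullet$.
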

    The above given result is the dual version of a lemma proved by Ginzburg.
    \begin{lemma}[{\cite[proof of Theorem 3.4.3]{Gin06}}]\label{lem:Ginzburg}
        If $(\mathcal{V}^\ast_\bullet, \Omega^\ast_\bullet, \cup, [-,-], i_\ast, L_\ast, B)$ is a Tamarkin-Tsygan calculus, then, for any $a,b\in \mathcal{V}^\ast_\bullet$, we have
        \[i_{[a,b]}=(-1)^{\Real{a}} B\circ i_{a\cup b} - i_a \circ B \circ i_g + (-1)^{(\Real{a}-1)(\Real{b}-1)} i_g\circ B \circ i_a + (-1)^{\Real{b}} i_{a\cup b}\circ B.\]
    \end{lemma}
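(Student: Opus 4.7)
The identity is purely formal: it should follow by eliminating all Lie derivatives $L_\ast$ from the right-hand side, leaving only compositions of interior products $i_\ast$ with $B$, which are then collapsed using the module structure of $\Omega^\ast_\bullet$ over the graded commutative algebra $(\mathcal{V}^\ast_\bullet, \cup)$. In particular, axiom (ii) of a Tamarkin-Tsygan calculus should not be needed.

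Concretely, I would start from axiom (i),
\[i_{[a,b]} = L_a \circ i_b - (-1)^{\Real{b}(\Real{a}+1)} i_b \circ L_a,\]
and expand each occurrence of $L_a$ via axiom (iii) as $L_a = B \circ i_a - (-1)^{\Real{a}} i_a \circ B$. After distributing, the right-hand side becomes a signed sum of four triple composites: $B \circ i_a \circ i_b$, $i_a \circ B \circ i_b$, $i_b \circ B \circ i_a$, and $i_b \circ i_a \circ B$.

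Next, I would invoke the module axiom $i_a \circ i_b = i_{a\cup b}$, together with graded commutativity of the cup product in the form $i_b \circ i_a = (-1)^{\Real{a}\Real{b}} i_{a \cup b}$, to rewrite the first and fourth composites as $B \circ i_{a\cup b}$ and $i_{a\cup b} \circ B$; the middle two composites already appear in the required form.

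What remains is a sign verification — checking that the combinations of $(-1)^{\Real{a}}$, $(-1)^{\Real{b}(\Real{a}+1)}$, and $(-1)^{\Real{a}\Real{b}}$ produce, term by term, the exponents $\Real{a}$, $0$, $(\Real{a}-1)(\Real{b}-1)$, and $\Real{b}$ of the lemma. This bookkeeping is the only real obstacle, but it is manageable because the perverse gradings propagate additively through the three Tamarkin-Tsygan axioms and do not enter into any sign convention. Consequently, the argument reduces to a direct perverse-graded transcription of the classical identity established in \cite{Gin06}.
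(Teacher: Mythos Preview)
Your proposal is correct and coincides with the argument the paper defers to: the paper gives no proof of its own here but cites Ginzburg, and Ginzburg's proof is exactly the substitution you describe --- insert axiom~(iii) into axiom~(i), then collapse $i_a\circ i_b$ and $i_b\circ i_a$ to $i_{a\cup b}$ via the graded-commutative $\mathcal{V}$-module structure on $\Omega$. Your remark that axiom~(ii) is not needed is accurate.
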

    Under some assumptions, we can now exhib a BV-algebra structure on $HH^\ast_\bullet(A)$. The next result is analogue to \cite[Proposition 12]{Men09}.
  
    \begin{letterthm}\label{thm:DPDA_is_BV}
    Let $A_\bullet$ be a derived Poincaré duality algebra and let $\phi:HH^\ast_\bullet(A)\to HH^\ast_\bullet(A,DA)$ be the associated isomorphism of $HH^\ast_\bullet(A)$-bimodules. We define a map $\Delta: HH^\ast_\bullet(A) \to HH^{\ast-1}_\bullet(A)$ by setting 
    \[\Delta=\phi^{-1}\circ B^\vee \circ \phi.\]
    If $\Delta(1)=0$, then the Gerstenhaber algebra $HH^\ast_\bullet(A)$ equipped with $\Delta$ becomes a BV-algebra.
    \end{letterthm}

    \begin{proof}  
    We denote the cup product and the Lie bracket on $HH^\ast_\bullet(A)$ by $\cup$ and $[-,-]$ respectively. We clearly have $\Delta\circ \Delta=0$. We're left with checking that the deviation of $\Delta$ being a derivation with respect to $\cup$ is given by $[-,-]$. Let $q_A: P_\bullet\xrightarrow{\simeq} A_\bullet$ be a cofibrant approximation of $A_\bullet$ and let $c:P_\bullet\xrightarrow{\simeq} DA_\bullet$ be a quasi-isomorphism of $A_\bullet$-modules. The isomorphism $\phi$ corresponds to the action of $HH^\ast_\bullet(A)$ on $[c]$, see Remark \ref{rem:pDPDA}. In other words, for any $f\in HH^\ast_\bullet(A)$ we have 
    \[\Delta(f).[c] := B^\vee(f.[c]).\]
    Hence, the condition $\Delta(1)=0$ is equivalent to asking that $B^\vee([c])=0$.
    By Lemma \ref{lem:lemma_Menichi}, we get for $f, g\in HH^\ast_\bullet(A)$
        \begin{align*}
            [f,g].c=&(-1)^{\Real{f}}\Delta(f\cup g).c-f.(\Delta(g).c)+(-1)^{(\Real{f}-1)(\Real{g}-1)}g.(\Delta(f).c)\\
            =&(-1)^{\Real{f}}\Delta(f\cup g).c-(f \cup \Delta(g)).c +(-1)^{(\Real{f}-1)(\Real{g}-1)}(g\cup\Delta(f)).c\quad.
        \end{align*}
        This implies that
        \begin{align*}
            [f,g]=&(-1)^{\Real{f}}\Delta(f\cup g)-f \cup \Delta(g)+(-1)^{(\Real{f}-1)(\Real{g}-1)}g\cup\Delta(f)\\
            =&(-1)^{\Real{f}}\Delta(f\cup g)-f \cup \Delta(g)+(-1)^{\Real{f}-1}\Delta(f)\cup g\\
            =&(-1)^{\Real{f}}(\Delta(f\cup g)-(-1)^{\Real{f}}f \cup \Delta(g)-\Delta(f)\cup g.
        \end{align*}
    \end{proof}

\subsection{Sufficient condition to be a DPDA}
    Let $M$ be a compact, simply-connected, oriented, smooth manifold. In order to show that $C^\ast(M; \mathbb{F})$ is a derived Poincaré duality algebra, Menichi \cite{Men09} uses the fact that, by Poincaré duality, the action of $C^\ast(M; \mathbb{F})$ on a fundamental cycle $\zeta\in C_\ast(M; \mathbb{F})$ gives a quasi-isomorphism of $C^\ast(M; \mathbb{F})$-modules between $C^\ast(M; \mathbb{F})$ and $C_\ast(M; \mathbb{F})\simeq C^\ast(M; \mathbb{F})^\vee$. Then, he lifts this map into a cocycle in the Hochschild cochain complex $HC^\ast(C^\ast(M; \mathbb{F}), C^\ast(M; \mathbb{F})^\vee)$. Similarly, by Proposition \ref{prop:dual-qiso-bup}, we have a quasi-isomorphism of perverse (right) $\widetilde N^\ast_{\overline{\bullet}}(X;\mathbb{F})$-modules from $\widetilde N^\ast_{\bullet}(X;\mathbb{F})$ to its dual $\mathbb{D}\widetilde N^\bullet_\ast(X;\mathbb{F})$. However, it is not clear how to lift it to $HH^\ast_\bullet(\widetilde N^\ast_{\bullet}(X;\mathbb{F}), \mathbb{D}\widetilde N^\bullet_\ast(X;\mathbb{F}))$. More generally, we ask ourselves if it is possible to lift a quasi-isomorphism of $A_\bullet$-modules between a pDGA $A_\bullet$ and its dual $DA_\bullet$ into a cocycle in $HC^\ast_\bullet(A, DA)$. Things are simplified when $A_\bullet$ is commutative. 
    
    \begin{proposition}\label{prop:Commu_DPDA}
    Let $A_\bullet$ be a commutative pDGA. If there is a cycle $M\in DA_\bullet$ which induces an isomorphism of left $H(A)_\bullet$-modules
    \begin{align*}
        H(A)_\bullet &\xrightarrow{\simeq} H(DA)_\bullet \\
        a &\mapsto a.[M]
    \end{align*}
    then $A_\bullet$ is a derived Poincaré duality algebra.
    \end{proposition}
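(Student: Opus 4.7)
The plan is to produce, for $A_\bullet$ a commutative pDGA satisfying the hypothesis, a zigzag
\[ A_\bullet \xleftarrow[\simeq]{q_A} P_\bullet \xrightarrow[\;c\;]{\simeq} DA_\bullet \]
of perverse $A_\bullet$-bimodule maps with $P_\bullet$ cofibrant in $\pMod(A^e)$, which is exactly what is required by Remark \ref{rem:pDPDA}. The hypothesis already gives a left-module candidate $\phi\colon A_\bullet \to DA_\bullet$, $\phi(a):=a.M$, and the strategy is to (i) check that $\phi$ is an honest quasi-isomorphism of perverse chain complexes, (ii) use commutativity of $A_\bullet$ to promote $\phi$ to a morphism of bimodules, and (iii) compose with any cofibrant approximation of $A_\bullet$ in $\pMod(A^e)$ to obtain $c$.

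First, $\phi$ is a chain map: since $M$ is a cycle in $DA_\bullet$, the Leibniz rule gives
\[ d_{DA}(\phi(a)) \;=\; d_A(a).M + (-1)^{\Real{a}} a.d_{DA}(M) \;=\; \phi(d_A(a)). \]
The hypothesis that $a\mapsto a.[M]$ is an isomorphism of left $H(A)_\bullet$-modules is precisely the statement that $H(\phi)$ is an isomorphism of perverse graded $R$-modules, so $\phi$ is a quasi-isomorphism in $(Ch(R))^{\pGM}$. Next, because $A_\bullet$ is commutative, the multiplication $\mu\colon A\boxtimes A\to A$ descends through the symmetry isomorphism $A\boxtimes A^{op} \xrightarrow{\simeq} A\boxtimes A$ to a morphism of monoids $A^e_\bullet \to A_\bullet$. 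Pullback along this morphism assigns, functorially, an $A_\bullet$-bimodule structure to every left $A_\bullet$-module in which the right action is determined by the left action through the braiding; this is the canonical bimodule structure carried by $A_\bullet$ itself and by $DA_\bullet$. With respect to these structures, the left-linearity of $\phi$ together with a direct check using the signs of the symmetry isomorphism — namely $\phi(a).b = (-1)^{\Real{b}(\Real{a}+\Real{M})} b.\phi(a) = (-1)^{\Real{b}(\Real{a}+\Real{M})} \phi(b.a) = \phi(a.b)$ — shows that $\phi$ is a morphism of perverse $A_\bullet$-bimodules.

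Finally, pick any cofibrant approximation $q_A\colon P_\bullet \xrightarrow{\simeq} A_\bullet$ in $\pMod(A^e)$; existence is guaranteed by Theorem \ref{thm:char_cofib_Amod}, and in the presence of the hypotheses of Proposition \ref{prop:bar-cofib-approx} or when $R$ is a field (Proposition \ref{prop:bar-cofib-field}) one may take the two-sided bar construction $P_\bullet=\mathbb{B}(A)_\bullet$. Set $c:=\phi\circ q_A\colon P_\bullet\to DA_\bullet$. It is a morphism of perverse $A_\bullet$-bimodules as a composite of such maps, and it is a quasi-isomorphism because both $\phi$ and $q_A$ are; this delivers the required zigzag and hence establishes that $A_\bullet$ is a perverse derived Poincaré duality algebra.

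The only step that needs genuine attention is the bimodule-enhancement of $\phi$ in Step 2: one must verify the sign bookkeeping for the symmetric-bimodule structure on $DA_\bullet$ inherited from commutativity of $A_\bullet$ in the closed symmetric monoidal category $(Ch(R))^{\pGM}$. The remaining steps are formal consequences of the model-categorical framework developed in Section \ref{sect:Hoch-coho}.
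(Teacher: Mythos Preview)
Your proposal is correct and follows essentially the same route as the paper. Both arguments define the map $\xi_M\colon a\mapsto a.M$ (your $\phi$), observe that it is a quasi-isomorphism by hypothesis, use commutativity of $A_\bullet$ to regard it as an $A^e_\bullet$-linear map, and then precompose with a cofibrant approximation $q_A\colon \mathbb{B}(A)_\bullet\to A_\bullet$ to obtain the zigzag $A_\bullet\xleftarrow{q_A}\mathbb{B}(A)_\bullet\xrightarrow{\xi_M\circ q_A}DA_\bullet$. The paper packages the bimodule promotion more abstractly: it simply records that $\mu\colon A^e_\bullet\to A_\bullet$ is a morphism of pDGAs when $A_\bullet$ is commutative, so restriction along $\mu$ gives a functorial lift $\widetilde\mu\colon \Hom_A(A,DA)_\bullet\to \Hom_{A^e}(\mathbb{B}(A),DA)_\bullet$, $g\mapsto g\circ q_A$, and sets $c=\widetilde\mu(\xi_M)$. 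This is exactly your Step~(ii) and Step~(iii) combined, but it avoids the hand computation with signs. One caution on that computation: your displayed chain $\phi(a).b=(-1)^{\Real{b}(\Real{a}+\Real{M})}b.\phi(a)=(-1)^{\Real{b}(\Real{a}+\Real{M})}\phi(b.a)=\phi(a.b)$ drops a factor $(-1)^{\Real{a}\Real{b}}$ at the last step (graded commutativity gives $b.a=(-1)^{\Real{a}\Real{b}}a.b$, not $b.a=a.b$), leaving an unresolved $(-1)^{\Real{b}\Real{M}}$; you are right to flag this as the step needing care, and the paper's functorial formulation via $\widetilde\mu$ is the clean way to handle it.
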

    \begin{proof}
    Since $A_\bullet$ is a commutative pDGA, the multiplication
    \begin{align*}
        \mu:    &(A\boxtimes A^{op})_\bullet    \to A_\bullet \\
        ~       &a\boxtimes a'         \mapsto a.a'
    \end{align*}
    is a morphism of pDGA and induces a lift
    \begin{align*}
        \widetilde\mu:  & \Hom_A(A, DA)_\bullet \to \Hom_{A^e}(A, DA)_\bullet \\
         ~          & (A_\bullet\xrightarrow{g} DA_\bullet) \mapsto (B(A,A,A)_\bullet  \xrightarrow{q_A} A_\bullet \xrightarrow{g} DA_\bullet). 
    \end{align*}
    Denote by $\xi_M$ the quasi-isomorphism of $A_\bullet$-modules
    \begin{align*}
        \xi_M:  & A_\bullet\to DA_\bullet \\
        ~       & a \mapsto a.M.   
    \end{align*}
    Set $c=\widetilde\mu(\xi_M)$, it is a quasi-isomorphism in $\Hom_{A^e}(A, DA)_\bullet\simeq HC^\ast_\bullet(A, DA)$. Hence, $A_\bullet$ is a pDPDA and as noted in Remark \ref{rem:pDPDA}, we have an isomorphism of $HH^\ast_\bullet(A)$-modules:
    \begin{align*}
        HH^\ast_\bullet(A) &\xrightarrow{\simeq} HH^\ast_\bullet(A, DA) \\
        f            &\mapsto f.[c].
    \end{align*}       
    One notices that this isomorphism sends a class $[f]\in HH^\ast_\bullet(A)$ to $[\xi_M\circ f]$.
    \end{proof}
    We're going to picture this result differently. Let $ev(1_A):\Ext_A(A, DA)_\bullet\to H(DA)_\bullet$ be the evaluation at the unit $1_A$ of $A_\bullet$ and let $i_A:A_\bullet\hookrightarrow (A\boxtimes A^{op})_\bullet$ be the inclusion in the first factor. When $A_\bullet$ is commutative, we're able to exhib a section to $\Ext_{i_A}(A, DA)$. 
    \[\begin{tikzcd}
	{H(DA)_\bullet} & {\Ext_{A}(A, DA)_\bullet} & {HH_\bullet(A, DA):=\Ext_{A^e}(A, DA)_\bullet}
	\arrow["{H(ev(1_A))}"', "\simeq", from=1-2, to=1-1]
	\arrow["{\Ext_{i_A}(A, DA)}", curve={height=-18pt}, from=1-3, to=1-2]
	\arrow["{\Ext_{\mu}(A, DA)}", curve={height=-18pt}, dashed, from=1-2, to=1-3]
    \end{tikzcd}\]
    Two problems arise when $A_\bullet$ is not commutative:
    \begin{itemize}
        \item it is not clear how to get such a section,
        \item if such a section exists, does it preserve quasi-isomorphisms ?
    \end{itemize}
    We think we should be able to deal with the first problem using operadic arguments. This will be done in a subsequent paper. We now treat the second problem. We denote by $\eval$ the composite
    \[eval:HH^\ast_\bullet(A,DA):=\Ext^\ast_{A^e}(A,DA)_\bullet\xrightarrow{\Ext_{i_A}(A, DA)} \Ext_A^\ast(A, DA)_\bullet\simeq H(DA)_\bullet.\]
    \begin{proposition}\label{prop:Hochdual}
    Let $A_\bullet$ be a pDGA and let $c\in HC^\ast_\bullet(A,DA)$ be a cocycle such that
    the morphism of $H(A)_\bullet$-modules
    \begin{align*}
        H(A)_\bullet &\xrightarrow{\simeq} H(DA)_\bullet \\
        a &\mapsto a.\eval([c])    
    \end{align*}
    is an isomorphism.
    Then $c$ is a quasi-isomorphism and hence, $A_\bullet$ is a derived Poincaré dualité algebra.
    \end{proposition}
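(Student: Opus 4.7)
The plan is to deduce that $c$ is a quasi-isomorphism of perverse chain complexes from the hypothesis on $\eval([c])$, and then to read off the zigzag $A_\bullet\xleftarrow[\simeq]{q_A}\mathbb{B}(A)_\bullet\xrightarrow[\simeq]{c}DA_\bullet$ in the category of perverse $A^e_\bullet$-modules, which realises the required isomorphism in the derived category according to Remark~\ref{rem:pDPDA}.

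The first step is to introduce the canonical left $A_\bullet$-linear chain map $\eta\colon A_\bullet\to\mathbb{B}(A)_\bullet$, $a\mapsto a[\,]1$. It is a section of $q_A$. The contracting-homotopy argument reproduced inside the proof of Proposition~\ref{prop:bar-cofib-approx} shows that $q_A$ is a quasi-isomorphism without invoking any cofibrancy hypothesis on $A_\bullet$; from $q_A\circ\eta=\id$ one therefore gets that $\eta$ induces the inverse of $H(q_A)$ in homology and is itself a quasi-isomorphism. Consequently $c$ will be a quasi-isomorphism if and only if the composite
\[\xi:=c\circ\eta\colon A_\bullet\longrightarrow DA_\bullet\]
is one.

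The second and central step is a direct computation of $\xi$ using the $A^e_\bullet$-linearity of $c$: for $a\in A_\bullet$,
\[\xi(a)\;=\;c(a[\,]1)\;=\;a\cdot c(1[\,]1),\]
so that $H(\xi)([a])=[a]\cdot[c(1[\,]1)]$. I would then identify $[c(1[\,]1)]$ with $\eval([c])$ by unwinding the definition of $\eval$: restriction of scalars along $i_A$ converts the $A^e_\bullet$-cocycle $c$ into a left-$A_\bullet$-linear quasi-representative for a class in $\Ext^\ast_A(A,DA)_\bullet$, and the evaluation isomorphism $\Ext^\ast_A(A,DA)_\bullet\simeq H(DA)_\bullet$, implemented through the resolution $q_A\colon\mathbb{B}(A)_\bullet\to A_\bullet$ and the lift $\eta(1_A)=1[\,]1$ of the unit, sends this class to $[c(1[\,]1)]$. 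With this identification $H(\xi)([a])=[a]\cdot\eval([c])$, which is exactly the morphism hypothesised to be an isomorphism. Hence $\xi$, and therefore $c$, is a quasi-isomorphism.

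At that point the zigzag $A_\bullet\xleftarrow[\simeq]{q_A}\mathbb{B}(A)_\bullet\xrightarrow[\simeq]{c}DA_\bullet$ consists of quasi-isomorphisms of perverse $A^e_\bullet$-modules and exhibits $A_\bullet\simeq DA_\bullet$ in the derived category of $A^e_\bullet$-bimodules; when $\mathbb{B}(A)_\bullet$ is itself cofibrant (as in Proposition~\ref{prop:bar-cofib-approx} or, over a field, Proposition~\ref{prop:bar-cofib-field}) this is already of the form given in Remark~\ref{rem:pDPDA}, and in general one lifts any cofibrant replacement $P_\bullet\to A_\bullet$ through the acyclic fibration $q_A$ and composes with $c$, preserving both quasi-isomorphisms. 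The main obstacle I anticipate is making the identification $\eval([c])=[c(1[\,]1)]$ rigorous at the derived level: it rests on $\mathbb{B}(A)_\bullet$ being a semi-projective resolution of $A_\bullet$ also as a \emph{left} $A_\bullet$-module, which should follow by the same filtration argument used in Proposition~\ref{prop:bar-cofib-field} (applied with the free left action on the first factor), so that the formula $[c]\mapsto[c\circ\eta(1_A)]$ genuinely computes $\Ext^\ast_{A^e}(A,DA)_\bullet\to\Ext^\ast_A(A,DA)_\bullet\xrightarrow{\sim}H(DA)_\bullet$.
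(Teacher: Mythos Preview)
Your proposal is correct and follows essentially the same route as the paper: pick a left-$A_\bullet$-linear section of the bar resolution (your explicit $\eta\colon a\mapsto a[\,]1$ is precisely the paper's $s_A$), observe that $c\circ\eta$ is the map $a\mapsto a\cdot\eval([c])$, which is a quasi-isomorphism by hypothesis, and conclude by two-out-of-three. The paper phrases the argument for an abstract cofibrant approximation $P_\bullet\to A_\bullet$ rather than the bar construction specifically, and simply asserts the existence of the section; your version is more explicit and your final paragraph about lifting a genuine cofibrant replacement through $q_A$ handles a point the paper leaves implicit.
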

    
    \begin{proof}
        Let $q_A: P_\bullet\xrightarrow{\simeq} A_\bullet$ be a cofibrant approximation of $A_\bullet$ as an $A^e_\bullet$-module. We consider $s_A:A_\bullet\xrightarrow{\simeq} P_\bullet$ a section of $q_A$ in the category of $A_\bullet$-modules. \\
        Notice that $\eval$ is the map induced in homology by the following composite:
        \[\begin{array}{ccccc}
            \Hom_{A^e}(P, DA)_\bullet& \xrightarrow{\Hom(s_A, DA)} & \Hom_A(A, DA)_\bullet&\xrightarrow[\simeq]{ev(1_A)} & DA_\bullet \\
            f & \longmapsto & f\circ s_A &\longmapsto & f\circ s_A(1_A) \\
        \end{array}\]
        By hypothesis, the map $c\circ s_A$ which sends $a\in A_\bullet$ to $a.(c\circ s_A(1_A))$ is a quasi-isomorphism. Since $s_A$ is also quasi-isomorphism, by the two out of three property, $c$ is also a quasi-isomorphism. We have the following zigzag of quasi-isomorphisms:
        \[A_\bullet \xleftarrow[\simeq]{q_A} P_\bullet \xrightarrow[\simeq]{c} DA_\bullet.\]
    \end{proof}
    
    \subsection{Application to the blown-up intersection cochain complex}
    Let $X$ be a pseudomanifold. To be able to apply Proposition \ref{prop:Hochdual} to the blown-up intersection cochain complex, we need to lift $[\Gamma_X]$ (Proposition \ref{prop:dual-qiso-bup}) into a class $[Y]\in HH^\ast_\bullet(\widetilde N(X;\mathbb{F}), \mathbb{D}\widetilde N(X;\mathbb{F}))$ such that $\eval([Y])=[\Gamma_X]$. 
    \[
    \begin{tikzcd}
    {\Ext_{\widetilde N}(\widetilde N, \mathbb{D}\widetilde N)_\bullet} \arrow[rd, "H(ev(1_A))"'] \arrow[rr, "?"] &                          & {HH_\bullet(\widetilde N, \mathbb{D}\widetilde N):=\Ext_{\widetilde N^e}(\widetilde N, \mathbb{D}\widetilde N)_\bullet} \arrow[ld, "\eval"] \\
                                                                                                    & H(\mathbb{D}\widetilde N)_\bullet. &                                                                                                                       
    \end{tikzcd}
    \]
    As we mentioned in the previous subsection, this will be done in a subsequent paper. Now, we work on $\mathbb{Q}$ and consider $\widetilde A^\ast_{PL, \overline{\bullet}}(X)$, the blown-up of Sullivan's polynomial forms of $X$ \cite[Example 1.34]{CST18ration}. Recall that we have a De Rham theorem analogue for the blown-up complexes (Proposition \ref{prop:integ_polyforms}). By the commutativity of $\widetilde A^\ast_{PL, \overline{\bullet}}(X)$ we will be able to lift $[\Gamma_X]$. 
    First, using the following result, we obtain a quasi-isomorphism between $\widetilde A^\ast_{PL, \overline{\bullet}}(X)$ and its dual.
    \begin{proposition}\label{prop:DGAqiso-gives-qiso-dual}
        Let $A_\bullet, B_\bullet$ be two pDGA. If there is a quasi-isomorphism $\gamma$ between $A_\bullet$ and $B_\bullet$ then it induces a quasi-isomorphism
        \[\phi:\Hom_B(B,DB)_\bullet\to \Hom_A(A,DA)_\bullet.\]
        Furthermore, if $\gamma$ is an isomorphism of pDGA then $\phi$ preserves quasi-isomorphisms.
    \end{proposition}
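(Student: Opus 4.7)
The plan is to construct $\phi$ by an explicit formula and then exploit the evaluation-at-unit identification $\Hom_A(A, M)_\bullet \simeq M_\bullet$ to reduce both assertions to statements about the linear dual $D\gamma$. Concretely, writing $\gamma\colon A_\bullet \to B_\bullet$, given a $B_\bullet$-linear chain map $f\colon B_\bullet \to DB_\bullet$, I would set $\phi(f) := D\gamma \circ f \circ \gamma$, where $D\gamma\colon DB_\bullet \to DA_\bullet$ denotes the linear dual of $\gamma$ (precomposition $\beta \mapsto \beta \circ \gamma$). A direct computation using that $\gamma$ is a morphism of pDGAs shows that $\phi(f)$ is $A_\bullet$-linear, so it belongs to $\Hom_A(A, DA)_\bullet$, and the assignment $f \mapsto \phi(f)$ is visibly a chain map of perverse chain complexes.

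To prove $\phi$ is a quasi-isomorphism, I would observe that evaluation at the unit provides natural isomorphisms of perverse chain complexes $\Hom_A(A, DA)_\bullet \simeq DA_\bullet$ and $\Hom_B(B, DB)_\bullet \simeq DB_\bullet$, under which $\phi$ identifies with $D\gamma$. Using the explicit formula from Example \ref{ex:pmonoidal_chcplx}, the perversity-$\overline{r}$ component of $D\gamma$ is the $R$-linear dual of $\gamma^{\overline{t}-\overline{r}}$. Over a field $\mathbb{F}$ (the coefficient ring in view of the application to $\widetilde{A}^\ast_{PL,\bullet}(X)$ and $\widetilde{N}^\ast_\bullet(X;\mathbb{F})$), the linear dual is exact and hence preserves quasi-isomorphisms perversity-wise, yielding the first assertion.

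For the second statement, if $\gamma$ is an isomorphism of pDGAs then both $\gamma$ and $D\gamma$ are isomorphisms of perverse chain complexes. Consequently, for any quasi-isomorphism $f \in \Hom_B(B, DB)_\bullet$, the image $\phi(f) = D\gamma \circ f \circ \gamma$ is the composition of a quasi-isomorphism with two isomorphisms, hence itself a quasi-isomorphism.

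The main hurdle I anticipate is the bookkeeping for the $A_\bullet$-action on $DB_\bullet$ induced along $\gamma$: one must check carefully that $D\gamma$ intertwines these actions with the canonical $A_\bullet$-action on $DA_\bullet$, so that $\phi$ really lands in $\Hom_A(A, DA)_\bullet$ and not merely in the larger internal $\Hom$. This verification is notationally heavy but uses only that $\gamma$ respects multiplication together with the definition of $D\gamma$ as precomposition.
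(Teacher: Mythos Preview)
Your approach is in the same spirit as the paper's---both reduce the first assertion to the fact that $D\gamma$ is a quasi-isomorphism via the evaluation-at-unit identification---and your treatment of the second assertion is identical to the paper's. There is, however, a subtle mismatch in the first part.

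The proposition's first clause does \emph{not} assume that $\gamma$ is a morphism of pDGAs; the phrasing ``Furthermore, if $\gamma$ is an isomorphism of pDGA'' signals that multiplicativity is only imposed in the second clause. Your formula $\phi(f)=D\gamma\circ f\circ\gamma$ needs $\gamma$ to be multiplicative for $\phi(f)$ to be $A_\bullet$-linear (your own ``direct computation'' invokes precisely this). Without multiplicativity, the composite $D\gamma\circ f\circ\gamma$ is only a map in the internal $\Hom$, not in $\Hom_A(A,DA)_\bullet$. The paper avoids this by \emph{defining} $\phi$ as the transport of $D\gamma$ along the two evaluation isomorphisms: explicitly, $\phi(f)$ is the $A_\bullet$-linear map $\alpha\mapsto \alpha.(f(1_B)\circ\gamma)$, i.e.\ the map sending $a$ to $f(1_B)(\gamma(a\cdot\alpha))$. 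This is $A_\bullet$-linear by associativity alone, with no hypothesis on $\gamma$ beyond being a chain map. When $\gamma$ \emph{is} multiplicative the two formulas agree (since then $f(\gamma(\alpha))(\gamma(a))=f(1_B)(\gamma(a)\gamma(\alpha))=f(1_B)(\gamma(a\alpha))$), which is exactly why your argument for the second clause goes through unchanged.

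So: replace your definition of $\phi$ by the paper's transport-along-evaluation (which is what you use anyway to prove it is a quasi-isomorphism), and observe that your formula $D\gamma\circ f\circ\gamma$ coincides with it once $\gamma$ is a pDGA morphism---this is the content of the commuting square the paper checks for the second clause.
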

    \begin{proof}
    Let $\gamma:A_\bullet\to B_\bullet$ be a quasi-isomorphism, it induces the following map:
    \[\begin{array}{rccccccc}
       \phi:&\Hom_B(B, DB)_\bullet &\to& DB_\bullet &\longrightarrow& DA_\bullet &\longrightarrow & \Hom_A(A, DA)_\bullet \\
       ~ & f &\mapsto& f(1_B) &\mapsto& f(1_B)\circ \gamma &\mapsto& \left(\alpha \mapsto \alpha.(f(1_B)\circ \gamma)\right)
    \end{array}\]
    where $\alpha.(f(1_B)\circ \gamma)$ is the linear form which maps $a\in A_\bullet$ to $f(1_B)( \gamma (a.\alpha))$.   \\
    $\phi$ induces an isomorphism in homology since all theses maps are quasi-isomorphisms (for the middle one, we can use the universal coefficient theorem and the 5 lemma).\\
    Consider the following diagram
    \[\begin{tikzcd}
    B_\bullet \arrow[r, "f"]                           & DB_\bullet \arrow[d, "\gamma^\vee"] \\
    A_\bullet \arrow[u, "\gamma"] \arrow[r, "\phi(f)"] & DA_\bullet.                         
    \end{tikzcd}\]
    To ensure it commutes, we need $\gamma$ to be an isomorphim of pDGA. Indeed, notice that the composition $\gamma^\vee \circ f\circ \gamma$ takes $\alpha\in A$ to the linear form
    \[a\mapsto (f(\gamma(\alpha))(\gamma(a)).\]
    Since, $f$ is a $B_\bullet$-module morphism, we have
    \[(f(\gamma(\alpha))(\gamma(a))=(\gamma(\alpha).f(1_B))(\gamma(a))=f(1_B)(\gamma(a).\gamma(\alpha)).\]
    On the other hand, $\phi(\alpha)$ is the linear form which maps $a\in A $ to $f(1_B)( \gamma (a.\alpha))$. Hence, the above diagram commutes if
    \[f(1_B)( \gamma (a.\alpha))=f(1_B)(\gamma(a).\gamma(\alpha)),\]
    in other words, when $\gamma$ is a morphism of pDGA. 
    \end{proof}
    
    \begin{corollaire}
    Let $X$ be an $n$-dimensional, compact, second countable and oriented pseudomanifold. Then, there exists a quasi-isomorphism of perverse $\widetilde{A}^{\ast}_{PL,\bullet}(X)$-bimodules:
    \begin{align*}
    \Dual_{A_{PL}}:\widetilde{A}^{\ast}_{PL,\bullet}(X) &\to \Hom(\widetilde{A}^{n-\ast}_{PL,\overline{t}-\bullet}(X), R) \\
    a &\mapsto a.\Gamma_X 
    \end{align*}
    for a certain $\Gamma_X\in \widetilde{A}^n_{PL,\overline{t}}(X)$.
    \end{corollaire}
    \begin{proof}
        The existence of such a quasi-isomorphism follows from the previous proposition, Proposition \ref{prop:integ_polyforms} and Proposition \ref{prop:dual-qiso-bup}. It is a quasi-isomorphism of perverse $\widetilde{A}^{\ast}_{PL,\bullet}(X)$-bimodules since $\widetilde{A}^{\ast}_{PL,\bullet}(X)$ is commutative. In order to get an explicit description, one should consider the following commutative diagram
    \[\begin{tikzcd}
    {\widetilde A^\ast_{PL, \bullet}(X)} \arrow[d, "\int"'] \arrow[r] & {\Hom(\widetilde A^{n-\ast}_{PL, \overline{t}-\bullet}(X),R)}                        \\
    \widetilde N^\ast_\bullet(X;R) \arrow[d, "DP_X"'] \arrow[r, "\Dual_N"]       & {\Hom(\widetilde N^{n-\ast}_{\overline{t}-\bullet}(X;R), R)} \arrow[u, "\int^\vee"'] \\
    I^\bullet C_{n-\ast}(X;R) \arrow[r, "Bid"]                          & {\Hom(I_\bullet C^{n-\ast}(X;R),R)}. \arrow[u, "\Inter^\vee"']                       
    \end{tikzcd}\]
    We denote again by $\Gamma_X$ the image of the unit $1\in A^0_{PL, \overline{0}}$ by the composite 
    \[\int^\vee\circ \Inter^\vee\circ Bid\circ DP_X\circ \int.\] For $w\in\widetilde{A}^{n}_{PL,\overline{t}}(X)$, we have
    \[\Gamma(w)=\int_{\zeta}w\]
    with $\zeta$ a fundamental cycle.
    The quasi-isomorphism of $\widetilde{A}^{\ast}_{PL,\overline{\bullet}}(X)$-bimodule is given by
    \[\alpha \mapsto \alpha.\Gamma :(w\mapsto \int_{\zeta}\alpha.w).\]
    \end{proof}    
    
    \begin{corollaire}
        Let $X$ be an $n$-dimensional, compact, second countable and oriented pseudomanifold. The blown-up of Sullivan's polynomial forms of $X$, $\widetilde A^\ast_{PL, \overline{\bullet}}(X)$, is a derived Poincaré duality algebra.
    \end{corollaire}    
    We would like to show that there exists a Batalin-Vilkovisky algebra structure on $HH^\ast_\bullet(\widetilde A_{PL, \bullet}(X))$. We give a general result for a commutative pDGA. 
    \begin{letterthm}\label{thm:commuDPDA_is_BV}
    If $A_\bullet$ is a commutative pDPDA then then the perverse Gerstenhaber algebra $HH^\ast_\bullet(A)$ can be endowed with a perverse Batalin-Vilkovisky algebra structure.
    \end{letterthm}
    \begin{proof}
    There exists a quasi-isomorphism $c:B(A,A,A)_\bullet \to DA_\bullet$. To apply Theorem \ref{thm:DPDA_is_BV}, it suffices to show that $B^\vee([c])=0$. Recall that we have the following commutative diagram 
    \begin{center}
    \begin{tikzcd}
    {\Ext_A(A,DA)_\bullet} \arrow[rr, "\widetilde\mu"] &                                & {\Ext_{A^e}(A,DA)_\bullet=:HH_\bullet^\ast(A,DA)} \arrow[ld, "\eval"] \\
                                                          & H(DA)_\bullet \arrow[lu, "H(ev(1_A))^{-1}", "\simeq"'] &                                                            
    \end{tikzcd}    
    \end{center}
    and that $[c]=\widetilde\mu \circ H(ev(1_A))^{-1} ([\zeta])$ where $\zeta=\phi(1)$.
    Let's give an explicit description of the lift. The composite $\widetilde\mu \circ ev(1_A)^{-1}$ is of the form
        \begin{align*}
        \widetilde\mu \circ ev(1_A)^{-1}:  & DA_\bullet \to \Hom_A(A, DA)_\bullet\to \Hom_{A^e}(P, DA)_\bullet \\
         ~          & \xi \mapsto (a \xmapsto{g_\xi} a.\xi) \mapsto g_\xi\circ q_A. 
        \end{align*} 
        where $q_A: P_\bullet\xrightarrow{\simeq} A_\bullet$ is a cofibrant approximation of $A_\bullet$ as an $A^e_\bullet$-module. In particular, one can take the following quasi-isomorphism (Proposition \ref{prop:bar-cofib-approx})
            \[q_A:B(A, \overline{A}, A)_\bullet\to A_\bullet \text{ given by }
            q_A(a[a_1, \ldots, a_k]b)=\begin{cases}
        0 & \text{if } k >0 \\
        a.b & \text{if } k=0. 
    \end{cases}\]
    Note that $q_A$ can be written as the composite of $\id_A \boxtimes proj \boxtimes \id_A$ and the product on $A_\bullet$:
    \[B(A, \overline{A}, A)_\bullet=(A\boxtimes T(s\overline{A}) \boxtimes A)_\bullet \xmapsto{\id_A \boxtimes proj \boxtimes \id_A} (A\boxtimes F_{\overline{0}}(S^0) \boxtimes A)_\bullet \simeq (A\boxtimes A)_\bullet \to A\]
    where $proj:T(s\overline{A})_\bullet\to F_{\overline{0}}(S^0)_\bullet$ is given by
        \[proj(a[x]b)=\begin{cases}
        x & \text{if } x\in F_{\overline{0}}(S^0) \\
        0 & \text{else.} 
    \end{cases}\]
    The following sequence of isomorphisms
    \[HC_\bullet(A, DA) \simeq \Hom_{Ch(R)^{\pGM}}(T(s\overline{A}), DA)_\bullet \simeq \Hom_{Ch(R)^{\pGM}}(A\boxtimes T(s\overline{A}), F_{\overline{0}}(\mathbb{S}^0))_\bullet \]
    sends a morphism of $A_\bullet$-bimodules
    \begin{align*}
        \phi:   &B(A,\overline{A},A)_\bullet \to DA_\bullet \\
        ~       &a[x]b \mapsto \phi(a[x]b)
    \end{align*}
    to a morphism $\overline{\phi}\in D(HC^\bullet_\ast(A))_\bullet$ given by
    \begin{align*}
        \overline{\phi}:    &(A\boxtimes T(s\overline{A}))_\bullet \to F_{\overline{0}}(\mathbb{S}^0)_\bullet \\
                ~           &a[x] \mapsto \phi(1[x]1)(a).
    \end{align*}   
    In particular, for $\phi=\widetilde\mu \circ ev(1_A)^{-1}(\zeta)=g_\zeta\circ q_A$, we have 
    \[\overline{\phi}(a[x])=
    \begin{cases}
        g_\zeta(x)(a)=x.\zeta(a) & \text{if } x\in F_{\overline{0}}(S^0) \\
        0 & \text{else.} 
    \end{cases}\]
    Since we work on the normalized Hochschild (co)chains, using the definition of the dual of Connes boundary map, one can see that $B^\vee(c)=B^\vee(g_\zeta\circ q_A)=0$.
    \end{proof}
    One immediately gets the following result.
    \begin{corollaire}\label{coro:BV_on_APL}
    Let $X$ be a compact, oriented, second countable pseudomanifold. There exists a Batalin-Vilkovisky algebra structure on $HH^\ast_\bullet(\widetilde A_{PL, \bullet}(X))$.
    \end{corollaire}
    In the next section, we will show that the perverse BV-algebra structure is invariant under quasi-isomorphism of pDGAs, hence, we get such a structure on $HH^\ast_\bullet(\widetilde N^\ast_{\bullet}(X;\mathbb{Q}))$.

\section{Topological invariance}\label{sect:top_invar}
    In this section, we would like to show that when $X$ and $Y$ are two homeomorphic or stratified homotopy equivalent pseudomanifolds, there exists an isomorphism of perverse BV-algebras
        \[HH^\ast_\bullet(\widetilde N^\ast_\bullet(X; \mathbb{Q})) \simeq HH^\ast_\bullet(\widetilde N^\ast_\bullet(Y; \mathbb{Q})).\]
    There are two things we must prove before:
    \begin{enumerate}
        \item for any field  $\mathbb{F}$, there is an isomorphism of perverse Gerstenhaber algebras
        \[HH^\ast_\bullet(\widetilde N^\ast_\bullet(X; \mathbb{F})) \simeq HH^\ast_\bullet(\widetilde N^\ast_\bullet(Y; \mathbb{F}))\]
        \item and one can endow $HH^\ast_\bullet(\widetilde N^\ast_\bullet(X; \mathbb{Q}))$ with a perverse BV algebra structure. 
    \end{enumerate}    
    This will be done in the next two subsections. 
    
    \subsection{Preserving the Gerstenhaber algebra structure}
    Let $f:A_\bullet \to B_\bullet$ be a quasi-isomorphism of pDGAs. In this subsection, we ask ourselves under which assumptions does $f$ induce an isomorphism of perverse Gerstenhaber algebras between $HH_\bullet(A)$ and $HH_\bullet(B)$. 
    \begin{proposition}
    Let $f:A_\bullet \to B_\bullet$ be a quasi-isomorphism of pDGAs. If $R$ is a field or if $A_\bullet$ and $B_\bullet$ are cofibrant objects in $Ch(R)^{\pGM}$ that are augmented as pDGAs, then $f$ induces morphisms on the Hochschild cochain complexes
    \[HC_\bullet(A) \xrightarrow{HC(A,f)} HC_\bullet(A,B) \xleftarrow{HC(\tilde f,B)} HC_\bullet(B)\]
    which are quasi-isomorphisms of pDGAs.
    \end{proposition}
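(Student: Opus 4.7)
The plan is to leverage the $\Ext$-interpretation of Hochschild cohomology together with the cofibrancy of the two-sided bar construction (Propositions \ref{prop:bar-cofib-approx} and \ref{prop:bar-cofib-field}). Via $f$, the pDGA $B_\bullet$ becomes a perverse DG $A^e_\bullet$-module, so I would set $HC(A,f)\colon \phi\mapsto f\circ \phi$ on $\Hom_{A^e}(\mathbb{B}(A),-)^\ast_\bullet$. Naturality of the bar construction gives a chain map $\mathbb{B}(f)\colon \mathbb{B}(A)_\bullet\to \mathbb{B}(B)_\bullet$ of perverse $A^e_\bullet$-modules (restricting the target along $f^e$), and $HC(\widetilde f,B)$ is then $\psi\mapsto \psi\circ \mathbb{B}(f)$; under the identification $HC^\ast_\bullet(A,M)\simeq \Hom_{(Ch(R))^{\pGM}}(T(s\overline A),M)^\ast_\bullet$ this is precomposition with $T(s\overline{f})$, where $\overline{f}\colon \overline A_\bullet\to \overline B_\bullet$ is the quotient map induced by $f$. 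A direct inspection of the formulas in Definitions \ref{def:Hochcocplx} and \ref{def:cup_prod} shows that both maps commute with the differential $D^\ast$ and with the cup product: concatenation and multiplication of cochains are preserved by post-composition with an algebra morphism and by precomposition with a map of tensor algebras, and the perverse degree is preserved by construction. Hence both maps are pDGA morphisms.

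For $HC(A,f)$, under either hypothesis $\mathbb{B}(A)_\bullet$ is cofibrant as a perverse DG $A^e_\bullet$-module by Propositions \ref{prop:bar-cofib-approx} or \ref{prop:bar-cofib-field}. Every object is fibrant in the projective model structure on $\pMod(A^e)$, so $\Hom_{A^e}(\mathbb{B}(A),-)^\ast_\bullet$ preserves quasi-isomorphisms of $A^e_\bullet$-modules. The algebra morphism $f\colon A_\bullet\to B_\bullet$ is $A^e_\bullet$-linear and a perversity-wise quasi-isomorphism, so $HC(A,f)$ is a quasi-isomorphism.

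For $HC(\widetilde f, B)$, I would first observe that $\mathbb{B}(f)$ is itself a quasi-isomorphism: the triangle $q_B\circ \mathbb{B}(f)=f\circ q_A$ commutes by construction, and two-out-of-three applied to the three quasi-isomorphisms $q_A$, $q_B$, $f$ then forces $\mathbb{B}(f)$ to be one. Under the first hypothesis, $\overline A_\bullet$ and $\overline B_\bullet$ are cofibrant perverse chain complexes as retracts of $A_\bullet$ and $B_\bullet$ (as in the proof of Proposition \ref{prop:bar-cofib-approx}); the monoidal axiom (Remark \ref{rem:quillen_adj_pch}) then gives cofibrancy of each $(s\overline A)^{\boxtimes k}_\bullet$ and of $T(s\overline A)_\bullet$, and likewise for $B$. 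The map $T(s\overline{f})$ is a quasi-isomorphism between cofibrant perverse chain complexes, so Remark \ref{rem:quillen_adj_pch} shows $\Hom_{(Ch(R))^{\pGM}}(T(s\overline{f}), B)$ is a quasi-isomorphism. Over a field, I would filter $T(s\overline A)_\bullet$ and $T(s\overline B)_\bullet$ by tensor length, as in the proof of Proposition \ref{prop:bar-cofib-field}; the induced map on successive subquotients $(s\overline A)^{\boxtimes k}_\bullet\to (s\overline B)^{\boxtimes k}_\bullet$ is a quasi-isomorphism by the K\"unneth theorem over a field, and the conditional-convergence spectral sequence argument used at the end of the proof of Proposition \ref{prop:semifree-semiproj} then propagates this quasi-isomorphism through to $\Hom_{(Ch(R))^{\pGM}}(T(s\overline{f}), B)$.

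The main obstacle I anticipate is the field case: without universal cofibrancy in $(Ch(R))^{\pGM}$ one cannot cite the monoidal model structure directly, so the semi-free filtration comparison must be done by hand, and both filtrations aligned along $\mathbb{B}(f)$ so that the comparison of $\Hom$ complexes is filtered as well. Once this bookkeeping is in place, the spectral sequence machinery already developed for Proposition \ref{prop:semifree-semiproj} carries the argument through, and the pDGA compatibility verified in the first paragraph upgrades both quasi-isomorphisms to quasi-isomorphisms of pDGAs.
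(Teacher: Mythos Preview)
Your proposal is correct and follows the same architecture as the paper's proof: describe $HC(A,f)$ and $HC(\tilde f,B)$ via the bar construction and the induced map $\mathbb{B}(f)$, reduce to the identification $\Hom_{A^e}(\mathbb{B}(A),M)\simeq\Hom_{(Ch(R))^{\pGM}}(T(s\overline A),M)$, and invoke cofibrancy of the bar resolution to conclude that the two maps are quasi-isomorphisms; the pDGA compatibility is checked directly on the cup product formula in both cases.

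The one genuine difference is your treatment of the field case for $HC(\tilde f,B)$. The paper disposes of both hypotheses in one sentence, asserting that ``the bar constructions are cofibrant perverse chain complexes'' and citing Remark~\ref{rem:quillen_adj_pch}. This is unproblematic under the augmented--cofibrant hypothesis (the proof of Proposition~\ref{prop:bar-cofib-approx} already shows each $(s\overline A)^{\boxtimes k}$ is cofibrant in $(Ch(R))^{\pGM}$), but over a field cofibrancy of $T(s\overline A)$ as a perverse chain complex is not automatic, since not every object of $(Ch(\mathbb F))^{\pGM}$ is cofibrant (cf.\ Appendix~\ref{appendix:chara-cofib-field}). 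Your filtration-by-tensor-length plus the conditionally convergent spectral sequence comparison (mirroring the end of the proof of Proposition~\ref{prop:semifree-semiproj}) makes this step honest. In effect you are supplying the details behind the paper's appeal to Remark~\ref{rem:quillen_adj_pch} in the field case; the paper's argument and yours are the same once those details are filled in.
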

    \begin{proof}
        We first describe the morphisms $HC(A,f)$ and $HC(\tilde f,B)$. Let $q_A: \mathbb{B}(A)_\bullet \to A_\bullet$ and $q_B: \mathbb{B}(B)_\bullet\to B_ \bullet$ be cofibrant approximations of $A_\bullet$ and $B_\bullet$, respectively. They are given in Proposition \ref{prop:bar-cofib-approx}. Notice that $f$ induces a morphism between the bar constructions
        \[\begin{array}{rccc}
            \tilde f:& \mathbb{B}(A)_\bullet &\to &\mathbb{B}(B)_\bullet \\
            ~        & a_0[a_1|\ldots|a_n]a_{n+1} &\mapsto & (-1)^{\sum\limits_{k=1}^{n+1}\Real{f}\eps_k} f(a_0)[f(a_1)|\ldots|f(a_n)]f(a_{n+1}).      
        \end{array}\]
        where $\eps_k=\Real{a_0}+\sum_{j=1}^{k-1}\Real{s(a_j)}$.
        It fits in the following commutative diagram of perverse chain complexes
        \[
        \begin{tikzcd}
        \mathbb{B}(A)_\bullet \arrow[d, "q_A"', two heads] \arrow[r, "\tilde f"] & \mathbb{B}(B)_\bullet \arrow[d, "q_B", two heads] \\
        A_\bullet \arrow[r, "f"']                                                &  B_\bullet                                        
        \end{tikzcd}
        \]
        and hence, it is a quasi-isomorphism of perverse chain complexes. 
        Notice that $T(s\overline{A})_\bullet \simeq (F_{\overline{0}}(\mathbb{S}^0)\boxtimes T(s\overline{A}) \boxtimes F_{\overline{0}}(\mathbb{S}^0))_\bullet$ is a subcomplex of $\mathbb{B}(A)_\bullet$. We also denote by $\tilde f$ the restriction $\tilde f : T(s\overline{A})_\bullet \to  T(s\overline{B})_\bullet$. The morphisms given in the statement of the proposition fit in the following diagram
        \[\adjustbox{scale=0.8}{\begin{tikzcd}[column sep=huge]
        	{HC_\bullet(A)} & {HC_\bullet(A,B)} & {HC_\bullet(B)} \\
        	{\Hom_{A^e}(\mathbb{B}(A), A)_\bullet} & {\Hom_{A^e}(\mathbb{B}(A), B)_\bullet} & {\Hom_{B^e}(\mathbb{B}(B), B)_\bullet} \\
        	{\Hom_{Ch(R)^{\pGM}}(T(s\overline{A}), A)_\bullet} & {\Hom_{Ch(R)^{\pGM}}(T(s\overline{A}), B)_\bullet} & {\Hom_{Ch(R)^{\pGM}}(T(s\overline{B}), B)_\bullet}
        	\arrow["{\Hom_{A^e}(\mathbb{B}(A), f)}", from=2-1, to=2-2]
        	\arrow[from=2-3, to=2-2]
        	\arrow["{\scriptscriptstyle{\Hom(\tilde f, B)}}"', from=3-3, to=3-2]
        	\arrow["{\scriptscriptstyle{\Hom(T(s\overline{A}), f)}}", from=3-1, to=3-2]
        	\arrow["\simeq"{description}, draw=none, from=2-1, to=3-1]
        	\arrow["\simeq"{description}, draw=none, from=2-3, to=3-3]
        	\arrow["\simeq"{description}, draw=none, from=3-2, to=2-2]
        	\arrow["{HC(A,f)}", from=1-1, to=1-2]
        	\arrow["{HC(\tilde f, B)}"', from=1-3, to=1-2]
        	\arrow["\simeq"{description}, draw=none, from=1-1, to=2-1]
        	\arrow["\simeq"{description}, draw=none, from=1-3, to=2-3]
        	\arrow["\simeq"{description}, draw=none, from=1-2, to=2-2]
        \end{tikzcd}}\]
        where perverse chain complexes on the same column are isomorphic. Since the bar constructions are cofibrant perverse chain complexes, by Remark \ref{rem:quillen_adj_pch}, the morphisms on the bottom row are quasi-isomorphisms. Hence, $HC(A,f)$ and $HC(\tilde f, B)$ are also quasi-isomorphisms. 
        
        We now show that we have morphisms of pDGAs. $HC^\ast_\bullet(A)$ and $HC^\ast_\bullet(B)$ are equipped with the cup product defined in Definition \ref{def:cup_prod}. One can define an associative product (denoted again by $\cup$) on $HC_\bullet(A,B)$ which makes it a pDGA. This multiplication is given for $g_1,g_2\in HC^\ast_\bullet(A,B)$ and $[a_1|\ldots|a_k]\in T(s\overline{A})_\bullet$ by 
        \[g_1\cup g_2[a_1|\ldots|a_k]=\sum_{i=1}^{k-1}(-1)^{\Real{g_2}\overline{\eps}_i}g_1[a_1| \ldots | a_{i}]g_2[a_{i+1} | \ldots |  a_{k}]\]    
        where $\overline{\eps_i}=\sum_{j \leq i} \Real{s(a_j)}=\sum_{j \leq i} (\Real{a_j}-1)$. Using the fact that $f$ is a morphism of pDGA, we see that for $\alpha_1, \alpha_2\in HC_\bullet(A,A)$
        \[HC(A,f)(\alpha_1\cup \alpha_2)=f\circ(\alpha_1\cup \alpha_2)=f\circ\alpha_1\cup f\circ\alpha_2=HC(A,f)(\alpha_1)\cup HC(A,f)(\alpha_2).\]
        In other words, $HC(A,f)$ is a morphism of pDGA. Using the definition of $\tilde f$ and by writing things out, one sees that $HC(\tilde f, B)$ is also a morphism of pDGA. 
    \end{proof}
    In what follows, we consider the composition
    \[HH(f):HH_\bullet (A) \xrightarrow{HH(A,f)} HH_\bullet(A, B) \xrightarrow{HH(\tilde f,B)^{-1}} HH_\bullet(B).\]
    The next result adapts \cite[Proposition 3.3]{FMT05} of Félix, Menichi and Thomas to pDGAs.
    \begin{proposition}\label{prop:HH_iso_Gert_topinvar}
        Let $f:A_\bullet \to B_\bullet$ be a quasi-isomorphism of pDGAs. The morphism $HH(f)$ is an isomorphism of Gerstenhaber algebras if one of the following conditions is verified
    \begin{itemize}
        \item $R$ is a field and $B_\bullet$ is a cofibrant object in $Ch(R)^{\pGM}$, 
        \item or $A_\bullet$ and $B_\bullet$ are cofibrant objects in $Ch(R)^{\pGM}$ that are augmented as pDGAs.
    \end{itemize}
    \end{proposition}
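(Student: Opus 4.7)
The previous proposition already establishes that both $HC(A,f)$ and $HC(\tilde f, B)$ are quasi-isomorphisms of pDGAs, so $HH(f) = HH(\tilde f, B)^{-1} \circ HH(A, f)$ is automatically an isomorphism of perverse graded commutative algebras with respect to the cup product. The task is therefore reduced to verifying compatibility with the Gerstenhaber bracket $[-,-]$.

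My plan is to upgrade the cup-product argument to the level of the brace operator $\phi\{\psi_1, \ldots, \psi_k\}$. As noted in Subsection 2.2, the cup product, the differential and the Gerstenhaber bracket are all encoded by the brace operator: $f \cup g = (-1)^{|f|}m\{f,g\}$, $D^{\ast}(f) = [m,f] + [d_A, f]$, and $f \circ g = f\{g\}$. It is therefore enough to show that both $HC(A, f)$ and $HC(\tilde f, B)$ are compatible with the relevant brace operations. For $HC(A, f)$: postcomposition with $f$ commutes with braces because $f$ is a pDGA morphism, so inserting values of $\psi_i$ into slots of $\phi$ and then multiplying in $A$, followed by $f$, yields the same result as first applying $f$ everywhere and multiplying in $B$. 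For $HC(\tilde f, B)$: precomposition with $\tilde f$ commutes with braces because $\tilde f$ is built tensor-wise from $f$ on $T(s\overline{A})$, so it respects the bar-coalgebra deconcatenation that underlies the brace combinatorics. Passing to cohomology then yields a zigzag of morphisms $HH^\ast_\bullet(A) \to HH^\ast_\bullet(A,B) \leftarrow HH^\ast_\bullet(B)$ compatible with $\cup$ and with $\circ$, and since $[-,-] = \circ - (-1)^{(|\cdot|-1)(|\cdot|-1)} \circ^{op}$, the bracket is preserved as well.

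The principal subtlety, which I expect to be the main obstacle, is making rigorous sense of ``brace compatibility'' on the intermediate complex $HC^\ast_\bullet(A, B)$: literally, a brace $\phi\{\psi\}$ for $\phi, \psi \in HC^\ast_\bullet(A, B)$ would require reinserting $\psi[\ldots] \in B$ as an input of $\phi$, which only accepts inputs from $A$. The workaround I would adopt is to define a left brace action of $HC^\ast_\bullet(A,A)$ on $HC^\ast_\bullet(A, B)$ (using that $B$ is an $A$-bimodule via $f$) and, symmetrically, a right brace action of $HC^\ast_\bullet(B,B)$ on $HC^\ast_\bullet(A, B)$ (using that $\tilde f$ converts $T(s\overline{A})$-inputs into $T(s\overline{B})$-inputs); then verify that $HC(A, f)$ intertwines the first action with the brace structure on $HC^\ast_\bullet(A, A)$, and $HC(\tilde f, B)$ intertwines the second action with that on $HC^\ast_\bullet(B, B)$. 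A cleaner conceptual alternative, which I would invoke if the bookkeeping becomes too heavy, is that the Hochschild cochain complex of a pDGA is naturally an algebra over the brace operad (equivalently, an $E_2$-algebra up to quasi-isomorphism), and this structure is functorial along zigzags of pDGA quasi-isomorphisms; the argument carries over to the perverse setting verbatim since the perversities enter only through the monoidal product $\boxtimes$ and do not interact with the brace combinatorics, exactly as in the proof of Theorem \ref{thm:Hoch_is_Gerst}.
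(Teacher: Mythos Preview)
Your proposal identifies the correct difficulty—the intermediate complex $HC^\ast_\bullet(A,B)$ carries no Gerstenhaber structure—but your workaround does not close the gap. You propose a brace action of $HC^\ast_\bullet(A,A)$ on $HC^\ast_\bullet(A,B)$ (insert $A$-valued cochains) and a separate brace action of $HC^\ast_\bullet(B,B)$ on $HC^\ast_\bullet(A,B)$ (insert $B$-valued cochains, converting the remaining inputs via $f$). These are well-defined, and indeed $HC(A,f)(a_1\{a_2\})=(f\circ a_1)\{a_2\}$ uses the first while $HC(\tilde f,B)(c_1\{c_2\})=c_1\star(c_2\circ\tilde f)$ uses the second. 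But to deduce $HH(f)[\alpha_1,\alpha_2]=[\gamma_1,\gamma_2]$ you must compare $(f\circ a_1)\{a_2\}$ with $c_1\star(c_2\circ\tilde f)$ inside $HH^\ast_\bullet(A,B)$, and for this you need representatives satisfying $c_j\circ\tilde f=f\circ a_j$ \emph{at the cochain level}, not merely up to coboundary: the two actions are over different algebras, so cohomology-level agreement of $c_j\circ\tilde f$ and $f\circ a_j$ does not propagate through the brace. Your argument provides no mechanism to produce such representatives—and notice that you never invoke the cofibrancy hypothesis on $B_\bullet$, which is exactly where it should enter.

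The paper's proof supplies this missing ingredient by a different route. It factors $f=p\circ i$ in the model category of pDGAs with $i$ a cofibration and $p$ an acyclic fibration. Since every object of $Ch(R)^{\pGM}$ is fibrant, $i:A_\bullet\hookrightarrow C_\bullet$ admits a retraction $r$ in $Ch(R)^{\pGM}$; since $B_\bullet$ is cofibrant, $p:C_\bullet\twoheadrightarrow B_\bullet$ admits a section $s$ in $Ch(R)^{\pGM}$. The retraction lets one take $c_j:=(i\circ a_j)\circ\tilde r$, which by $r\circ i=\id_A$ satisfies $c_j\circ\tilde i=i\circ a_j$ exactly; the cochain-level identity $i\circ(a_1\{a_2\})=(c_1\{c_2\})\circ\tilde i$ then follows by direct substitution, and similarly for $p$ using $s$. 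Your $E_2$-functoriality alternative is not an acceptable shortcut here: functoriality of the brace-algebra structure on Hochschild cochains along DGA quasi-isomorphisms is essentially the statement being proved, and the standard arguments for it rely on the same model-categorical machinery.
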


    \begin{proof}
        We already know that $HH(f)$ is an isomorphism of pDGA, we just have to check that it is a morphism of perverve graded Lie algebras. In \cite[Theorem 3.4]{Hov09}, Hovey shows that there is a model category structure on the category of perverse differential graded algebras. Hence, the morphism $f:A_\bullet \to B_\bullet$ factors as 
        \[A_\bullet \xhookrightarrow{i} C_\bullet \xtwoheadrightarrow{p} B_\bullet\]
        in the category of pDGAs with $i$ a cofibration and $p$ an acyclic fibration. 
        Notice that we have the following commutative diagram
        \[\adjustbox{scale=0.85}{
        \begin{tikzcd}
        {HC_\bullet(A)} \arrow[d, "\id"] \arrow[r, "{HC(A,i)}"]                         & {HC_\bullet(A,C)} \arrow[d, "\id"]        & {HC_\bullet(C)} \arrow[l, "{HC(\tilde i, C)}"'] \arrow[r, "{HC(C, p)}"] & {HC_\bullet(C,B)} \arrow[d, "\id"']              & {HC_\bullet(B)} \arrow[l, "{HC(\tilde p, B)}"'] \arrow[d, "\id"']                           \\
        {HC_\bullet(A)} \arrow[r, "{HC(A,i)}"'] \arrow[rr, "{HC(A,f)}"', bend right=60] & {HC_\bullet(A,C)} \arrow[r, "{HC(A,p)}"'] & {HC_\bullet(A,B)}                                                          &            {HC_\bullet(C,B)} \arrow[l, "{HC(\tilde i, B)}"] & {HC_\bullet(B).} \arrow[l, "{HC(\tilde p, B)}"] \arrow[ll, "{HC(\tilde f, B)}", bend left=60]
        \end{tikzcd}}\]
        This implies that $HH(f)=HH(p)\circ HH(i)$. We are going to show that $HH(p)$ and $HH(i)$ are isomorphisms of Gerstenhaber algebra. Notice that $i:A_\bullet \hookrightarrow C_\bullet$ has a retraction $r$ and that $p: C_\bullet \to B_\bullet$ admits a section $s$ in the category $Ch(R)^{\pGM}$. Indeed, since $A_\bullet$ is fibrant and $B_\bullet$ is cofibrant as perverse chain complexes the following diagrams have lifts
        \[\begin{tikzcd}
        A_\bullet \arrow[d, "i", hook] \arrow[r, "\id"] & A_\bullet &                                                      & C_\bullet \arrow[d, "p", two heads] \\
        C_\bullet \arrow[ru, "r"', dashed]              &           & B_\bullet \arrow[r, "\id"'] \arrow[ru, "s", dashed] & B_\bullet.                          
        \end{tikzcd}\]
        They induce sections in the following diagram
        \[\adjustbox{scale=0.9}{\begin{tikzcd}
        {HC_\bullet(A)} \arrow[r, "{HC(A,i)}"] & {HC_\bullet(A,C)} \arrow[r, "{HC(\tilde r, C)}"', bend right=60] & {HC_\bullet(C)} \arrow[l, "{HC(\tilde i, C)}"', two heads] \arrow[r, "{HC(C,p)}", two heads] & {HC_\bullet(C,B)} \arrow[l, "{HC(C,s)}", bend left=60] & {HC_\bullet(B).} \arrow[l, "{HC(\tilde p, B)}"']
        \end{tikzcd}}\]
        Let $\alpha_1,\alpha_2 \in HH_\bullet(A)$ and set $\gamma_j=HH(i)(\alpha_j)$ for $j=1,2$. We want to show that
        \[HH(i)[\alpha_1, \alpha_2]=[\gamma_1, \gamma_2].\]
        Let $a_1, a_2$ be cycles in $HC_\bullet(A)$ such that their class in homology correspond to $\alpha_1$ and $\alpha_2$ respectively. There exists cycles $c_1,c_2\in HC_\bullet(C)$ whose classes in homology are respectively $\gamma_1, \gamma_2$ and such that
        \[HC(A,i)(a_j)=i\circ a_j=HC(\tilde i, C)(c_j)= c_j\circ \tilde i\]
        for $j=1,2$. Indeed, one can take $c_j=HC(\tilde r, C)\circ HC(A,i)(a_j)$. We compute
        \[\begin{array}{rcl}
        HC(A, i)[a_1, a_2]&=&i\circ (a_1\circ a_2) - (-1)^{(\Real{a_1}-1)(\Real{a_2}-1)}i\circ (a_2\circ a_1)\\
        ~ &                =&(c_1\circ c_2)\circ \tilde i- (-1)^{(\Real{a_1}-1)(\Real{a_2}-1)} (c_2\circ c_1)\circ \tilde i\\
        ~ &                =&(c_1\circ c_2)\circ \tilde i- (-1)^{(\Real{c_1}-1)(\Real{c_2}-1)} (c_2\circ c_1)\circ \tilde i  \\
        ~ &                =&HC(\tilde i, C)[c_1, c_2].
        \end{array}\]
        In homology, this gives us 
        \[HH(i)[\alpha_1, \alpha_2] = HH(\tilde i, C)^{-1}\circ HC(A,i)[\alpha_1, \alpha_2]=[\gamma_1, \gamma_2].\]
        By using similar arguments, we can show that for cycles $\gamma_1,\gamma_2\in HH_\bullet(C)$ 
        \[HH(p)[\gamma_1, \gamma_2]=[HH(p)(\gamma_1), HH(p)(\gamma_2)].\]
        Finally, this proves that $HH(f)=HH(p)\circ HH(i)$ is a morphism of Gerstenhaber algebras. 
    \end{proof} 
    Using the topological invariance of the blown-up intersection cochain complex (Proposition \ref{prop:bup_prop}) and the fact that it is a cofibrant perverse cochain complex on a field (Corollary \ref{coro:ex_cofib_cplx_field}), we get the following result. 
    \begin{corollaire}\label{coro:top_invar_Gerst}
        If $X$ and $Y$ are two homeomorphic or stratified homotopy equivalent pseudomanifolds then there exists an isomorphism of perverse Gerstenhaber algebras
        \[HH^\ast_\bullet(\widetilde N^\ast_\bullet(X; \mathbb{F})) \simeq HH^\ast_\bullet(\widetilde N^\ast_\bullet(Y; \mathbb{F})).\]
    \end{corollaire} 

    \subsection{Preserving the pBV algebra structure}
    As we said in the introduction to this section, we now prove that under some assumptions on the pseudomanifold $X$ there exists a perverse BV-algebra structure on $HH^\ast_\bullet(\widetilde N^\ast_\bullet(X; \mathbb{Q}))$ . We will need the following lemma.
    \begin{lemma}\label{lem:BV_transfer} 
        Let $(A_\bullet, \cup_A, [-,-]_A)$ and $(B_\bullet, \cup_B, [-,-]_B)$ be two perverse Gerstenhaber algebras which are isomorphic. If one of them is a perverse BV-algebra then so is the other one.
    \end{lemma}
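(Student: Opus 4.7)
The plan is a direct transport-of-structure argument. Assume without loss of generality that $(B_\bullet,\cup_B,[-,-]_B,\Delta_B)$ carries the pBV structure and let $\phi:A_\bullet\xrightarrow{\simeq}B_\bullet$ be an isomorphism of perverse Gerstenhaber algebras. I define
\[
\Delta_A\ :=\ \phi^{-1}\circ \Delta_B\circ \phi\ :\ A^\ast_\bullet\longrightarrow A^{\ast-1}_\bullet,
\]
and claim $(A_\bullet,\cup_A,[-,-]_A,\Delta_A)$ is a perverse BV algebra.

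First I would check the degree and perverse degree: since $\phi$ is an isomorphism in the category of perverse graded objects (hence of degree $0$ and perverse degree $\overline{0}$) and $\Delta_B$ has degree $-1$ and perverse degree $\overline{0}$, the composite $\Delta_A$ has the same bidegree. The involutivity is immediate: $\Delta_A\circ\Delta_A=\phi^{-1}\circ\Delta_B\circ(\phi\circ\phi^{-1})\circ\Delta_B\circ\phi=\phi^{-1}\circ\Delta_B^2\circ\phi=0$.

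The main verification is the BV identity. For $a,b\in A_\bullet$, apply $\phi$ to the right-hand side of the desired formula and use that $\phi$ is a morphism of perverse Gerstenhaber algebras (so it commutes with $\cup$ and with $[-,-]$ and preserves the degree, in particular $\Real{\phi(a)}=\Real{a}$):
\begin{align*}
\phi\bigl(\Delta_A(a\cup_A b)-\Delta_A(a)\cup_A b-(-1)^{\Real{a}}a\cup_A\Delta_A(b)\bigr)
&=\Delta_B(\phi(a)\cup_B\phi(b))\\
&\quad-\Delta_B(\phi(a))\cup_B\phi(b)\\
&\quad-(-1)^{\Real{\phi(a)}}\phi(a)\cup_B\Delta_B(\phi(b))\\
&=(-1)^{\Real{\phi(a)}}[\phi(a),\phi(b)]_B\\
&=\phi\bigl((-1)^{\Real{a}}[a,b]_A\bigr),
\end{align*}
where the penultimate equality is the BV relation in $B_\bullet$ and the last equality uses that $\phi$ preserves brackets and degrees. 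Since $\phi$ is injective, the BV identity holds in $A_\bullet$.

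There is essentially no obstacle here beyond bookkeeping; the only point to be a little careful about is that \textquotedblleft isomorphism of perverse Gerstenhaber algebras\textquotedblright\ must preserve \emph{all} the relevant data (cup product, bracket, grading, perverse grading), which is built into the definition and used three times in the computation above. The roles of $A_\bullet$ and $B_\bullet$ are of course symmetric, so the conclusion holds in either direction.
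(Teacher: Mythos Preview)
Your proof is correct and follows essentially the same transport-of-structure argument as the paper: conjugate the BV operator by the Gerstenhaber isomorphism and verify the BV identity by pushing through the isomorphism. The only cosmetic difference is that the paper carries an extra sign $(-1)^{\Real{f}}$ in the definition of the transported operator to allow for a graded isomorphism, whereas you (reasonably, and in line with the paper's actual applications) assume the isomorphism has degree~$0$, making that sign disappear.
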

    \begin{proof}
        Suppose that $A_\bullet$ is a perverse BV-algebra and denote by $\Delta_A$ the $\Delta$ operator on $A_\bullet$. Let $f:A_\bullet \to B_\bullet$ be an isomorphism of perverse Gerstenhaber algebras. We set $\Delta_B:=(-1)^{\Real{f}}f\circ \Delta_A\circ f^{-1}$. We clearly have $\Delta_B \circ \Delta_B=0$. We just need to check that $[-,-]_B$ encodes the deviation of $\Delta_B$ from being a derivation with respect to $\cup_B$.
        Let $b_1, b_2 \in B_\bullet$, we set $a_1=f^{-1}(b_1)$ and $a_2=f^{-1}(b_2)$. We compute $[b_1,b_2]_B$, it is equal to 
        \begin{align*}
        f([a_1, a_2]_A) &=(-1)^{\Real{a_1}}f(\Delta_A(a_1\cup_A a_2) -a_1\cup_A \Delta_A(a_2) -(-1)^{\Real{a_1}}\Delta_A(a_1)\cup_A a_2) \\
                ~       &=(-1)^{\Real{a_1}+\Real{f}}\Delta_B(b_1\cup_B b_2) -b_1\cup_B \Delta_B(b_2) -(-1)^{\Real{a_1}+\Real{f}}\Delta_B(b_1)\cup_B b_2) \\
                ~       &=(-1)^{\Real{b_1}}\Delta_B(b_1\cup_B b_2) -b_1\cup_B \Delta_B(b_2) -(-1)^{\Real{b_1}}\Delta_B(b_1)\cup_B b_2)                
        \end{align*}
    \end{proof}
    \begin{proposition}\label{prop:BV_on_BUP}
        Let $X$ be a compact, oriented, second countable pseudomanifold. There exists a perverse Batalin-Vilkovisky algebra structure on $HH^\ast_\bullet(\widetilde N^\ast_\bullet(X; \mathbb{Q}))$.
    \end{proposition}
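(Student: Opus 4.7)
The plan is to transfer the perverse BV-algebra structure from $HH^\ast_\bullet(\widetilde A_{PL,\bullet}(X))$ (provided by Corollary~\ref{coro:BV_on_APL}) to $HH^\ast_\bullet(\widetilde N^\ast_\bullet(X;\mathbb{Q}))$ through the zigzag of pDGAs supplied by Proposition~\ref{prop:integ_polyforms}. The naive obstruction is that the integration map $\int$ itself is \emph{not} a morphism of pDGAs (only a quasi-isomorphism of perverse chain complexes compatible with products in homology), so one cannot directly apply Proposition~\ref{prop:HH_iso_Gert_topinvar} to $\int$.

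Instead I will use the intermediate pDGA $(\widetilde{A_{PL}\otimes C})^\ast_\bullet(X)$ together with the honest pDGA quasi-isomorphisms $f_1$ and $f_2$ of Proposition~\ref{prop:integ_polyforms}:
\[
\widetilde A^\ast_{PL,\bullet}(X) \xrightarrow{\ f_1\ } (\widetilde{A_{PL}\otimes C})^\ast_\bullet(X) \xleftarrow{\ f_2\ } \widetilde N^\ast_\bullet(X;\mathbb{Q}).
\]
Since we work over the field $\mathbb{Q}$, Proposition~\ref{prop:HH_iso_Gert_topinvar} applies to each of $f_1$ and $f_2$ (using the characterisation of cofibrant perverse chain complexes over a field from Appendix~\ref{appendix:chara-cofib-field} to verify the cofibrancy hypothesis on the targets). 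Hence each of the induced maps $HH(f_1)$ and $HH(f_2)$ is an isomorphism of perverse Gerstenhaber algebras. Composing, we obtain an isomorphism of perverse Gerstenhaber algebras
\[
\Phi \colon HH^\ast_\bullet(\widetilde A^\ast_{PL,\bullet}(X)) \xrightarrow{\ \simeq\ } HH^\ast_\bullet(\widetilde N^\ast_\bullet(X;\mathbb{Q})).
\]

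Now by Corollary~\ref{coro:BV_on_APL}, the left-hand side carries a perverse BV-algebra structure, i.e.\ a $\Delta$-operator satisfying $\Delta^2=0$ and the BV seven-term identity relating $\Delta$, $\cup$ and $[-,-]$. By Lemma~\ref{lem:BV_transfer}, since $\Phi$ is an isomorphism of perverse Gerstenhaber algebras, the operator $\Phi \circ \Delta \circ \Phi^{-1}$ endows $HH^\ast_\bullet(\widetilde N^\ast_\bullet(X;\mathbb{Q}))$ with a perverse BV-algebra structure extending its perverse Gerstenhaber algebra structure.

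The only delicate point is the verification of cofibrancy needed to apply Proposition~\ref{prop:HH_iso_Gert_topinvar} to both $f_1$ and $f_2$; this is where the restriction to rational coefficients is essential, and where the characterisation of cofibrant perverse chain complexes over a field given in the appendix does the work.
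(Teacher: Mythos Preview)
Your proof is correct and follows essentially the same approach as the paper's own proof: route through the intermediate pDGA $(\widetilde{A_{PL}\otimes C})^\ast_\bullet(X)$ via the honest pDGA quasi-isomorphisms $f_1,f_2$ of Proposition~\ref{prop:integ_polyforms}, apply Proposition~\ref{prop:HH_iso_Gert_topinvar} to get an isomorphism of perverse Gerstenhaber algebras, and then transport the BV structure from Corollary~\ref{coro:BV_on_APL} using Lemma~\ref{lem:BV_transfer}. One small remark: both $f_1$ and $f_2$ share the \emph{same} target $(\widetilde{A_{PL}\otimes C})^\ast_\bullet(X)$, so the cofibrancy check required by Proposition~\ref{prop:HH_iso_Gert_topinvar} is a single verification, handled by Corollary~\ref{coro:ex_cofib_cplx_field}.
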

    \begin{proof}
        Recall that Proposition \ref{prop:integ_polyforms} gives quasi-isomorphisms of pDGAs 
        \[f_1:\widetilde A_{PL, \bullet}^\ast(X) \to (\widetilde{A_{PL}\otimes C})^\ast_\bullet(X) \text{ and } f_2:\widetilde N_\bullet^\ast(X; \mathbb{Q}) \to (\widetilde{A_{PL}\otimes C})^\ast_\bullet(X).\]
        Hence, by Proposition \ref{prop:HH_iso_Gert_topinvar}, we have isomorphisms of perverse Gerstenhaber algebras
        \[HH^\ast_\bullet(\widetilde A^\ast_{PL, \bullet}(X))\simeq HH^\ast_\bullet((\widetilde{A_{PL}\otimes C})^\ast_\bullet(X))\simeq HH^\ast_\bullet(\widetilde N^\ast_\bullet(X; \mathbb{Q}))\]              
        By Corollary \ref{coro:BV_on_APL}, $HH^\ast_\bullet(\widetilde A_{PL, \bullet}(X))$ is a pBV algebra. Using the previous lemma, we can endow $HH^\ast_\bullet(\widetilde N^\ast_\bullet(X; \mathbb{Q}))$ with a pBV algebra structure.
    \end{proof}
    While there is a canonical Gerstenhaber algebra structure on $HH^\ast_\bullet(A)$ for $A_\bullet$, the BV structure depends on the choice of a class in $HH^\ast_\bullet(A,DA)_\bullet$. Hence, one needs additional hypothesis to ensure that a quasi-isomorphism $f:A_\bullet \to B_\bullet$ of pDGAs preserves the pBV algebra structures on the Hochschild cohomologies. 

    \begin{proposition}\label{prop:HH_iso_BV_topinvar} 
        Let $f:A_\bullet \to B_\bullet$ be a quasi-isomorphism of commutative pDGAs. We denote by $f^\vee: DB_\bullet \to DA_\bullet$ the dual morphism. Suppose that there are cycles $M\in DA_\bullet$ and $N\in DB_\bullet$ such that the morphisms
        \[\begin{array}{rclcrcl}
          H(A)_\bullet   & \xrightarrow{\simeq} & H(DA)_\bullet & \quad & H(B)_\bullet   & \xrightarrow{\simeq} & H(DB)_\bullet\\
          a   & \mapsto & a.[M] & \quad & b   & \mapsto & b.[N]  
        \end{array}\]
        are isomorphisms of left $H(A)_\bullet$-modules and left $H(B)_\bullet$-modules respectively. If $f^\vee(N)=M$ then $HH(f)$ is an isomorphism of BV-algebras.
    \end{proposition}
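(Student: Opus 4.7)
The plan is to reduce the BV-compatibility of $HH(f)$ to a single cochain-level identity forced by the hypothesis $f^\vee(N)=M$, and then conclude by a diagram chase using naturality of Connes' boundary. First note that Proposition \ref{prop:HH_iso_Gert_topinvar} (applied in the commutative setting) already gives that $HH(f)$ is an isomorphism of perverse Gerstenhaber algebras, and by Proposition \ref{prop:Commu_DPDA} combined with Theorem \ref{thm:commuDPDA_is_BV}, both sides carry pBV structures whose operators have the form $\Delta_\bullet=\phi_\bullet^{-1}\circ B^\vee\circ\phi_\bullet$, with $\phi_A$ the action on $[c_A]=[\xi_M\circ q_A]$ and $\phi_B$ the action on $[c_B]=[\xi_N\circ q_B]$ (using the explicit cocycles from the proof of Theorem \ref{thm:commuDPDA_is_BV}). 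The entire content is therefore to show $HH(f)\circ\Delta_A=\Delta_B\circ HH(f)$.

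The crux is the cochain-level identity $c_A = f^\vee\circ c_B\circ \tilde f$ in $\Hom_{A^e}(\mathbb{B}(A),DA)_\bullet$, where $\tilde f:\mathbb{B}(A)_\bullet\to\mathbb{B}(B)_\bullet$ is the canonical lift of $f$ to bar constructions. Since $q_B\circ\tilde f=f\circ q_A$ by naturality, it suffices to prove $f^\vee\circ\xi_N\circ f=\xi_M$, which is a direct consequence of $f^\vee(N)=M$: for $a,a'\in A_\bullet$,
\[
f^\vee(\xi_N(f(a)))(a') = N\bigl(f(a)f(a')\bigr) = N\bigl(f(aa')\bigr) = f^\vee(N)(aa') = M(aa') = \xi_M(a)(a').
\]
Combining, $f^\vee\circ c_B\circ\tilde f = f^\vee\circ\xi_N\circ q_B\circ\tilde f = \xi_M\circ q_A = c_A$; that $f^\vee$ is $A$-bilinear via $f$ ensures this composite lives in $\Hom_{A^e}(\mathbb{B}(A),DA)_\bullet$.

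Set $\Theta := HH(A,f^\vee)\circ HH(\tilde f,DB):HH^\ast_\bullet(B,DB)\to HH^\ast_\bullet(A,DA)$. This is a composite of two quasi-isomorphisms (the first because $f^\vee$ is itself a quasi-isomorphism by two-out-of-three applied to the square $H(A)\xrightarrow{f}H(B)$, $H(DA)\xleftarrow{f^\vee}H(DB)$ with the two given vertical isomorphisms; the second by the bar-cofibrancy argument in the style of Proposition \ref{prop:HH_iso_Gert_topinvar}). The identity above reads $\Theta([c_B])=[c_A]$, and naturality of the action of Hochschild cochains on Hochschild cochains with coefficients with respect to both $\tilde f$ (restriction of scalars) and $f^\vee$ (change of coefficients) promotes this to commutativity of
\[
\begin{tikzcd}
HH^\ast_\bullet(A) \arrow[r, "HH(f)"] \arrow[d, "\phi_A"'] & HH^\ast_\bullet(B) \arrow[d, "\phi_B"] \\
HH^\ast_\bullet(A, DA) & HH^\ast_\bullet(B, DB). \arrow[l, "\Theta"']
\end{tikzcd}
\]
Since Connes' boundary $B$ depends only on the cyclic structure of the bar (and not on the multiplication of the algebra), $\tilde f$ commutes with $B$, so $\Theta$ intertwines $B^\vee$ on both sides. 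A short chase then gives $\phi_B\circ HH(f)\circ\Delta_A = \Theta^{-1}\circ B^\vee\circ \Theta\circ\phi_B\circ HH(f) = B^\vee\circ\phi_B\circ HH(f) = \phi_B\circ\Delta_B\circ HH(f)$, whence $HH(f)\circ\Delta_A=\Delta_B\circ HH(f)$.

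The main obstacle is verifying that the action of $HH^\ast_\bullet(-)$ on $HH^\ast_\bullet(-,D-)$ is simultaneously natural under $HH(f)$, $HH(A,f^\vee)$ and $HH(\tilde f,DB)$, so that the single cochain identity $c_A=f^\vee\circ c_B\circ\tilde f$ really propagates to commutativity of the square above. This naturality is a bookkeeping exercise on the explicit formula $f\boxtimes_A g$ from Subsection \ref{subsect:Hoch_via_bar}, but one must carefully track perverse degrees through the restriction/extension functors for perverse bimodules and handle the identification $HH^\ast_\bullet(A,DA)\simeq D(HC_\ast^\bullet(A))_\bullet$ compatibly with the definition of $B^\vee$.
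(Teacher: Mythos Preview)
Your proof is correct and follows essentially the same route as the paper. Both arguments hinge on the cochain-level identity $f^\vee\circ\xi_N\circ f=\xi_M$ (forced by $f^\vee(N)=M$), the compatibility of Connes' boundary with $f\boxtimes\tilde f$, and the naturality of the $HH^\ast_\bullet(-)$-action on $HH^\ast_\bullet(-,D-)$; the paper simply unpacks your commuting square $\Theta\circ\phi_B\circ HH(f)=\phi_A$ into a larger diagram with intermediate node $HC^\ast_\bullet(A,DB)$ and labels the three sub-squares (1), (2), (3), verifying the $B^\vee$-compatibility (your ``$\Theta$ intertwines $B^\vee$'') by an explicit sign computation rather than invoking the cyclic-structure argument.
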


    \begin{proof}
    By Proposition \ref{prop:Commu_DPDA}, there exist cocycles $c\in HC^\ast_\bullet(A,DA)$ and $d\in HC^\ast_\bullet(B,DB)$ such that the morphisms 
    \[\begin{array}{rclcrcl}
        HH^\ast_\bullet(A) &\xrightarrow{\simeq} &HH^\ast_\bullet(A, DA) & \quad & HH^\ast_\bullet(B) &\xrightarrow{\simeq} &HH^\ast_\bullet(B, DB) \\
        a            &\mapsto &a.[c]&\quad & b            &\mapsto &b.[d]
    \end{array}\]   
    are isomorphisms of $HH^\ast_\bullet(A,A)$-modules and $HH^\ast_\bullet(B,B)$-modules respectively. 
    Recall that, by Theorem \ref{thm:DPDA_is_BV}, the pBV algebra structures on $HH^\ast_\bullet(A,A)$ and $HH^\ast_\bullet(B,B)$ are given by the following operators: 
    \[\Delta_A: HH^\ast_\bullet(A) \to HH^{\ast-1}_\bullet(A) \text{ and } \Delta_B: HH^\ast_\bullet(A) \to HH^{\ast-1}_\bullet(A)\] which are defined for $h\in HH^\ast_\bullet(A), g\in HH^\ast_\bullet(B)$
    by \[\Delta_A(h).[c] := B^\vee(h.[c]) \text{ and } \Delta_B(g).[d] := B^\vee(g.[d]).\]
    We just need to check that the following diagram commutes
    \[\begin{tikzcd}
    {HH^\ast_\bullet(A)} \arrow[d, "HH(f)"'] \arrow[r, "\Delta_A"] & {HH^{\ast-1}_\bullet(A)} \arrow[d, "HH(f)"] \\
    {HH^\ast_\bullet(B)} \arrow[r, "\Delta_B"]                     & {HH^{\ast-1}_\bullet(B)}.                   
    \end{tikzcd}\]
    We go back to the definition of the morphisms at the cochain level, we have the following diagram
    \[
    \begin{tikzcd}
    {HC_\bullet^\ast(A)} \arrow[d, "{HC(A,f)}"', ""{name=l1}] \arrow[r, "{\_\,.c}"]          & {HC_\bullet^\ast(A,DA)} \arrow[r, "B^\vee"]                                 & {HC_\bullet^{\ast-1}(A,DA)}                                 & {HC_\bullet^{\ast-1}(A)} \arrow[l, "{\_\,.c}"'] \arrow[d, "{HC(A,f)}"]                                  \\
    {HC_\bullet^\ast(A,B)} \arrow[r, "\xi_N\circ\_"]                              & {HC_\bullet^\ast(A,DB)} \arrow[u, "{HC(A,f^\vee)}"', ""{name=r1}] \arrow[r, phantom, "\scriptstyle{(3)}"]    & {HC_\bullet^{\ast-1}(A,DB)} \arrow[u, "{HC(A,f^\vee)}"']    & {HC_\bullet^{\ast-1}(A,B)} \arrow[l, "\xi_N\circ\_"']                        \\
    {HC_\bullet^\ast(B)} \arrow[u, "{HC(\tilde f, B)}", ""{name=l2}] \arrow[r, "{\_ \,. d}"] & {HC_\bullet^\ast(B,DB)} \arrow[u, "{HC(\tilde f, B)}"', ""{name=r2}] \arrow[r, "B^\vee"] & {HC_\bullet^{\ast-1}(B,DB)} \arrow[u, "{HC(\tilde f, B)}"'] & {HC_\bullet^{\ast-1}(B)} \arrow[l, "{\_ \,. d}"'] \arrow[u, "{HC(\tilde f, B)}"']
    \arrow[phantom,from=l1,to=r1,"\scriptstyle{(2)}"]
    \arrow[phantom,from=l2,to=r2,"\scriptstyle{(1)}"]
    \end{tikzcd}
    \]
    The commutativity of the right hand side will follow from the commutativity of the left hand side. As it was mentioned at the end of the proof of Proposition \ref{prop:Commu_DPDA}, the action of $c$ and $d$ correspond to composing with $\xi_M$ and $\xi_N$ respectively. Hence, the left hand size of the above diagram is given by
    \[\begin{tikzcd}
    {HC_\bullet^\ast(A)} \arrow[d, "{HC(A,f)}"', ""{name=l1}] \arrow[r, "\xi_M\circ\_"]        & {HC_\bullet^\ast(A,DA)}                                 \\
    {HC_\bullet^\ast(A,B)} \arrow[r, "\xi_N\circ\_"]                                & {HC_\bullet^\ast(A,DB)} \arrow[u, "{HC(A,f^\vee)}"', ""{name=r1}]    \\
    {HC_\bullet^\ast(B)} \arrow[u, "{HC(\tilde f, B)}", ""{name=l2}] \arrow[r, "\xi_N\circ\_"] & {HC_\bullet^\ast(B,DB)}. \arrow[u, "{HC(\tilde f, B)}"', ""{name=r2}]
    \arrow[phantom,from=l1,to=r1,"\scriptstyle{(2)}"]
    \arrow[phantom,from=l2,to=r2,"\scriptstyle{(1)}"]
    \end{tikzcd}\]
    The diagram (1) is clearly commutative. The commutativity of (2) follows from the following commutative diagram of $A_\bullet$-bimodules
    \[
    \begin{tikzcd}
    A_\bullet \arrow[r, "\xi_M"] \arrow[d, "f"'] & DA_\bullet                      \\
    B_\bullet \arrow[r, "\xi_N"']                & DB_\bullet. \arrow[u, "f^\vee"']
    \end{tikzcd}
    \]
    We know study the middle part (3). Using the isomorphisms 
    \[HC_\bullet^\ast(A,DA)\simeq D(HC_\ast(A))^\ast_\bullet\text{ and } HC^\bullet_\ast(A)\simeq (A\boxtimes T(s\overline{A}))_\ast^\bullet,\] we are led to show that the following diagram commutes
    \[\begin{tikzcd}
    D(A\boxtimes T(s\overline{A}))^\ast_\bullet \arrow[r, "B^\vee"]                                          & D(A\boxtimes T(s\overline{A}))^{\ast-1}_\bullet                                          \\
    D(B\boxtimes T(s\overline{B}))^\ast_\bullet \arrow[u, "(f\boxtimes \tilde f)^\vee"] \arrow[r, "B^\vee"'] & D(B\boxtimes T(s\overline{B}))^{\ast-1}_\bullet \arrow[u, "(f\boxtimes \tilde f)^\vee"']
    \end{tikzcd}\]
    where for $g\in D(B\boxtimes T(s\overline{B}))^\ast_\bullet$ and $a_0[a_1|\ldots|a_m]\in (A\boxtimes T(s\overline{A}))_\ast^\bullet$, $(f\boxtimes \tilde f)^\vee \circ g$ is given by
    \begin{align*}
        (f\boxtimes \tilde f)^\vee \circ g (a_0[a_1|\ldots|a_m])&(-1)^{(m+1)\Real{f}\Real{g}} g\circ (f\boxtimes \tilde f) (a_0[a_1|\ldots|a_m]) \\
        ~                                                       &(-1)^{\sum_{k=1}^m\Real{f}\eps_k+(m+1)\Real{f}\Real{g}} g(f(a_0)[f(a_1)|\ldots|f(a_m)])
    \end{align*}
    with $\eps_k=\Real{a_0}+\sum_{j=1}^{k-1}\Real{s(a_j)}$. This is indeed the case. Let $g\in D(B\boxtimes T(s\overline{B}))^\ast_\bullet$ and $a_0[a_1|\ldots|a_m]\in (A\boxtimes T(s\overline{A}))_\ast^\bullet$. 
    
    On one side we have
    \begin{align*}
        (B^\vee\circ &(f\boxtimes \tilde f)^\vee)(g)(a_0[a_{1,m}]) = (-1)^{(m+1)\Real{f}\Real{g}}B^\vee(g\circ(f\boxtimes \tilde f))(a_0[a_1|\ldots|a_m]) \\
        & =-(-1)^{((m+1)\Real{f}-1)(\Real{g}-1)}g\circ (f\boxtimes \tilde f)\left(B(a_0[a_1|\ldots|a_m])\right) \\        
        & =-(-1)^{((m+1)\Real{f}-1)(\Real{g}-1)}g\circ (f\boxtimes \tilde f)\left(\sum_{i=0}^m (-1)^{\sum\limits_{k<i}\Real{s(a_k)}\sum\limits_{k\geq i}\Real{s(a_k)}} 1[a_i|\ldots|a_m|a_0|\ldots]\right) \\
        & =-(-1)^{((m+1)\Real{f}-1)(\Real{g}-1)}\sum_{i=0}^m (-1)^{\sum\limits_{k<i}\Real{s(a_k)}\sum\limits_{k\geq i}\Real{s(a_k)}}(-1)^{\left(\sum\limits_{k=i}^m\eta_{i,k}+\sum\limits_{k=1}^{i-1}(\eta_{i,m}+\eps_k)\right)\Real{f}} \\
        &\qquad g(1[f(a_i)|\ldots|f(a_m)|f(a_0)|\ldots]).
    \end{align*}
    with $\eta_{k,l}=\eps_{l+1}-\eps_k$. On the other side, we have
    \begin{align*}
        ((f\boxtimes &\tilde f)^\vee \circ B^\vee)(g)(a_0[a_1|\ldots|a_m]) = (-1)^{\Real{g}} (f\boxtimes \tilde f)^\vee (g\circ B)(a_0[a_1|\ldots|a_m]) \\
        & =(-1)^{\Real{g}+(\Real{g}-1)(m+1)\Real{f}}(-1)^{\sum_{k=1}^m\Real{f}\eps_k} (g\circ B)(f(a_0)[f(a_1)|\ldots|f(a_m)])\\
        & =-(-1)^{((m+1)\Real{f}-1)(\Real{g}-1)}(-1)^{\sum_{k=1}^m\Real{f}\eps_k} \sum_{i=0}^m (-1)^{\sum\limits_{k<i}\Real{s(f(a_k))}\sum\limits_{k\geq i}\Real{s(f(a_k))}} \\
        &\qquad g(1[f(a_i)|\ldots|f(a_m)|f(a_0)|\ldots]).
    \end{align*}
    The only extra signs that appear in this second expression are $(-1)^{\Real{f}^2}$ but they cancel themselves out with $(-1)^{\Real{f}}$. 
    \end{proof}
    The next result follows from the topological invariance of the blown-up intersection cochain complex (Proposition \ref{prop:bup_prop}) and Poincaré duality (Theorem \ref{thm:Poincaré}). We can proceed just like we did in the proof of Proposition \ref{prop:BV_on_BUP} and use the blown-up of Sullivan's polynomial forms. It is a commutative pDGA, so we can apply the previous proposition.
    \begin{corollaire}\label{coro:top_invar_BV}
        If $X$ and $Y$ are two compact, oriented, second countable pseudomanifolds which are homeomorphic or stratified homotopy equivalent then there exists an isomorphism of perverse BV algebras
        \[HH^\ast_\bullet(\widetilde N^\ast_\bullet(X; \mathbb{Q})) \simeq HH^\ast_\bullet(\widetilde N^\ast_\bullet(Y; \mathbb{Q})).\]
    \end{corollaire} 

    \subsection{Poincaré homology sphere}
    In this subsection, we will prove the following result.
    \begin{proposition}\label{prop:Poincaré_homology_sphere}
        Let $M$ be the Poincaré $\mathbb{Q}$-homology sphere and let $\Sigma M$ be its suspension. We have an isomorphism of pBV-algebras
        \[HH^\ast_\bullet(\widetilde N^\ast_\bullet(\Sigma M; \mathbb{Q})) \simeq HH^\ast_\bullet(F_{\overline{0}}C^\ast(\mathbb{S}^4; \mathbb{Q}))\]
        where $C^\ast(\mathbb{S}^4; \mathbb{Q})$ denotes the singular cochains of $\mathbb{S}^4$ with coefficients in $\mathbb{Q}$.
    \end{proposition}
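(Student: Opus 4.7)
The plan is to exhibit a zigzag of quasi-isomorphisms of commutative perverse DGAs between $\widetilde N^\ast_\bullet(\Sigma M; \mathbb{Q})$ and the constant diagram $F_{\overline{0}} C^\ast(\mathbb{S}^4; \mathbb{Q})$, and then invoke the invariance machinery of Section \ref{sect:top_invar}. The starting point is to compute $\mathscr{H}^\ast_\bullet(\Sigma M; \mathbb{Q})$ via Mayer-Vietoris and the cone formula of Proposition \ref{prop:bup_prop}. Writing $\Sigma M$ as the union of two open cones on $M$ meeting along a subset homotopy equivalent to $M$, and using that $M$ is a rational homology $3$-sphere, the cone formula yields $\mathscr{H}^k_{\overline{p}}(\mathring{c}M; \mathbb{Q}) \simeq H^k(M; \mathbb{Q})$ for $k \leq \overline{p}(4) \leq 2$ and zero otherwise, so each cone contributes only in degree $0$ for every perversity. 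Chasing the long exact sequence gives $\mathscr{H}^\ast_{\overline{p}}(\Sigma M; \mathbb{Q}) \simeq H^\ast(\mathbb{S}^4; \mathbb{Q})$ for every GM $4$-perversity, and naturality of Mayer-Vietoris in the perversity shows that the structure maps $\mathscr{H}^\ast_{\overline{p}}(\Sigma M) \to \mathscr{H}^\ast_{\overline{q}}(\Sigma M)$ are all isomorphisms of graded algebras.

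Next I would lift this to the cochain level using the perverse Sullivan polynomial forms $\widetilde A^\ast_{PL, \bullet}(\Sigma M)$, which form a commutative pDGA quasi-isomorphic to $\widetilde N^\ast_\bullet(\Sigma M; \mathbb{Q})$ by Proposition \ref{prop:integ_polyforms}. The computation above forces every structure morphism $\widetilde A^\ast_{PL, \overline{p}}(\Sigma M) \to \widetilde A^\ast_{PL, \overline{q}}(\Sigma M)$ to be a quasi-isomorphism of CDGAs, so the canonical map $F_{\overline{0}}(\widetilde A^\ast_{PL, \overline{0}}(\Sigma M)) \to \widetilde A^\ast_{PL, \bullet}(\Sigma M)$ is a perversity-wise quasi-isomorphism of pCDGAs. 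Since the rational cohomology algebra $\mathbb{Q}[x]/(x^2)$ with $|x|=4$ is intrinsically formal, $\widetilde A^\ast_{PL, \overline{0}}(\Sigma M)$ is joined to $C^\ast(\mathbb{S}^4; \mathbb{Q})$ by a zigzag of CDGA quasi-isomorphisms; applying $F_{\overline{0}}$ produces a zigzag
\[
\widetilde N^\ast_\bullet(\Sigma M; \mathbb{Q}) \;\simeq\; \widetilde A^\ast_{PL, \bullet}(\Sigma M) \;\simeq\; F_{\overline{0}}\bigl(\widetilde A^\ast_{PL, \overline{0}}(\Sigma M)\bigr) \;\simeq\; F_{\overline{0}} C^\ast(\mathbb{S}^4; \mathbb{Q})
\]
of quasi-isomorphisms of commutative pDGAs.

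Applying Proposition \ref{prop:HH_iso_Gert_topinvar} arrow by arrow immediately gives an isomorphism of perverse Gerstenhaber algebras on Hochschild cohomology. To upgrade this to a pBV isomorphism via Proposition \ref{prop:HH_iso_BV_topinvar}, one must match the Poincaré duality cocycles on each side along every arrow: both sides are commutative pDPDAs (the blown-up side through Proposition \ref{prop:dual-qiso-bup} and Corollary \ref{coro:BV_on_APL}, the constant side from ordinary Poincaré duality of $\mathbb{S}^4$ extended trivially to the constant perverse diagram), and the hypothesis $f^\vee(N) = M$ amounts to requiring that the fundamental class of $\Sigma M$ in the top blown-up cohomology pull back to the fundamental class of $\mathbb{S}^4$. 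The hard part is controlling this compatibility through the intrinsic-formality zigzag, which only produces CDGA quasi-isomorphisms up to homotopy: one has to use that in cohomology both cocycles generate the unique nonzero class in $H^4$, and rescale one of them by a nonzero rational scalar to obtain an exact match, whereupon Proposition \ref{prop:HH_iso_BV_topinvar} applies at each arrow and composes to the desired isomorphism of perverse BV algebras.
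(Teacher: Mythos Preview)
Your argument is correct but follows a genuinely different route from the paper. The paper produces a \emph{single} stratified map, the Thom--Pontryagin collapse $c:\Sigma M\to\mathbb{S}^4$, proves that $c_\ast$ carries a fundamental class of $\Sigma M$ to one of $\mathbb{S}^4$, and then uses the induced pDGA morphism $c^\ast$ on blown-up cochains together with Proposition~\ref{prop:HH_iso_BV_topinvar}; the preservation of fundamental classes makes the hypothesis $f^\vee(N)=M$ immediate, with no rescaling needed. Your route instead computes $\mathscr{H}^\ast_\bullet(\Sigma M;\mathbb{Q})$ directly from Mayer--Vietoris and the cone formula, and then invokes intrinsic formality of $H^\ast(\mathbb{S}^4;\mathbb{Q})$ to manufacture a zigzag of CDGA quasi-isomorphisms. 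This avoids any geometric input but forces you to propagate the duality cocycle through several arrows; the ``hard part'' you flag is in fact harmless once one observes that rescaling the cycle $M$ by a nonzero scalar leaves $\Delta=\phi^{-1}\circ B^\vee\circ\phi$ unchanged, so a single consistent choice of top-degree class across the zigzag suffices. The paper's collapse-map argument is shorter because the geometry handles the fundamental-class compatibility for free; your argument, by contrast, would adapt more readily to situations where no convenient stratified comparison map to a model space is available.
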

    In order to apply Proposition \ref{prop:HH_iso_BV_topinvar}, we must first give a quasi-isomorphism between $\widetilde N^\ast_\bullet(\Sigma M; \mathbb{Q})$ and $F_{\overline{0}}C^\ast(\mathbb{S}^4; \mathbb{Q})_\bullet$. We can use the \emph{Thom-Pontryagin collapse map} \cite[Chapter II - Section 16]{Bre93}. 
    \begin{proposition}
        Let $X$ be a closed pseudomanifold of formal dimension $n$. Then for any $\overline{p}\in \pGM$, the \emph{Thom-Pontryagin collapse map} $c:X\to \mathbb{S}^n$ induces an isomorphism
        \[I^{\overline{p}}H_n(X; R) \simeq H_n(\mathbb{S}^n; R)\]
        if and only if $X$ is orientable.
    \end{proposition}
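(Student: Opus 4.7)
The plan is to reduce the statement to the identification of top-degree intersection homology with a fundamental class and then track this class through the collapse map. First, I observe that in top degree the $\overline{p}$-allowability condition is vacuous. Indeed, an $n$-chain $\xi$ in $X$ satisfies $\dim(|\xi|\cap S)\leq \dim S = n - \codim(S) \leq n-\codim(S) + \overline{p}(S)$ automatically for every singular stratum $S$, and for cycles the corresponding condition on $\partial \xi = 0$ is vacuous. Thus the structure morphisms $I^{\overline{0}}H_n(X;R) \to I^{\overline{p}}H_n(X;R)$ are isomorphisms for every $\overline{p}\in \pGM$, and we are reduced to showing the statement for a single perversity, say $\overline{0}$.

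Next, I would identify $I^{\overline{0}}H_n(X;R)$ with $H_n(X,X_{n-1};R)$, the top singular homology of the pair. Since $X_{n-1}$ has dimension $\leq n-1$ and $X \setminus X_{n-1}$ is an $n$-manifold, this identification follows from the fact that any representative cycle can be pushed off the singular locus and that allowable boundaries in $X_{n-1}$ die in the quotient (see \cite[Chapter 6]{Fri20}). Standard results for closed pseudomanifolds then give $H_n(X,X_{n-1};R) \cong R$ generated by a fundamental class $[X]$ precisely when $X$ is orientable (the regular part carries a coherent orientation extending across the singular locus). When $X$ is not orientable the group is either zero or a proper subgroup of $R$, and in either case cannot be isomorphic to $H_n(\mathbb{S}^n;R) \cong R$.

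Finally, unravel the Thom--Pontryagin collapse explicitly: choose a regular point $x_0\in X\setminus X_{n-1}$, a closed $n$-disk $D\hookrightarrow X\setminus X_{n-1}$ containing $x_0$ in its interior, and set $c:X\twoheadrightarrow X/(X\setminus \mathrm{int}(D))\simeq \mathbb{S}^n$. When $X$ is orientable, the image of a fundamental class $[X]$ under $c_\ast$ is, by naturality, the class of $D$ relative to $\partial D$, which generates $H_n(\mathbb{S}^n;R)$. Since both sides are then free of rank one, $c_\ast$ is an isomorphism. Conversely, if $c_\ast$ were an isomorphism, $I^{\overline{p}}H_n(X;R)\cong R$ and the preimage of the generator would give a fundamental cycle witnessing orientability, contradicting non-orientability.

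The main obstacle is the middle step: carefully matching the (perverse) intersection-theoretic definition of orientability with the classical topological one via fundamental classes, and verifying that the collapse map is compatible with both. Once this identification is in place, the two implications reduce to routine tracking of a generator.
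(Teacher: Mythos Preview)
Your argument is correct in outline but takes a different route from the paper. The paper does not reduce to a single perversity or invoke the identification $I^{\overline{0}}H_n(X;R)\cong H_n(X,X_{n-1};R)$; instead it factors the collapse map through a chain of local relative groups. Concretely, for a regular point $x$ with Euclidean neighbourhood $U_x\subset V\subset X_n\setminus X_{n-1}$, the paper writes a commutative ladder
\[
I^{\overline{p}}H_n(X)\;\to\; I^{\overline{p}}H_n(X,X\setminus U_x)\;\leftarrow\; I^{\overline{p}}H_n(V,V\setminus U_x)\;=\;H_n(V,V\setminus U_x)\;\to\;H_n(\mathbb{S}^n,\infty),
\]
where the last three arrows are isomorphisms by excision, by the fact that $V$ lies in the manifold part (so intersection homology coincides with ordinary homology), and by a standard long exact sequence argument. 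The only link that can fail is the first one, and the paper simply quotes that $I^{\overline{p}}H_n(X)\to I^{\overline{p}}H_n(X,X\setminus U_x)$ is an isomorphism precisely when $X$ is orientable. This local-to-global factorisation handles all perversities uniformly and sidesteps both the ``top-degree allowability is vacuous'' step and the explicit identification with $H_n(X,X_{n-1})$.

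Your approach buys a more conceptual picture (everything is packaged into the existence of a fundamental class), but at the cost of two extra identifications that need justification. In particular, your claim that top-degree allowability is vacuous is correct as stated for the PL/support formulation of $\overline{p}$-allowability, but for the singular version used in this paper the inequality is on $\dim(\sigma^{-1}(S))$ rather than $\dim(|\xi|\cap S)$, so the argument as written does not literally apply; you would need to cite the independence of $I^{\overline{p}}H_n$ from $\overline{p}$ as a separate known fact. The paper's diagram avoids this subtlety entirely.
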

    \begin{proof}
        The pseudomanifold $X$ is a filtered space (Definition \ref{def:filtered_space}), it is equipped with a filtration by closed subspaces $\{X_i\}_{0\leq n}$. Let $x$ be a point in the regular part $X_n\setminus X_{n-1}$ (which is an $n$-manifold). There exists an open neighborhood $U_x$ of $x$ in $X_n\setminus X_{n-1}$ and an homeomorphism $\phi$ from $U_x$ to $\mathbb{D}^n$, the open disk in $\mathbb{R}^n$. The $n$-sphere $\mathbb{S}^n$ in $\mathbb{R}^{n+1}$ can be seen as the \emph{Alexandroff compactification} of $\mathbb{D}^n$ i.e. it is obtained by adjoining a single point, which we denote $\infty$. The Thom-Pontryagin collapse at $x$ is the continuous map $c_x: X \to \mathbb{S}$ which sends a point $p\in U_x$ to $\phi(p)$ and the other points are sent to $\infty$. Note that this is a stratified map if we equip $\mathbb{S}^n$ with the trivial filtration. It induces, for every $\overline{p}\in \pGM$, a morphism $c_{x, \ast}:I^{\overline{p}}H_n(x)\to H_n(\mathbb{S}^n)$ which fits in the following commutative diagram.  \[\begin{tikzcd}
        	{I^{\overline{p}}H_n(X; R)} & {H_n(\mathbb{S}^n; R)} \\
        	{I^{\overline{p}}H_n(X, X\setminus U_x)} & {H_n(\mathbb{S}^n, \infty)} \\
        	{I^{\overline{p}}H_n(V, V\setminus U_x)} & {H_n(V, V\setminus U_x)} 
        	\arrow["{c_{x, \ast}}", from=1-1, to=1-2]
        	\arrow["\simeq"', "(1)", from=1-1, to=2-1]
        	\arrow["\simeq"', "(2)", from=2-1, to=3-1]
        	\arrow["\simeq"',"(3)", from=3-1, to=3-2]
        	\arrow["\simeq"',"(4)", from=3-2, to=2-2]
        	\arrow["\simeq"{description}, draw=none, from=2-2, to=1-2]
        \end{tikzcd}\] 
        where $V$ is an open in $X_n\setminus X_{n-1}$ which contains $U_x$.
        Note the following facts:
        \begin{itemize}
            \item $(1)$ is an isomorphism if and only if $X$ is orientable.
            \item $(2)$ is an isomorphism by excision.
            \item $(3)$ is an isomorphism because $V$ is a submanifold.
            \item $(4)$ is also an isomorphism by considering the long exact sequence induced in homology by $C_\ast(V\setminus U_X; R) \hookrightarrow C_\ast(V; R) \twoheadrightarrow C_\ast(V; R)/C_\ast(V\setminus U_X; R)$.
        \end{itemize}
        Hence, $c_{x, \ast}$ is an isomorphism.
    \end{proof}
    We now proceed to the proof of Proposition \ref{prop:Poincaré_homology_sphere}. 
    \begin{proof}
        We denote by $M$ the Poincaré $\mathbb{Q}$-homology sphere. We apply the previous proposition to $X=\Sigma M$. For every $\overline{p}\in\pGM$ there exists an isomorphism
        \[I^{\overline{p}}H_4(\Sigma M; \mathbb{Q}) \simeq H_4(\mathbb{S}^4; \mathbb{Q}).\]
        Furthermore, by Theorem \ref{thm:Poincaré} (Poincaré duality), we have a quasi-isomorphism
        \[\widetilde N^0_{\overline{p}}(\Sigma M; \mathbb{Q})\simeq H^0(\mathbb{S}^4; \mathbb{Q}).\]
        After additionnal computation (see \cite[Section 1.5]{CST18ration}), we can show that $\widetilde N^\ast_{\overline{p}}(\Sigma M; \mathbb{Q})$ and $H^\ast(\mathbb{S}^4; \mathbb{Q})$ are quasi-isomorphic for any $\overline{p}\in\pGM$. In other words, we have a quasi-isomorphism of perverse chain complexes
        \[\widetilde N^\ast_\bullet(\Sigma M; \mathbb{Q})\simeq F_{\overline{0}}C^\ast(\mathbb{S}^4; \mathbb{Q})_\bullet.\]
        It's even a quasi-isomorphism of pDGA. Going back to the proof of the previous proposition, one notices that the collapse map sends a fundamental class of $\Sigma M$ to a fundamental class of $\mathbb{S}^4$.  
        By applying Proposition \ref{prop:HH_iso_BV_topinvar} to the blown-up of Sullivan's polynomial forms we get an isomorphism of perverse BV algebras
        \[HH^\ast_\bullet(\widetilde A_{PL, \bullet}(\Sigma M)) \simeq HH^\ast_\bullet(F_{\overline{0}}C^\ast(\mathbb{S}^4; \mathbb{Q})).\]
        Finally, by Proposition \ref{prop:BV_on_BUP}, we get the sought isomorphism
        \[HH^\ast_\bullet(\widetilde N^\ast_\bullet(\Sigma M; \mathbb{Q})) \simeq HH^\ast_\bullet(F_{\overline{0}}C^\ast(\mathbb{S}^4; \mathbb{Q})).\]        
    \end{proof}

\section{Tensor product and Hochschild cohomology}\label{sect:tensor}
    
    In \cite{LZ14}, Le and Zhou show that if $A$ and $B$ are finite dimensional symmetric algebras over a field then we have an isomorphism of BV algebras
    between $HH^\ast(A\otimes B)$ and $ HH^\ast(A)\otimes HH^\ast(B)$. Following their approach, we will see what assumptions are needed on pDGAs $A_\bullet$ and $B_\bullet$ in order to get an isomorphism of pBV algebras
    \begin{equation}\label{eq:tensorprod-Hoch-coho}
        HH^\ast_\bullet(A\boxtimes B) \simeq (HH^\ast(A) \boxtimes HH^\ast(B))_\bullet.
    \end{equation}
    We first endow both of these objects with a perverse BV-algebra structure. 
    
    \subsection{Defining pBV algebra structures}
    Let $A_\bullet$ and $B_\bullet$ be two pDGAs. Instead of showing that there is a pBV algebra structure on $(HH^\ast(A) \boxtimes HH^\ast(B))_\bullet$ we will define a pBV algebra structure on the tensor product of two pBV algebras. The following statements are analogous to results known by Manin \cite[Chap III - Section 9.4]{Man99} and his proofs apply to the perverse objects as the perversities don't appear in the computations.
    \begin{lemma}
        Let $(A^\ast_\bullet, \Delta, \cup)$ be a pDGA where
        \begin{itemize}
            \item $\Delta$ is a differential of degree $-1$ and perverse degree $\overline{0}$
            \item and $\cup$ is an associative and graded commutative product of degree $0$ and perverse degree $\overline{0}$.
        \end{itemize}
        We define a product 
        \[[-,-]:(A \boxtimes A)_\bullet^\ast \to A_\bullet^{\ast - 1} \]
        by setting for $a,b\in A^\ast_\bullet$
        \[[a,b]:=(-1)^{\Real{a}}\Delta(a\cup b)-a\cup\Delta(b)-(-1)^{\Real{a}}\Delta(a)\cup b.\]
        If for every $a\in A^\ast_\bullet$, the operator $[a, -]:A^\ast_\bullet \to A^{\ast+\Real{a}-1}_\bullet$ is a derivation with respect to $\cup$, then $(A_\bullet^\ast, \cup, [-,-], \Delta)$ is a perverse Batalin-Vilkovisky algebra. 
    \end{lemma}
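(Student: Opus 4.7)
The definition of $[-,-]$ can be rearranged to give $(-1)^{\Real{a}}[a,b]=\Delta(a\cup b)-\Delta(a)\cup b-(-1)^{\Real{a}}a\cup \Delta(b)$, so the BV relation holds tautologically. Combined with the hypotheses $\Delta\circ\Delta=0$, the graded-commutative associative structure of $\cup$, and the Leibniz rule ``$[a,-]$ is a derivation of $\cup$'', the only axioms of a perverse Batalin--Vilkovisky algebra still to be verified for $(A^\ast_\bullet,\cup,[-,-],\Delta)$ are the graded skew-commutativity and the Jacobi identity on $[-,-]$.

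For the graded skew-commutativity, I would expand $[a,b]+(-1)^{(\Real{a}-1)(\Real{b}-1)}[b,a]$ with the definition of $[-,-]$, producing six terms that group into three natural pairs: $\Delta(a\cup b)$ against $\Delta(b\cup a)$, $a\cup\Delta(b)$ against $\Delta(b)\cup a$, and $\Delta(a)\cup b$ against $b\cup\Delta(a)$. Applying the graded commutativity of $\cup$ (with the shifted degree $\Real{\Delta(x)}=\Real{x}-1$ on any $\Delta$-image factor) introduces in each pair a sign that exactly matches the outer sign, so each pair cancels. This is a purely symbolic parity check in $\mathbb{Z}/2$.

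The main obstacle is the Jacobi identity, which I would prove in two steps. First, applying $\Delta$ to the definition $[a,b]=(-1)^{\Real{a}}\Delta(a\cup b)-a\cup\Delta(b)-(-1)^{\Real{a}}\Delta(a)\cup b$ and using $\Delta^2=0$ reduces it to $\Delta[a,b]=-\Delta(a\cup\Delta b)-(-1)^{\Real{a}}\Delta(\Delta a\cup b)$; expanding each of these two terms once more by the BV relation and cancelling the resulting $\pm\Delta(a)\cup\Delta(b)$ contributions yields the auxiliary identity $\Delta[a,b]=[\Delta a,b]-(-1)^{\Real{a}}[a,\Delta b]$. Second, computing $\Delta(a\cup b\cup c)$ in two ways via the associativity $(a\cup b)\cup c=a\cup(b\cup c)$ and applying the BV relation twice in each grouping produces two expressions that share the three ``diagonal'' terms $\Delta a\cup b\cup c$, $(-1)^{\Real{a}}a\cup\Delta b\cup c$ and $(-1)^{\Real{a}+\Real{b}}a\cup b\cup\Delta c$; after their cancellation one obtains a relation between $[a\cup b,c]+[a,b]\cup c$ and $[a,b\cup c]+a\cup[b,c]$, which combined with the Leibniz hypothesis for $[a,-]$ upgrades to a right Leibniz rule of the shape $[a\cup b,c]=a\cup[b,c]+(-1)^{\Real{b}(\Real{c}-1)}[a,c]\cup b$. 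Finally, expanding $[a,[b,c]]$ by applying the BV relation to $[b,c]$, then simplifying each resulting term with the two Leibniz rules and the auxiliary identity applied to $\Delta[b,c]$, all $\Delta$-containing pieces re-assemble into $[[a,b],c]$ and $[b,[a,c]]$, producing the Jacobi identity after sign cancellation. Since every operator here has perverse degree $\overline{0}$ and both $\cup$ and $\Delta$ respect the colimit-based $\boxtimes$ structure, no perverse index intervenes in the manipulations and the classical Koszul/Manin computation transposes verbatim to the perverse setting; the only genuine difficulty is the sign bookkeeping.
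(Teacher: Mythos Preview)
Your proposal is correct and matches the paper's own proof, which consists of a single sentence: since the BV relation holds by construction and the Leibniz rule is assumed, only the graded skew-commutativity and the Jacobi identity for $[-,-]$ remain, and these follow by straightforward computation (the paper cites Manin, Chap.~III, \S9.4, and notes that the perverse grading plays no role). You supply considerably more of that computation than the paper does, but the route is identical.
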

    \begin{proof}
        One just needs to check, by straightforward computation, that $[-,-]$ is a Lie algebra (skew commutativity and Jacobi identity). All the other properties expected from a perverse BV algebra are verified (Definition \ref{def:pBV}).
    \end{proof}
    \begin{proposition}
        Let $(A_\bullet, \cup_A, [-,-]_A, \Delta_A)$ and $(B_\bullet, \cup_B, [-,-]_B, \Delta_B)$ be two pBV algebras. Then, there exists a pBV algebra structure on their tensor product $(A\boxtimes B)_\bullet$ where the cup product $\cup$, the Gerstenhaber bracket $[-, -]$ and the $\Delta$ operator are given for homogeneous elements $a_1\otimes b_1, a_2\otimes b_2\in (A\boxtimes B)_\bullet$ by
        \begin{itemize}
            \item $(a_1\otimes b_1)\cup (a_2\otimes b_2)=(-1)^{\Real{a_2}\Real{b_1}}(a_1\cup_A a_2)\otimes (b_1\cup_B b_2)$,
            \item $[a_1\otimes b_1, a_2\otimes b_2]=(-1)^{(\Real{a_2}-1)\Real{b_1}}[a_1, a_2]_A\otimes (b_1\cup b_2) + (-1)^{\Real{a_2}(\Real{b_1}-1)}(a_1\cup_1 a_2)\otimes [b_1, b_2]_B$,
            \item $\Delta(a_1\otimes b_1)=\Delta_A(a_1)\otimes b_1 +(-1)^{\Real{a_1}}a_1\otimes \Delta_B(b_1)$.
        \end{itemize}
    \end{proposition}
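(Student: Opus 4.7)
The strategy is to apply the preceding lemma, which reduces the problem of constructing a pBV algebra to verifying three things: that $(A\boxtimes B)_\bullet$ is an associative graded commutative pDGA under $\cup$, that $\Delta$ is a degree $-1$ differential, and that the derived bracket obtained from $\Delta$ and $\cup$ via the formula $[x,y]=(-1)^{\Real{x}}\Delta(x\cup y)-x\cup\Delta(y)-(-1)^{\Real{x}}\Delta(x)\cup y$ coincides with the bracket displayed in the statement and defines a derivation in each slot. The formulas for $\cup$, $[-,-]$ and $\Delta$ are exactly the Koszul-twisted extensions of the corresponding operations on $A_\bullet$ and $B_\bullet$, so the perversity indexing plays no role in the computation: the structure maps of the colimit defining $\boxtimes$ commute with all of the operations, and at every step one reduces to a statement about homogeneous elements living at fixed perverse degrees $\overline{p},\overline{q}$. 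This is why the proof proceeds exactly as in the non-perverse case treated by Manin \cite[Chap III - Section 9.4]{Man99}.

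The first step is to check that $\cup$ on $(A\boxtimes B)_\bullet$ is associative and graded commutative. This is routine from the Koszul sign rule and the corresponding properties of $\cup_A$ and $\cup_B$, and it uses only that $A_\bullet$ and $B_\bullet$ are associative graded commutative pDGAs. Second, we compute $\Delta^2$ on a simple tensor $a\otimes b$:
\[
\Delta^2(a\otimes b)=\Delta\bigl(\Delta_A(a)\otimes b + (-1)^{\Real{a}}a\otimes\Delta_B(b)\bigr),
\]
and a short sign chase, using $\Delta_A^2=0=\Delta_B^2$ and $\Real{\Delta_A(a)}=\Real{a}-1$, collapses this to zero. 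Similarly one verifies that $\Delta$ has degree $-1$ and perverse degree $\overline{0}$ by inspecting the summands.

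The third step is the central computation: plug $x=a_1\otimes b_1$ and $y=a_2\otimes b_2$ into the derived-bracket formula and expand each of the three terms $\Delta(x\cup y)$, $x\cup\Delta(y)$ and $\Delta(x)\cup y$ using the definitions of $\cup$ and $\Delta$ above. On the $A$-factor the terms group into
\[
(-1)^{\Real{a_1}}\Delta_A(a_1\cup_A a_2)-a_1\cup_A\Delta_A(a_2)-(-1)^{\Real{a_1}}\Delta_A(a_1)\cup_A a_2 = [a_1,a_2]_A,
\]
and symmetrically for $B$; the Koszul signs produced by commuting $a_2$ past $b_1$ and the internal signs from $\Delta$ combine to yield exactly the factors $(-1)^{(\Real{a_2}-1)\Real{b_1}}$ and $(-1)^{\Real{a_2}(\Real{b_1}-1)}$ appearing in the statement. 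This identifies the displayed bracket as the derived bracket of $(\cup,\Delta)$.

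Finally, one checks that $[a_1\otimes b_1,-]$ is a derivation of $\cup$. Writing $[a_1\otimes b_1,-] = (-1)^{(\,\cdot-1)\Real{b_1}}[a_1,-]_A\otimes(b_1\cup_B-)+(-1)^{\Real{a_2}(\Real{b_1}-1)}(a_1\cup_A-)\otimes[b_1,-]_B$ shows it is the sum of (twisted) tensor products of a derivation and a multiplication, so it inherits the derivation property from the facts that $[a_1,-]_A$ and $[b_1,-]_B$ are derivations (since $A_\bullet$ and $B_\bullet$ are pBV, hence pGerstenhaber, algebras) and that $\cup_A,\cup_B$ are associative graded commutative products. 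The main obstacle is simply bookkeeping of Koszul signs across four sources -- the twist in the tensor product $\boxtimes$, the shift from $\Delta$, the shift from the bracket, and the Koszul rule for tensoring bilinear operations; once the elementary identities $\Real{\Delta_A(a)}=\Real{a}-1$ and $\Real{[a_1,a_2]_A}=\Real{a_1}+\Real{a_2}-1$ are substituted consistently, all signs match. Invoking the lemma then promotes $(A\boxtimes B)_\bullet$ to a pBV algebra.
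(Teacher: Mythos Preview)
Your proposal is correct and follows essentially the same approach as the paper: check that $\cup$ is associative graded commutative, that $\Delta$ is a differential, identify the displayed bracket with the derived bracket of $(\cup,\Delta)$, verify that $[a_1\otimes b_1,-]$ is a derivation, and invoke the preceding lemma. Your write-up is in fact more detailed than the paper's, which simply asserts the first two points and the derivation property and records the derived-bracket identity without expanding the sign computation.
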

    \begin{proof}
        We easily notice the following facts: 
        \begin{itemize}
            \item  the associativity and the graded commutativity of $\cup$ is inherited from $\cup_A$ and $\cup_B$,
            \item $\Delta$ is a differential.
        \end{itemize}
        We can remark that
        \begin{align*}
            [a_1\otimes b_1, a_2\otimes b_2]    &=(-1)^{\Real{a_1}+\Real{b_1}}\Delta((a_1\otimes b_1) \cup (a_2\otimes b_2))-(a_1 \otimes b_1) \cup\Delta(a_2\otimes b_2)\\
            ~                                   &-(-1)^{\Real{a_1}+\Real{b_1}}\Delta(a_1\otimes b_1)\cup (a_2\otimes b_2).
        \end{align*}
        Furthermore, we can show that for any $a_1\otimes b_1\in (A\boxtimes B)_\bullet$, the operator $[a_1\otimes b_1, -]$ is a derivation with respect to $\cup$. Hence, by the previous lemma, $(A\boxtimes B)_\bullet$ is endowed with a pBV algebra structure.
    \end{proof}

    We now move on to the Hochschild cohomology of a tensor product. There is no general way of extending the perverse BV-algebra structures found on $HH^\ast_\bullet(A)$ and $HH^\ast_\bullet(B)$ to $HH^\ast_\bullet(A\boxtimes B)$. We will state the main result for pDGAs over a field $\mathbb{F}$ and we will require additional assumptions on either $A_\bullet$ or $B_\bullet$. The following lemmas will be useful.
    \begin{lemma}\label{lemma:Bourbaki_tensorhom}
        Let $A_\bullet, B_\bullet, C_\bullet$ and $D_\bullet$ be perverse cochain complexes. If either one of the pairs $(A_\bullet, C_\bullet),\, (A_\bullet, B_\bullet)$ or $(C_\bullet, D_\bullet)$ is of the form $(F_{\overline{0}}(V)_\bullet, F_{\overline{0}}(W)_\bullet)$ with $V^\ast$ and $W^\ast$ chain complexes whose components are finite dimensional $\mathbb{F}$-vector spaces then we have an isomorphism of perverse chain complexes
        \[(\Hom_{Ch(R)^{\pGM}}(A, B) \boxtimes \Hom_{Ch(R)^{\pGM}}(C, D))_\bullet \simeq \Hom_{Ch(R)^{\pGM}}(A\boxtimes C, B\boxtimes D)_\bullet.\]
    \end{lemma}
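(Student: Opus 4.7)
The plan is to construct a natural comparison morphism of perverse cochain complexes and then verify it is an isomorphism separately in each of the three cases by reducing to classical tensor-hom identities over the field $\mathbb{F}$. The comparison map
\[\eta : (\Hom_{Ch(R)^{\pGM}}(A,B) \boxtimes \Hom_{Ch(R)^{\pGM}}(C,D))_\bullet \to \Hom_{Ch(R)^{\pGM}}(A \boxtimes C, B \boxtimes D)_\bullet\]
exists for any perverse cochain complexes by the universal property of $\boxtimes$: for perversities $\overline{p},\overline{q}$ with $\overline{p}+\overline{q}\leq \overline{r}$, a pair $(f,g) \in \Hom(A,B)_{\overline{p}} \otimes \Hom(C,D)_{\overline{q}}$ determines an element $f \boxtimes g$ in the target at perversity $\overline{r}$, and these assemble into a morphism compatible with the structure maps.

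The first preliminary I would record is the strong monoidal behaviour of $F_{\overline{0}}$. Using the adjunctions $F_{\overline{r}} \dashv Ev_{\overline{r}}$ and the remark on internal Hom given after the definition of the closed structure, one verifies directly from the colimit and limit formulas that for any perverse cochain complex $Z_\bullet$ and any chain complex $V^\ast$,
\[(F_{\overline{0}}(V) \boxtimes Z)_{\overline{r}} \simeq V \otimes Z_{\overline{r}}, \qquad \Hom_{Ch(R)^{\pGM}}(F_{\overline{0}}(V), Z)_{\overline{r}} \simeq \Hom_{Ch(R)}(V, Z_{\overline{r}}),\]
the first since $\overline{r}$ is terminal in $\{\overline{q} \leq \overline{r}\}$ and the second since $\overline{r}$ is initial in $\{\overline{q} \geq \overline{r}\}$. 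In particular, $F_{\overline{0}}(V) \boxtimes F_{\overline{0}}(W) \simeq F_{\overline{0}}(V \otimes W)$ and $\Hom(F_{\overline{0}}(V), F_{\overline{0}}(W)) \simeq F_{\overline{0}}(\Hom(V,W))$.

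Next I would treat the case $(A,C) = (F_{\overline{0}}(V), F_{\overline{0}}(W))$. Unwinding the formulas, the left hand side of $\eta_{\overline{r}}$ becomes
\[\colim_{\overline{p}+\overline{q} \leq \overline{r}} \Hom_{Ch(R)}(V, B_{\overline{p}}) \otimes \Hom_{Ch(R)}(W, D_{\overline{q}})\]
while the right hand side becomes $\Hom_{Ch(R)}\bigl(V \otimes W,\, \colim_{\overline{p}+\overline{q}\leq \overline{r}} B_{\overline{p}} \otimes D_{\overline{q}}\bigr)$. Since $V^\ast$ and $W^\ast$ are degree-wise finite dimensional, $\Hom_{Ch(R)}(V,X) \simeq V^\vee \otimes X$ and likewise for $W$, and tensor products commute with colimits; both sides then become $(V \otimes W)^\vee \otimes \colim_{\overline{p}+\overline{q} \leq \overline{r}} B_{\overline{p}} \otimes D_{\overline{q}}$, and $\eta$ is easily seen to implement this identification. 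For the case $(A,B) = (F_{\overline{0}}(V), F_{\overline{0}}(W))$, the left hand side simplifies by the first preliminary to $\Hom_{Ch(R)}(V,W) \otimes \Hom_{Ch(R)^{\pGM}}(C,D)_{\overline{r}}$, while the right hand side is $\lim_{\overline{r} \leq \overline{q}-\overline{p}} \Hom_{Ch(R)}(V \otimes C_{\overline{p}}, W \otimes D_{\overline{q}})$; the finite dimensionality of $V$ and $W$ yields the natural isomorphism $\Hom(V \otimes C_{\overline{p}}, W \otimes D_{\overline{q}}) \simeq \Hom(V,W) \otimes \Hom(C_{\overline{p}}, D_{\overline{q}})$, and $\Hom(V,W)$ being finite dimensional allows us to pull it out of the limit. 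Case $(C,D) = (F_{\overline{0}}(V), F_{\overline{0}}(W))$ is symmetric to this one.

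The main obstacle I anticipate is not any deep algebra but rather the careful bookkeeping: checking that the combinatorial colimit and limit formulas in the perverse category collapse in the expected way, that the classical finite-dimensional identifications interact correctly with the structure maps $\phi_{\overline{p} \leq \overline{q}}$, and that the natural map $\eta$ genuinely realises the abstract isomorphism in each case (signs included). Everything else is a direct consequence of the standard tensor-hom identities over $\mathbb{F}$ once the simplifying identities for $F_{\overline{0}}$ are in place.
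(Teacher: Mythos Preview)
Your proposal is correct and follows essentially the same route as the paper: record the simplifying identities for $F_{\overline{0}}$ with respect to $\boxtimes$ and the internal $\Hom$, then reduce each case to the classical tensor--hom isomorphism over $\mathbb{F}$ using finite-dimensionality. The paper only writes out the case $(C_\bullet,D_\bullet)=(F_{\overline{0}}(V),F_{\overline{0}}(W))$ and declares the others similar, whereas you sketch all three and make the comparison map $\eta$ explicit; but the underlying argument is the same.
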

    \begin{proof}
        We suppose that $C_\bullet=F_{\overline{0}}(V)_\bullet$ and $D_\bullet=F_{\overline{0}}(W)_\bullet$ where $V$ and $W$ are chain complexes whose components are are finite dimensional vector spaces. The other cases are treated in a similar way. Let $\overline{r}\in \pGM$ and $k\in \mathbb{Z}$, notice that we have
        \[\Hom_{Ch(R)^{\pGM}}(A, F_{\overline{0}}(V))_{\overline{r}}^k=\Hom_{Ch(R)}(A_{\overline{r}}, V)^k \text{ and } (A\boxtimes F_{\overline{0}}(V))_{\overline{r}}^k=(A_{\overline{r}}\otimes V)^k.\]
        The first equality implies that $\Hom_{Ch(R)^{\pGM}}(F_{\overline{0}}(V), F_{\overline{0}}(W))_{\overline{r}}^k=F_{\overline{0}}(\Hom_{Ch(R)}(V, W))^k_{\overline{r}}$. Using these relations, we find that the term $(\Hom_{Ch(R)^{\pGM}}(A, B) \boxtimes \Hom_{Ch(R)^{\pGM}}(C, D))_{\overline{r}}^k$ is equal to 
        \[(\Hom_{Ch(R)^{\pGM}}(A, B)_{\overline{r}}\otimes \Hom_{Ch(R)}(V, W))^k\]
        \[=\lim\limits_{\overline{r}\leq \overline{q}-\overline{p}} \prod_{j_2-j_1-i_2+i_1=k} \Hom_R(A_{\overline{p}}^{i_1}, B_{\overline{q}}^{i_2})\otimes \Hom_R(V^{j_1}, W^{j_2}).\]
        Furthermore, we have 
        \[\Hom_{Ch(R)^{\pGM}}(A\boxtimes C, B\boxtimes D)_{\overline{r}}^k=\lim\limits_{\overline{r}\leq \overline{q}-\overline{p}} \prod_{j-i=k} \Hom_R((A_{\overline{p}}\otimes V)^i, (B_{\overline{q}}\otimes W)^j)\]
        \[=\lim\limits_{\overline{r}\leq \overline{q}-\overline{p}} \prod_{j_2-j_1-i_2+i_1=k} \Hom_R(A_{\overline{p}}^{i_1}\otimes V^{j_1}, B_{\overline{q}}^{i_2}\otimes W^{j_2}).\]        
        Since $V^{j_1}$ and $W^{j_2}$ are finite dimensional vector spaces, we have an isomorphism
        \[\Hom_R(A_{\overline{p}}^{i_1}, B_{\overline{q}}^{i_2})\otimes \Hom_R(V^{j_1}, W^{j_2}) \simeq \Hom_R(A_{\overline{p}}^{i_1}\otimes V^{j_1}, B_{\overline{q}}^{i_2}\otimes W^{j_2}).\]
        This concludes the proof. 
    \end{proof}   
        The next lemma follows by noticing that over a field all chains complexes are cofibrant and that the left adjoint $F_{\overline{0}}(-)$ is part of a Quillen adjunction. 
    \begin{lemma}
        If $V^\ast$ is a chain complex whose components are finite dimensional $\mathbb{F}$-vector spaces then the perverse chain complex $F_{\overline{0}}(V)_\bullet$ and its linear dual are cofibrant objects in $(Ch(R))^{\pGM}$.
    \end{lemma}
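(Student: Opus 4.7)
The plan is to deduce the statement in two pieces, following precisely the hint preceding the lemma.

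For the first part, I would first check that the adjunction $F_{\overline{0}} \dashv Ev_{\overline{0}}$ is a Quillen adjunction with respect to the model structure of Theorem \ref{thm:modelcat_pchain}. This is immediate: by construction, fibrations and weak equivalences in $(Ch(\mathbb{F}))^{\pGM}$ are tested perversity-wise, so $Ev_{\overline{0}}$ preserves both, hence is right Quillen and $F_{\overline{0}}$ is left Quillen. Next, I would invoke the standard fact that over a field every chain complex is cofibrant in the projective model structure on $Ch(\mathbb{F})$: any chain complex $V$ of $\mathbb{F}$-vector spaces splits (non-canonically) as $V \cong H(V) \oplus C$ where $C$ is contractible, $H(V)$ decomposes into a direct sum of sphere complexes $\mathbb{S}^n$ and $C$ into a direct sum of disk complexes $\mathbb{D}^n$. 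Each $\mathbb{D}^n$ is cofibrant tautologically, while $\mathbb{S}^n$ is cofibrant as the pushout of the zero map against the generating cofibration $\mathbb{S}^{n-1}\hookrightarrow \mathbb{D}^n$; direct sums of cofibrant objects are cofibrant in a cofibrantly generated model category. Hence $V$ is cofibrant in $Ch(\mathbb{F})$ and therefore $F_{\overline{0}}(V)_\bullet$ is cofibrant in $(Ch(\mathbb{F}))^{\pGM}$.

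For the second part, I would identify the linear dual explicitly. Applying the formula of Example \ref{ex:pmonoidal_chcplx} to $Z = F_{\overline{0}}(V)$, we have $Z^{\overline{p}}_i = V_i$ for every $\overline{p}\in\pGM$ with all structure maps equal to the identity, so
\[
D(F_{\overline{0}}(V))^{\overline{r}}_n \;=\; \Hom_{\mathbb{F}}\bigl(F_{\overline{0}}(V)^{\overline{t}-\overline{r}}_{-n}, \mathbb{F}\bigr) \;=\; \Hom_{\mathbb{F}}(V_{-n}, \mathbb{F})
\]
independently of $\overline{r}$, and the structure maps of the perverse chain complex $D(F_{\overline{0}}(V))_\bullet$ are again all identities (the restriction between two constant limits). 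Denoting by $V^\vee$ the chain complex with $(V^\vee)_n = \Hom_{\mathbb{F}}(V_{-n},\mathbb{F})$ and dual differential, this gives a canonical isomorphism $D(F_{\overline{0}}(V))_\bullet \cong F_{\overline{0}}(V^\vee)_\bullet$. Since $V^\vee$ is again a chain complex of $\mathbb{F}$-vector spaces (with finite-dimensional components, thanks to the hypothesis on $V$), the first part applies to $V^\vee$ and yields that $D(F_{\overline{0}}(V))_\bullet$ is cofibrant.

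There is really no substantial obstacle: the argument is essentially a bookkeeping of the model-categorical ingredients already established in the paper. The only point requiring attention is the collapse of the limit defining $D(F_{\overline{0}}(V))_\bullet$, which uses in an essential way that $F_{\overline{0}}(V)_\bullet$ has trivial perverse structure, so that the identification with $F_{\overline{0}}(V^\vee)_\bullet$ is clean and Part 1 can be invoked verbatim.
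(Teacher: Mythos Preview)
Your proposal is correct and follows exactly the approach sketched in the paper, which only records the one-line hint that over a field all chain complexes are cofibrant and that $F_{\overline{0}}$ is left Quillen. Your additional identification $D(F_{\overline{0}}(V))_\bullet \cong F_{\overline{0}}(V^\vee)_\bullet$ is the natural way to make the ``and its linear dual'' clause explicit, and is correct; note incidentally that the finite-dimensionality hypothesis is not actually needed for cofibrancy (it is used elsewhere, in Lemma~\ref{lemma:Bourbaki_tensorhom}).
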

    
    \begin{proposition}\label{prop:BV on Hoch of tensor}
        Let $A_\bullet$ and $B_\bullet$ be commutative pDGAs over a field $\mathbb{F}$ such that one of them is equal to $F_{\overline{0}}(V)_\bullet$ with $V^\ast$ a commutative DGA whose components are finite dimensional vector spaces. Suppose that there is an isomorphism of left $H(A)_\bullet$-modules $H(A)_\bullet\to H(DA)_\bullet$ and an isomorphism of left $H(B)_\bullet$-modules $H(B)_\bullet\to H(DB)_\bullet$. Then there exists an isomorphism of left $H(A\boxtimes B)_\bullet$-modules $H(A\boxtimes B)_\bullet \xrightarrow{\simeq}  H(D(A\boxtimes B))_\bullet$. 
    \end{proposition}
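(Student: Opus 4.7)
The plan is to reduce the statement to the two given isomorphisms by combining the Künneth formula over $\mathbb{F}$ with the interaction between $D$ and $\boxtimes$ encoded in Lemma \ref{lemma:Bourbaki_tensorhom}. Let $M\in DA_\bullet$ and $N\in DB_\bullet$ be cycles whose multiplications realize the hypothesized isomorphisms $\xi_M:H(A)_\bullet\to H(DA)_\bullet$ and $\xi_N:H(B)_\bullet\to H(DB)_\bullet$.

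First, I would apply Lemma \ref{lemma:Bourbaki_tensorhom} to the four perverse cochain complexes $(A,\,F_{\overline{0}}(\mathbb{S}^0),\,B,\,F_{\overline{0}}(\mathbb{S}^0))$. Since $B_\bullet=F_{\overline{0}}(V)_\bullet$ with $V^\ast$ componentwise finite-dimensional, the pair $(B,F_{\overline{0}}(\mathbb{S}^0))$ is of the required form, so the hypothesis is met. Using that $F_{\overline{0}}(\mathbb{S}^0)_\bullet$ is the monoidal unit, this yields a natural isomorphism of perverse cochain complexes
\[
\theta:(DA\boxtimes DB)_\bullet\xrightarrow{\simeq}D(A\boxtimes B)_\bullet
\]
which is compatible with the natural $(A\boxtimes B)_\bullet$-module structures on both sides. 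Set $K:=\theta(M\otimes N)\in D(A\boxtimes B)_\bullet$; this is a cycle since $M$ and $N$ are.

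Second, since $\mathbb{F}$ is a field and, by the preceding lemma, $B_\bullet$ and $DB_\bullet$ are cofibrant perverse cochain complexes, the Künneth formula applies perversity-wise to give isomorphisms
\[
H(A\boxtimes B)_\bullet\simeq (H(A)\boxtimes H(B))_\bullet,\qquad H(D(A\boxtimes B))_\bullet\simeq (H(DA)\boxtimes H(DB))_\bullet,
\]
where the second uses $\theta$ together with Künneth on $(DA\boxtimes DB)_\bullet$. Under these identifications, multiplication by $[K]$ on $H(A\boxtimes B)_\bullet$ is transported to the tensor product $H(\xi_M)\boxtimes H(\xi_N)$. Each factor being an isomorphism of left $H(A)_\bullet$- (resp.\ $H(B)_\bullet$-)modules, their tensor product is an isomorphism of left $(H(A)\boxtimes H(B))_\bullet$-modules, which through the Künneth identification is precisely the desired isomorphism of left $H(A\boxtimes B)_\bullet$-modules.

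The main obstacle is verifying that $\theta$ intertwines the two a priori distinct module structures: on $D(A\boxtimes B)_\bullet$ the $(A\boxtimes B)_\bullet$-action is by precomposition with left multiplication, whereas on $(DA\boxtimes DB)_\bullet$ the action is the separate action on each tensor factor (with the usual Koszul signs). This reduces to a sign bookkeeping inside the standard natural map $\Hom(X,Y)\otimes\Hom(X',Y')\to\Hom(X\otimes X',Y\otimes Y')$, which is an isomorphism here exactly because of the finite-dimensionality of one of the factors. Once this compatibility is recorded, every remaining step is formal.
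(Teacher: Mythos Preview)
Your proof is correct and follows essentially the same route as the paper: factor through $(DA\boxtimes DB)_\bullet$ via $\xi_M\boxtimes\xi_N$, use cofibrancy of $B_\bullet$ and $DB_\bullet$ (from the preceding lemma) together with the field hypothesis to see this is a quasi-isomorphism, and then invoke Lemma~\ref{lemma:Bourbaki_tensorhom} to pass to $D(A\boxtimes B)_\bullet$. You are slightly more explicit than the paper in spelling out the K\"unneth identification and in flagging the module-compatibility of $\theta$ as something to check, but the strategy is identical.
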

    \begin{proof}
    We use the notation from Proposition \ref{prop:Commu_DPDA}. Suppose that there are cycles $M\in DA_\bullet$ and $N\in DB_\bullet$ such that the morphisms
        \[\begin{array}{crclccrcl}
          H(\xi_M):&H(A)_\bullet   & \xrightarrow{\simeq} & H(DA)_\bullet & \quad & H(\xi_N):& H(B)_\bullet   & \xrightarrow{\simeq} & H(DB)_\bullet\\
          ~ & a   & \mapsto & a.[M] & \quad & ~ & b   & \mapsto & b.[N]  
        \end{array}\]
        are isomorphisms of left $H(A)_\bullet$-modules and left $H(B)_\bullet$-modules respectively. 
    We obtain a morphism $(A\boxtimes B)_\bullet \to D(A\boxtimes B)_\bullet$ by factoring through $(DA\boxtimes DB)_\bullet$. Suppose that $B_\bullet=F_{\overline{0}}(V)_\bullet$ with $V^\ast$ a commutative DGA whose components are finite dimensional vector spaces, then by the previous lemma $B_\bullet$ and $DB_\bullet$ are cofibrant objects in $(Ch(R))^{\pGM}$. Note that there is a morphism of left $(A\boxtimes B)_\bullet$-module $\xi_M\boxtimes \xi_N:(A\boxtimes B)_\bullet \to (DA\boxtimes DB)_\bullet$ that sends $a\otimes b$ to $\xi_M(a)\otimes \xi_N(b)$. We can show that it is a quasi-isomorphism by reasoning perversity wise and using the fact that all chain complexes are cofibrant over a field. Furthermore, by Lemma \ref{lemma:Bourbaki_tensorhom}, there is isomorphism from $(DA\boxtimes DB)_\bullet$ to $D(A\boxtimes B)_\bullet$.       
    \end{proof}
    Using Theorem \ref{thm:commuDPDA_is_BV}, we get the following result.  
    \begin{corollaire}
        Under the assumptions of the previous proposition, there exists a perverse BV-algebra structure on $HH^\ast_\bullet(A\boxtimes B)$.
    \end{corollaire}
    \begin{exemple}\label{ex:BV_prod_mani_pseudomani}
         Let $Y$ be a compact, oriented, second countable pseudomanifold and let $M$ be a compact, simply-connected, oriented smooth manifold. We consider their product $X=Y\times M$, it is a pseudomanifold endowed with the product filtration. We present a pBV algebra structure on $HH^\ast_\bullet(\widetilde N^\ast_\bullet(X; \mathbb{Q}))$. As we can see in the proof of Proposition \ref{prop:BV_on_BUP}, it suffices to show that $HH^\ast_\bullet(\widetilde A^\ast_{PL, \bullet}(X))$ is a pBV algebra.  
         
         Notice that $\widetilde A^\ast_{PL, \bullet}(M) \simeq F_{\overline{0}}(A_{PL}^\ast(M))^\ast_\bullet$ since $M$ is a manifold. The conditions we have on $Y$ and $M$ ensure that there exist pBV algebra structures on $HH^\ast_\bullet(F_{\overline{0}}(A^\ast_{PL}(M)))$ and $HH^\ast_\bullet(\widetilde N^\ast_\bullet(Y; \mathbb{Q}))$. In general, the components of the cochain complex $A^\ast_{PL}(M)$ are not finite dimensional vector spaces. However, if $\Lambda_M$ is a minimal model of  $A^\ast_{PL}(M)$, it is a cochain complex of finite dimensional vector spaces and we have $\widetilde A^\ast_{PL, \bullet}(M) \simeq F_{\overline{0}}(\Lambda_M^\ast)^\ast_\bullet$. By the above given corollary, we can endow $HH^\ast_ \bullet((\widetilde A^\ast_{PL, \bullet}(Y)\boxtimes \widetilde A^\ast_{PL, \bullet}(M)))$ with a pBV algebra structure. 
         
         We denote by $pr_M:X\to M$ and $pr_Y:X\to Y$ the canonical projections. They are stratified maps so they induce morphisms ($pr_M^\ast$ and $pr_Y^\ast$) on the associated blown-up cochain complexes. Composing the product with $pr_Y^\ast\boxtimes pr_M^\ast$ gives a perverse chain complex morphism
         \[(\widetilde A^\ast_{PL, \bullet}(Y)\boxtimes \widetilde A^\ast_{PL, \bullet}(M))_\bullet \to \widetilde A^\ast_{PL, \bullet}(X).\]
         It is even a morphism of pDGAs since we work with commutative algebras. Using arguments similar to \cite[Proposition 13.2]{CST18BUP-Alpine}, we can show that it is a quasi-isomorphism. 
         Lemma \ref{lem:BV_transfer} implies that $HH^\ast_\bullet(\widetilde A^\ast_{PL, \bullet}(X))$ is a pBV algebra.
    \end{exemple}

    \subsection{Comparing the pBV algebra structures}
    We have endowed both sides of (\ref{eq:tensorprod-Hoch-coho}) with a perverse BV-algebra structure. We now explain how to get an isomorphism. We consider perverse chain complexes over a field $\mathbb{F}$. Let $A_\bullet$ and $B_\bullet$ be two pDGAs. Using Proposition \ref{prop:bar-cofib-field} and Remark \ref{rem:quillen_adj_pch}, we notice that the perverse chain complexes $\mathbb{B}(A\boxtimes B)_\bullet$ and $\left(\mathbb{B}(A)\boxtimes\mathbb{B}(B)\right)_\bullet$ are two cofibrant resolutions of $(A\boxtimes B)_\bullet$ as pDG $(A\boxtimes B)^e$-modules. Hence, there exists a quasi-isomorphism of $(A\boxtimes B)_\bullet$-bimodules $(\mathbb{B}(A)\boxtimes \mathbb{B}(B))_\bullet \to \mathbb{B}(A\boxtimes B)_\bullet$
    which induces a quasi-isomorphism
    \[\Hom_{(A\boxtimes B)^e}(\mathbb{B}(A\boxtimes B), A\boxtimes B)_\bullet \simeq  \Hom_{(A\boxtimes B)^e}(\mathbb{B}(A)\boxtimes \mathbb{B}(B), A\boxtimes B)_\bullet.\]
    Furthermore, if we assume that either $A_\bullet$ or $B_\bullet$ is of the form $F_{\overline{0}}(V)_\bullet$ with $V^\ast$ a chain complex whose components are finite dimensional vector spaces then we have an isomorphism
    \[\Hom_{(A\boxtimes B)^e}(\mathbb{B}(A)\boxtimes \mathbb{B}(B), A\boxtimes B)_\bullet \simeq (\Hom_{A^e}(\mathbb{B}(A),A)\boxtimes \Hom_{B^e}(\mathbb{B}(B),B))_\bullet.\]    
    These two morphisms give the sought isomorphism (\ref{eq:tensorprod-Hoch-coho}). However, to show that this morphism preserves the perverse BV-algebra structure, we need to give an explicit description of the map that goes from one cofibrant resolution to the other. We'll be using the \emph{Alexander-Whitney map} and the \emph{Eilenberg-Zilber map} \cite[Chap VII - Section 8]{MacL95}. They are well-defined for bisimplicial objects in an abelian category. 
    \begin{defi}
    Let $A_\bullet$ and $B_\bullet$ be two pDGAs. The \emph{Alexander-Whitney map} $AW_{\ast,\bullet}: \mathbb{B}(A\boxtimes B)_\bullet\to (\mathbb{B}(A)\boxtimes \mathbb{B}(B))_\bullet$ is a morphism of perverse chain complexes whose length $k\in\mathbb{N}$ component is denoted $AW_{k,\bullet}$. \\
    It is given for $k=0$ by
    \begin{align*}
        AW_{0, \bullet}: &(A\boxtimes B \boxtimes A \boxtimes B)_\bullet \to (A\boxtimes A \boxtimes B \boxtimes B)_\bullet \\
        ~     &a_1\otimes b_1\otimes a_2 \otimes b_2 \mapsto (-1)^{\Real{a_2}\Real{b_1}}a_1\otimes a_2 \otimes b_1 \otimes b_2.       
    \end{align*}
    and for $k>0$ by
    \[AW_{k, \bullet}:\mathbb{B}_k(A\boxtimes B)_\bullet \to \bigoplus_{i=0}^k (A\boxtimes (s\overline{A})^{\boxtimes i}\boxtimes A)\boxtimes (B\boxtimes (s\overline{B})^{\boxtimes (k-i)}\boxtimes B)\]
    \begin{align*}
        &AW_{k, \bullet}(a_0\otimes b_0[a_1\otimes b_1|\ldots |a_k \otimes b_k]a_{k+1}\otimes b_{k+1})\\
        &=(-1)^{\Real{b_0}\eta_1+\sum\limits_{j=1}^k(\Real{b_j}-1)\eta_j}\sum_{i=0}^k (-1)^{\sum\limits_{j=i+1}^k \eps_j + \sum\limits_{j=0}^{i-1}\overline{\eps}_j}a_0[a_{1,i}]a_{i+1}\ldots a_ka_{k+1}\otimes b_0b_1\ldots b_i[b_{i+1,k}]b_{k+1}.      
    \end{align*}
    where $\eta_j=\Real{a_{k+1}}+\sum_{l=j+1}^k \Real{s(a_l)}$, $\eps_i=\Real{a_0}+\sum_{k=1}^i \Real{s(a_k)}$ and $\overline{\eps}_i=\Real{b_0}+\sum_{k=1}^i \Real{b_k}$. 
    \end{defi}
    To define the Eilenberg-Zilber map, we first need to present $\mathcal{S}_{s,t}$ the set of $(s,t)$-\emph{shuffles}. It is given by 
    \[\mathcal{S}_{s,t}:=\{\sigma\in \mathfrak{S}_{s+t}| \sigma(1)<\sigma(2)<\ldots<\sigma(s) \text{ and } \sigma(s+1)<\sigma(s+2)<\ldots<\sigma(s+t)\}.\]    
    The degree of a shuffle $\sigma\in \mathcal{S}_{s,t}$ is set to be 
    \[\Real{\sigma}:=Card\{(i,j) |  1\leq i < j \leq s+t \text{ and } \sigma(i)>\sigma(2)\} \]
    Note that the symmetric group $\mathfrak{S}_m$ acts on $(A\boxtimes T^{\boxtimes m}(s\overline{A})\boxtimes A)_\bullet$ by
    \[\sigma.(a_0[a_1|\ldots|a_m]a_{m+1})=a_0[a_{\sigma^{-1}(1)}|\ldots |a_{\sigma^{-1}(m)}]a_{m+1}.\]   
    \begin{defi}
    Let $A_\bullet$ and $B_\bullet$ be two pDGAs. The \emph{Eilenberg-Zilber map} $EZ_{\ast, \bullet}: (\mathbb{B}(A)\boxtimes \mathbb{B}(B))_\bullet \to \mathbb{B}(A\boxtimes B)_\bullet$ is a morphism of perverse chain complexes which is defined length-wise by
    \begin{align*}
        EZ_{0, \bullet}: &(A\boxtimes A \boxtimes B \boxtimes B)_\bullet \to (A\boxtimes B \boxtimes A \boxtimes B)_\bullet  \\
        ~     &a_1\otimes a_2\otimes b_1 \otimes b_2 \mapsto (-1)^{\Real{a_2}\Real{b_1}}a_1\otimes b_1 \otimes a_2 \otimes b_2.       
    \end{align*}
    \begin{align*}
        &EZ_{k, \bullet}(1[a_1|\ldots|a_{k-i}]1 \otimes 1[b_1|\ldots b_i]1)\\
        &=\sum_{\sigma\in \mathcal{S}_{s,t}} (-1)^{\Real{\sigma}}\sigma.(1\otimes 1\otimes [a_{1}\otimes 1|\ldots |a_{k-i}\otimes 1|1\otimes b_{1}|\ldots|1\otimes b_{i}]\otimes 1\otimes 1).      
    \end{align*}
    \end{defi}
    The \emph{Eilenberg-Zilber theorem} \cite[Theorem 8.5.1]{Wei95} gives the following result.
    \begin{proposition}
        Let $A_\bullet$ and $B_\bullet$ be two pDGAs. The Alexander-Whitney map $AW_{\ast, \bullet}$ and the Eilenberg-Zilber map $EZ_{\ast, \bullet}$ induce a chain homotopy equivalence
        \[\mathbb{B}(A\boxtimes B)_\bullet \underset{EZ_{\ast, \bullet}}{\overset{AW_{\ast, \bullet}}{\rightleftarrows}} (\mathbb{B}(A)\boxtimes \mathbb{B}(B))_\bullet.\]
    \end{proposition}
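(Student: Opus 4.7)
The plan is to reduce this to the classical Eilenberg--Zilber theorem by reinterpreting both perverse chain complexes as realizations of bisimplicial objects in an appropriate abelian category, and then to check that the formulas given for $AW_{\ast,\bullet}$ and $EZ_{\ast,\bullet}$ are precisely the perverse lifts of the classical Alexander--Whitney and shuffle maps. Recall that for a monoid $M$ in a monoidal abelian category, the two-sided bar construction $M \otimes T(s\overline{M}) \otimes M$ is the normalized chain complex of a simplicial object $\mathbb{B}_\bullet(M)$ given in simplicial degree $k$ by $M \otimes \overline{M}^{\otimes k} \otimes M$, with faces given by multiplication of adjacent factors. Since $(Ch(R))^{\pGM}$ is an abelian symmetric monoidal category, this construction applies verbatim to the pDGAs $A_\bullet$, $B_\bullet$ and $A_\bullet \boxtimes B_\bullet$, producing simplicial objects $\mathbb{B}_\bullet(A)$, $\mathbb{B}_\bullet(B)$ and $\mathbb{B}_\bullet(A \boxtimes B)$ in $(Ch(R))^{\pGM}$.

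First, I would identify $\mathbb{B}_\bullet(A \boxtimes B)$ with the diagonal of the bisimplicial object $(p,q) \mapsto \mathbb{B}_p(A) \boxtimes \mathbb{B}_q(B)$ (up to the symmetry isomorphism that permutes tensor factors). Indeed, in simplicial degree $k$,
\[
(A \boxtimes B) \boxtimes (\overline{A \boxtimes B})^{\boxtimes k} \boxtimes (A \boxtimes B) \;\simeq\; \bigl( A \boxtimes \overline{A}^{\boxtimes k} \boxtimes A \bigr) \boxtimes \bigl( B \boxtimes \overline{B}^{\boxtimes k} \boxtimes B \bigr),
\]
where one must use the symmetry $\tau$ of $\boxtimes$, which is the source of the signs $(-1)^{\Real{b_j}\eta_j}$ appearing in the definition of $AW_{k,\bullet}$. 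Then, by the Eilenberg--Zilber theorem for bisimplicial objects in an abelian category (which is a purely formal consequence of the Dold--Kan correspondence and holds in any abelian category, see for instance \cite[Theorem 8.5.1]{Wei95}), the diagonal and the total complex of the bisimplicial object are chain homotopy equivalent, with the equivalence given by the classical Alexander--Whitney and shuffle formulas.

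Next, I would verify that the explicit formulas in the statement coincide with the abstract construction. The map $AW_{k,\bullet}$ is given by pairs of consecutive face maps applied to each tensor factor (the classical Alexander--Whitney formula $\sum_i d_{i+1}^A \cdots d_k^A \otimes d_0^B \cdots d_{i-1}^B$), which produces the sum over $i$ of $a_0[a_{1,i}]a_{i+1}\cdots a_{k+1} \otimes b_0 b_1 \cdots b_i [b_{i+1,k}]b_{k+1}$ in the statement; the signs $\eps_j, \overline{\eps}_j$ record the Koszul signs from permuting suspensions past elements. Similarly, $EZ_{k,\bullet}$ is the shuffle map, expressed as a signed sum over $(s,t)$-shuffles acting on the $k$-fold tensor $[a_1 \otimes 1 \mid \ldots \mid a_{k-i}\otimes 1 \mid 1 \otimes b_1 \mid \ldots \mid 1 \otimes b_i]$. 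Since $\boxtimes$ is defined as a colimit, one must check that these formulas respect the perverse-degree filtration: the multiplication $A_{\overline{p}}^i \otimes A_{\overline{q}}^j \to A_{\overline{p} \oplus \overline{q}}^{i+j}$ ensures the maps land in the correct colimit, and the shuffles only permute tensor factors without combining perverse degrees in a bad way.

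The main subtlety, which I expect to be the principal obstacle, is ensuring that the homotopy producing $EZ \circ AW \simeq \mathrm{id}$ (the non-trivial direction, since $AW \circ EZ = \mathrm{id}$ holds on the nose) is well-defined on the colimits defining $\boxtimes$. For this, I would either invoke the Eilenberg--Zilber theorem directly in the abelian category $(Ch(R))^{\pGM}$ (where it holds functorially, hence commutes with the $F_{\overline{p}}$ functors and with colimits indexed over the perversity poset), or explicitly write down Eilenberg--MacLane's contracting homotopy and check that on each piece of the colimit filtration it respects the perverse-degree constraints. Once this is in place, the result follows formally, and the Koszul signs given in the statement are then forced by naturality with respect to the symmetry $\tau$ of $\boxtimes$.
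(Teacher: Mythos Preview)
Your approach is essentially the same as the paper's: the paper simply remarks that the result is proved ``just like \cite[Proposition 8.6.13]{Wei95} by considering simplicial perverse chain complexes instead of simplicial modules,'' and your proposal is a detailed unpacking of exactly that argument---view the bar constructions as (bi)simplicial objects in the abelian category $(Ch(R))^{\pGM}$ and invoke the Eilenberg--Zilber theorem there.

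One technical correction: the displayed isomorphism
\[
(A \boxtimes B) \boxtimes (\overline{A \boxtimes B})^{\boxtimes k} \boxtimes (A \boxtimes B) \;\simeq\; \bigl( A \boxtimes \overline{A}^{\boxtimes k} \boxtimes A \bigr) \boxtimes \bigl( B \boxtimes \overline{B}^{\boxtimes k} \boxtimes B \bigr)
\]
is not correct as stated, because $\overline{A\boxtimes B}$ is strictly larger than $\overline{A}\boxtimes\overline{B}$ (it contains, for instance, elements of the form $1_A\otimes \bar b$). The identification you want is at the \emph{unnormalized} level, $[k]\mapsto (A\boxtimes B)^{\boxtimes(k+2)}\simeq A^{\boxtimes(k+2)}\boxtimes B^{\boxtimes(k+2)}$; this is the diagonal of the bisimplicial object $([p],[q])\mapsto A^{\boxtimes(p+2)}\boxtimes B^{\boxtimes(q+2)}$. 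The Eilenberg--Zilber theorem then compares the normalized complex of the diagonal (which is $\mathbb{B}(A\boxtimes B)_\bullet$) with the total complex of the bi-normalized bisimplicial object (which is $(\mathbb{B}(A)\boxtimes\mathbb{B}(B))_\bullet$). With this fix, your argument goes through and coincides with the paper's intended one.
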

    The result above is proved just like \cite[Proposition 8.6.13]{Wei95} by considering simplicial perverse chain complexes instead of simplicial modules. We now consider the morphisms induced on the Hochschild cohomology. 
    \begin{proposition}
        Let $A_\bullet$ and $B_\bullet$ be two pDGAs such that either one of them is equal to $F_{\overline{0}}(V)$ where $V^\ast$ is chain complex whose components are finite dimensional vector spaces. There is an isomorphism of chain complexes
        \[(HH^\ast(A)\boxtimes HH^\ast(B))_\bullet \to HH^\ast_\bullet(A\boxtimes B).\]
    \end{proposition}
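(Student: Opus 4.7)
The strategy is to realize the isomorphism as a composite obtained by comparing two cofibrant resolutions of $(A\boxtimes B)_\bullet$ as a bimodule, then invoking Lemma \ref{lemma:Bourbaki_tensorhom} to split the resulting $\Hom$-complex, and finally applying a perversity-wise Künneth computation. Throughout I would suppose, without loss of generality, that it is $B_\bullet$ which has the form $F_{\overline{0}}(V)_\bullet$ with $V^\ast$ degree-wise finite-dimensional.

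First, since we work over a field, Proposition \ref{prop:bar-cofib-field} ensures that both $\mathbb{B}(A\boxtimes B)_\bullet$ and, using the monoidal model structure of Remark \ref{rem:quillen_adj_pch} together with Theorem \ref{thm:char_cofib_Amod}, $(\mathbb{B}(A)\boxtimes \mathbb{B}(B))_\bullet$ are cofibrant resolutions of $(A\boxtimes B)_\bullet$ in $\pMod((A\boxtimes B)^e)$. The Eilenberg--Zilber theorem stated just above the proposition produces a chain homotopy equivalence $AW_{\ast,\bullet}$ and $EZ_{\ast,\bullet}$ between these two resolutions; these are manifestly maps of $(A\boxtimes B)^e_\bullet$-bimodules. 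Applying $\Hom_{(A\boxtimes B)^e}(-, A\boxtimes B)_\bullet$, a functor which preserves chain homotopy equivalences, produces a quasi-isomorphism
\[HC^\ast_\bullet(A\boxtimes B) \xrightarrow{\simeq} \Hom_{(A\boxtimes B)^e}\bigl(\mathbb{B}(A)\boxtimes \mathbb{B}(B),\, A\boxtimes B\bigr)_\bullet.\]

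Next I would invoke Lemma \ref{lemma:Bourbaki_tensorhom}, whose finiteness hypothesis is met thanks to the assumption on $B_\bullet$, to identify the right-hand side with $(\Hom_{A^e}(\mathbb{B}(A), A)\boxtimes \Hom_{B^e}(\mathbb{B}(B), B))_\bullet = (HC^\ast(A)\boxtimes HC^\ast(B))_\bullet$. This reduces the problem to the Künneth-type equality $H((HC^\ast(A)\boxtimes HC^\ast(B))_\bullet) \simeq (HH^\ast(A)\boxtimes HH^\ast(B))_\bullet$. Since $B_\bullet = F_{\overline{0}}(V)_\bullet$, one checks from the definitions that $HC^\ast_\bullet(B) = F_{\overline{0}}(HC^\ast(V))_\bullet$, and a direct manipulation of the defining colimit shows that for any perverse cochain complex $X_\bullet$ one has $(X\boxtimes F_{\overline{0}}(W))^{\overline{r}} \simeq X^{\overline{r}}\otimes W$ naturally in $\overline{r}$. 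Taking homology perversity-wise and using the classical Künneth theorem over the field $\mathbb{F}$ then yields the claim.

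The step I expect to require the most care is the verification at the cochain level that the Alexander--Whitney and Eilenberg--Zilber maps are $(A\boxtimes B)^e_\bullet$-bilinear and compatible with the identification furnished by Lemma \ref{lemma:Bourbaki_tensorhom}, so that the zigzag of chain maps we assemble really descends to a well-defined morphism of perverse cochain complexes. Once this bookkeeping is in place, the chain of isomorphisms and quasi-isomorphisms above composes to the desired morphism $(HH^\ast(A)\boxtimes HH^\ast(B))_\bullet \xrightarrow{\simeq} HH^\ast_\bullet(A\boxtimes B)$.
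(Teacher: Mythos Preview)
Your proposal is correct and follows essentially the same route as the paper: compare the two cofibrant resolutions $\mathbb{B}(A\boxtimes B)_\bullet$ and $(\mathbb{B}(A)\boxtimes\mathbb{B}(B))_\bullet$ via the Alexander--Whitney/Eilenberg--Zilber equivalence, apply $\Hom_{(A\boxtimes B)^e}(-,A\boxtimes B)_\bullet$, and then split the resulting Hom using Lemma~\ref{lemma:Bourbaki_tensorhom}. The paper stops at the quasi-isomorphism $(HC^\ast(A)\boxtimes HC^\ast(B))_\bullet \simeq HC^\ast_\bullet(A\boxtimes B)$ and leaves the passage to $(HH^\ast(A)\boxtimes HH^\ast(B))_\bullet$ implicit; your explicit K\"unneth argument using $HC^\ast_\bullet(B)=F_{\overline{0}}(HC^\ast(V))_\bullet$ and $(X\boxtimes F_{\overline{0}}(W))^{\overline{r}}\simeq X^{\overline{r}}\otimes W$ fills that in cleanly.
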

    \begin{proof}
    We suppose that either $A_\bullet$ or $B_\bullet$ is of the form $F_{\overline{0}}(V)_\bullet$ where the components of $V^\ast$ are vector spaces of finite dimension. We have an isomorphism
    \[\Psi:\Hom_{(A\boxtimes B)^e}(\mathbb{B}(A)\boxtimes \mathbb{B}(B), A\boxtimes B)_\bullet \simeq (\Hom_{A^e}(\mathbb{B}(A),A)\boxtimes \Hom_{B^e}(\mathbb{B}(B),B))_\bullet.\]
    Consider the following morphisms
    \[AW^\ast_\bullet:=\Hom_{(A\boxtimes B)^e}(AW_{\ast, \bullet}, A\boxtimes B)\circ \Psi^{-1} \text{ and } 
    EZ^\ast_\bullet:=\Psi\circ\Hom_{(A\boxtimes B)^e}(EZ_{\ast, \bullet}, A\boxtimes B).\]
    By remark \ref{rem:quillen_adj_pch}, they are quasi-isomorphisms. 
    \[AW^\ast_\bullet:(\Hom_{A^e}(\mathbb{B}(A),A)\boxtimes \Hom_{B^e}(\mathbb{B}(B),B))_\bullet\to \Hom_{(A\boxtimes B)^e}(\mathbb{B}(A\boxtimes B), A\boxtimes B)_\bullet\]  
    \[EZ^\ast_\bullet:\Hom_{(A\boxtimes B)^e}(\mathbb{B}(A\boxtimes B), A\boxtimes B)_\bullet \to (\Hom_{A^e}(\mathbb{B}(A),A)\boxtimes \Hom_{B^e}(\mathbb{B}(B),B))_\bullet\]      
    \end{proof}

    To show that $H(AW^\ast_\bullet)$ and $H(EZ^\ast_\bullet)$ are isomorphisms of perverse BV-algebras, one just needs to check that they are morphisms of perverse graded algebras and that they preserve the $\Delta$ operator. 
    \begin{lemma}
        Let $(A_\bullet, \cup_A, [-,-]_A, \Delta_A)$ and $(B_\bullet, \cup_B, [-,-]_B, \Delta_B)$ be two perverse BV-algebras. Suppose that there exists an isomorphism of perverse chain complexes $f:A_\bullet \to B_\bullet$ such that for any $a, b \in A_\bullet$
        $f(a\cup_A b)=f(a)\cup_B f(b)$ and $f(\Delta_A(a))=(-1)^{\Real{f}}\Delta_B(f(a))$ then $f$ is an isomorphism of perverse BV-algebras.
    \end{lemma}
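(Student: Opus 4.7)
The plan is to verify that, once $f$ preserves the cup product and intertwines the BV operators up to the sign $(-1)^{\Real{f}}$, it automatically preserves the Lie bracket; since the three structures $(\cup,\Delta,[-,-])$ are what it means to be a perverse BV-algebra, this is the only remaining point.

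\medskip

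The starting observation is that in any perverse BV-algebra the bracket is determined by $\cup$ and $\Delta$ via the defining identity of Definition \ref{def:pBV}: for homogeneous $a,b$,
\[
[a,b]_A \;=\; (-1)^{\Real{a}}\Delta_A(a\cup_A b)\;-\;(-1)^{\Real{a}}\Delta_A(a)\cup_A b\;-\;a\cup_A \Delta_A(b),
\]
and similarly for $B_\bullet$. Thus my first step is to apply $f$ to this formula and replace, term by term, $f(\Delta_A(-))$ by $(-1)^{\Real{f}}\Delta_B(f(-))$ and $f(x\cup_A y)$ by $f(x)\cup_B f(y)$. This yields
\[
f([a,b]_A) \;=\; (-1)^{\Real{a}+\Real{f}}\Delta_B(f(a)\cup_B f(b))\;-\;(-1)^{\Real{a}+\Real{f}}\Delta_B(f(a))\cup_B f(b)\;-\;(-1)^{\Real{f}}f(a)\cup_B \Delta_B(f(b)).
\]
My second step is to compare this with the BV identity in $B_\bullet$ applied to $f(a),f(b)$, using that $\Real{f(a)}=\Real{a}+\Real{f}$:
\[
[f(a),f(b)]_B \;=\; (-1)^{\Real{a}+\Real{f}}\Delta_B(f(a)\cup_B f(b))\;-\;(-1)^{\Real{a}+\Real{f}}\Delta_B(f(a))\cup_B f(b)\;-\;f(a)\cup_B \Delta_B(f(b)).
\]

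\medskip

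The only delicate point is matching signs in the final term. Here I invoke that $f$ is a chain-level isomorphism of perverse chain complexes, hence is of degree $0$ (so $(-1)^{\Real{f}}=1$): indeed, degree $0$ is part of the data of a morphism in $(Ch(R))^{\pGM}$, and the sign $(-1)^{\Real{f}}$ in the hypothesis on $\Delta$ simply reflects the Koszul convention if one had formally considered $f$ as a cochain of some degree. With this in hand, both expressions above coincide, giving $f([a,b]_A)=[f(a),f(b)]_B$. Combined with the assumption $f(a\cup_A b)=f(a)\cup_B f(b)$ and $f\circ\Delta_A=\Delta_B\circ f$, this shows that $f$ preserves all three pieces of the perverse BV-algebra structure; since $f$ is already an isomorphism of the underlying perverse chain complexes, it is an isomorphism of perverse BV-algebras.

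\medskip

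The main obstacle is essentially bookkeeping: checking that the replacement of $f\circ\Delta_A$ by $\Delta_B\circ f$ inside a product does not produce an extra Koszul sign from $f$ crossing an element of $A_\bullet$. Because $f$ is of degree $0$ this is automatic, and no genuine difficulty remains; the lemma is a purely formal consequence of the BV identity once $\cup$ and $\Delta$ are known to be preserved.
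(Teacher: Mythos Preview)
Your proof is correct and follows the same approach as the paper: write the bracket in terms of $\cup$ and $\Delta$ via the BV identity, apply $f$, and compare with the BV identity in $B_\bullet$ evaluated at $f(a),f(b)$. You even make explicit a point the paper leaves implicit, namely that a morphism in $(Ch(R))^{\pGM}$ has degree $0$, which is exactly what reconciles the sign $(-1)^{\Real{f}}$ in the last term.
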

    \begin{proof}
        Let $a_1, a_2 \in A_\bullet$, we just have to check that $f([a_1,a_2]_A)=[f(a_1),f(a_2)]_B$. To do so, we write the brackets using the differentials and the cup products. We have 
        \[f([a_1,a_2]_A)=(-1)^{\Real{a_1}}f(\Delta_A(a_1\cup_A a_2) -a_1\cup_A \Delta_A(a_2) -(-1)^{\Real{a_1}}\Delta_A(a_1)\cup_A a_2)\]
        and $[f(a_1),f(a_2)]_B$ is equal to
        \[(-1)^{\Real{a_1}+\Real{f}}\Delta_B(f(a_1)\cup_A f(a_2)) -(-1)^{\Real{a_1}+\Real{f}}f(a_1)\cup_B \Delta_B(f(a_2)) -\Delta_B(f(a_1))\cup_B f(a_2).\]
        One concludes by using the fact that $f$ preserves the cup product and the differential.
    \end{proof}
    
    \begin{proposition}
        Let $A_\bullet$ and $B_\bullet$ be two pDGA such that either one of them is equal to $F_{\overline{0}}(V)$ where $V^\ast$ is chain complex whose components are finite dimensional vector spaces. There is an isomorphism of perverse graded algebras
        \[H(AW^\ast):(HH^\ast(A)\boxtimes HH^\ast(B))_\bullet \to HH^\ast_\bullet(A\boxtimes B).\]
    \end{proposition}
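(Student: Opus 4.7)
The previous proposition establishes that $H(AW^\ast)$ is an isomorphism of perverse chain complexes, so the remaining task is to verify that it intertwines the cup products. My plan is to reinterpret the cup product on Hochschild cohomology as arising from a coassociative comultiplication on the truncated bar complex $T(s\overline{C})_\bullet$, and then to show that the Alexander--Whitney map is compatible with these comultiplications at the chain level (at worst up to chain homotopy). Dualising then yields a morphism of perverse graded algebras on Hochschild cohomology.

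More precisely, for any pDGA $C_\bullet$ I would introduce a deconcatenation coproduct
\[
\Delta_C : T(s\overline{C})_\bullet \longrightarrow (T(s\overline{C}) \boxtimes T(s\overline{C}))_\bullet,
\quad [a_1|\ldots|a_k] \longmapsto \sum_{i} (-1)^{\sigma_i}\, [a_1|\ldots|a_i] \otimes [a_{i+1}|\ldots|a_k],
\]
with suitable Koszul signs. Under the identification $HC^\ast_\bullet(C) \simeq \Hom_{(Ch(R))^{\pGM}}(T(s\overline{C}), C)_\bullet$, an inspection of Definition~\ref{def:cup_prod} shows that the cup product reads $f \cup g = \mu_C \circ (f \boxtimes g) \circ \Delta_C$, where $\mu_C$ is the multiplication of $C_\bullet$. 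An analogous description holds for the cup product on $(HH^\ast(A) \boxtimes HH^\ast(B))_\bullet$ using $\Delta_A \boxtimes \Delta_B$ together with the canonical Koszul twist $\tau$ of tensor factors.

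The heart of the argument is then the coalgebra identity
\[
(AW_{\ast, \bullet} \boxtimes AW_{\ast, \bullet}) \circ \Delta_{A \boxtimes B} \;\sim\; (\mathrm{id} \boxtimes \tau \boxtimes \mathrm{id}) \circ (\Delta_A \boxtimes \Delta_B) \circ AW_{\ast, \bullet},
\]
viewed as maps of perverse chain complexes. This identity is classical in the non-perverse setting. Since the Alexander--Whitney formula only shuffles tensor factors and its signs depend solely on the ordinary degrees, while the perverse degrees are merely summed along $\boxtimes$, the classical verification descends perversity by perversity. Combining this with the isomorphism $\Psi$ from the previous proposition and applying $\Hom_{(A \boxtimes B)^e}(-, A \boxtimes B)$ shows that $AW^\ast_\bullet$ induces a morphism of perverse graded algebras in cohomology; being already a quasi-isomorphism, it is then an isomorphism of perverse graded algebras, as desired.

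The main obstacle is the sign bookkeeping in the coalgebra identity: one must carefully match the deconcatenations of a bar chain in $\mathbb{B}(A \boxtimes B)_\bullet$ with pairs of deconcatenations in $\mathbb{B}(A)_\bullet$ and $\mathbb{B}(B)_\bullet$, while simultaneously tracking the Koszul signs produced by the Alexander--Whitney formula itself. Because this combinatorics reduces cleanly to its classical counterpart---the perverse degrees never interfere with the Koszul signs, as they are orthogonal to the homological grading---the verification presents no genuine new difficulty in the perverse context, and dualising yields the claimed isomorphism.
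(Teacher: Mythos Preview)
Your proposal is correct and follows essentially the same architecture as the paper: express the cup product on Hochschild cochains via a diagonal (deconcatenation) map on the bar complex, and then show that $AW_\ast$ is compatible with these diagonals, so that dualising gives an algebra morphism in cohomology. Your $\Delta_C$ is exactly the paper's $diag_C$ (restricted to $T(s\overline{C})$), and your coalgebra identity is exactly the relation the paper needs.

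The only notable difference is in how the key compatibility is justified. You propose to verify the identity $(AW_\ast \boxtimes AW_\ast)\circ \Delta_{A\boxtimes B} \sim (\mathrm{id}\boxtimes\tau\boxtimes\mathrm{id})\circ(\Delta_A\boxtimes\Delta_B)\circ AW_\ast$ by direct combinatorics, appealing to the classical non-perverse case and noting that perverse degrees do not interfere with Koszul signs. The paper instead sidesteps all sign bookkeeping with a model-categorical argument: both $\mathbb{B}(C)$ and $(\mathbb{B}(A)\boxtimes\mathbb{B}(B))\boxtimes_C(\mathbb{B}(A)\boxtimes\mathbb{B}(B))$ are cofibrant approximations of $C=A\boxtimes B$ as $C^e$-modules, so the two composites $\tau\circ(diag_A\boxtimes diag_B)\circ AW_\ast$ and $(AW_\ast\boxtimes_C AW_\ast)\circ diag_C$ are automatically homotopic, since maps between cofibrant approximations of the same object are unique up to homotopy. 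Your approach is more hands-on and makes the argument self-contained once the signs are checked; the paper's approach is cleaner and avoids the combinatorics entirely, at the cost of invoking the model structure on $pMod((A\boxtimes B)^e)$.
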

    \begin{proof}
    Let $f,f'\in HC_\bullet(A,A)$ and $g,g'\in HC_\bullet(B,B)$. We want to show that
    \[AW^\ast(f\boxtimes g \cup f'\boxtimes g')=AW^\ast(f\boxtimes g) \cup AW^\ast(f'\boxtimes g').\]
    One can rewrite the term on the left as $(-1)^{\Real{f'}\Real{g}}AW^\ast(f\cup f' \boxtimes g\cup g')$. Furthermore, by definition, $AW^\ast:=\Hom_{(A\boxtimes B)^e}(AW_\ast, A\boxtimes B)$. The equality we want to show is thus the following
    \[(-1)^{\Real{f'}\Real{g}}(f\cup_A f' \boxtimes g\cup_B g')\circ AW_\ast=(f\boxtimes g)\circ AW_\ast \cup (f'\boxtimes g')\circ AW_\ast.\]
    Let $q_A$ be the cofibrant approximation of $A_\bullet$ given in Proposition \ref{prop:bar-cofib-approx}. The following diagrams are commutative 
    \[
    \begin{tikzcd}
    \mathbb{B}(A\boxtimes B) \arrow[rd, "q_{A\boxtimes B}"'] \arrow[r, "AW_\ast"] & \mathbb{B}(A)\boxtimes \mathbb{B}(B) \arrow[d, "q_A\boxtimes q_B"] & \mathbb{B}(A) \arrow[rd, "q_A"', two heads] \arrow[r, "diag_A"] & \mathbb{B}(A)\boxtimes_A \mathbb{B}(A) \arrow[d, "q_A\boxtimes_A q_A"] \\
                                                                       & A\boxtimes B                                         &                                                        & A                                                                        
    \end{tikzcd}
    \]
    where $diag_A$ is defined for $a_0[a_1|\ldots|a_m]a_{m+1}\in \mathbb{B}(A)$ by
    \[diag_A(a_0[a_1|\ldots|a_m]a_{m+1}) = \sum_{i=0}^m (a_0[a_1|\ldots|a_i]\otimes 1) \otimes_A (1\otimes [a_{i+1}|\ldots|a_m]a_{m+1}).\]
    Notice that $f\cup_A f'=(f\boxtimes_A f') \circ diag_A$. We set $C_\bullet:=(A\boxtimes B)_\bullet$. All the triangles in the following diagram are commutative.
    \[
    \adjustbox{scale=0.8}{\begin{tikzcd}[column sep=tiny, row sep=scriptsize]
	{\mathbb{B}(C)} & {\mathbb{B}(A)\boxtimes \mathbb{B}(B)} \\
	\\
	{\mathbb{B}(C)\boxtimes_{C} \mathbb{B}(C)} & {C} & {(\mathbb{B}(A)\boxtimes_A \mathbb{B}(A))\boxtimes (\mathbb{B}(B)\boxtimes_B \mathbb{B}(B))} \\
	\\
	& {((\mathbb{B}(A)\boxtimes \mathbb{B}(B))\boxtimes_{C} (\mathbb{B}(A)\boxtimes \mathbb{B}(B))} & C
	\arrow["{q_{C}}"{description}, from=1-1, to=3-2]
	\arrow["{q_A\boxtimes q_B}"{description}, from=1-2, to=3-2]
	\arrow["{AW_\ast}"{description}, from=1-1, to=1-2]
	\arrow["{diag_{C}}"{description}, from=1-1, to=3-1]
	\arrow["{\mu_{C}}"{description}, from=3-1, to=3-2]
	\arrow["{AW_\ast\boxtimes AW_\ast}"{description}, from=3-1, to=5-2]
	\arrow["{\mu_A\boxtimes \mu_B}"{description}, from=3-3, to=3-2]
	\arrow["{(q_A\boxtimes q_B) \boxtimes_C (q_A\boxtimes q_B)}"{description}, from=5-2, to=3-2]
	\arrow["\tau"{description}, from=3-3, to=5-2]
	\arrow["{diag_A\boxtimes diag_B}"{description}, from=1-2, to=3-3]
    \arrow["(f\boxtimes g)\boxtimes_C (f'\boxtimes g')"{description}, from=5-2, to=5-3]
    \arrow["(f\boxtimes_A f')\boxtimes (g\boxtimes_B g')"{description}, from=3-3, to=5-3]
    \end{tikzcd}}
    \]
    The perverse chain complexes $\mathbb{B}(C)_\bullet$ and $((\mathbb{B}(A)\boxtimes \mathbb{B}(B))\boxtimes_{C} (\mathbb{B}(A)\boxtimes \mathbb{B}(B)))_\bullet$ are both cofibrant approximations of $C$. Hence, there exists a unique (upto homotopy equivalence) quasi-isomorphism between the two. This implies that $\tau \circ (diag_A\boxtimes diag_B)\circ AW_\ast$ is homotopy equivalent to $(AW_\ast \boxtimes_C AW_\ast) \circ diag_C$. Finally, in homology, we have the following equality. 
    \begin{align*}
        (-1)^{\Real{f'}\Real{g}}(f\cup f' \boxtimes g\cup g')\circ AW_\ast  &= (f\boxtimes_A f')\boxtimes (g\boxtimes_B g') \circ (diag_A\boxtimes diag_B)\circ AW_\ast \\
        ~                                                                   &= (f\boxtimes g)\boxtimes_C (f'\boxtimes g') \circ \tau \circ (diag_A\boxtimes diag_B)\circ AW_\ast \\        
        ~                                                                   &= (f\boxtimes g)\boxtimes_C (f'\boxtimes g') \circ  (AW_\ast \boxtimes_C AW_\ast) \circ diag_C\\
                                                                            &= (f\boxtimes g)\circ AW_\ast \cup (f'\boxtimes g')\circ AW_\ast.
    \end{align*}
    \end{proof}
    In order to show that $H(EZ^\ast_\bullet)$ preserves the $\Delta$ operator, we will need to introduce the \emph{shuffle product} \cite[Section 4.2.1]{Lod98}. Notice that the symmetric group $\mathfrak{S}_m$ acts on $(A\boxtimes T^{\boxtimes m}(s\overline{A}))_\bullet$. For $\sigma \in \mathfrak{S}_m$, the action is given by
    \[\sigma.(a_0[a_1|\ldots|a_m])=a_0[a_{\sigma^{-1}(1)}|\ldots |a_{\sigma^{-1}(m)}].\]   
    \begin{defi}
        Let $A_\bullet$ and $B_\bullet$ be two pDGAs. The \emph{shuffle product}, 
        \[sh:(HC_\ast(A)\boxtimes HC_\ast(B))^\ast_\bullet \to HC^\bullet_\ast(A\boxtimes B)\] 
        is a morphism of degree $0$ and perverse degree $\overline{0}$ which is defined for $a_0[a_{1,p}]\in HC_\ast^\bullet(A)$ and $b_0[b_{1,m}]\in HC_\ast^\bullet(B)$ by
    \[sh(a_0[a_{1,p}]\otimes b_0[b_{1,m}])=\sum_{\sigma\in \mathcal{S}_{p,m}} (-1)^{\Real{\sigma}}\sigma.((a_0\otimes b_0) \otimes [a_1\otimes 1|\ldots| a_p\otimes 1|1\otimes b_1|\ldots|1\otimes b_m].\]     
    \end{defi}
    \begin{proposition}\label{prop:HH_tensorprod_BViso}
        Let $A_\bullet$ and $B_\bullet$ be two perverse CDGA satisfying the hypothesis of \emph{Proposition \ref{prop:BV on Hoch of tensor}}. Then, there is an isomorphism of perverse BV-algebras
        \[H(EZ^\ast):HH^\ast_\bullet(A\boxtimes B)\to (HH^\ast(A)\boxtimes HH^\ast(B))_\bullet.\]
    \end{proposition}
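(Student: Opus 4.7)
The plan is to apply the lemma preceding the proposition, so it suffices to show that $H(EZ^\ast)$ preserves the cup product and the operator $\Delta$. For the algebra structure, observe that the Alexander--Whitney and Eilenberg--Zilber maps satisfy $AW_{\ast,\bullet}\circ EZ_{\ast,\bullet}=\mathrm{id}$ strictly (while $EZ_{\ast,\bullet}\circ AW_{\ast,\bullet}$ is only chain homotopic to the identity), hence the induced maps $H(AW^\ast)$ and $H(EZ^\ast)$ are mutually inverse isomorphisms of perverse chain complexes. Since the previous proposition shows $H(AW^\ast)$ is an isomorphism of perverse graded algebras, its inverse $H(EZ^\ast)$ automatically inherits that structure.

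The main obstacle is the compatibility with $\Delta$. Recall that on $HH^\ast_\bullet(A)$, the operator is $\Delta_A=\phi_A^{-1}\circ B^\vee\circ \phi_A$, where $\phi_A\colon HH^\ast_\bullet(A)\xrightarrow{\simeq} HH^\ast_\bullet(A,DA)$ is the action on a cocycle $c_A\in HC^\ast_\bullet(A,DA)$ (and similarly for $B$ and for $A\boxtimes B$). In the proof of Proposition \ref{prop:BV on Hoch of tensor}, the cocycle $c_{A\boxtimes B}$ is produced by lifting $\xi_M\boxtimes \xi_N\colon (A\boxtimes B)_\bullet\to (DA\boxtimes DB)_\bullet\simeq D(A\boxtimes B)_\bullet$, so it factors canonically through the tensor decomposition. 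The key computational step will be the classical compatibility between Connes' boundary and the shuffle/Eilenberg--Zilber construction, namely
\[
B_{A\boxtimes B}\circ EZ_{\ast,\bullet}=EZ_{\ast,\bullet}\circ\bigl(B_A\otimes \mathrm{id}+(-1)^{\Real{-}}\mathrm{id}\otimes B_B\bigr),
\]
which one checks by the usual combinatorial shuffle argument, adapted to perverse chain complexes (the perversities only enter as a colimit index and are respected by both sides of the formula). Dualising this identity and using the hypothesis that both $A_\bullet$ and $B_\bullet$ are commutative pDPDAs (so that $\phi_{A\boxtimes B}$ splits in a controlled way through $\phi_A\boxtimes \phi_B$) yields the commutative square
\[
\begin{tikzcd}
HH^\ast_\bullet(A\boxtimes B)\arrow[r,"H(EZ^\ast)"]\arrow[d,"\Delta_{A\boxtimes B}"'] & (HH^\ast(A)\boxtimes HH^\ast(B))_\bullet\arrow[d,"\Delta_A\boxtimes \mathrm{id}+(-1)^{\Real{-}}\mathrm{id}\boxtimes \Delta_B"] \\
HH^{\ast-1}_\bullet(A\boxtimes B)\arrow[r,"H(EZ^\ast)"'] & (HH^\ast(A)\boxtimes HH^\ast(B))^{\ast-1}_\bullet
\end{tikzcd}
\]
which is exactly the desired statement since the right-hand vertical map is the $\Delta$ operator on the tensor product of pBV algebras defined above.

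With both properties in hand, the lemma immediately upgrades $H(EZ^\ast)$ from an isomorphism of perverse Gerstenhaber algebras to an isomorphism of perverse BV-algebras. The most delicate bookkeeping will be the signs in the shuffle/Connes compatibility and the verification that the lift of the duality cocycle provided by Proposition \ref{prop:BV on Hoch of tensor} is genuinely compatible with the shuffle decomposition of the bar complex; the hypothesis that one of the factors is of the form $F_{\overline{0}}(V)_\bullet$ with $V^\ast$ degree-wise finite-dimensional is exactly what makes the perverse internal Hom distribute over $\boxtimes$ (Lemma \ref{lemma:Bourbaki_tensorhom}), so that the comparison with the dualised shuffle argument goes through cleanly.
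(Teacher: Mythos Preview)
Your proposal is correct and follows essentially the same route as the paper: reduce to preservation of $\cup$ and $\Delta$, get the former from $H(EZ^\ast)=H(AW^\ast)^{-1}$, and get the latter from the fact that Connes' boundary is a derivation for the shuffle product. The only refinement the paper adds is that your identity $B_{A\boxtimes B}\circ EZ_{\ast,\bullet}=EZ_{\ast,\bullet}\circ(\ldots)$ is not literally stated on the bar complexes (where $B$ is not defined) but on Hochschild chains, so the paper first makes explicit the identification of $EZ^\ast$ with the dual $sh^\vee$ of the shuffle product and then builds a commutative diagram through $\phi_{A\boxtimes B}$ and $\phi_A\boxtimes\phi_B$ to invoke the derivation property of $B$ with respect to $sh$.
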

    \begin{proof}
        Let $[f]\in HH^\ast_\bullet(A\boxtimes B)$, we would like to show that
        \[(\Delta_A\boxtimes \id_B \pm \id_A\boxtimes \Delta_B)\circ EZ^\ast([f])=EZ^\ast(\Delta_{A\boxtimes B}([f])).\]
        Since the $\Delta$ operator corresponds to the dual of Connes' boundary map, it suffices to show that the following diagram commutes in homology
        \[
        \adjustbox{scale=0.8}{\begin{tikzcd}[column sep=tiny]
        HC^\ast_\bullet(A\boxtimes B) \arrow[rr, "EZ^\ast"] \arrow[d, "\xi_P\circ-"{description}] &             & (HC^\ast(A)\boxtimes HC^\ast(B))^\ast_\bullet \arrow[d, "(\xi_M\circ -)\boxtimes (\xi_N\circ -)"{description}]           \\
        HC^\ast_\bullet(A\boxtimes B, D(A\boxtimes B)) \arrow[d, "\simeq"', "\phi_{A\boxtimes B}"] \arrow[rr, "(1)", phantom] &             & (HC^\ast(A,DA)\boxtimes HC^\ast(B,DB))^\ast_\bullet \arrow[d, "\simeq", "\phi_A\boxtimes \phi_B"']           \\
        D(HC_\ast(A\boxtimes B))^\ast_\bullet \arrow[r, "sh^\vee"] \arrow[d, "B^\vee_{A\boxtimes B}"{description}, ""{name=1}]  & D(HC_\ast(A)\boxtimes HC_ \ast(B))^\ast_\bullet \arrow[d, "(B_A\boxtimes \id \pm \id\boxtimes B_B)^\vee"{description}, ""{name=2}] & \big(D(HC_\ast(A))\boxtimes D(HC_\ast(B))\big)^\ast_\bullet \arrow[l, "\simeq", phantom] \arrow[d, "(B_A^\vee\boxtimes \id \pm \id\boxtimes B_B^\vee)"{description}, ""{name=3}] \\
        D(HC_\ast(A\boxtimes B))^{\ast-1}_\bullet  \arrow[r, "sh^\vee"]            & D(HC_\ast(A)\boxtimes HC_ \ast(B))^{\ast-1}_\bullet  & \big(D(HC_\ast(A))\boxtimes D(HC_\ast(B))\big)^{\ast-1}_\bullet \arrow[l, "\simeq", phantom] \\
        HC^{\ast-1}_\bullet(A\boxtimes B, D(A\boxtimes B)) \arrow[u, "\simeq", "\phi_{A\boxtimes B}"'] &             & (HC^\ast(A,DA)\boxtimes HC^\ast(B,DB))^{\ast-1}_\bullet \arrow[u, "\simeq"', "\phi_A\boxtimes \phi_B"]           \\
        HC^{\ast-1}_\bullet(A\boxtimes B) \arrow[rr, "EZ^\ast"] \arrow[u, "\xi_P\circ-"{description}] &             & (HC^\ast(A)\boxtimes HC^\ast(B))^{\ast-1}_\bullet \arrow[u, "(\xi_M\circ -)\boxtimes (\xi_N\circ -)"{description}]          
        \arrow[phantom,from=1,to=2,"\scriptstyle{(2)}"]
        \arrow[phantom,from=2,to=3,"\scriptstyle{(3)}"]
        \end{tikzcd}}
        \]
        where $\phi_A: HC^\ast_\bullet(A, DA)\to D(HC_\ast(A))^\ast_\bullet$ is the isomorphism which is given for $f\in HC^\ast_\bullet(A, DA)$ and $a=a_0[a_{1,p}]\in HC_\ast(A)$ by
        \[\phi(f)(a_0[a_{1,p}]):=(-1)^{\Real{a_0)(\Real{a}-\Real{a_0})}}f([a_{1,p}])(a_0)\]
        where $\Real{a}=\Real{a_0}+\sum_{i=1}^p \Real{s(a_i)}$.
        Proving the commutativity of the above diagram amounts to noticing two things:
        \begin{itemize}
            \item $EZ^\ast$ defined on the Hochschild cochain complex corresponds to the dual of the the shuffle product $sh$,
            \item Connes' boundary map is a derivation with respect to the shuffle product.
        \end{itemize}
        We begin by showing that (1) is commutative. Let $f\in HC^\ast_\bullet(A\boxtimes B)$, $a=a_0[a_{1,p}]\in HC_\ast^\bullet(A)$ and $b=b_0[b_{1,m}]\in HC_\ast^\bullet(B)$. We write
        \[sh(a_0[a_{1,p}]\otimes b_0[b_{1,m}])=\sum_{\sigma\in \mathcal{S}_{p,m}} (-1)^{\Real{\sigma}} c_0[c_{\sigma^{-1}(1)}|\ldots | c_{\sigma^{-1}(p+m)}]\]
        and
        \begin{align*}
        &EZ_{p+m, \bullet}(1[a_1|\ldots|a_p]1 \otimes 1[b_1|\ldots b_m]1)\\
        &=\sum_{\sigma\in \mathcal{S}_{s,t}} (-1)^{\Real{\sigma}}1\otimes 1\otimes [c_{\sigma^{-1}(1)}|\ldots|c_{\sigma^{-1}(p+m)}]\otimes 1\otimes 1.      
        \end{align*}
        where $c_0=a_0\otimes b_0$ and
        $c_i=\begin{cases}
            1 \otimes a_i \text{ if } 1\leq i \leq p\\
            b_{i-n} \otimes 1 \text{ if } n+1\leq i \leq p+m.
        \end{cases}$
        
        We have
        \begin{align*}
            sh^\vee\circ\phi_{A\boxtimes B}\circ \xi_P&\circ f(a\otimes b)   = \phi_{A\boxtimes B}\circ \xi_P\circ f\circ sh(a\otimes b) \\
            ~                                                               &= \sum_{\sigma\in \mathcal{S}_{p,m}} (-1)^{\Real{\sigma}} \phi_{A\boxtimes B}\circ \xi_P\circ f(c_0[c_{\sigma^{-1}(1)}|\ldots | c_{\sigma^{-1}(p+m)}]) \\
            ~                                                               &= \sum_{\sigma\in \mathcal{S}_{p,m}} (-1)^{\Real{\sigma}} (-1)^{\Real{c_0)(\Real{c}-\Real{c_0})}} \xi_P\circ f([c_{\sigma^{-1}(1)}|\ldots | c_{\sigma^{-1}(p+m)}])(c_0)            
        \end{align*}
        where $\Real{c}=\Real{c_0}+\sum_{i=1}^{p+m} \Real{s(c_i)}$.
        The other side of the diagram gives the following term
        \begin{align*}
            &(\phi_A\boxtimes \phi_B)\circ ((\xi_M\circ-)\boxtimes (\xi_N\circ-))\circ EZ^\ast\circ f(a\otimes b)   \\
            &= (-1)^{sg}((\xi_M\circ-)\boxtimes (\xi_N\circ-))\circ EZ^\ast\circ f([a_{1,n}]\otimes [b_{1,m}])(a_0\otimes b_0) \\
            &= (-1)^{sg} ((\xi_M\circ-)\boxtimes (\xi_N\circ-))\circ f\circ EZ_\ast([a_{1,n}]\otimes [b_{1,m}])(a_0\otimes b_0) \\
            &= (-1)^{sg}\sum_{\sigma\in \mathcal{S}_{n,m}} (-1)^{\Real{\sigma}} ((\xi_M\circ-)\boxtimes (\xi_N\circ-))\circ f([c_{\sigma^{-1}(1)}|\ldots | c_{\sigma^{-1}(n+m)}])(c_0)            
        \end{align*}
        where $sg=(\Real{a_0}+\Real{b_0})(\Real{a}-\Real{a_0}+\Real{b}-\Real{b_0})$.
        In the proof of Proposition \ref{prop:BV on Hoch of tensor}, we have shown that the diagram
        \[\begin{tikzcd}
        (A\boxtimes B)_\bullet \arrow[rr, "\xi_P"] \arrow[rd, "\xi_M\boxtimes \xi_N"'] &                                                & D(A\boxtimes B)_\bullet \\
                                                                               & (DA\boxtimes DB)_\bullet \arrow[ru, "\simeq"'] &                        
        \end{tikzcd}\]
        is commutative. This implies that the two terms we have computed are equal. 
        We move on to (2). It is well known that Connes' boundary map is a derivation with respect to the shuffle product \cite[Corollary 4.3.4]{Lod98}. For $x\in HH_\ast^\bullet(A)$ and $y\in HH_\ast^\bullet(B)$, we have
        \[B_{A\boxtimes B}(sh(x,y))=sh(B_A(x),y)+(-1)^{\Real{x}}sh(x,B_B(y)).\]    
        Dually, this gives the commutativity of (2) in homology. Finally, the diagram (3) is clearly commutative.
    \end{proof}
    \begin{exemple}
        We consider the pseudomanifold $X=Y\times M$ from Example \ref{ex:BV_prod_mani_pseudomani}. By the previous proposition, we have an isomorphism of perverse BV-algebras
        \[HH^\ast_\bullet(\widetilde N_\bullet^\ast(X; \mathbb{Q})) \simeq \big(HH^\ast(\widetilde N_\bullet^\ast(Y; \mathbb{Q})) \boxtimes HH^\ast(\widetilde N_\bullet^\ast(M; \mathbb{Q}))\big)_\bullet.\]
        This example presents a way of constructing a pseudomanifold $X$ such that the pBV algebra structure found on  $HH^\ast_\bullet(\widetilde N_\bullet^\ast(X; \mathbb{Q}))$ is non trivial.
    \end{exemple}

\appendix
\section{Characterisation of cofibrant objects in \texorpdfstring{$(Ch(\mathbb{F}))^{\pGM}$}{pCh(F)}}\label{appendix:chara-cofib-field}
    In this appendix, we consider perverse chain complexes over a field $\mathbb{F}$. The projective model category structure of $Ch(\mathbb{F})$ can be lifted to the category $(Ch(\mathbb{F}))^{\pGM}$. By applying \cite[Theorem 5.1.3.]{Hov99}, we get the following result. 
    \begin{thm}
        The category of perverse chain complexes $(Ch(\mathbb{F}))^{\pGM}$ is equipped with a model structure induced by the (projective) model structure on chain complexes. For this structure, a perverse chain complex $Z^\bullet$ is cofibrant if and only if for any $\overline{p}\in \pGM$, the map
        \[\colim_{\overline{q}<\overline{p}} Z^{\overline{q}}\to Z^{\overline{p}}\]
        is a degree-wise split injection with cofibrant cokernel. 
    \end{thm}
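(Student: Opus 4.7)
The plan is to derive the characterization by combining Hovey's Theorem 5.1.3 with a Reedy-type argument. First, I would observe that $\pGM$ is a direct Reedy category: setting $|\overline{p}| := \sum_{i} \overline{p}(i)$ gives a degree function that strictly increases along every non-identity morphism $\overline{p} < \overline{q}$. Hovey's theorem produces the projective model structure on $(Ch(\mathbb{F}))^{\pGM}$, with objectwise weak equivalences and fibrations, cofibrantly generated by $\mathcal{I}^{\pGM} = \bigcup_{\overline{p}} F_{\overline{p}}\mathcal{I}$. Since $\pGM$ is direct, this projective structure coincides with the Reedy one, giving the usual latching-map criterion for cofibrations.

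Next, I would invoke the standard Reedy characterization: $Z^\bullet$ is cofibrant if and only if, for every $\overline{p}$, the latching map $L_{\overline{p}} Z \to Z^{\overline{p}}$ is a cofibration in $Ch(\mathbb{F})$. In the case of a poset, the latching object is simply $L_{\overline{p}} Z = \colim_{\overline{q} < \overline{p}} Z^{\overline{q}}$, matching the statement. Finally, cofibrations in the projective model structure on $Ch(\mathbb{F})$ are precisely degree-wise split monomorphisms with cofibrant cokernel (cf. Hovey, Proposition 2.3.9 and surrounding discussion), which is exactly the stated criterion.

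For a more self-contained proof that avoids invoking Reedy theory as a black box, I would analyze $\mathcal{I}^{\pGM}$-cell complexes directly. A cell attachment along $F_{\overline{p}}(\mathbb{S}^{n-1}) \hookrightarrow F_{\overline{p}}(\mathbb{D}^n)$ adds a free summand $\mathbb{D}^n$ to $Z^{\overline{q}}$ exactly when $\overline{p} \leq \overline{q}$; at $\overline{q} = \overline{p}$ this summand enlarges the cokernel of the latching map (making it still a split mono with free cokernel), while at $\overline{q} > \overline{p}$ the same summand is added identically to both $L_{\overline{q}} Z$ and $Z^{\overline{q}}$, so the corresponding latching map is unchanged. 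Transfinite composition and passage to retracts preserve the latching condition, yielding the forward direction. Conversely, given the latching condition, I would construct $Z^\bullet$ by transfinite induction on $|\overline{p}|$: at each perversity $\overline{p}$, a cofibrant chain complex over a field decomposes as a coproduct of sphere and disk complexes, so the cokernel of $L_{\overline{p}} Z \to Z^{\overline{p}}$ admits such a decomposition, and the corresponding basis elements can be glued in as cells.

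The main obstacle I expect is the combinatorial bookkeeping of how $F_{\overline{p}}$-cells interact with the latching objects for different $\overline{p}'$, particularly verifying inductively that splitness of every latching map is preserved by each cell attachment. The field hypothesis provides significant simplification since every $\mathbb{F}$-linear injection is split and every chain complex of $\mathbb{F}$-vector spaces is cofibrant in $Ch(\mathbb{F})$, so the criterion reduces essentially to checking that each map $\colim_{\overline{q} < \overline{p}} Z^{\overline{q}} \to Z^{\overline{p}}$ is a degree-wise injection, which is easier to verify in practice.
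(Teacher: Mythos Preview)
Your proposal is correct and matches the paper's approach: the paper does not give its own proof but simply states that the result follows from \cite[Theorem 5.1.3]{Hov99}, which is precisely the direct-category model structure theorem you invoke. Your write-up actually supplies more detail than the paper---spelling out the degree function making $\pGM$ direct, identifying the latching object with $\colim_{\overline{q}<\overline{p}} Z^{\overline{q}}$, and recalling the characterization of cofibrations in $Ch(\mathbb{F})$---all of which are implicit in the citation.
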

    Notice that, since we work over a field, all chain complexes are cofibrant and all injections are split. We are going to give a sufficient condition on $Z^\bullet$ that ensures that the above map is be a degree-wise injection. 
    
    \begin{defi}
        We say that a perverse chain complex $Z^\bullet$ satisfies the \emph{minimum condition} if for any perversities $\overline{p}, \overline{q}_1, \overline{q}_2$ such that $\overline{q}_1< \overline{p}$ and $\overline{q}_2< \overline{p}$, we have 
    \[i_1(Z^{\overline{q}_1})\cap i_2(Z^{\overline{q}_2})=i(Z^{\min\{\overline{q}_1, \overline{q}_2\}})\]
    where $\min\{\overline{q}_1, \overline{q}_2\}:=\max\{\overline{r}\in\pGM~|~\overline{r}\leq\overline{q}_1 \text{ and } \overline{r}\leq\overline{q}_2\}$ and the maps $i, i_1, i_2$ are the structure morphisms
    \[\begin{tikzcd}
                                                              & Z^{\overline{q}_1} \arrow[rd, "i_1"] &                 \\
    {Z^{\min\{\overline{q}_1, \overline{q}_2\}}} \arrow[rr, "i"] &                                     & Z^{\overline{p}}. \\
                                                              & Z^{\overline{q}_2} \arrow[ru, "i_2"] &                
    \end{tikzcd}\]
    \end{defi}
    
    \begin{proposition}
        If a perverse chain complex $Z^\bullet$ satisfies the minimum condition and if its structure morphisms are injections then, for any $\overline{p}\in \pGM$, the map
        \[\colim_{\overline{q}<\overline{p}} Z^{\overline{q}}\to Z^{\overline{p}}\]
        is a degree-wise injection. In other words, $Z^\bullet$ is a cofibrant perverse chain complex. 
    \end{proposition}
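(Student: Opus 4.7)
The plan is to compute the colimit explicitly and argue, degree by degree, that the canonical map is injective. Since colimits in $(Ch(\mathbb{F}))^{\pGM}$ are computed pointwise in the perversity variable and degree-wise in the chain-complex variable, fix an integer $k$ and it suffices to show that the induced $\mathbb{F}$-linear map
\[L_{\overline{p}}^k := \colim_{\overline{q}<\overline{p}} Z^{\overline{q}}_{k} \longrightarrow Z^{\overline{p}}_{k}\]
is injective. Describe the source concretely as the quotient of $\bigoplus_{\overline{q}<\overline{p}} Z^{\overline{q}}_{k}$ by the subspace $K$ generated by the relations $\phi_{\overline{q}_1\leq \overline{q}_2}(x) - x$ for $\overline{q}_1\leq \overline{q}_2 < \overline{p}$. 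The canonical map sends the class of $\sum_i (x_i)_{\overline{q}_i}$ to $\sum_i i_{\overline{q}_i}(x_i)$, so the injectivity that I aim to prove reduces to the statement: every finite sum $\sum_{i=1}^N i_{\overline{q}_i}(x_i)=0$ forces $\sum_i (x_i)_{\overline{q}_i}\in K$.

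I would proceed by induction on the number $N$ of nonzero summands in a representative. The case $N=1$ is immediate from the hypothesis that the structure morphisms are injective. The case $N=2$ is the heart of the argument and is exactly where the minimum condition intervenes. If $i_{\overline{q}_1}(x_1)+i_{\overline{q}_2}(x_2)=0$, then the common element $i_{\overline{q}_1}(x_1)=-i_{\overline{q}_2}(x_2)$ lies in $i_{\overline{q}_1}(Z^{\overline{q}_1}_{k})\cap i_{\overline{q}_2}(Z^{\overline{q}_2}_{k})$, which by the minimum condition equals $i_{\overline{r}}(Z^{\overline{r}}_{k})$ with $\overline{r}=\min\{\overline{q}_1,\overline{q}_2\}$. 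Picking a preimage $y\in Z^{\overline{r}}_{k}$ and using injectivity of $i_{\overline{q}_1}$ together with the compatibility $i_{\overline{q}_j}\circ \phi_{\overline{r}\leq \overline{q}_j}=i_{\overline{r}}$, one deduces $\phi_{\overline{r}\leq \overline{q}_1}(y)=x_1$ and $\phi_{\overline{r}\leq \overline{q}_2}(y)=-x_2$, so modulo $K$ both $(x_1)_{\overline{q}_1}$ and $(x_2)_{\overline{q}_2}$ are represented by $\pm y$ at the common perversity $\overline{r}$ and cancel.

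For the inductive step with $N\geq 3$, the strategy is to locate two indices $i,j$ in the support whose pointwise maximum $\overline{q}_i\vee\overline{q}_j$ (which again lies in $\pGM$ since maxima of GM-perversities are GM-perversities, and is bounded above by $\overline{p}$) is \emph{strictly} below $\overline{p}$; using relations in $K$ one pushes $(x_i)_{\overline{q}_i}$ and $(x_j)_{\overline{q}_j}$ up to perversity $\overline{q}_i\vee\overline{q}_j$ and adds them into a single summand, reducing $N$ by one and invoking the inductive hypothesis. This reduction applies directly whenever the support is not a "pairwise maximal" configuration.

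The main obstacle will be the degenerate case where every pair $\overline{q}_i,\overline{q}_j$ in the support satisfies $\overline{q}_i\vee\overline{q}_j=\overline{p}$: direct combination via the join is then unavailable, while the minimum condition only controls binary intersections in $Z^{\overline{p}}_k$. To handle this, I plan to apply the two-summand argument to a carefully chosen pair $(i,j)$ in order to exchange $(x_i)_{\overline{q}_i}$ for an equivalent $K$-representative supported at the meet $\overline{q}_i\wedge\overline{q}_j$, and iterate; because $\pGM$ is finite and each such exchange strictly decreases the perversity of one summand, the process must terminate. Once a summand has been dragged down far enough, the join with some remaining $\overline{q}_l$ in the support will drop strictly below $\overline{p}$ and the preceding join-combination step applies, reducing $N$ and completing the induction.
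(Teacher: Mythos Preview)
There is a genuine gap in your handling of the degenerate case $N\geq 3$. The two-summand argument hinges on the equality $i_{\overline{q}_i}(x_i)+i_{\overline{q}_j}(x_j)=0$: that is precisely what places $i_{\overline{q}_i}(x_i)$ in $i_{\overline{q}_i}(Z^{\overline{q}_i})\cap i_{\overline{q}_j}(Z^{\overline{q}_j})$ and hence, via the minimum condition, permits descent to $\overline{q}_i\wedge\overline{q}_j$. When $N\geq 3$ you only know the full sum vanishes; there is no reason any individual $i_{\overline{q}_i}(x_i)$ should lie in some other $i_{\overline{q}_j}(Z^{\overline{q}_j})$, so you cannot ``exchange $(x_i)_{\overline{q}_i}$ for a representative at the meet'' as you propose, and the descent never gets started. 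What your induction implicitly needs is control over $Z^{\overline{q}_1}\cap\bigl(Z^{\overline{q}_2}+\cdots+Z^{\overline{q}_N}\bigr)$, and the binary minimum condition says nothing about that. Note that your strategy is also different from the paper's, which instead writes down an explicit direct-sum decomposition $\colim_{\overline{q}<\overline{p}}Z^{\overline{q}}\cong Z^{\overline{0}}\oplus\bigoplus_{\overline{q}}\overline{Z}_{\overline{q}}$ using complements $\overline{Z}_{\overline{q}}$ of $Z^{\overline{m}_{\overline{q}}}$ in $Z^{\overline{q}}$.

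The gap is not merely cosmetic. Take $\overline{p}$ with three maximal predecessors $\overline{q}_1,\overline{q}_2,\overline{q}_3$ (for instance $\overline{p}=(1,1,2,2,3,3)$ in rank $8$), set $Z^{\overline{q}_1}=\langle e_1\rangle$, $Z^{\overline{q}_2}=\langle e_2\rangle$, $Z^{\overline{q}_3}=\langle e_1+e_2\rangle$ inside $Z^{\overline{p}}=\mathbb{F}^2$, put $Z^{\overline{q}}=0$ for every other $\overline{q}<\overline{p}$, and extend to all of $\pGM$ by $Z^{\overline{r}}:=Z^{\overline{r}\wedge\overline{p}}$. All pairwise intersections vanish, so the minimum condition holds and the structure maps are inclusions; yet the colimit over $\{\overline{q}<\overline{p}\}$ is $\mathbb{F}^3$ and the canonical map to $\mathbb{F}^2$ has a one-dimensional kernel. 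Your induction cannot reduce the three-term relation $e_1+e_2-(e_1+e_2)=0$, and indeed no argument from the stated hypotheses alone can. (The paper's own justification at the analogous step --- ``the map is an injection since it is induced by the structure morphisms which are injections'' --- is equally vulnerable to this example: injectivity of each $Z^{\overline{q}_i}\hookrightarrow Z^{\overline{p}}$ does not force injectivity of their direct sum.) The moral is that the binary minimum condition is too weak for such configurations; what genuine $p$-filtrations enjoy, and what is really needed, is a distributivity property relating one subspace to sums of the others.
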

    
    \begin{proof}
        The statement is clearly true for $\overline{0}$. Let $\overline{p}\in \pGM$ be a non-trivial perversity. 
        We consider the following set 
        \[Per_{\overline{p}}:=\{\overline{q}\in \pGM \,|\, \overline{q}<\overline{p}\}.\]
        Since we work with positive perversities, $\min Per_{\overline{p}}=\overline{0}$ and the set is finite.  \\
        For $\overline{q}\in Per_{\overline{p}}$, a non-trivial perversity, let $M_{\overline{q}}$ be the set of maximal elements of $Per_{\overline{q}}$. We consider the perversity 
        \[\overline{m}_{\overline{q}}:=\min M_{\overline{q}}.\]
        where we define the minimum of $S\subset\pGM$ as $\min S:=\max\{\overline{r}\in \pGM |\forall \overline{s}\in S\; \overline{r}\leq \overline{s}\}$. We denote by $k_{\overline{q}}:Z^{\overline{m}_{\overline{q}}}\to Z^{\overline{q}}$ the structure morphism and let $\overline{Z}_{\overline{q}}$ be the supplementary of $k_{\overline{q}}\left(Z^{\overline{m}_{\overline{q}}}\right)$ in $Z^{\overline{q}}$.  \\
        The minimum condition implies that     
        \[\colim_{\overline{q}<\overline{p}} Z^{\overline{q}}=Z^{\overline{0}}\bigoplus_{\overline{q}\in Per_{\overline{p}}\setminus\overline{0}} \overline{Z}_{\overline{q}}.\]
        The map $\colim_{\overline{q}<\overline{p}} Z^{\overline{q}}\to Z^{\overline{p}}$ is a injection since it is induced by the structure morphisms which are injections by hypothesis.
    \end{proof}
    Note that asking for the structure morphisms to be injections is a necessary condition to have a cofibrant perverse chain complex. 
    \begin{proposition}
        The structure morphisms of a cofibrant perverse chain complexes are cofibrations of chain complexes. In particular, they are injections.
    \end{proposition}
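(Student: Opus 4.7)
The plan is to prove a stronger structural statement by induction on the rank of $\overline{p}\in\pGM$ (the length of the longest chain from $\overline{0}$ to $\overline{p}$; well defined since $\pGM$ is finite). Set $\overline{Z}^{\overline{s}}:=\operatorname{coker}(L_{\overline{s}}Z\to Z^{\overline{s}})$ for $\overline{s}>\overline{0}$ and $\overline{Z}^{\overline{0}}:=Z^{\overline{0}}$; by the preceding theorem each $\overline{Z}^{\overline{s}}$ is a cofibrant chain complex. I claim that for every $\overline{p}\in\pGM$ there is a decomposition
\[Z^{\overline{p}}=\bigoplus_{\overline{s}\leq\overline{p}}\overline{Z}^{\overline{s}}\]
under which each structure morphism $\phi_{\overline{r}\leq\overline{p}}:Z^{\overline{r}}\to Z^{\overline{p}}$ is the canonical inclusion of subsums indexed by $\{\overline{s}\leq\overline{r}\}\subseteq\{\overline{s}\leq\overline{p}\}$. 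This suffices to prove the proposition: such an inclusion is a degree-wise split injection whose cokernel $\bigoplus_{\overline{r}<\overline{s}\leq\overline{p}}\overline{Z}^{\overline{s}}$ is a direct sum of cofibrant chain complexes, hence cofibrant, so $\phi_{\overline{r}\leq\overline{p}}$ is a cofibration in $Ch(\mathbb{F})$, and in particular an injection.

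The base case $\overline{p}=\overline{0}$ is trivial. For the inductive step with $\overline{p}$ of positive rank, the inductive hypothesis provides, for each $\overline{r}<\overline{p}$, the decomposition $Z^{\overline{r}}=\bigoplus_{\overline{s}\leq\overline{r}}\overline{Z}^{\overline{s}}$ with structure morphisms realized as canonical inclusions. Since the colimit of a diagram of inclusions in $Ch(\mathbb{F})$ is the union of their images, and since
\[\bigcup_{\overline{r}<\overline{p}}\{\overline{s}:\overline{s}\leq\overline{r}\}=\{\overline{s}:\overline{s}<\overline{p}\}\]
(for $\overline{s}<\overline{p}$ take $\overline{r}=\overline{s}$), the latching object is computed as
\[L_{\overline{p}}Z=\colim_{\overline{r}<\overline{p}}\,\bigoplus_{\overline{s}\leq\overline{r}}\overline{Z}^{\overline{s}}=\bigoplus_{\overline{s}<\overline{p}}\overline{Z}^{\overline{s}}.\]
By cofibrancy of $Z^\bullet$, the latching map $\lambda_{\overline{p}}:L_{\overline{p}}Z\to Z^{\overline{p}}$ is a degree-wise split injection with cokernel $\overline{Z}^{\overline{p}}$, and a chosen splitting yields
\[Z^{\overline{p}}=L_{\overline{p}}Z\oplus\overline{Z}^{\overline{p}}=\bigoplus_{\overline{s}\leq\overline{p}}\overline{Z}^{\overline{s}}.\]
Any structure morphism $\phi_{\overline{r}\leq\overline{p}}$ with $\overline{r}<\overline{p}$ factors as $Z^{\overline{r}}\to L_{\overline{p}}Z\xrightarrow{\lambda_{\overline{p}}}Z^{\overline{p}}$; by the inductive description of the colimit the first arrow is the inclusion of $\bigoplus_{\overline{s}\leq\overline{r}}\overline{Z}^{\overline{s}}$ into $\bigoplus_{\overline{s}<\overline{p}}\overline{Z}^{\overline{s}}$, so the composite is the canonical inclusion as claimed.

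The main obstacle lies in the colimit computation in the inductive step: one must verify that the direct-sum decompositions at the various lower perversities glue together coherently, so that the diagram of inclusions really has $\bigoplus_{\overline{s}<\overline{p}}\overline{Z}^{\overline{s}}$ as its colimit. This coherence is guaranteed by the inductive hypothesis, since all structure morphisms among the $Z^{\overline{r}}$ for $\overline{r}<\overline{p}$ are already identified as canonical inclusions, so the ambient direct sum $\bigoplus_{\overline{s}<\overline{p}}\overline{Z}^{\overline{s}}$ serves as a common containing space for all the $Z^{\overline{r}}$'s in the diagram. The combinatorial identity $\bigcup_{\overline{r}<\overline{p}}\{\overline{s}\leq\overline{r}\}=\{\overline{s}<\overline{p}\}$ is the key point that makes the union collapse to a direct sum indexed exactly by the strictly lower perversities.
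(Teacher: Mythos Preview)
Your proof is correct and takes a genuinely different route from the paper's.

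The paper argues directly from the lifting property: given consecutive perversities $\overline{q}<\overline{p}$ and an acyclic fibration $f:A_\ast\twoheadrightarrow B_\ast$ in $Ch(\mathbb{F})$, it builds test objects $M^\bullet,N^\bullet\in(Ch(\mathbb{F}))^{\pGM}$ concentrated at $\overline{q}$ and $\overline{p}$ so that a lifting problem for $\phi_{\overline{q}\leq\overline{p}}$ against $f$ becomes a lifting problem for $0\to Z^\bullet$ against a trivial fibration $\tilde f:M^\bullet\to N^\bullet$; cofibrancy of $Z^\bullet$ then supplies the lift. Non-consecutive structure morphisms are handled by composition. Your approach instead leverages the Reedy-type characterisation from the preceding theorem: you show inductively that a cofibrant $Z^\bullet$ globally decomposes as $Z^{\overline{p}}\cong\bigoplus_{\overline{s}\leq\overline{p}}\overline{Z}^{\overline{s}}$ with structure maps the canonical inclusions. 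This is a stronger statement, giving an explicit cellular picture of cofibrant objects, and it makes the proposition an immediate corollary. The paper's argument is more self-contained (it does not invoke the latching characterisation), while yours extracts more structural information.

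One small correction: since $\pGM$ is not totally ordered, the cokernel of $\phi_{\overline{r}\leq\overline{p}}$ is indexed by $\{\overline{s}\leq\overline{p}:\overline{s}\not\leq\overline{r}\}$, not by $\{\overline{s}:\overline{r}<\overline{s}\leq\overline{p}\}$; there may be $\overline{s}$ incomparable to $\overline{r}$. This does not affect your conclusion, as the cokernel is still a direct sum of cofibrant complexes. You might also note explicitly that the splittings chosen at each rank are only degree-wise (that is all the preceding theorem provides), so the direct-sum decomposition is a priori at the level of graded vector spaces; this is enough, since injectivity and the cofibration criterion in $Ch(\mathbb{F})$ are detected degree-wise.
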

    \begin{proof}
        Let $Z^\bullet$ be a cofibrant perverse chain complex and $\overline{q}< \overline{p}$ two \emph{consecutive} perversities. This means that there is no perversity $\overline{r}$ such that 
        \[\overline{q}< \overline{r} < \overline{p}.\]
        We want to show that the structure morphism $s:Z^{\overline{q}}\to Z^{\overline{p}}$ has the left lifting property with respect to trivial fibrations. \\
        Let $f:A_\ast\twoheadrightarrow B_\ast$ be a trivial fibration of chain complexes. Suppose we have a commutative diagramme of the following form 
        \[\begin{tikzcd}
        Z^{\overline{q}}_\ast \arrow[r, "g_1"] \arrow[d, "s"'] & A_\ast \arrow[d, "f", two heads] \\
        Z^{\overline{p}}_\ast \arrow[r, "g_2"']           & B_\ast                          
        \end{tikzcd}
        \]
        We consider two perverse chain complexes $M^\bullet, N^\bullet$ which are defined by
        \[M^{\overline{r}}_\ast=\begin{cases}
        A_\ast & \text{ if } \overline{r}=\overline{q} \text{ ou }\overline{p}, \\
        0 & \text{ else} \\
    \end{cases}\]
    and
        \[N^{\overline{r}}_\ast=\begin{cases}
        A_\ast & \text{ if } \overline{r}=\overline{q}, \\
        B_\ast & \text{ if } \overline{r}=\overline{p},\\    
        0 & \text{ else.} \\
    \end{cases}\]
    We have a morphism of perverse chain complexes $\tilde{f}:M^\bullet\to N^\bullet$
    \[  \begin{tikzcd}
        M^\bullet \arrow[d, "\tilde f", two heads] & \ldots \arrow[r] & 0 \arrow[r] \arrow[d] & M^{\overline{q}}_\ast=A_\ast \arrow[d, "\id"'] \arrow[r, "\id"] & M^{\overline{p}}_\ast=A_\ast \arrow[d, "f"', two heads] \arrow[r] & 0 \arrow[r] \arrow[d] & \ldots \\
        N^\bullet                                  & \ldots \arrow[r] & 0 \arrow[r]           & N^{\overline{q}}_\ast=A_\ast \arrow[r, "f", two heads]          & N^{\overline{p}}_\ast=B_\ast \arrow[r]                            & 0 \arrow[r]           & \ldots
        \end{tikzcd}\]
    Since $f$ and $\id$ are trivial fibrations of chain complexes, $\tilde f$ is a trivial fibration in the category of perverse chain complexes. \\
    We have a commutative diagram of perverse chain complexes
        \[\begin{tikzcd}
        0 \arrow[r] \arrow[d] & M^\bullet \arrow[d, "\tilde f", two heads] \\
        Z^\bullet \arrow[r, "\tilde g"']           & N^\bullet                          
        \end{tikzcd}
        \]
    where $\tilde g:Z^\bullet\to N^\bullet$ is trivial except in perversities $\overline{q}$ and $\overline{p}$:
        \[\tilde g^{\overline{q}}=g_1, \quad \tilde g^{\overline{p}}=g_2.\]
    Since $Z^\bullet$ is cofibrant, there exists a lift $h:Z^\bullet\to M^\bullet$ such that \[\tilde f \circ h = \tilde g.\]
    In particular, we have $h^{\overline{q}}, h^{\overline{p}}$ maps of chains complexes such that
        \[\id \circ h^{\overline{q}}=g_1 \text{ and } f\circ h^{\overline{p}} =g_2.\]
    \[
    \begin{tikzcd}
      &    & M^{\overline{q}}_\ast=A_\ast \arrow[ld] \arrow[d]       \\
    Z^{\overline{q}}_\ast \arrow[d, "s"'] \arrow[r, "g_1"'] \arrow[rru, "h^{\overline{q}}", dashed] & N^{\overline{q}}_\ast=A_\ast \arrow[d, two heads] & M^{\overline{p}}_\ast=A_\ast \arrow[ld, "f", two heads] \\
    Z^{\overline{p}}_\ast \arrow[r, "g_2"'] \arrow[rru, "h^{\overline{p}}", dashed]                 & N^{\overline{p}}_\ast=B_\ast                      &                                          
    \end{tikzcd}\]
    Since $h$ is a morphism of perverse chain complexes, its components commute with the structure morphisms:
    \[\id\circ h^{\overline{q}}= h^{\overline{p}}\circ s.\]
    To sum up, we have
    \[h^{\overline{p}}\circ s = g_1 \text{ and } f\circ h^{\overline{p}} =g_2.\]
    In other words, $h^{\overline{p}}$ is a lift for the first diagram. This proves the claim for consecutive perversities.\\
    Now suppose that $\overline{q}$ and $\overline{p}$ are not consecutive perversities. Then there exists a finite number of perversities $\{\overline{r}_i\}_{1\leq i \leq m}$ such that
    \[\overline{r}_i \text{ and } \overline{r}_{i+1} \text{ are consecutive perversities for every } i\in\{0, 1, \ldots, m\}\]
    with $\overline{r}_0=\overline{q}$ and $\overline{r}_{m+1}=\overline{p}$.\\
    Note that the class of cofibrations is stable under composition (more generally, a class of maps which verify the left lifting property with respect to another class of maps is stable under composition).  
    Since the structure morphism $s:Z^{\overline{q}}\to Z^{\overline{p}}$ is obtained as the composite of cofibrant structure morphisms
    \[s_i:Z^{\overline{r}_i}\to Z^{\overline{r}_{i+1}} \text{ with } i\in \{0,\ldots, n\}\]
    it is cofibrant.
    \end{proof}
    We now try to understand what kind of perverse chain complexes verify the minimum condition. Let $X$ be a pseudomanifold of dimension $n$. We denote by $\mathcal{S}_X$ the set of strata of $X$. Remember that a perversity $\overline{p}\in \pGM$ can be seen as a map $\overline{p}: \mathcal{S}_X\to \mathbb{N}$ (Remark \ref{rem:genperv}).
    \begin{defi}
        A chain complex $(D_\ast, d)$ is \emph{p-filtered} by $X$ if there exists a set map
        \[\lambda_X: D_\ast \to \pGM\]
        which takes each chain to a certain perversity on $X$ that verifies the following properties:
        \begin{itemize}
            \item $\lambda_X(0)=\overline{0}$,
            \item $\lambda_X(\alpha c)\leq \lambda_X(c)\quad \forall \alpha \in \mathbb{F}, c\in D_\ast,$
            \item $\lambda_X(c+c')\leq \max\{\lambda_X(c), \lambda_X(c')\} \quad \forall c, c'\in D_\ast$.
        \end{itemize}
        The \emph{p-filtration} of $D_\ast$ with respect to $\lambda_X$ is the perverse chain complex $Filt_\ast^{\overline{\bullet}}(D)$ given by
        \[Filt_k^{\overline{p}}(D):=\{c\in D_k \,|\, \lambda_X(c) \leq \overline{p} \text{ and } \lambda_X(dc) \leq \overline{p}\}\]
        for $k\in \mathbb{Z}$ and $\overline{p}\in \pGM$. We remark that the structure morphisms are given by the inclusions. 
    \end{defi}
    \begin{rem}
        These notions were already considered by Chataur, Saralegui and Tanré \cite[Definition 1.22]{CST18ration}. In their terminology, a p-filtered chain complex is a \emph{strict perverse chain complex} and its p-filtration corresponds to the associated \emph{intersection chain complex}. Hence, the blown-up intersection cochain complex $\widetilde N^\ast_\bullet(X; \mathbb{F})$ and the blown-up of Sullivan's polynomial forms $\widetilde A_{PL, \bullet}^\ast(X; \mathbb{Q})$ are examples of p-filtrations (\cite[Example 1.34]{CST18ration}). 
    \end{rem}
    \begin{exemple}
        Let $C_\ast(X; \mathbb{F})$ be the singular chain complex of $X$. For a singular chain $c\in C_\ast(X; \mathbb{F})$, we fix
        \[\lambda_X(c)=\min \{\overline{p}\in \pGM \,|\, x \text{ is } \overline{p}\text{-allowable}\}.\]
        The intersection chain complex $I^\bullet C_\ast(X;\mathbb{F})$ is the p-filtration of the singular chain complex with respect to $\lambda_X$. 
    \end{exemple}
    \begin{proposition}
        If a perverse chain complex is a p-filtration of a certain chain complex then it satisfies the minimum condition.
    \end{proposition}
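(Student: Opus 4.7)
The plan is to show that the minimum condition is immediate once one unpacks the definitions, using the fact that for a p-filtration the structure morphisms are inclusions of subcomplexes.

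Let $D_\ast$ be a chain complex equipped with a map $\lambda_X: D_\ast \to \pGM$ making it p-filtered, and write $Z^{\overline{p}}_\ast := Filt_\ast^{\overline{p}}(D)$. Fix perversities $\overline{q}_1, \overline{q}_2 \leq \overline{p}$ and set $\overline{m} := \min\{\overline{q}_1, \overline{q}_2\}$. Since every structure morphism of $Z^{\overline{\bullet}}_\ast$ is the inclusion of a subcomplex of $D_\ast$, the images $i_1(Z^{\overline{q}_1}_\ast)$, $i_2(Z^{\overline{q}_2}_\ast)$ and $i(Z^{\overline{m}}_\ast)$ can all be identified with honest subcomplexes of $Z^{\overline{p}}_\ast \subset D_\ast$, so the minimum condition becomes the equality of sets
\[Z^{\overline{q}_1}_\ast \cap Z^{\overline{q}_2}_\ast = Z^{\overline{m}}_\ast\]
inside $D_\ast$. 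This is the only substantive reduction.

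Next I would verify this equality directly from the definition of $Filt$. A chain $c \in D_k$ lies in the left-hand side if and only if both $\lambda_X(c) \leq \overline{q}_j$ and $\lambda_X(dc) \leq \overline{q}_j$ for $j = 1, 2$. By the definition of $\overline{m}$ as the largest perversity satisfying $\overline{m} \leq \overline{q}_1$ and $\overline{m} \leq \overline{q}_2$, this pair of conditions is equivalent to $\lambda_X(c) \leq \overline{m}$ and $\lambda_X(dc) \leq \overline{m}$, which is precisely the condition for $c \in Z^{\overline{m}}_k$. The reverse inclusion $Z^{\overline{m}}_\ast \subset Z^{\overline{q}_1}_\ast \cap Z^{\overline{q}_2}_\ast$ is automatic since $\overline{m} \leq \overline{q}_j$ makes the structure morphisms factor through $Z^{\overline{m}}_\ast$.

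There is essentially no obstacle here: the proof amounts to chasing definitions and invoking the universal property of the meet $\overline{m}$ in the poset $\pGM$. The only minor point worth being careful about is that $\min\{\overline{q}_1, \overline{q}_2\}$ as defined in the paper is the greatest lower bound in $\pGM$, not the pointwise minimum; but since GM-perversities are closed under taking pointwise minimum (the constraints $\overline{p}(0) = \overline{p}(1) = \overline{p}(2) = 0$ and $\overline{p}(i) \leq \overline{p}(i+1) \leq \overline{p}(i) + 1$ are preserved under pointwise $\min$), the two notions coincide, so the characterization ``$\lambda_X(c) \leq \overline{q}_1$ and $\lambda_X(c) \leq \overline{q}_2 \iff \lambda_X(c) \leq \overline{m}$'' holds without trouble.
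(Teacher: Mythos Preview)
Your proof is correct and follows essentially the same route as the paper's: identify the structure morphisms as inclusions, unwind the definition of $Filt$, and invoke the universal property of the meet $\overline{m}$ to pass from the pair of inequalities to the single one. Your closing remark about the pointwise minimum being a GM-perversity is correct but not strictly needed, since the equivalence $\lambda_X(c)\leq\overline{q}_1,\ \lambda_X(c)\leq\overline{q}_2\iff\lambda_X(c)\leq\overline{m}$ follows directly from $\overline{m}$ being the greatest lower bound in $\pGM$.
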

    \begin{proof}
    Let $(D_\ast, d, \lambda)$ be a p-filtered chain complex and consider $Z^\bullet$ the p-filtration of $D_\ast$ with respect ot $\lambda$. Let $\overline{p}, \overline{q}_1, \overline{q}_2$ be perversities such that $\overline{q}_1< \overline{p}$ and $\overline{q}_2< \overline{p}$, we want to show that
    \[i_1(Z^{\overline{q}_1})\cap i_2(Z^{\overline{q}_2})=i(Z^{\overline{m}})\]
    where $\overline{m}:=\min\{\overline{q}_1, \overline{q}_2\}$ and the maps $i, i_1$ and $i_2$ are structure morphisms
    \[\begin{tikzcd}
                                                              & Z^{\overline{q}_1} \arrow[rd, "i_1", hook] &                 \\
    {Z^{\overline{m}}} \arrow[rr, "i", hook] &                                     & Z^{\overline{p}}. \\
                                                              & Z^{\overline{q}_2} \arrow[ru, "i_2", hook] &                
    \end{tikzcd}\] 
    By commutativity of the structure morphisms, we always have
    \[i(Z^{\overline{m}})\subset i_1(Z^{\overline{q}_1})\cap i_2(Z^{\overline{q}_2}).\]
    By definition of $Z$, we get
    \[i_1(Z^{\overline{q}_1})\cap i_2(Z^{\overline{q}_2})=\{c\in D_\ast \,|\, \lambda(c) \leq \overline{q}_1,\,\lambda(c) \leq \overline{q}_2,\,\lambda(dc) \leq \overline{q_1} \text{ and } \lambda(dc) \leq \overline{q_2}\}.\]
    This means that if $c\in i_1(Z^{\overline{q}_1})\cap i_2(Z^{\overline{q}_2})$ then 
    \[\lambda(c), \lambda(dc)\in \{\overline{r}\in \pGM | \overline{r}\leq \overline{q}_1 \text{ and }\overline{r}\leq\overline{q}_2\}.\] 
    But, 
    \[\overline{m}=\max\{\overline{r}\in \pGM | \overline{r}\leq \overline{q}_1 \text{ and }\overline{r}\leq\overline{q}_2\}\]
    whence $\lambda(c)\leq \overline{m}$ and $\lambda(dc)\leq \overline{m}$.
    In other words, 
    \[i_1(Z^{\overline{q}_1})\cap i_2(Z^{\overline{q}_2})\subset i(Z^{\overline{m}}).\]
    This proves the statement.
    \end{proof}
    Since the structure morphisms of a p-filtration are given by the inclusion, we immediatly get the following result. 
    \begin{corollaire}\label{coro:ex_cofib_cplx_field}
    Let $\mathbb{F}$ be a field and $D_\ast$ a p-filtered chain complex. The p-filtration of $D_\ast$ is a cofibrant perverse chain complex. 
    In particular, for $X$ a pseudomanifold, the perverse chain complexes $I^\bullet C_\ast(X;\mathbb{F})$, $\widetilde N^\ast_\bullet(X; \mathbb{F})$, $\widetilde A^\ast_{PL, \bullet}(X; \mathbb{Q})$ and $(\widetilde{A_{PL}\otimes C})^\ast_\bullet(X; \mathbb{Q})$ \emph{(given in Proposition \ref{prop:integ_polyforms})} are cofibrant. 
    \end{corollaire}

\emergencystretch=1em 
\printbibliography[title={Bibliography}] 
\end{document}